%% file: stochasticHarmonicMapHeatFlow.tex
\documentclass[a4paper, 12pt, reqno,final]{amsart}
\usepackage[foot]{amsaddr} 
\usepackage[utf8x]{inputenc}
\usepackage[T1]{fontenc}
\usepackage[english]{babel}
\usepackage{amsmath, amsfonts, amsthm, amssymb, amscd}
\usepackage[final]{graphicx}
\usepackage[below]{placeins}
\usepackage{multirow}
\usepackage{mathtools}
\usepackage{caption}
\usepackage{subcaption}
\usepackage{float}

\usepackage[hidelinks]{hyperref}
\hypersetup{
  pdfauthor = {Stefan Metzger},
  pdftitle = {SEL}
}

\usepackage{showkeys} 
\usepackage{esint}
\usepackage{dsfont}

\usepackage{tikz}
\usetikzlibrary{patterns}
\usetikzlibrary{shapes.geometric}

\allowdisplaybreaks
\usepackage[]{todonotes}
\usepackage{diagbox}

\newtheorem{theorem}{Theorem}[section]
\newtheorem{lemma}[theorem]{Lemma}
\newtheorem*{lemma*}{Lemma}
\newtheorem{corollary}[theorem]{Corollary}

\newtheorem{remark}[theorem]{Remark}
\newtheorem{definition}[theorem]{Definition}
\numberwithin{equation}{section}



\input{header.tex}

\begin{document}
\title[Numerical approximation of stochastic harmonic map heat flow]{Numerical approximation of probabilistically strong solutions to the stochastic harmonic map heat flow}
\date{\today}
\author[R.~Lasarzik, S.~Metzger, and M.~E.~V.~Reiter]{Robert Lasarzik, Stefan Metzger, and Maximilian E.V. Reiter}
\address[S.~Metzger]{Friedrich--Alexander Universität Erlangen--Nürnberg,~Cauerstraße 11,~91058~Erlangen,~Germany}
\email{stefan.metzger@fau.de}
\address[R.~Lasarzik]{Friedrich--Alexander Universität Erlangen--Nürnberg,~Cauerstraße 11,~91058~Erlangen,~Germany, and Weierstrass Institute for Applied Analysis and Stochastics,~Anton-Wilhem-Amo-Str.~39,~10117~Berlin,~Germany}
\email{robert.lasarzik@wias-berlin.de}
\address[M.E.V.~Reiter]{Technical University of Berlin, Strasse des 17.~Juni~135, 10623 Berlin, Germany}
\email{research@MaximilianReiter.de}
\keywords{}
\subjclass[2010]{}



%
\selectlanguage{english}

\allowdisplaybreaks

\begin{abstract}
We study the stochastic harmonic map heat flow into the sphere driven by a Stratonovich noise term that formally preserves the spherical constraint by construction. Starting from a fully discrete, implementable numerical scheme, we prove its convergence to the novel solution concept of probabilistically strong, analytically energy-variational solutions. This yields the first existence result for probabilistically strong solutions in the three-dimensional setting on the prescribed stochastic basis. 
In addition, we establish weak-strong uniqueness and illustrate our theoretical findings with numerical experiments.

\end{abstract}\maketitle

\input{./content.tex}

\input{stochasticEL.bbl}

\end{document}

%% file: header.tex
\makeatletter
\newcommand{\labitem}[2]{%
\def\@itemlabel{\textbf{#1}}
\item
\def\@currentlabel{#1}\label{#2}}
\makeatother

\newcommand{\norm}[1]{\left\|{#1}\right\|}
\newcommand{\abs}[1]{\left|{#1}\right|}

\newcommand{\rkla}[1]{{\left(#1\right)}}
\newcommand{\trkla}[1]{{(#1)}}
\newcommand{\gkla}[1]{{\left\{#1\right\}}}
\newcommand{\tgkla}[1]{{\{#1\}}}
\newcommand{\skla}[1]{{\left\langle#1\right\rangle}}
\newcommand{\ekla}[1]{{\left[#1\right]}}
\newcommand{\tekla}[1]{{[#1]}}
\newcommand{\tabs}[1]{|{#1}|}

\newcommand{\dtau}{\,\mathrm{d}\tau}
\newcommand{\de}{\,\mathrm{d}}
\newcommand{\bs}[1]{\boldsymbol{#1}}

\newcommand{\expect}[1]{\mathds{E}\left[ #1\right]}

\newcommand{\D}{\mathcal{D}}
\newcommand{\iD}{\int_{\D}}

\newcommand{\dx}{\, \mathrm{d}\bs{x}}
\newcommand{\dt}{\, \mathrm{d}t}
\newcommand{\ds}{\, \mathrm{d}s}

\renewcommand{\div}{\operatorname{div}}

\newcommand{\id}{I}

\newcommand{\nn}{^{n+1}}
\newcommand{\nh}{^{n+1/2}}
\newcommand{\no}{^{n}}

\newcommand{\tl}{^{\tau}}
\newcommand{\tp}{^{\tau,+}}
\newcommand{\tm}{^{\tau,-}}
\newcommand{\ti}{^{\tau,\bullet}}
\newcommand{\tpm}{^{\tau,(\pm)}}

\newcommand{\h}{_{h}}

\newcommand{\weakstartop}{{w^*}}

\newcommand{\Uh}{U_{h}}
\newcommand{\Th}{\mathcal{T}_h}

\newcommand{\Ihop}{\mathcal{I}_h}
\newcommand{\Ih}[1]{\Ihop\gkla{#1}}

\newcommand{\Projop}{\mathcal{P}_h}
\newcommand{\Proj}[1]{\Projop\gkla{#1}}

\newcommand{\diam}{\operatorname{diam}}

\newcommand{\g}[1]{\mathfrak{g}_{#1}}
\newcommand{\U}{\mathfrak{U}}

\newcommand{\p}{{\mathfrak{p}}}

\newcommand{\expected}[1]{\mathds{E}\ekla{#1}}

\newcommand{\Ito}{It\^{o}}

\newcommand{\sinc}[1]{\blacktriangle^{#1}\boldsymbol{\xi}^\tau}
\newcommand{\incbeta}[3]{\delta^{#1}_\tau\beta_{#2}^{#3}}

%% file: content.tex
\section{Introduction}
In this article, we are concerned with the harmonic map heat flow into the sphere with Stratonovich noise, 
\begin{subequations}\label{eq:model:StratIntro}
    \begin{align}\label{eq:model:StratIntro:a}
\mathrm{d}\bs{d}-\trkla{\bs{d}\times\Delta\bs{d}}\times\bs{d}\dt&=\Phi\trkla{\bs{d}}\times\circ\mathrm{d}\bs{W} \quad &&\text{ in } \Omega \times \D \times (0,T) 
\intertext{for a vector field $\bs{d}\in \mathds{R}^3$ equipped with Neumann boundary conditions}
    (\bs{n} \cdot \nabla) \bs{d} &= 0 \quad&&\text{ in } \Omega \times \partial \D \times (0,T) 
\end{align}
\end{subequations}
for a domain $\D \subset \mathds{R}^d $ with $d\in\tgkla{1,2,3}$, a time interval $\trkla{0,T}$, and a complete filtered probability space $\trkla{\Omega,\mathcal{A},\tgkla{\mathcal{F}_t}_{t\geq0},\mathds{P}}$.
Here $\bs{W}$ is a cylindrical Wiener process and $\Phi\trkla{\bs{d}}\times$ is a suitably weighted Hilbert--Schmidt operator of cross product type, i.e., the right hand side of \eqref{eq:model:StratIntro:a} can be written as $\Phi\trkla{\bs{d}}\times\circ\mathrm{d}\bs{W}=:\bs{d}\times\circ\mathrm{d}\trkla{\mathcal{Q}^{1/2}\bs{W}}$ with a suitably scaled Hilbert--Schmidt operator $\mathcal{Q}^{1/2}$. The exact assumptions on $\Phi$ ,$\bs{W}$, and $\mathcal{Q}$ can be found in assumptions \ref{item:W1}, \ref{item:W2}, and \ref{item:W3} in Section \ref{sec:notation}.

In the deterministic setting, this equation can be seen as the $L^2$-gradient flow of the Dirichlet energy with the restriction on the sphere, \textit{i.e.,}
$$
\min_{ |\bs u | = 1 \text{ in }\D} \int_{\D}|\nabla \bs u |^2 \mathrm{d} x \,. 
$$
The associated gradient flow, the harmonic map heat flow into the sphere, can be seen as a simplification of the Landau--Lifshitz--Gilbert equation that keeps the main difficulties but also as a simplification of the Ericksen--Leslie equations with vanishing velocity field and simple Dirichlet elastic energy. 
The Landau--Lifshitz--Gilbert equation models the orientation of spin fields in ferromagnetism~\cite{ProhlBook}, whereas the Ericksen--Leslie equations model anisotropic fluids, especially liquid crystals~\cite{leslie}. \\
In the stochastic setting, the deterministic equations are augmented by the additional source term on the right-hand side of \eqref{eq:model:StratIntro:a}, which is driven by a suitable cylindrical Wiener process which we shall introduce in more detail in Section \ref{sec:notation}.
Here, it is important to mention that the noise has to be understood in the sense of Stratonovich, as this interpretation preserves the geometric constraints (cf.~\cite{Hocquet1} and Remark \ref{rem:StratNoise}).

In the deterministic setting, Struwe~\cite{Struwe1} established the existence of global weak solutions for harmonic map heat flow from two-dimensional surfaces, which are smooth apart from finitely many space-time singularities, nowadays these solutions are called Struwe-type solutions. The existence and partial regularity theory was extended to higher-dimensional domains by Chen and Struwe~\cite{Struwe2}.
The deterministic harmonic map heat flow exhibits a delicate uniqueness theory. In two spatial dimensions, Freire \cite{Freire1995} established uniqueness for energy-decreasing weak solutions, whereas Coron \cite{Coron1990} demonstrated nonuniqueness of weak solutions in three dimensions. 

There are several numerical investigations for the deterministic case or of the very similar Landau--Lifshitz--Gilbert equations, see for instance~\cite{BartelsProhl07,BanasProhl10,Alouges06}. 

For the stochastic case, there is an existence result  of martingale solutions for the three dimensional Landau--Lifshitz--Gilbert equation~\cite{BrzezniakGoldysJegaraj2013}  and the convergence of an implementable numerical scheme for this model~\cite{Alouges2014}. Both works consider analytically and probabilistically weak solutions. 
In the two-dimensional case the existence of Struwe-type solutions was shown in~\cite{Hocquet1} that are strong in the probabilistic sense and regular up to finitely many points in time. Also in the two-dimensional case, it was shown in~\cite{Hocquet2} that {prescribing Dirichlet-type boundary conditions
generates a blow up with positive probability. 
The existence of similar Struwe-type solutions could also be extended to a simplified version of the Ericksen--Leslie system in~\cite{Hocquet3}. 
In~\cite{BrzezniakGoldysJegaraj2017} the authors show the existence of probabilistically strong solutions in the case of the domain being a bounded interval and a finite dimensional noise by showing pathwise uniqueness and applying the Yamada--Watanabe theory. 
Recently, Struwe like solutions were also investigated for the two-dimensional Landau--Lifshitz--Gilbert equations including path-wise solvability and bubbling~\cite{GoldysJiaoMelcher2025}.

A semi-discrete scheme for the Landau--Lifshitz equation was proposed and analyzed in~\cite{Alouges2014}, whereas a fully discrete Finite Element scheme was proposed and shown to converge in~\cite{Banas2013} to a probabilistically weak and analytically weak solution. In~\cite{Dunst2014} T.~Dunst manages to prove convergences rates for the simpler case of finite dimensional noise. 

In this article, we investigate the convergence of a fully discrete implementable structure inheriting scheme, similar to the one in~\cite{Banas2013}. 
We will prove convergence of solutions to that scheme to a stochastically strong analytically energy-variational solution.
This direction contrasts the usual way results for stochastic evolution equations are shown. 
Typically, after establishing finite moment bounds through appropriate \textit{a priori} estimates,  the Skorokhod--Jakubowski representation theorem~\cite{Jakubowski_2} is invoked in order to 
deduce almost sure convergence of the approximations to a limit. 
The downside of this procedure is the required change of the underlying probability space.  Therefore, such a procedure leads to probabilistically weak solutions, so-called martingale solutions. 
Often, the only way to infer the existence of probabilistically strong solutions on the original probability space is via pathwise uniqueness and an application of the Yamada--Watanabe theory (see e.g.~\cite{Yamada1971,Rockner2008,theewis2025arxiv} and the references therein).
This can be done in a constructive way by employing the Gyöngy--Krylov characterization of convergence in probability (cf.~\cite{GyongyKrylov1996, BreitFeireislHofmanova}). 

In the current article, we propose a different approach. Our main results are the following new contributions: 
\begin{itemize}
    \item We introduce the novel concept of probabilistically strong analytically energy-variational solutions to the stochastic harmonic map heat flow.
    \item We establish existence of such solutions via the convergence of a fully discrete implementable numerical scheme without changing the stochastic basis relying only on weak convergence results. 
    \item We show that the energy-variational solutions fulfills a pathwise weak--strong uniqueness result, meaning that if there is a solutions emanating from the same initial datum on the same underlying stochastic basis with additional regularity, then all energy-variational solutions coincide with this more regular solution. 
\end{itemize}
\subsection{Probabilistically strong analytically energy-variational solutions}
In contrast to previous works, we do not show convergence towards martingale solutions but probabilistically strong solutions, i.e.~the stochastic basis is not part of the solution concept but the given data. 
To achieve this, we have to abstain from using the Skorokhod--Jakubowski theorem to establish strong convergence for the approximating sequence. 
However, we are still able to identify the limit of the approximating sequence by generalizing the analytical formulation of the solution. 
Relaxing our solution concept to energy-variational solutions, we can pass to the limit using only weak convergence of the approximating sequence and weak sequential lower semicontinuity. In the deterministic case, the resulting concept would certainly be more general and thus weaker than classical weak solutions, but in the stochastic case, this is not the case anymore. 
The probabilistically weak, analytically weak solutions are not  comparable anymore with the probabilistically strong analytically energy-variational solutions.  

The idea of energy-variational solutions was developed in the realm of fluid dynamics~\cite{Lasa24} and conservation laws~\cite{EiterLasa24}, but was afterwards identified as a general scheme in physically motivated partial differential evolution equations~\cite{AgostiRoccaLasa24,Las25}. 
We consider an operator-valued ODE in a deterministic setting of the form 
$$ \partial_t \bs U + \bs A (\bs U) = 0 \quad \in \mathds{Y}^* \text{ with } \bs U = \bs U_0 \text{ in }\mathds V^*$$
for certain Banach spaces $ \mathds Y  \hookrightarrow \mathds V$. For physical systems, there usually exists an energy $ \mathcal E : \mathds V^* \to [0,\infty ] $ and a dissipation potential $ \Psi : \mathds V^* \to (-\infty, +\infty]$ such that a certain energy inequality is fulfilled, \textit{i.e.,} $ \displaystyle\mathcal E (\bs U) \Big|_s^t + \int_s^t \Psi(\bs U) \de \tau \leq 0 $
 for all $s<t\in [0,T]$. In order to derive the energy-variational formulation for such a system, we introduce an auxiliary variable $ E \in \mathrm{BV}([0,T])$, which is an upper bound of the energy, \textit{i.e.,} $E \geq \mathcal E(\bs U)$ for all $ t\in [0,T]$ and the energy-variational inequality is fulfilled, \textit{i.e.,}
\begin{align}\label{eq:envarintro}
    \left [ E - \langle \bs U , \Phi \rangle_{\mathds V}\right ] \Big |_s^t + \int_s^t \Psi (\bs U) + \langle \partial_t \Phi , \bs U\rangle _{\mathds V} - \langle A (\bs U ) , \Phi \rangle_{\mathds Y} + \mathcal{K}(\Phi) [ \mathcal E(\bs U) - E ] \de \tau \leq 0 
\end{align}
for all suitable test functions $ \Phi \in C^1([0,T];\mathds Y)$ and all $0\leq s \leq t \leq T$. 
For a large class of partial differential equations, it was observed that there exists
a regularity weight $\mathcal K$ such that the formulation~\eqref{eq:envarintro} will be 
stable under weak convergence. This is often connected to the convexity of the mapping
$$\bs U \mapsto \Psi(\bs U) -\langle A(\bs U) , \Phi\rangle_{\mathds Y} + \mathcal{K}(\Phi) \mathcal E(\bs U)
$$
for any $ \Phi \in \mathds Y$. 
This structure was not only exploited for (in)compressible fluid dynamics 
but also for general conservation laws~\cite{EiterLasa24}, viscoelastic fluids~\cite{AgostiRoccaLasa24}, 
polyconvex elasticity~\cite{Las25}, and nematic liquid crystals~\cite{Max}. 
In the context of PDEs with stochastic forcing, the concept of energy-variational solutions was recently 
applied to the stochastic incompressible Euler equations. 
There, the first author was able to improve the existence results from probabilistically weak to probabilistically strong solutions via a vanishing viscosity limit from the stochastic incompressible Navier--Stokes equations~\cite{GessLasarzik26}. 

The main difficulty of the system considered in this work is that establishing enough compactness to verify that the limit process satisfies the norm constraint $\abs{\bs{d}}=1$ seems to be out of reach --- despite the fact that fully discrete, approximate solution satisfies this constraint in every node of the mesh.
As a consequence, we are not able to establish strong convergence of the energy.

\subsection{Convergence of the fully discrete structure inheriting scheme}
The convergence of solutions to the proposed scheme to martingale solutions was already shown in~\cite{BanasBrzezniakNeklyudovProhl2014}. 
In order to prove the convergence on the initial probability space, we repeatedly make use of the energy-variational  
structure, which in turn heavily relies on the underlying energy-dissipation mechanism. 
Therefore, in order to prove convergence towards energy-variational solutions, establishing the limit of the associated energy-inequality is crucial.
This was not done in the previous works as only the weak formulation was investigated. 
We note that usually the existence of weak solutions without the energy-inequality does not suffice to prove weak strong uniqueness of solutions. 
Establishing the convergence of the energy-inequality, in particular convergence of the higher-order terms, requires us to verify additional estimates that allow for a careful handling of the discrete error terms. 
In comparison to martingale solutions, the main drawback of our solution concept is that we are only able to show that $|\bs d |\leq 1$ in the limit. 
Yet, assuming that the limit process satisfies $ |\bs d|=1$, we actually can show that the energy-variational solutions simplifies to the usual analytically weak solution (see Remark \ref{rem:variationalVSweak} and Theorem \ref{thm:weaksol}).

\subsection{Pathwise weak-strong uniqueness of energy-variational solutions}
To the best of the authors' knowledge, there is no weak-strong uniqueness result available for the stochastic harmonic map heat flow. 
As we are able to show the existence of probabilistically strong solutions, all solutions are defined on the same stochastic basis and it suffices to show pathwise weak-strong uniqueness.
Therefore, this result is stronger than usual results in probability theory as it states that the solutions coincide a.s. not only in law. 
 The strategy for our proof is very similar to the usual relative energy idea. However, the usual relative energy, which coincides with the $H^1$-norm for the considered case, 
 has to be augmented by additional nonnegative error-terms. Namely, we are not able to show that the energy nor the norm of the approximate sequence converge. 
 Therefore, the difference between the limit of the approximate energy $E$ and the energy of the limit $ \mathcal E(\bs d)$ as well as the deviation of the limit of the norm of the discrete solutions (which is $1$) from the norm of the limit $ |\bs d|^2 $ need to be included. 
 The resulting relative energy is thus given by 
\begin{align}
    \mathcal{R}(\bs d , E | \tilde{\bs d} ) = 
E - \mathcal E(\bs d ) + \frac{1}{2}\int_{\D} 1 - |\bs d|^2 \dx + \frac{1}{2} \| \bs d - \tilde{\bs d} \| _{H^1(\D)}\,,
\end{align}
where $\tilde{\bs d} $ denotes a more regular function.
With this technique, we are able to prove pathwise weak-strong uniqueness for the concept of energy-variational solutions (cf.~Definition \ref{def:envar2}). \\

Finally, we present some numerical simulations that show the influence of the stochastic fluctuations on the dynamics. Starting from an instable stationary point, we observe how the stochastic driving force moves the solution to a local or global minimum, depending on the actual stochastic realization. 

The article is structured as follows: 
We introduce the relevant notations and several preliminary results in Section~\ref{sec:notation}. The the discrete scheme and the main results of this work are stated in Section~\ref{sec:main}. The proofs of these results can be found in the subsequent sections: After deriving the relevant \textit{a priori} estimates in Section~\ref{sec:apriori}, we show the convergence towards the probabilistically strong analytically energy-variational solution in Section~\ref{sec:limenergy}, whereas in Section~\ref{sec:weakstrong}, the weak-strong uniqueness is shown to hold for the established solutions. Finally, the results are illustrated via some numerical experiments in Section~\ref{sec:numerics}.

\section{Notation and preliminaries}\label{sec:notation}
The spatial domain $\D\subset\mathds{R}^d$ ($d\in\tgkla{1,2,3}$) is assumed to be bounded, convex, and polygonal (or polyhedral, respectively).
We denote the space of $k$-times weakly differentiable  functions with weak derivatives in $L^p\trkla{\D}$ by $W^{k,p}\trkla{\D}$.
For $p=2$, we denote the Hilbert spaces $W^{k,2}\trkla{\D}$ by $H^k\trkla{\D}$ and the $L^2\trkla{\D}$-inner product by $\trkla{.,.}$. 
We will denote the duality pairing between $H^1\rkla{\D}$ and its dual space $\trkla{H^1\trkla{\D}}^*$ by $\skla{.,.}$.
This allows us to define a generalized mean value of a function $v\in\trkla{H^1\trkla{\D}}^*$ via $\trkla{v}_{\D}:=\tabs{\D}^{-1}\skla{v,1}$.
We equip $\trkla{H^1\trkla{\D}}^*$ with the inner product
\begin{align}
    \rkla{a,b}_*:=\rkla{\nabla\trkla{-\Delta_N^{-1}}\rkla{a-\trkla{a}_{\D}},\nabla\trkla{-\Delta_N^{-1}}\trkla{b-\trkla{b}_{\D}}} + \trkla{a}_{\D}\trkla{b}_{\D}
\end{align}
and the induced norm $\norm{a}_{*}^2:=\trkla{a,a}_*$, which makes it a Hilbert space.
In an abuse of notation, we denote by 
\begin{align}
-\Delta_N : H^1(\D)^3 \to (H^1(\D)^3)^* 
\quad \langle - \Delta_N \bs b , \bs a \rangle = \int_{\D} \nabla \bs b : \nabla \bs a \dx \quad \forall \bs a \in H^1(\D)^3\,
\end{align}
the Neumann--Laplace operator. 
Given a Banach space $B$ and a set $I$, the symbol $L^p\trkla{I;B}$ ($p\in\tekla{1,\infty}$) stands for the Bochner space of strongly measurable $L^p$-integrable functions on $I$ with values in $X$.
If $B$ is only separable (but not reflexive), we 
denote the dual space of $L^{p/\trkla{p-1}}\trkla{I;B}$ by $L^p_{\weakstartop}\trkla{I;B^*}$.
Further, the symbol $C^{k,\alpha}\trkla{I;B}$ with $k\in\mathds{N}_0$ and $\alpha\in(0,1]$ denotes the space of $k$-times continuously differentiable functions from $I$ to $B$ whose $k$-th derivatives are Hölder continuous with Hölder exponent $\alpha$. For $\alpha\in\trkla{0,1}$ and $\beta\in\tekla{1,\infty}$, we define the Nikolskii space $N^{\alpha,\beta}$ for a time interval $\trkla{0,T}$ and a Banach space $X$ via
\begin{align}
N^{\alpha,\beta}\trkla{0,T;X}:=\gkla{f\in L^\beta\trkla{0,T;X}\,:\,\sup_{k>0}k^{-\alpha}\norm{f\trkla{\cdot+k}-f\trkla{\cdot}}_{L^\beta\trkla{0,T-k;X}}<\infty}\,.
\end{align}

With respect to time, we use an equidistant discretization, i.e.~we assume that
\begin{itemize}
\labitem{\textbf{(T)}}{item:T} the time interval $I:=\tekla{0,T}$ is subdivided into $N$ equidistant intervals $I_n$ given by nodes $\trkla{t^n}_{n=0,\ldots,N}$ with $t^0=0$, $t^N=T$, and $t\nn-t\no=\tau=\tfrac{T}{N}$.
\end{itemize}
Our spatial discretization is based on partitions $\Th$ of $\D$ which depend on a parameter $h>0$ and satisfy the following assumption:
\begin{itemize}
\labitem{\textbf{(S)}}{item:S} The family $\tgkla{\Th}_{h>0}$ is a quasi-uniform family of partitions of $\D$ into disjoint, open simplices $K$ which satisfy
\begin{align*}
\overline{\D}\equiv\bigcup_{K\in\Th}\overline{K}&&\text{with~~}\max_{K\in\Th}\diam\trkla{K}\leq h\,.
\end{align*}
\end{itemize}
Thus, the space $\Uh$ of continuous piecewise linear finite element functions on $\Th$ is spanned by functions $\tgkla{\chi_{h,k}}_{k=1,\ldots,\dim\Uh}$ forming a dual basis to the vertices $\tgkla{\bs{x}_{h,k}}_{k=1,\ldots,\dim\Uh}$ of $\Th$.
For each $h>0$, the nodal interpolation operator $\Ihop$ mapping $C^0\trkla{\overline{\D}}$ to $\Uh$ is defined by
\begin{align}
\Ih{a}:=\sum_{k=1}^{\dim{\Uh}}a\trkla{\bs{x}_{h,k}}\chi_{h,k}\,.
\end{align}
This interpolation operator allows us to define equivalent versions of the standard $L^p\trkla{\D}$-norms. 
In particular, the inequalities
\begin{align}\label{eq:normequivalence}
\norm{a\h}_{L^p\trkla{\D}}\leq\rkla{\iD\Ih{\abs{a\h}^p}\dx}^{1/p}\leq C\norm{a\h}_{L^p\trkla{\D}}
\end{align}
hold true for $p\in[1,\infty)$ and $a\h\in\Uh$ with  a constant $C>0$ independent of $h$. 
This estimate follows from Jensen's inequality and standard inverse estimates. A detailed proof can be found, e.g., in \cite[Lemma 3.2.11]{SieberDiss2021} or \cite[Lemma~3.3.1]{MaxDiss}.
For $p=2$, this norm is induced by the lumped masses inner product $\trkla{a,b}\h:=\iD\Ih{ab}\dx$ ($\forall a,b\in\Uh$).
For future reference, we recall the following error estimates for $\Ihop$ which were proven e.g.~in \cite[Lemma 2.1]{Metzger2020}:
\begin{lemma}\label{lem:interpolationerror}
Let $\mathcal{T}\h$ satisfy assumption \ref{item:S} and let $p\in[1,\infty)$ and $1\leq q\leq \infty$. Then, for $q^*=\tfrac{q}{q-1}$ (with the obvious modifications for $q=1$ and $q=\infty$) the estimates
\begin{subequations}
\begin{align}
\norm{\trkla{I-\Ihop}\tgkla{f\h g\h}}_{L^p\trkla{\mathcal{D}}}\leq&\, Ch^2\norm{\nabla f\h}_{L^{pq}\trkla{\mathcal{D}}}\norm{\nabla g\h}_{L^{pq^*}\trkla{\mathcal{D}}}\,,\\
\norm{\trkla{I-\Ihop}\tgkla{f\h g\h}}_{W^{1,p}\trkla{\mathcal{D}}}\leq&\, Ch\norm{\nabla f\h}_{L^{pq}\trkla{\mathcal{D}}}\norm{\nabla g\h}_{L^{pq^*}\trkla{\mathcal{D}}}\,
\end{align}
\end{subequations}
hold true for all $f\h,g\h\in\Uh$.
\end{lemma}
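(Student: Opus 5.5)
We will argue elementwise and then sum over $K\in\Th$, reducing everything to a local estimate on a single simplex. Fix $K\in\Th$. On $K$ both $f\h$ and $g\h$ are affine, so their product $f\h g\h$ is a quadratic polynomial with constant Hessian
\begin{align*}
D^2\trkla{f\h g\h}=\nabla f\h\otimes\nabla g\h+\nabla g\h\otimes\nabla f\h \quad\text{on } K.
\end{align*}
Since $\Ihop$ reproduces affine functions and $f\h g\h$ is smooth on $K$, the standard Bramble--Hilbert/interpolation estimate on the simplex $K$ applies. Assumption \ref{item:S} gives shape regularity, so the constants are uniform in $K$ and $h$, and we obtain
\begin{align*}
\norm{\trkla{I-\Ihop}\tgkla{f\h g\h}}_{L^p\trkla{K}}+h\norm{\nabla\trkla{I-\Ihop}\tgkla{f\h g\h}}_{L^p\trkla{K}}\leq Ch^2\norm{D^2\trkla{f\h g\h}}_{L^p\trkla{K}}.
\end{align*}
The right-hand side is controlled pointwise by $2\abs{\nabla f\h}\abs{\nabla g\h}$, which is constant on $K$. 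Here we use that $\Ihop$ acts elementwise: on $K$ it only uses the vertex values on $\overline K$, so no patch argument is needed.

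Next we raise the local estimate to the $p$-th power and sum over all elements. This gives
\begin{align*}
\norm{\trkla{I-\Ihop}\tgkla{f\h g\h}}_{L^p\trkla{\D}}^p\leq Ch^{2p}\iD\abs{\nabla f\h}^p\abs{\nabla g\h}^p\dx.
\end{align*}
The analogous bound holds with $h^{p}$ for the gradient. Hölder's inequality with exponents $q$ and $q^*$ applied to $\abs{\nabla f\h}^p\abs{\nabla g\h}^p$ then yields
\begin{align*}
\iD\abs{\nabla f\h}^p\abs{\nabla g\h}^p\dx\leq\norm{\nabla f\h}_{L^{pq}}^p\norm{\nabla g\h}_{L^{pq^*}}^p.
\end{align*}
Taking $p$-th roots gives the first estimate. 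For the second, the $W^{1,p}$-norm combines the $L^p$ part, bounded by $h^2\leq Ch$ since $h\leq\diam\D$, with the gradient part, bounded by $h$. The endpoint cases $q=1$ and $q=\infty$ follow by the same Hölder step with the usual modification.

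The only step that needs care is the uniformity of the local interpolation constant, which we handle by a scaling argument to the reference simplex. Write $\hat K$ for the reference simplex and $\hat v$ for the pullback of $v$ under the affine map $F_K:\hat K\to K$. On $\hat K$ we have $\norm{(I-\hat{\mathcal I})\hat v}_{W^{1,p}(\hat K)}\leq C\abs{\hat v}_{W^{2,p}(\hat K)}$. This holds for quadratic $\hat v$ with a constant depending only on $p$ and $d$, since the space of quadratics is finite dimensional and $I-\hat{\mathcal I}$ annihilates affine functions. Transforming back with shape regularity gives the factors $h^{2}$ and $h$. Quasi-uniformity is not even needed here; shape regularity suffices. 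No inverse estimate is required, because the Hessian is already expressed exactly through $\nabla f\h$ and $\nabla g\h$.
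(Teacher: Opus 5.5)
Your proof is correct and is essentially the standard argument for this lemma, which the paper does not reprove but cites from \cite[Lemma 2.1]{Metzger2020}. On each simplex, $f_hg_h$ is quadratic with Hessian $\nabla f_h\otimes\nabla g_h+\nabla g_h\otimes\nabla f_h$; the local interpolation estimate supplies the factors $h^2$ and $h$, and Hölder's inequality with exponents $q,q^*$ finishes the proof. Your finite-dimensional argument on the reference simplex makes the local estimate hold for every $p\in[1,\infty)$ without the embedding $W^{2,p}\hookrightarrow C^0$, which fails for $p=1$, $d=3$.
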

Further error estimates for the application of $\Ihop$ to smooth functions can be found e.g.~in \cite[Theorem 4.4.20]{BrennerScott}.
In a slight misuse of notation, we shall also write $\Ih{\bs{d}}$ (and $\trkla{.,.}\h$) when we apply $\Ihop$ component wise to a vector field $\bs{d}$.
Further, we shall define a discrete counterpart of the Laplacian via
\begin{align}\label{eq:defDiscLap}
    \rkla{\nabla a\h,\nabla b\h}=\rkla{-\Delta\h a\h,b\h}\h&&\forall a\h,b\h\in\Uh\,.
\end{align}
The (vector-valued) cylindrical Wiener process $\bs{W}$ governing the right-hand side of \eqref{eq:model:StratIntro} is defined on a filtered probability space $\trkla{\Omega,\mathcal{A},\tgkla{\mathcal{F}_t}_{t\geq0},\mathds{P}}$ with a complete right-continuous filtration $\tgkla{\mathcal{F}_t}_{t\geq0}$.
Denoting the cartesian basis of $\mathds{R}^3$ by $\trkla{\bs{e}_i}_{i=1}^3$, we shall assume that
\begin{itemize}
\labitem{\textbf{(W1)}}{item:W1} each component of the vector-valued cylindrical Wiener process $\bs{W}$ has a representation
\begin{align}\label{eq:representationW}
\bs{W}_i\trkla{t}=\sum_{l\in\mathds{Z}}\g{li}\cdot\bs{e}_i\beta_l^i\trkla{t}\,,
\end{align}
where $\trkla{\beta_l^i}_{l\in\mathds{Z}}^{i=1,2,3}$ is a family of mutually independent Brownian motions and $\trkla{\g{li}}_{l\in\mathds{Z}}^{i=1,2,3}$ are a complete orthonormal system in a separable Hilbert space $\U^3$ over $\D$ consisting of smooth function satisfying $\g{li}\cdot\bs{e}_j=0$ for $i\neq j$.
\end{itemize}
Next, we specify the coloring of the stochastic source term by assuming that
\begin{itemize}
\labitem{\textbf{(W2)}}{item:W2} $\mathcal{Q}\,:\,\U^3\rightarrow\trkla{L^2\trkla{\D}}^3$ is a trace class operator with the representation
\begin{align}
\mathcal{Q}\bs{g}:=\sum_{i=1}^3\sum_{l\in\mathds{Z}}\tabs{\lambda_{li}}^2\trkla{\bs{g},\g{li}}\g{li}&&\text{for~}\bs{g}\in \U^3\,,
\end{align}
where $\trkla{\cdot,\cdot}$ denotes the $L^2$-inner product, $\trkla{\lambda_{li}}_{l\in\mathds{Z}}^{i=1,2,3}$ are given positive real numbers and $\trkla{\g{li}}_{l\in\mathds{Z}}^{i=1,2,3}$ are the basis functions introduced in \ref{item:W1}.
\end{itemize}
Consequently, $\displaystyle\mathcal{Q}^{1/2}$, which is given by $\displaystyle\mathcal{Q}^{1/2}\bs{g}:=\sum_{i=1}^3\sum_{l\in\mathds{Z}}\lambda_{li}\trkla{\bs{g},\g{li}}\g{li}$, is a Hilbert--Schmidt operator from $\U^3$ to $\trkla{L^2\trkla{\D}}^3$.
Furthermore, we shall assume that $\mathcal{Q}$ satisfies the following bound:
\begin{itemize}
\labitem{\textbf{(W3)}}{item:W3} There exists a positive constant $\widehat{C}$ such that
\begin{align}
\sum_{i=1}^3\sum_{l\in\mathds{Z}}\tabs{\lambda_{li}}^2\rkla{\norm{\g{li}}_{W^{1,\infty}\trkla{\D}}^2+\norm{\g{li}}_{H^2\trkla{\D}}^2}\leq\widehat{C}\,.
\end{align}
\end{itemize}
This allows us to define the operator $\Phi\trkla{\cdot}\times$ which maps $\trkla{L^\infty\trkla{\D}}^3$-valued stochastic processes $\bs{d}$ into the space of Hilbert--Schmidt operators from $\U^3$ to $\trkla{L^2\trkla{\D}}^3$, via
\begin{align}\label{eq:def:PHI}
\Phi\trkla{\bs{d}}\times\bs{g}:=\sum_{i=1}^3\sum_{l\in\mathds{Z}}\trkla{\bs{g},\g{li}}\lambda_{li}\bs{d}\times\g{li}\,.
\end{align}

In our discrete scheme, we shall use for given $h>0$ only a finite number of modes.
Thus, we introduce an $h$-dependent family $\trkla{\mathds{Z}\h}_{h>0}$ of finite subsets of $\mathds{Z}$ satisfying
\begin{itemize}
\labitem{\textbf{(Z)}}{item:Z} $\displaystyle\lim_{h\rightarrow0}\mathds{Z}\h=\mathds{Z}$
\end{itemize}
and approximate $\Phi$ by
\begin{align}\label{eq:def:PHIH}
\Phi\h\trkla{\bs{d}}\times\bs{g}:=\sum_{i=1}^3\sum_{l\in\mathds{Z}\h}\trkla{\bs{g},\g{li}}\lambda_{li}\bs{d}\times\g{li}\,.
\end{align} 
In Section \ref{sec:limenergy} we will deduce a pathwise energy estimate satisfied by the solution process $\bs{d}$ to \eqref{eq:model:StratIntro}.
As the regularity obtained for $\bs{d}$ is not sufficient to apply \Ito's formula directly, we shall verify the energy estimate on the discrete level and identify its limit.
To achieve this, we will require a certain compatibility between our spatial and temporal discretization parameters which state that $\tau$ vanishes slightly faster than $h^2$.
In particular we assume that
\begin{itemize}
    \labitem{\textbf(C)}{item:C} there exists $0<\widehat{\epsilon}<1/2$ and $C_*>0$ such that 
    \begin{align}
        \frac{\tau^{1-2\widehat{\epsilon}}}{h^2}\leq C_*&&\text{as~}\trkla{h,\tau}\rightarrow0\,.
    \end{align}
\end{itemize}
Concerning the initial data $\bs{d}_0$, we consider a unit vector field with $H^2\trkla{\D}$-regularity, i.e.~we assume that
\begin{itemize}
\labitem{\textbf{(I)}}{item:I} $\bs{d}_0\in L^\infty\trkla{\Omega;\trkla{H^2\trkla{\D}}^3}$ is $\mathcal{F}_0$-measurable and satisfies $\tabs{\bs{d}_0\trkla{\bs{x}}}=1$ $\mathds{P}$-almost surely for all $\bs{x}\in \D$.
Further, our discrete initial data is given as $\bs{d}\h^0:=\Ih{\bs{d}_0}\in\trkla{\Uh}^3$.
\end{itemize}

\begin{remark}\label{rem:StratNoise}
    We introduced our model in \eqref{eq:model:StratIntro} using Stratonovich noise. 
    Using representation \eqref{eq:representationW} of the cylindrical Wiener process and \eqref{eq:def:PHI}, we can reformulate \eqref{eq:model:StratIntro} as the equivalent \Ito-equation
    \begin{subequations}\label{eq:Ito}
    \begin{align}\label{eq:Ito:d}
\mathrm{d}\bs{d}-\trkla{\bs{d}\times\Delta\bs{d}}\times\bs{d}\dt= \tfrac12\sum_{i=1}^3\sum_{l\in\mathds{Z}}\tabs{\lambda_{li}}^2\rkla{\bs{d}\times\g{li}}\times\g{li}\mathrm{d}t+ \sum_{i=1}^3\sum_{l\in\mathds{Z}}\bs{d}\times\lambda_{li}\g{li}\mathrm{d}\beta_l^i
\end{align}
in $\Omega\times\trkla{0,T}\times\D$, subjected to the boundary conditions
\begin{align}
    \trkla{\bs{n}\cdot\nabla}\bs{d}=0&&\text{on~}\Omega\times\trkla{0,T}\times\partial\D\,.
\end{align}
\end{subequations}
If we assume that all components of the cylindrical vector-valued Wiener process can be expressed with the same basis functions $\rkla{\widehat{\mathfrak{g}}_l}_{l\in\mathds{Z}}$, i.e.~$\g{li}=\widehat{\mathfrak{g}}_l\bs{e}_i$ for $i,j=1,2,3$, and $\lambda_{li}=\lambda_{lj}=:\widehat{\lambda}_l$ ($i,j=1,2,3$), we recover the formulation used in \cite{Hocquet3} for the \Ito-correction term
\begin{align*}
    \tfrac12\sum_{i=1}^3\sum_{l\in\mathds{Z}}\tabs{\lambda_{li}}^2\rkla{\bs{d}\times\g{li}}\times\g{li} =\tfrac12\sum_{l\in\mathds{Z}}\tabs{\widehat{\lambda_{l}}}^2\tabs{\widehat{\mathfrak{g}}_l}^2\sum_{i=1}^3 ( (\bs e_i\cdot \bs d ) \bs e_i - \bs d ) 
     = -\sum_{l\in\mathds{Z}}\tabs{\widehat{\lambda_{l}}}^2\tabs{\widehat{\mathfrak{g}}_l}^2\bs{d}\,.
\end{align*}
The stochastic term on the right-hand side of \eqref{eq:model:StratIntro} preserves the length constraint:
On a formal level, a pointwise application of \Ito's formula to $F\trkla{t,\bs{d}}:=\tfrac12\bs{d}\cdot\bs{d}$ yields
\begin{align}\label{eq:formallength}
\begin{split}
\tfrac12\abs{\bs{d}\trkla{\bs{x},t}}^2=&\,\tfrac12\abs{\bs{d}\trkla{\bs{x},0}}^2+\sum_{i=1}^3\sum_{l\in\mathds{Z}}\int_0^t\rkla{\trkla{\lambda_{li}\g{li}}\times\bs{d}}\cdot\bs{d}\,\mathrm{d}\beta_l^i+\int_0^t\rkla{\trkla{\bs{d}\times\Delta\bs{d}}\times\bs{d}}\cdot\bs{d}\ds\\
&\,+\tfrac12\sum_{i=1}^3\sum_{l\in\mathds{Z}}\tabs{\lambda_{li}}^2\int_0^t\rkla{\trkla{\bs{d}\times\g{li}}\times\g{li}}\cdot\bs{d}\ds +\tfrac12\sum_{i=1}^3\sum_{l\in\mathds{Z}}\abs{\bs{d}\times\lambda_{li}\g{li}}^2\ds\\
=&\,\tfrac12\abs{\bs{d}\trkla{\bs{x},0}}^2\,.
\end{split}
\end{align}
Another formal application of \Ito's formula shows that solutions to \eqref{eq:model:StratIntro} (and \eqref{eq:Ito}, respectively) should satisfy the energy estimate of the form
\begin{align}
\tfrac12&\int_{\mathcal{D}}\abs{\nabla\bs{d}\trkla{t}}^2\dx-\tfrac12\int_{\mathcal{D}}\abs{\nabla\bs{d}\trkla{0}}^2\dx\nonumber \\=&\, \int_0^t\rkla{\nabla\bs{d},\nabla \ekla{\trkla{\bs{d}\times\Delta\bs{d}}\times\bs{d}}}\ds+\sum_{i=1}^3\sum_{l\in\mathds{Z}}\int_0^t\rkla{\nabla\bs{d},\nabla \trkla{\bs{d}\times\lambda_{li}\g{li}}}\mathrm{d}\beta_l^i\nonumber\\
&\,+\tfrac12\sum_{i=1}^3\sum_{l\in\mathds{Z}}\tabs{\lambda_{li}}^2\int_0^t \rkla{\nabla\bs{d},\nabla\ekla{\trkla{\bs{d}\times\g{li}}\times\g{li}}}+\rkla{\nabla\ekla{\bs{d}\times\g{li}},\nabla\ekla{\bs{d}\times\g{li}}}\ds 
 \nonumber\\
=&\,
-\int_0^t\rkla{\bs{d}\times\Delta\bs{d},\bs{d}\times\Delta\bs{d}}\ds+\sum_{i=1}^3\sum_{l\in\mathds{Z}}\int_0^t\rkla{\bs{d}\times\Delta\bs{d},\lambda_{li}\g{li}}\mathrm{d}\beta_l^i\label{eq:formalIto}\\
&\,+\tfrac12\sum_{i=1}^3\sum_{l\in\mathds{Z}}\abs{\lambda_{li}}^2\int_0^t\sum_{j=1}^d\int_{\D} |\bs d|^2| \partial_j \g{li}|^2 - |\bs d \cdot \partial_j \g{li} |^2 -  (\bs d \times \partial_j \bs d) \cdot( \g{li}\times \partial _j \g{li}  )
\dx\ds\nonumber
 %
\end{align}
When deriving such an estimate rigorously in Section \ref{sec:limenergy}, we can only rely on weak sequential lower semi-continuity of quadratic terms and weak convergence results for the director $ \bs d $ and the tangential derivative $ \bs d \times \nabla \bs d = \bs q$.
Thus, the later formulation of the right-hand side is better suited for our approach. The equivalence of these formulations is verified in Lemma \ref{lem}.
\end{remark}

The following lemma will allow to rewrite the terms appearing in the energy inequality \eqref{eq:formalIto} due to the application of \Ito's formula.

\begin{lemma}\label{lem}
       Let $ \bs d \in L^\infty(0,T;H^1(\D)^3)$ and $ \g{li} \in W^{1,\infty}(\D)^3$ as well as $\lambda_{li} \in \mathds{R}$.

    Then it holds 
 \[
\begin{aligned}
&
\rkla{\nabla\bs{d},\nabla\ekla{\trkla{\bs{d}\times\g{li}}\times\g{li}}}
+
\rkla{\nabla\ekla{\bs{d}\times\g{li}},\nabla\ekla{\bs{d}\times\g{li}}}
\\
&{}=\, 
\sum_{j=1}^d\int_{\D} |\bs d|^2| \partial_j \g{li}|^2 - |\bs d \cdot \partial_j \g{li} |^2 -  (\bs d \times \partial_j \bs d) \cdot( \g{li}\times \partial _j \g{li}  )
\dx
\,.
\end{aligned}
\]
\end{lemma}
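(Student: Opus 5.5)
The identity is purely algebraic and holds pointwise, for a.e.\ $t$ and a.e.\ $\bs{x}\in\D$, separately for each direction $j$; no integration by parts is needed. First I check that every term makes sense. For a.e.\ $t$ we have $\bs{d}\trkla{t}\in H^1\trkla{\D}^3$, and $\g{li}\in W^{1,\infty}\trkla{\D}^3$. Hence $\bs{d}\times\g{li}$ and $\trkla{\bs{d}\times\g{li}}\times\g{li}$ lie in $H^1\trkla{\D}^3$, and the product rule holds for their weak derivatives. So both sides are integrals of $L^1$ functions.

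Fix $j$, write $\partial=\partial_j$ and $\bs{g}=\g{li}$, and drop the constant $\lambda_{li}$, which does not appear. The product rule gives
\begin{align*}
\partial\trkla{\bs{d}\times\bs{g}}&=\partial\bs{d}\times\bs{g}+\bs{d}\times\partial\bs{g}\,,\\
\partial\ekla{\trkla{\bs{d}\times\bs{g}}\times\bs{g}}&=\trkla{\partial\bs{d}\times\bs{g}}\times\bs{g}+\trkla{\bs{d}\times\partial\bs{g}}\times\bs{g}+\trkla{\bs{d}\times\bs{g}}\times\partial\bs{g}\,.
\end{align*}
I will repeatedly use the cyclic identity $\bs{u}\cdot\trkla{\bs{v}\times\bs{w}}=-\bs{v}\cdot\trkla{\bs{u}\times\bs{w}}$. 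It yields
\begin{align*}
\partial\bs{d}\cdot\ekla{\trkla{\partial\bs{d}\times\bs{g}}\times\bs{g}}&=-\abs{\partial\bs{d}\times\bs{g}}^2\,,\\
\partial\bs{d}\cdot\ekla{\trkla{\bs{d}\times\partial\bs{g}}\times\bs{g}}&=-\trkla{\bs{d}\times\partial\bs{g}}\cdot\trkla{\partial\bs{d}\times\bs{g}}\,,\\
\partial\bs{d}\cdot\ekla{\trkla{\bs{d}\times\bs{g}}\times\partial\bs{g}}&=-\trkla{\bs{d}\times\bs{g}}\cdot\trkla{\partial\bs{d}\times\partial\bs{g}}\,.
\end{align*}
Expanding $\abs{\partial\trkla{\bs{d}\times\bs{g}}}^2$ gives three terms. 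Its term $\abs{\partial\bs{d}\times\bs{g}}^2$ cancels the first line above. This cancellation is the key point: the terms quadratic in $\nabla\bs{d}$ disappear, which is what makes the right-hand side suitable for weak limits.

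Its term $\abs{\bs{d}\times\partial\bs{g}}^2$ equals $\abs{\bs{d}}^2\abs{\partial\bs{g}}^2-\abs{\bs{d}\cdot\partial\bs{g}}^2$ by Lagrange's identity. This gives the first two terms of the claimed integrand.

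Its cross term $2\trkla{\partial\bs{d}\times\bs{g}}\cdot\trkla{\bs{d}\times\partial\bs{g}}$ combines with the second and third lines above. What remains is
\begin{align*}
\trkla{\partial\bs{d}\times\bs{g}}\cdot\trkla{\bs{d}\times\partial\bs{g}}-\trkla{\bs{d}\times\bs{g}}\cdot\trkla{\partial\bs{d}\times\partial\bs{g}}\,.
\end{align*}
I expand both products with the Binet--Cauchy identity $\trkla{\bs{a}\times\bs{b}}\cdot\trkla{\bs{c}\times\bs{e}}=\trkla{\bs{a}\cdot\bs{c}}\trkla{\bs{b}\cdot\bs{e}}-\trkla{\bs{a}\cdot\bs{e}}\trkla{\bs{b}\cdot\bs{c}}$. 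The two terms containing $\trkla{\bs{d}\cdot\partial\bs{d}}\trkla{\bs{g}\cdot\partial\bs{g}}$ cancel. We are left with
\begin{align*}
-\trkla{\partial\bs{d}\cdot\partial\bs{g}}\trkla{\bs{d}\cdot\bs{g}}+\trkla{\bs{d}\cdot\partial\bs{g}}\trkla{\bs{g}\cdot\partial\bs{d}}\,.
\end{align*}
By Binet--Cauchy once more, this is exactly $-\trkla{\bs{d}\times\partial\bs{d}}\cdot\trkla{\bs{g}\times\partial\bs{g}}$. Summing over $j$ and integrating over $\D$ gives the claim.

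The main obstacle is keeping the signs and the pairing of the cross-term contributions straight. Since the identity is pointwise, I would double-check it by evaluating both sides on simple test vectors, for instance $\bs{g}$ constant and $\partial\bs{g}$ parallel to $\bs{d}$.
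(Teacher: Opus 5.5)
Your argument is correct and is essentially the paper's proof: a pointwise verification for each direction $j$ using the product rule and the Binet--Cauchy identity, then summed over $j$ and integrated. The only difference is bookkeeping. You use the scalar triple product to cancel $\abs{\partial_j\bs{d}\times\g{li}}^2$ directly, whereas the paper first expands $(\bs{d}\times\g{li})\times\g{li}=(\g{li}\cdot\bs{d})\g{li}-\abs{\g{li}}^2\bs{d}$ and reduces everything to dot products before reassembling the cross-product term; your check that all products lie in $H^1$ and $L^1$ also supplies a justification the paper leaves implicit.
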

\begin{proof}
    We expand the first term via the relation $ (a\times b)\times b = (b\cdot a) b - |b|^2 a $ into 
\begin{align}
\nabla\bs{d}:\nabla\ekla{\trkla{\bs{d}\times\g{li}}\times\g{li}} = {}&\nabla\bs{d}:\nabla\ekla{\trkla{\g{li}\cdot \bs{d}}\g{li} - |\g{li}|^2 \bs{d}}\notag
    \\
    ={}& \sum_{j=1}^3 \big[(\partial_j\bs d \cdot \g{li}) (\partial_j \g{li} \cdot \bs d)+ | \partial_j \bs d \cdot \g{li}|^2 + \rkla{\partial_j\bs{d}\cdot \partial_j\g{li}}\rkla{ \bs d \cdot \g{li}} \notag\\&\qquad- \rkla{\partial_j \bs d \cdot \bs d }\rkla{ 2\partial_j \g{li} \cdot \g{li}}  - | \g{li}|^2 | \partial_j \bs d |^2\Big] \,.\label{eq:crossproduct1}
    \end{align}
For the second term, we observe via the cross product property \begin{equation}\label{crossproduct}    ( a\times b) \cdot ( c \times d) = (a\cdot c) (b\cdot d) - ( a \cdot d )( b \cdot c)  
\end{equation} that 
\begin{align}\label{eq:crossproduct2}
\begin{split}
        | \nabla (\bs{d} \times \g{li})|^2 ={}& \sum_{j=1}^3 | \partial_j \bs d \times \g{li} + \bs d \times \partial_j \g{li}|^2 \\
    ={}& \sum_{j=1}^d \Big[| \g{li} \times \partial_j \bs d |^2 + | \bs d \times \partial_j \g{li} |^2 + 2 ( \partial_j \bs d \times  \g{li} )\rkla{ \bs d \times \partial_j \g{li} }\Big]
    \\
    ={}& \sum_{j=1}^d \Big[ | \g{li}|^2 | \partial_j \bs d|^2 - ( \g{li}\cdot \partial_j \bs d)^2 + | \bs d|^2 |\partial_j \g{li}|^2 - ( \bs d \cdot \partial _j \g{li})^2 \\ &\qquad+ 2 ( \partial_j \bs d \cdot \bs d ) ( \g{li} \cdot \partial _j \g{li}) - 2( \partial_j \bs d \cdot \partial _j\g{li} )( \g{li}\cdot \bs d) \Big] \,.
\end{split}
\end{align}
Summing the remaining terms of~\eqref{eq:crossproduct1} and \eqref{eq:crossproduct2}, we find again using~\eqref{crossproduct} that
    \begin{align*}
\nabla\bs{d}:&\nabla\ekla{\trkla{\bs{d}\times\g{li}}\times\g{li}}+ | \nabla (\bs{d} \times \g{li})|^2
\\
={}&\sum_{j=1}^3 \big[(\partial_j\bs d \cdot \g{li}) (\partial_j \g{li} \cdot \bs d)+ | \partial_j \bs d \cdot \g{li}|^2 + \rkla{\partial_j\bs{d}\cdot \partial_j\g{li}}\rkla{ \bs d \cdot \g{li}} 
\\&\quad
- 2\rkla{\partial_j \bs d \cdot \bs d }\rkla{ \partial_j \g{li} \cdot \g{li}} 
- | \g{li}|^2 | \partial_j \bs d |^2
+| \g{li}|^2 | \partial_j \bs d|^2 - ( \g{li}\cdot \partial_j \bs d)^2 + | \bs d|^2 |\partial_j \g{li}|^2 \\ 
&\quad- ( \bs d \cdot \partial _j \g{li})^2 + 2 ( \partial_j \bs d \cdot \bs d ) ( \g{li} \cdot \partial _j \g{li}) - 2( \partial_j \bs d \cdot \partial _j\g{li} )( \g{li}\cdot \bs d)
\Big] 
\\
={}& \sum_{j=1}^3 \big[(\partial_j\bs d \cdot \g{li}) (\partial_j \g{li} \cdot \bs d)- \rkla{\partial_j\bs{d}\cdot \partial_j\g{li}}\rkla{ \bs d \cdot \g{li}} 
+|\bs d |^2  |\partial_j \g{li}|^2 - ( \bs d \cdot \partial _j \g{li})^2 \Big] 
\\
={}& \sum_{j=1}^3 \big[ |\bs d|^2|\partial_j \g{li}|^2 - ( \bs d \cdot \partial _j \g{li})^2 - ( \partial _j \bs d \times \bs d )\cdot(  \partial _j \g{li} \times \g{li} ) \Big] \,,
    \end{align*}
where we used again~\eqref{crossproduct} in the last equality.   This proves the assertion.
\end{proof}

We define the Skorokhod space $\mathfrak{D}([0, T])$ as the space of the real-valued càdlàg functions defined on $[0, T]$. More precisely, $\Psi$ belongs to the space $\mathfrak{D}([0, T])$ if it is right-continuous and has left-hand limits:
\begin{enumerate}
    \item for $0<t \leq T, \Psi(t-)=\lim _{s \uparrow t} \Psi(s)$ exists,
    \item for $0 \leq t<T, \Psi(t+)=\lim _{s \downarrow t} \Psi(s)$ exists and $\Psi(t+)=\Psi(t)$.
\end{enumerate}
It is well known that càdlàg functions are bounded, measurable, and have at most countably many discontinuities. 
The following Lemma can be found in~\cite[Lemma~2.1]{GessLasarzik26}.
\begin{lemma}\label{lem:invar}
Let $f\in L^p(\Omega;L^1(0,T))$, $g\in L^p_{w^*}(\Omega;L^\infty(0,T )) $,  $g_0\in L^p(\Omega)$ for $p\geq 1$, and let $h$ be $(\mathcal F_t)$-progressively measurable and satisfy
\begin{equation}\label{eq:h-H1}
\mathds E\left[
\left(
\int_0^T
\|h(t)\|_{L_2(\mathfrak{U}^3;\mathds R)}^2\,\de t
\right)^{1/2}
\right]<\infty.
\end{equation}
Then the following three statements are equivalent:
\begin{enumerate}
\item[i)]
The inequality 
\begin{multline}
-\expect{\psi(\omega)\int_0^T\partial_t\phi(\tau) g(\tau,\omega) \de \tau}  \\+\expect{\psi(\omega)\left( \int_0^T \phi(\tau) f(\tau,\omega) \de \tau+\int_0^T \phi (\tau) h(\tau,\omega) \de W(\tau) - \phi(0)g_0 (\omega)\right)}\leq 0 
\label{ineq1}
\end{multline}
holds for all $\phi \in{C}^1_c ([0,T))$ with $\phi \geq 0$ and $\psi \in L^\infty(\Omega)$ with $\psi \geq 0$ $\mathds P$-almost surely.
\item[ii)]
The inequality
\begin{equation}
    g(t,\omega) -g(s,\omega) + \int_s^t f(\tau,\omega) \de \tau + \int_s^t h(\tau,\omega   ) \de W(\tau)\leq 0 
    \label{ineq2}
\end{equation}
holds for a.e.~$s< t\in (0,T)$ and $\mathds{P}$-almost surely,
including $s=0$ if we replace $g(0)$ with $g_0$.

\item[iii)]
There exists a representative of $g$ that can be identified with a function in $\mathfrak{D}([0,T])$  such that
\begin{equation}
    g(t) -g(s-) + \int_s^t f(\tau) \de \tau + \int_s^t h(\tau ) \de W(\tau)\leq 0 \,
    \label{ineq.pw}
\end{equation}
for all $s\leq t\in[0,T]$,
where we set $g(0-)\coloneqq g_0$ and $\mathds{P}$-almost surely.
In particular, it holds $g(0)\leq g_0$ and $g(t)\leq \lim_{s\nearrow t}g(s)$ for all $t\in (0,T]$ $\mathds{P}$-almost surely.
\end{enumerate}
\end{lemma}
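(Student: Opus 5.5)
We prove the cycle of implications i) $\Rightarrow$ ii) $\Rightarrow$ iii) $\Rightarrow$ i). Throughout, fix a version of the stochastic integral $M(t):=\int_0^t h(\tau)\de W(\tau)$ with continuous paths. By \eqref{eq:h-H1} and the Burkholder--Davis--Gundy inequality, $\sup_t\tabs{M(t)}\in L^1(\Omega)$. The key observation is that, for a deterministic $\phi\in C^1_c([0,T))$, stochastic integration by parts (no Itô correction, since $\phi$ has finite variation) gives
\begin{align*}
\int_0^T\phi\, h\de W = -\int_0^T \partial_t\phi(\tau) M(\tau)\de\tau .
\end{align*}
Hence \eqref{ineq1} can be rewritten as $\expect{\psi\,(-\int_0^T\partial_t\phi\,G\de\tau - \phi(0)g_0)}\leq0$, where
\begin{align*}
G(t):=g(t)+\int_0^t f\de\tau+M(t).
\end{align*}
Here we used $\int_0^T\phi f\de\tau=-\int_0^T\partial_t\phi\int_0^\tau f\de\tau$. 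In this form the statement is the stochastic analogue of the deterministic fact that a function is (essentially) nonincreasing if and only if its distributional derivative is nonpositive.

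\textbf{i) $\Rightarrow$ ii).} Since $\psi\geq0$ is an arbitrary element of $L^\infty(\Omega)$, the reformulated inequality gives, for each fixed $\phi\geq0$, that
\begin{align*}
-\int_0^T\partial_t\phi\, G\de\tau-\phi(0)g_0\leq0 \quad \mathds{P}\text{-a.s.}
\end{align*}
The main obstacle is that the null set depends on $\phi$. To handle it, we restrict to a countable family that is dense in the $C^1$ sense, for instance the piecewise linear approximations $\phi_{s,t,\epsilon}$ of $\mathds{1}_{[s,t]}$ with $s,t,\epsilon$ rational, suitably mollified, together with the family approximating $\mathds{1}_{[0,t]}$. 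On the complement of a single null set, the inequality holds for this whole family. Letting $\epsilon\to0$ and using the Lebesgue point theorem pathwise yields $G(t)-G(s)\leq0$ for all Lebesgue points $s<t$ of $g(\cdot,\omega)$, and $G(t)\leq g_0$ for $s=0$. Recall that $\int f$ and $M$ are continuous and $g(\omega)\in L^\infty(0,T)$. Rational $s,t$ first, then density among Lebesgue points, give \eqref{ineq2} for a.e.\ $s<t$.

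\textbf{ii) $\Rightarrow$ iii).} By ii), $G(\cdot,\omega)$ coincides a.e.\ with a nonincreasing function. Define $\tilde G(t):=\operatorname{ess\,lim}_{r\downarrow t}G(r)$, which is right-continuous, nonincreasing, and has left limits. Then set $\tilde g:=\tilde G-\int_0^\cdot f\de\tau-M$. This is a representative of $g$, and it is càdlàg because it differs from a monotone càdlàg function by continuous ones. Monotonicity gives $\tilde G(t)\leq\tilde G(s-)$ for all $s\leq t$, which is exactly \eqref{ineq.pw}. At $s=0$ we use $G\leq g_0$ a.e.\ to get $\tilde G(0)\leq g_0$. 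The inequalities $g(0)\leq g_0$ and $g(t)\leq\lim_{s\nearrow t}g(s)$ then follow by taking $s=t$ and $s=0$, using that $\int f$ and $M$ are continuous.

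\textbf{iii) $\Rightarrow$ i).} Fix $\phi\geq0$. Pathwise, $\tilde G$ is nonincreasing with $\tilde G(0-)=g_0$, so the Lebesgue--Stieltjes measure $\de\tilde G$ is nonpositive. Integration by parts then gives
\begin{align*}
-\int_0^T\partial_t\phi\,\tilde G\de\tau-\phi(0)g_0=\int_{[0,T)}\phi\de\tilde G\leq0 .
\end{align*}
We multiply by $\psi\geq0$ and take expectations; integrability follows from the assumptions on $f$, $g$, $g_0$ and the BDG bound on $M$. Undoing the stochastic integration by parts (a Fubini argument for stochastic integrals applies) recovers \eqref{ineq1}.
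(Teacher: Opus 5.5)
The paper gives no proof of this lemma; it quotes it from \cite[Lemma~2.1]{GessLasarzik26}, so there is no in-paper argument to compare against. Your proof is correct. The pathwise identity $\int_0^T\phi h\,\mathrm{d}W=-\int_0^T\partial_t\phi\,M\,\mathrm{d}\tau$ follows from It\^o's product rule for the deterministic $C^1$ function $\phi$, using $\phi(T)=0$ and $M(0)=0$. Together with $\int_0^T\phi f\,\mathrm{d}\tau=-\int_0^T\partial_t\phi\int_0^\tau f\,\mathrm{d}\tau$, it reduces all three statements to the deterministic characterization of essentially nonincreasing functions, applied $\omega$-wise to $G=g+\int_0^\cdot f\,\mathrm{d}\tau+M$. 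The cycle i)$\Rightarrow$ii)$\Rightarrow$iii)$\Rightarrow$i) then closes as you describe.

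Three details should be stated more carefully.

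\emph{Order of limits in i)$\Rightarrow$ii).} Rational points need not be Lebesgue points, so the steps must go in this order:
\begin{itemize}
\item On one full-measure set, the averaged inequality holds for all rational $s<t$ and all rational $\epsilon$.
\item For fixed $\epsilon$, the averages of $G$ are continuous in $(s,t)$. So the inequality extends to all real $s<t$ with $s-\epsilon>0$ and $t+\epsilon<T$.
\item Only then let $\epsilon\to0$ at Lebesgue points.
\end{itemize}
A convenient admissible family is $\phi(\tau)=R_\epsilon(\tau-s)-R_\epsilon(\tau-t)$, where $R_\epsilon$ is the primitive of a smooth mollifier. It is automatically nonnegative and compactly supported in $(0,T)$. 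For the case $s=0$, use $1-R_\epsilon(\tau-t)$.

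\emph{Monotone representative in ii)$\Rightarrow$iii).} The assertion that $G(\cdot,\omega)$ agrees a.e.\ with a nonincreasing function needs the same averaging step, because ii) only holds off a null set of pairs $(s,t)$. Also, $\tilde G(T)$ must be defined separately, e.g.\ as $\tilde G(T-)$, since the right essential limit is not available at $t=T$.

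\emph{Monotonicity and integrability in iii)$\Rightarrow$i).} Monotonicity of $\tilde G$ has to be derived from \eqref{ineq.pw}: let $s'\downarrow s$ in $\tilde G(t)\le\tilde G(s'-)$ and use right-continuity. No stochastic Fubini theorem is needed. The same pathwise product rule undoes the integration by parts, and $\mathds{E}\bigl|\int_0^T\phi h\,\mathrm{d}W\bigr|<\infty$ follows from the Burkholder--Davis--Gundy inequality with exponent one.
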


In Section \ref{sec:limenergy}, we will pass to the limit $\rkla{h,\tau}\searrow\rkla{0,0}$ with families of fully discrete random variables and derive the existence of energy-variational solutions (cf.~Definition \ref{def:envar2}) from solutions to our fully discrete scheme \eqref{eq:discscheme}.
For this reason, we define time-interpolants of a time-discrete function $a^n$, $n=0,\ldots,N$, and introduce some time-index free notation as follows
\begin{subequations}\label{eq:deftimeinterpolation}
\begin{align}
a\tl\trkla{.,t}:=&\,\tfrac{t-t\no}{\tau}a\nn\trkla{.}+\tfrac{t\nn-t}{\tau} a\no\trkla{.}&&t\in\tekla{t\no,t\nn},\,n\geq0\,,\\
a\tm\trkla{.,t}:=&\,a\no\trkla{.}&&t\in[t\no,t\nn),\,n\geq0\,,\\
a\tp\trkla{.,t}:=&\,a\nn\trkla{.}&&t\in[t\no,t\nn),\,n\geq0\,,\\
a\ti\trkla{.,t}:=&\,\tfrac12\tekla{a\tp\trkla{.,t}+a\tm\trkla{.,t}}\,.
\end{align}
\end{subequations}
If a statement is valid for all time-interpolations defined in \eqref{eq:deftimeinterpolation}, we use the abbreviation $a\tpm$.
\section{The Discrete Scheme and main results}\label{sec:main}

\begin{definition}[Test process]\label{def:testprocess} 
     We call the triple $ ( \varphi_0 , A ,B)$  a test process, if $\varphi_0 \in L^2(\Omega;L^2(\D)^3)$ 
     and $(A,B)\in L^2(\Omega; L^1(0,T;(H^{1}(\D)^3)^*)) \times L^4(\Omega\times\trkla{0,T} ;L_2(\mathfrak{U}^3;(H^1(\D)^3)^*))$ are $(\mathcal{F}_t)$-progressively measurable  functions 
     such that $\bs{\varphi} \in L^\p(\Omega ; \mathcal{C}([0,T]; L^4(\D)^3)) $ ($\p>2$) given by 
     $$\mathrm d \bs\varphi = A\mathrm {d}t + B \mathrm{d}\bs{W} \,. $$ 
\end{definition}
The control of the $\p$-th moments of $\bs{\varphi}$ will be relevant in the error estimates stated in Corollary \ref{cor:diffS}.
Writing $B  \mathrm{d}\bs{W} = \sum_{i=1}^3\sum_{l\in\mathds Z} B_{li}(t)\g{li}\mathrm d \beta_l^i$, the assumptions on $B$ translate to 
\begin{align}
\expected{\int_0^T \norm{B}_{L_2\trkla{\mathfrak{U};\trkla{H^1\trkla{\D}^3}^*}}^4\ds}=\expected{\int_0^T\rkla{\sum_{i=1}^3\sum_{l\in\mathds{Z}}\norm{B\g{li}}_*^2}^2\ds}<\infty\,.
\end{align}
\begin{remark}[Formulation of solutions to SPDEs]
    Before we define the energy-variational solution concept, we argue how we will use the above defined solution test processes. 
    If we consider a solution $ \bs u$ to an SPDE given by 
    $$ \mathrm d \bs u = U \dt + V \mathrm d \bs W  \quad \text{with } \bs u (0) = \bs u_0 \,$$
    where $ U \in L^1(\Omega; \mathds Y^*) $ and $ V \in L_2(H,\mathds Y^*)$ for some Hilbert space $H$ and a Banach space $\mathds Y$.
    Usually generalized solutions are defined by multiplying with test functions that are only space dependent. 
    This leads to a concept like 
     $$
        \langle \bs u , \varphi \rangle_{\mathds Y} \Big|_0^t = \int_0^t \langle U , \varphi \rangle_{\mathds Y} \dt + \int_0^t \langle V , \varphi \rangle_{\mathds Y} \mathrm{d} \bs W \,,
        $$
for all $ \varphi \in \mathds Y$.
In contrast to this, we do not want to test by test functions but rather test processes defined in Definition~\ref{def:testprocess}. 
Applying \Ito's formula to the product of $ \bs u$ and a test process $ \bs \varphi$, we find 
\begin{align*}
   \langle \bs u , \bs \varphi \rangle \Big |_0^t ={}& \int_0^t \langle U , \bs \varphi \rangle \dt + \int_0^t \langle V , \bs \varphi \rangle \mathrm{d} \bs W
   \\
& + \int_0^t \langle A , \bs u \rangle \dt + \int_0^t \langle B , \bs u \rangle \mathrm d\bs W + \int_0^t \langle B , V \rangle \de t \,
\end{align*}
for all test processes $ \bs \varphi $ in the sense of Definition~\ref{def:testprocess}.
This idea has already been applied in other articles cf.~\cite{GessLasarzik26,HZZEuler}. 
We combine this idea with the energy-variational formulation in order to arrive at the next definition. 
\end{remark}

\begin{definition}[Probabilistically strong energy-variational solutions]\label{def:envar2}
    For a cylindrical Wiener process defined on a filtered probability space $\trkla{\Omega,\mathcal{A},\tgkla{\mathcal{F}_t}_{t\geq0},\mathds{P}}$ and a given $\mathcal{F}_0$-measurable random variable $\bs{d}_0$ and the operator $\Phi\trkla{.}\times$ defined in \eqref{eq:def:PHI}, we call 
    $\bs  d \in L^\infty( \Omega\times (0,T)\times \D)$ a probabilistically strong energy-variational solution, iff $\bs d$ is $\mathcal{F}_t$-progressively measurable such that for every $p\in [1,\infty)$ it holds 
    \begin{align}
        \bs d \in L^{2p}_{\weakstartop}(\Omega;L^\infty(0,T;H^1(\D)^3))
    \end{align}
     with 
      $ |\bs d| \leq 1 $ a.e.~in $\D \times (0,T)$ and $\mathds{P} $-a.s., such that $ \bs d \in C_{w}([0,T];H^1(\D)^3)$ $\mathds{P} $-a.s. with $ \bs d(0)=\bs d_0$, and
    \begin{align}\label{eq:d-dW}
        \bs{d} - \int_0^{\cdot}\Phi\trkla{\bs{d}} \times \mathrm{d} \bs{W} &\in L^2(\Omega; H^1(0,T;L^2(\D)^3)) 
        \,.
    \end{align}
    Moreover, there exists $ \bs q  \in L^{2p}_{w^*}(\Omega;L^\infty(0,T; L^2( \D; \mathds R^{3\times d} ))) $ such that its divergence $ \div \bs q \in L^{2p}(\Omega; L^2((0,T)\times \D; \mathds R^{3} )) $ and $ E \in L^p(\Omega;\mathfrak{D}([0,T]))$ such that $ E  \geq \mathcal E(\bs d) $ a.e.~in $\D \times (0,T)$ and $ \mathds P$-a.s. with $ E(0-)=\mathcal E(\bs d_0)$, and the energy-variational inequality is fulfilled
    \begin{equation}
        \label{eq:envarineq2}
    \begin{aligned}
    E\Big|_{s-}^t& + \int_s^t\int_{\D} |\div \bs q |^2 \dx\dtau 
   -\sum_{i=1}^3\sum_{l\in\mathds{Z}}\int_s^t\trkla{\bs q;\lambda_{li}\nabla \g{li}}\mathrm{d}\beta_l^i\\
   & -  (\bs d , \bs \varphi)  \Big|_s^t + \int_s^t   (\div \bs q , \bs d \times \bs \varphi) + 
   \frac{1}{4} \int_{\D}|\bs\varphi|^2 \left[  |\bs d|^2 - 1\right] \dx \dtau\\
 \leq {} & \tfrac12\sum_{i=1}^3\sum_{l\in\mathds{Z}}\tabs{\lambda_{li}}^2\int_s^t \| \nabla \g{li}\|_{L^2\trkla{\D}}^2 - \|\bs d \cdot \nabla \g{li} \|_{L^2\trkla{\D}}^2 -  (\bs q ,  \g{li}\times \nabla  \g{li}  )
\dtau
\\
&{- \int_s^t \langle A , \bs d \rangle  \dt  - \sum_{i=1}^3
\sum_{l\in\mathds{Z}}\int_s^t \langle B_{li}\g{li},  ( \bs d \times \lambda_{li}\g{l})\rangle \dtau
-\sum_{i=1}^3\sum_{l\in\mathds{Z}}\int_s^t \langle B_{li}\g{li}, \bs d \rangle \mathrm{d}\beta_l^i}
 \\
& -\frac{1}{2}\sum_{i=1}^3\sum_{l\in \mathds{Z}} \int_s^t\rkla{ |\lambda_{li}|^2 (\bs d \times\g{li} )\times \g{li} , \bs \varphi } \dtau -\sum_{i=1}^3\sum_{l\in \mathds{Z}} \int_s^t \rkla{ \lambda_{li}\bs d \times \g{li},\bs \varphi } \mathrm{d}\beta_l^i
\end{aligned}
    \end{equation}
for all test processes in the sense of Definition~\ref{def:testprocess}
\\
Moreover, the term $\bs q$ fulfills the inequality 
\begin{equation}\label{eq:formulationq}
      \int_{\D}   ( \bs d \times \nabla  \bs d - \bs q ) : \bs \Psi \dx \leq   \left[ \frac{1}{2} \int_{\D} |\bs \Psi|^2[1-|\bs d|^2 ]\dx +E- \mathcal{E}(\bs d)\right]  
\end{equation}
 for all $\bs \Psi \in L^4(\D; \mathds R^{3\times d})$ 
$\mathds{P}$-a.s.. 
\end{definition}
\begin{remark}[Connection to weak solutions]\label{rem:variationalVSweak}
    The main difficulty in the analysis of the convergence of the numerical scheme lies in the lacking strong convergence of the approximation of the director $\bs d$. Especially, we are not able to prove that in the limit $ |\bs d|=1$ holds true pointwise a.e.~in $\D\times (0,T)$ and $\mathds{P}$-a.s.~in $\Omega$.  Due to the lower semicontinuity of the norm, we only know that $ |\bs d| \leq 1 $ pointwise a.e.~in $\D\times (0,T)$ and $\mathds{P}$-a.s.~in $\Omega$, which is not sufficient to infer $ |\bs d| =1$. 

    On the other hand, in case that $ |\bs d| =1$ holds true, the solution in the sense of Definition~\ref{def:envar2} simplifies to a standard probabilistically strong, analytically weak solution with energy inequality. 
\end{remark}

\begin{theorem}[Weak solution]\label{thm:weaksol}
    Let $\bs d$ be an energy-variational solution in the sense of Definition~\eqref{def:envar2} with $ |\bs d|=1$ a.e.~in $\D\times (0,T)$ $\mathds P$-a.s.. 

    Then $\bs d$ is a probabilistically strong analytically weak solution, \textit{i.e.,} 
   $$ \begin{aligned}
           &\int_{\D} (\bs d  \cdot  \zeta) \dx \Big|_s^t + \int_s^t \iD  \trkla{\bs d \times(\bs d \times \Delta \bs d  ) }\cdot    \zeta\dx \dtau\\
= & \frac{1}{2}\sum_{i=1}^3\sum_{l\in \mathds{Z}} \int_s^t\iD |\lambda_{li}|^2 \trkla{(\bs d \times\g{li} )\times \g{li} }\cdot  \zeta \dx \dtau +\sum_{i=1}^3\sum_{l\in \mathds{Z}} \int_s^t \int_{\D} \lambda_{li}(\bs d \times \g{li})\cdot \zeta \dx \mathrm{d}\beta_l^i
    \end{aligned}
    $$ holds
    for all $  \zeta\in L^2(\D)^3$ $\mathds P$-a.s.. 
\end{theorem}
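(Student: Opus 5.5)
We use the two inequalities of Definition~\ref{def:envar2} with $|\bs d|=1$. In this case all terms weighted by $1-|\bs d|^2$ vanish, and so \eqref{eq:formulationq} and \eqref{eq:envarineq2} can be tested linearly.

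\emph{Step 1: identification of $\bs q$.} With $|\bs d|=1$, inequality \eqref{eq:formulationq} reads $\int_{\D}(\bs d\times\nabla\bs d-\bs q):\bs\Psi\dx\le E-\mathcal E(\bs d)$ for all $\bs\Psi\in L^4$. Replacing $\bs\Psi$ by $\pm k\bs\Psi$ and letting $k\to\infty$ gives $\int_{\D}(\bs d\times\nabla\bs d-\bs q):\bs\Psi\dx=0$. Hence $\bs q=\bs d\times\nabla\bs d$ a.e., $\mathds P$-a.s. Consequently $\div\bs q=\bs d\times\Delta\bs d$ in the distributional sense, which therefore lies in $L^2$.

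\emph{Step 2: linearisation of the energy-variational inequality in $\bs\varphi$.} In \eqref{eq:envarineq2} with $|\bs d|=1$, the term $\tfrac14\int|\bs\varphi|^2(|\bs d|^2-1)$ disappears. The remaining dependence on the test process is affine in $(\bs\varphi,A,B)$. We choose the deterministic test process $\bs\varphi(t)=k\phi(t)\zeta$ with $\zeta\in L^4(\D)^3$ (later extended by density to $L^2$), $\phi\in C^1([0,T])$, $A=k\phi'\zeta$, $B=0$, and $k\in\mathds R$. Dividing by $|k|$ and letting $k\to\pm\infty$ kills the $\bs\varphi$-independent terms, namely $E|_{s-}^t$, $\int|\div\bs q|^2$, the $\bs q$-stochastic integral, and the Itô-correction on the energy side, since these are finite $\mathds P$-a.s. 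What remains is an equality.

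Before this limit, the null sets must be made independent of $k$ and $\zeta$. We do this by restricting to $k\in\mathds Q$, to a countable dense set of $\zeta$, and to a countable dense family of $\phi$. The stochastic integrals are then fixed versions, and Lemma~\ref{lem:invar} allows working with a.e.\ $s<t$ first.

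\emph{Step 3: the resulting identity.} For constant $\phi\equiv1$ (so $A=0$), the linear part gives
\[
-(\bs d,\zeta)\Big|_s^t+\int_s^t(\bs d\times\Delta\bs d,\bs d\times\zeta)\dtau=-\tfrac12\sum_{i,l}\int_s^t\trkla{|\lambda_{li}|^2(\bs d\times\g{li})\times\g{li},\zeta}\dtau-\sum_{i,l}\int_s^t(\lambda_{li}\bs d\times\g{li},\zeta)\mathrm d\beta_l^i .
\]
By the triple-product identity, $(\bs d\times\Delta\bs d)\cdot(\bs d\times\zeta)=-\trkla{\bs d\times(\bs d\times\Delta\bs d)}\cdot\zeta$. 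Multiplying by $-1$ yields exactly the claimed weak formulation for a.e.\ $s<t$. All $s\le t$ follow from $\bs d\in C_w([0,T];H^1)$ and the continuity of the Lebesgue and stochastic integrals in $t$. Finally we extend to all $\zeta\in L^2(\D)^3$ by density, using $\bs d\times(\bs d\times\Delta\bs d)\in L^2$ (because $|\bs d|\le1$ and $\div\bs q\in L^2$), together with the Itô isometry for the stochastic term along a countable dense set.

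\emph{Main obstacle.} The delicate point is the $k\to\pm\infty$ argument with stochastic integrals. Each $\bs\varphi$-dependent stochastic integral is linear in $k$, but the inequality holds only outside a null set depending on $\bs\varphi$. One must therefore fix versions, restrict to countably many test data, and then check that the $\bs\varphi$-independent remainder $E|_{s-}^t+\dots$ is a.s.\ finite; this holds since $E\in\mathfrak D$ and $\bs q\in L^\infty(0,T;L^2)$. A secondary issue is that constant-in-time $\bs\varphi$ must be an admissible test process, which is immediate from Definition~\ref{def:testprocess} with $A=B=0$.
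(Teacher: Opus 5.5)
Your proposal is correct and matches the paper's proof. Both arguments scale $\bs\Psi$ in \eqref{eq:formulationq} to identify $\bs q=\bs d\times\nabla\bs d$, and hence $\div\bs q=\bs d\times\Delta\bs d$; both then scale the constant-in-time test process $(\zeta,0,0)$ in \eqref{eq:envarineq2}, where the quadratic term drops out because $|\bs d|=1$, and use linearity to turn the inequality into an equality. Your extra bookkeeping fills in details the paper leaves implicit: you restrict to countably many test data so the null sets are controlled, take $\zeta\in L^4$ as Definition~\ref{def:testprocess} requires, and extend to $L^2$ by density.
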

\begin{proof}    
    Indeed, from~\eqref{eq:formulationq}, we find  
    $$
    \int_{\D}    ( \bs d \times \nabla  \bs d- \bs q ) : \bs \Psi   \dx \leq   \left[ E- \frac{1}{2}\norm{\nabla \bs d}_{L^2(\D)}^2   \right]  \,.
    $$
    Choosing $ \bs \Psi = \alpha \bs \Theta$ for $\bs \Theta \in L^4(\D;\mathds R^{d\times 3})$, multiplying by $\frac{1}{\alpha}$ and letting $ \alpha \to \infty$, we find that $\int_{\D}  ( \bs d \times \nabla  \bs d- \bs q) : \bs \Theta \dx \leq 0 $, which allows to identify $ \div \bs q $ with $ \bs d \times \Delta \bs d $ due to the linearity of the space of test functions. 

    In~\eqref{eq:envarineq2}, the last term on the left-hand side vanishes. In a similar scaling argument, we choose the test process to be 
    $(\bs{\varphi},A,B) = \alpha (\tilde{\zeta}_0,0,0)  $, multiply the resulting inequality by $\frac{1}{\alpha}$ and send $\alpha \to \infty$, we find the standard weak formulation 
    $$
    \begin{aligned}
           &\int_{\D} (\bs d  \cdot  \zeta) \dx \Big|_s^t + \int_s^t \iD  \bs d \times(\bs d \times \Delta \bs d  ) \cdot    \zeta\dx \\
\geq & \frac{1}{2}\sum_{i=1}^3\sum_{l\in \mathds{Z}} \int_s^t\iD |\lambda_{li}|^2 \trkla{(\bs d \times\g{li} )\times \g{li} }\cdot  \zeta \dx \dtau +\sum_{i=1}^3\sum_{l\in \mathds{Z}} \int_s^t \int_{\D} \lambda_{li}(\bs d \times \g{li})\cdot \zeta \dx \mathrm{d}\beta_l^i
    \end{aligned}
    $$
    for all $  \zeta \in L^2(\D)^3$. Again this formulation is equivalent to the inequality replaced by an equality due to the linearity of the test functions. 
    \end{proof}

Using the notation introduced in Section \ref{sec:notation}, we formulate our numerical scheme as follows:\\
For a given $\trkla{\Uh}^3$-valued random variable $\bs{d}\h\no$, find a $\trkla{\Uh}^3$-valued random variable $\bs{d}\h\nn$ such that pathwise

\begin{align}\label{eq:discscheme}
\rkla{\bs{d}\h\nn-\bs{d}\h\no,\bs{\psi}\h}\h -\tau\rkla{\bs{d}\h\nh\times\Delta\h\bs{d}\h\nn,\bs{d}\h\nh\times\bs{\psi}\h}\h =\rkla{\bs{\psi}\h,\bs{d}\h\nh\times\sinc{n+1}}\h
\end{align}
for $\bs{\psi}\h\in\trkla{\Uh}^3$ with $\bs{d}\h\nh:=\tfrac12\trkla{\bs{d}\h\nn+\bs{d}\h\no}$ and
\begin{align}
\sinc{n+1}:=\sum_{i=1}^3\sum_{l\in\mathds{Z}\h}\lambda_{li}\g{li}\incbeta{n+1}{l}{i}:=\sum_{i=1}^3\sum_{l\in\mathds{Z}\h}\lambda_{li}\g{li}\int_{t\no}^{t\nn}\mathrm{d}\beta_l^i\,.
\end{align}
In this scheme, we approximated the Stratonovich integral $\displaystyle\int_{t^n}^{t^{n+1}} \Phi_h\trkla{\bs{d}}\times\circ\mathrm{d}\bs{W}$ by
\begin{align}
 \Phi_h\trkla{\bs{d}\h\nh}\times\int_{t^n}^{t^{n+1}}\mathrm{d}\bs{W} = \bs{d}\h\nh\times\sinc{n+1}\,,
\end{align}
i.e.~by an approximation of the solution at $t\no+\tfrac12\tau$.
This approach guarantees that if a solution exists, it satisfies the length constraint at the nodes of $\Th$.
Choosing $\bs{\psi}\h=\bs{d}\h\nh\trkla{\bs{x}_i}\chi_{h,i}$, we obtain a discrete version of the length constraint \eqref{eq:formallength}:
\begin{align}\label{eq:lengthconstraint}
\tfrac12\abs{\bs{d}\h\nn\trkla{\bs{x}_i}}^2=\tfrac12\abs{\bs{d}\h\no\trkla{\bs{x}_i}}^2=\tfrac12\abs{\bs{d}\h^0\trkla{\bs{x}_i}}^2
\end{align}
$\mathds{P}$-almost surely for all $i=1,\ldots,\dim\Uh$ and all $n\in\tgkla{0,\ldots,N-1}$.
As shown in \cite[Lemma 4.1]{Banas2013} or \cite[Lemma 2.22]{BanasBrzezniakNeklyudovProhl2014} there exists an $\mathcal{F}_{t\nn}$-measurable process satisfying \eqref{eq:discscheme}:
\begin{lemma}\label{lem:existence}
    Let assumptions \ref{item:T}, \ref{item:S}, \ref{item:I}, \ref{item:W1}, \ref{item:W2}, and \ref{item:W3} hold true. Then there exists a sequence $\tgkla{\bs{d}\h\no}_{n=0}^N$ which solves \eqref{eq:discscheme}. Furthermore, the map $\bs{d}\h\no\,:\Omega\rightarrow \trkla{\Uh}^3\subset\trkla{H^1\trkla{\D}}^3$ is $\mathcal{F}_{t\no}$-measurable for each $0\leq n\leq N$.
\end{lemma}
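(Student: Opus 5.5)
The statement is Lemma~\ref{lem:existence}, and I would prove it in two parts. First, existence for fixed $\omega$ by a Brouwer fixed-point argument. Second, measurability, obtained from a measurable-selection argument rather than from uniqueness, which need not hold. Throughout, fix $n$ and regard $\bs{d}\h\no$ and $\sinc{n+1}$ as given, which is legitimate pathwise. Note that $\sinc{n+1}$ is $\mathcal{F}_{t\nn}$-measurable and is a finite sum, since $\mathds{Z}\h$ is finite.

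\textbf{Existence.} Write the unknown as $\bs{w}\h=\bs{d}\h\nh$, so that $\bs{d}\h\nn=2\bs{w}\h-\bs{d}\h\no$. Define $\mathcal{G}:(\Uh)^3\to(\Uh)^3$ through the lumped inner product by
\begin{align*}
\rkla{\mathcal{G}(\bs{w}\h),\bs{\psi}\h}\h := 2\rkla{\bs{w}\h-\bs{d}\h\no,\bs{\psi}\h}\h-\tau\rkla{\bs{w}\h\times\Delta\h(2\bs{w}\h-\bs{d}\h\no),\bs{w}\h\times\bs{\psi}\h}\h-\rkla{\bs{\psi}\h,\bs{w}\h\times\sinc{n+1}}\h .
\end{align*}
A zero of $\mathcal{G}$ solves \eqref{eq:discscheme}. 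Test with $\bs{\psi}\h=\bs{w}\h$. The second term vanishes because $\Ihop$ acts nodewise and $(\bs{w}\h\times\bs a)\cdot(\bs{w}\h\times\bs{w}\h)=0$ at every node. The third term vanishes because $\bs{w}\h\cdot(\bs{w}\h\times\bs b)=0$ at every node. Hence
\begin{align*}
\rkla{\mathcal{G}(\bs{w}\h),\bs{w}\h}\h=2\norm{\bs{w}\h}\h^2-2\rkla{\bs{d}\h\no,\bs{w}\h}\h\geq 2\norm{\bs{w}\h}\h\rkla{\norm{\bs{w}\h}\h-\norm{\bs{d}\h\no}\h},
\end{align*}
which is positive on the sphere of radius $R>\norm{\bs{d}\h\no}\h$. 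Since $\mathcal{G}$ is continuous (it is a polynomial in finitely many nodal values), the standard corollary of Brouwer's theorem gives a zero in the closed ball of radius $R$. This argument works for every $\omega$. By \eqref{eq:lengthconstraint}, nodal lengths are preserved, so the iterates stay uniformly bounded and the induction over $n$ can proceed.

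\textbf{Measurability.} This is the main obstacle, since uniqueness of the zero is not available. My first approach is measurable selection. The map $(\bs{w},\bs{d},\bs{\xi})\mapsto\mathcal{G}$ is jointly continuous, so the solution set-valued map
\begin{align*}
(\bs{d}\h\no,\sinc{n+1})\mapsto\{\bs{w}\,:\,\mathcal{G}(\bs{w})=0,\ \norm{\bs{w}}\h\leq R\}
\end{align*}
has nonempty compact values and closed graph. By the Kuratowski--Ryll-Nardzewski selection theorem (equivalently, the measurable-selection lemma used in \cite{Banas2013,BanasBrzezniakNeklyudovProhl2014}), there is a Borel measurable selector $\Lambda$. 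Set $\bs{d}\h\nn:=2\Lambda(\bs{d}\h\no,\sinc{n+1})-\bs{d}\h\no$. This composition of a Borel map with $\mathcal{F}_{t\nn}$-measurable inputs is $\mathcal{F}_{t\nn}$-measurable.

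\textbf{Induction.} Start from $\bs{d}\h^0=\Ih{\bs{d}_0}$, which is $\mathcal{F}_0$-measurable by \ref{item:I}. Then induct on $n$, applying the existence and measurability steps at each stage; $(\Uh)^3\subset(H^1(\D))^3$ is immediate. As an alternative route to measurability, for $\tau$ small one could instead try to show uniqueness via a contraction estimate, but this depends on $h$ and $\omega$, so the selection argument is preferable.
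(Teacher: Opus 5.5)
Your proposal is correct and essentially follows the route the paper relies on: the paper gives no proof of its own but defers to \cite[Lemma 4.1]{Banas2013} and \cite[Lemma 2.22]{BanasBrzezniakNeklyudovProhl2014}, which argue exactly this way, with pathwise existence from Brouwer's fixed-point theorem via the coercivity identity you derive and $\mathcal{F}_{t\nn}$-measurability from a measurable-selection argument. The one detail worth making explicit is that $R$ can be fixed deterministically, e.g.\ $R>\abs{\D}^{1/2}$, because \eqref{eq:lengthconstraint} gives $\norm{\bs{d}\h\no}\h^2=\abs{\D}$; this makes the solution multifunction nonempty-valued on the Borel set of inputs that actually occur.
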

We are now in the position to state our main results:
\begin{theorem}[Existence of probabilistically strong analytically energy-variational solutions]\label{thm:mainResult}
    Let $\mathcal{D}$ be bounded, convex, and polygonal (or polyhedral, respectively), let $\bs{W}$ be a cylindrical Wiener process which satisfies \ref{item:W1} and is defined on a complete stochastic basis $\left(\Omega, \mathcal{A},\left\{\mathcal{F}_t\right\}_{t \geq 0}, \mathds{P}\right)$ with a probability measure $\mathds{P}$ and a right-continuous filtration $\left\{\mathcal{F}_t\right\}$, and let $\mathcal{Q}$ be a trace class operator satisfying \ref{item:W2} and \ref{item:W3}. 
    Let $\bs d_0$ be an initial condition fulfilling~\ref{item:I}.
    Then, subsequences of fully discrete solutions to \eqref{eq:discscheme} converge weakly$^*$ towards a probabilistically strong analytically energy-variational solution in the sense of Definition~\ref{def:envar2}.

    In particular, we have
    \begin{subequations}
        \begin{align}
            \bs{d}\h\tpm\stackrel{*}{\rightharpoonup}&\,\bs{d}&&\text{in~}L_{\weakstartop}^p\trkla{\Omega;L^\infty\trkla{0,T;H^1\trkla{\D}^3}}\,,\\
            \bs{d}\h\ti\times\nabla\bs{d}\h\ti\stackrel{*}{\rightharpoonup}&\,\bs{q}&&\text{in~}L_{\weakstartop}^p\trkla{\Omega;L^\infty(0,T;L^2\trkla{\D;\mathds{R}^{3\times d}})}\,,\\
            \tfrac{1}{2}\norm{\nabla\bs{d}\h\tp}_{L^2\trkla{\D}}^2\stackrel{*}{\rightharpoonup}&\,E&&\text{in~}L^p_{w^*}\trkla{\Omega;L^\infty\trkla{{0,T}}}
        \end{align}
        for any $p\in[1,\infty)$. Moreover, for any $p\in [1,\infty)$ and $q\in\trkla{1,\infty}$, it holds  
        \begin{align}
\begin{split}
\bs{d}\in& L^\infty\trkla{\Omega\times\trkla{0,T}\times\D}^3 \cap L^{p}_{\weakstartop}\trkla{\Omega;L^\infty\trkla{0,T;H^1\trkla{\D}^3}}\\
&\,\cap L^{2q}\trkla{\Omega;C^{0,\trkla{q-1}/\trkla{2q}}\rkla{\tekla{0,T};L^2\trkla{\D}^3}}\,.
\end{split}
\end{align}
    \end{subequations}
    \end{theorem}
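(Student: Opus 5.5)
The proof follows the standard stability--compactness--identification route, with two changes: compactness comes only from weak(-$*$) convergence on the fixed stochastic basis, and the limit is identified through the energy-variational inequality rather than an equation.

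\textbf{Step 1: a priori bounds.} We test \eqref{eq:discscheme} with $\bs{\psi}\h=-\Delta\h\bs{d}\h\nn$ (or with the midpoint version) and use the nodal length constraint \eqref{eq:lengthconstraint}, so that $\norm{\bs{d}\h\tpm}_{L^\infty}\leq1$. The Itô-type terms arising from $\sinc{n+1}$ are expanded with Lemma \ref{lem:interpolationerror}, \ref{item:W3} and the Burkholder--Davis--Gundy inequality. This gives uniform bounds on
\begin{align*}
\expected{\sup_n\norm{\nabla\bs{d}\h\no}_{L^2}^{2p}}\quad\text{and}\quad \expected{\rkla{\tau\sum_n\norm{\Ih{\bs{d}\h\nh\times\Delta\h\bs{d}\h\nn}}_{L^2}^2}^p}
\end{align*}
for every $p$. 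We then add Hölder-in-time estimates in $L^2$ for $\bs{d}\h\tl$, obtained from the scheme together with Kolmogorov-type moment bounds on the stochastic increments; these yield the $C^{0,(q-1)/(2q)}$ regularity, which survives the weak limit by lower semicontinuity.

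\textbf{Step 2: weak limits.} By Banach--Alaoglu in the spaces $L^p_{\weakstartop}(\Omega;\cdot)$ we extract the limits $\bs d$, $\bs q$, $E$, and a weak limit of the discrete $\bs{d}\h\nh\times\Delta\h\bs{d}\h\nn$, which we identify with $\div\bs q$ through summation by parts against smooth test functions; mesh errors vanish by Lemma \ref{lem:interpolationerror}. Progressive measurability is preserved, since the set of progressively measurable processes is a closed subspace and hence weakly closed. The bound $|\bs d|\leq1$ follows because the convex set $\{|\bs v|\leq1\}$ is weakly closed, and $E\geq\mathcal E(\bs d)$ follows from weak lower semicontinuity tested against nonnegative $\psi(\omega)\phi(t)$.

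\textbf{Step 3: discrete energy-variational inequality.} For a test process $\bs\varphi$ we form the discrete energy identity minus the discrete relative term $(\bs{d}\h,\bs\varphi)$, using the discrete Itô product rule for $(\bs{d}\h\no,\bs\varphi(t\no))$. We then rewrite every term that is not weakly continuous as a convex (quadratic) functional plus linear terms. The central algebraic step is
\begin{align*}
\tfrac14\iD|\bs\varphi|^2\trkla{1-\Ih{|\bs{d}\h|^2}}\dx=0,
\end{align*}
which holds because the nodal values satisfy $|\bs{d}\h|=1$. Adding this vanishing quantity turns the cross term into $\tfrac14\iD|\bs\varphi|^2(|\bs d|^2-1)$, a form that is weakly lower semicontinuous. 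The Itô-correction terms are converted via Lemma \ref{lem} into expressions linear in $\bs{d}\h\times\nabla\bs{d}\h$ together with $-\norm{\bs d\cdot\nabla\g{li}}^2$. The latter carries the wrong sign, so I would pair it with the $|\bs\varphi|^2$ term, or argue that it is strongly controlled through the $L^2$-compactness in time. In fact, Hölder continuity together with the $H^1$ bound gives strong convergence of $\bs d\h$ only pathwise along subsequences. We cannot use that pathwise convergence, so this point must instead be handled by convexity.

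\textbf{Step 4: passage to the limit.} We pass to the limit in the form \eqref{ineq1} of Lemma \ref{lem:invar}, multiplying by $\psi\geq0$ and $\phi\geq0$ and taking expectations. Linear terms converge weakly, stochastic integrals converge weakly as well because the Itô integral is a continuous linear map on progressive $L^2$ processes, and the convex terms are lower semicontinuous. Lemma \ref{lem:invar} then yields \eqref{eq:envarineq2}, a càdlàg representative $E$, and $E(0-)=\mathcal E(\bs d_0)$.

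\textbf{Step 5: the inequality \eqref{eq:formulationq}.} This follows from $0\leq\tfrac12\iD\Ih{|\nabla\bs{d}\h-\bs{d}\h\times\bs\Psi|^2}$ type inequalities: expanding the square, using $|\bs{d}\h|=1$ at the nodes, and passing to weak limits.

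The main obstacle is Step 3. The compatibility condition \ref{item:C} is needed to control the interpolation and time-discretisation errors in the discrete energy identity, in particular the higher-order Itô terms of size $\tau^{1/2}h^{-1}$ produced by the Crank--Nicolson midpoint and the interpolation of $\bs d\h\times\g{li}$.
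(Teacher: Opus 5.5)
Your outline follows the paper's route: uniform bounds, weak-$*$ limits on the fixed basis, a discrete energy identity relative to $\bs\varphi$, convexification, and Lemma~\ref{lem:invar}. However, it never verifies the requirement \eqref{eq:d-dW} of Definition~\ref{def:envar2}, namely $\bs d-\int_0^\cdot\Phi(\bs d)\times\mathrm d\bs W\in L^2(\Omega;H^1(0,T;L^2(\D)^3))$.

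This cannot be recovered after passing to the limit. The limit formulation contains no evolution equation for $\bs d$, and only $|\bs d|\le1$ is known. The scaling $\bs\varphi\mapsto\alpha\bs\varphi$, $\alpha\to\infty$, that would extract an equation (compare the proof of Theorem~\ref{thm:weaksol}) then makes the inequality void, because of the term $\frac14\iD|\bs\varphi|^2(|\bs d|^2-1)\dx\le0$. Your $L^2$-Hölder bound on $\bs d$ also says nothing about $H^1$-in-time regularity of the drift part.

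The paper handles this on the discrete level (Lemma~\ref{lem:boundS}):
\begin{itemize}
\item Let $\bs S\h\tl$ be the piecewise linear interpolant of the discrete Itô sums $\bs S\h^m=\sum_{n<m}\sum_{i,l}\Ih{\bs d\h^n\times\lambda_{li}\g{li}}(\beta_l^i(t^{n+1})-\beta_l^i(t^n))$.
\item Rewriting the scheme in Itô form gives $\partial_t(\bs d\h\tl-\bs S\h\tl)=\Ih{(\bs d\h\ti\times\Delta\h\bs d\h\tp)\times\bs d\h\ti}+\frac12\sum_{i,l}|\lambda_{li}|^2\Ih{(\bs d\h\ti\times\g{li})\times\g{li}}$, minus $\tau^{-1}$ times the residuals $\mathfrak A_1\tp,\mathfrak A_2\tp,\mathfrak A_3\tp$.
\item This is bounded in $L^2(\Omega\times(0,T)\times\D)$ by Lemma~\ref{lem:energy} and the residual estimates of Lemma~\ref{lem:stochasticerror}, which in turn use the Nikolskii bound.
\item One then shows $\bs S\h\tl-\int_0^\cdot\Phi(\bs d\h\tm)\times\mathrm d\bs W\to0$ in $L^2(\Omega;L^2(\D))$, via the Itô isometry, interpolation estimates, and \ref{item:Z} with \ref{item:W3}.
\item The limit is identified through the weak continuity of the Itô integral, which you already use in Step~4, and the bound transfers by weak lower semicontinuity.
\end{itemize}

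Step~5, as you describe it, is also too weak. Nonnegativity of $\frac12\|\nabla\bs d\h\ti-\bs d\h\ti\times\bs\Psi\|^2$, followed by expansion and passage to the limit, only yields $-(\bs q,\bs\Psi)\le E+\frac12\|\bs\Psi\|^2-\frac12\|\bs d\cdot\bs\Psi\|^2$. This falls short of \eqref{eq:formulationq} by exactly $\frac12\|\nabla\bs d-\bs d\times\bs\Psi\|^2$, and it would not give $\bs q=\bs d\times\nabla\bs d$ when $|\bs d|=1$. You have to keep the square, use $\liminf\|\nabla\bs d\h\ti-\bs d\h\ti\times\bs\Psi\|^2\ge\|\nabla\bs d-\bs d\times\bs\Psi\|^2$, and expand only in the limit.

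Two details in Step~3 are off but harmless once corrected:
\begin{itemize}
\item The term $-\|\bs d\cdot\nabla\g{li}\|^2$ sits on the majorant side, so weak lower semicontinuity of the convex map $\bs d\mapsto\|\bs d\cdot\nabla\g{li}\|^2$ is exactly what is needed. The term with the bad sign is $\iD|\bs d\h\ti|^2|\nabla\g{li}|^2\dx$ from Lemma~\ref{lem}, and it is removed by $|\bs d\h\ti|\le1$.
\item The identity $\frac14\iD|\bs\varphi|^2(1-\Ih{|\bs d\h|^2})\dx=0$ fails for the midpoint $\bs d\h\nh$ entering the nonlinear term, whose nodal values are not unit vectors. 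But $|\bs d\h\nh|\le1$ suffices: after completing the square with $\Ih{\bs d\h\ti\times\Delta\h\bs d\h\ti}$ it gives $-\frac14\|\bs d\h\ti\times\bs\varphi\tm\|^2\ge-\frac14\|\bs\varphi\tm\|^2+\frac14\|\bs d\h\ti\cdot\bs\varphi\tm\|^2$, which is the paper's argument.
\end{itemize}
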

    \begin{proof}[Sketch of the proof]
        The proof of Theorem \ref{thm:mainResult} can be found in Sections \ref{sec:apriori}--\ref{sec:limenergy}.
        For the reader's convenience, we provide a short overview:\\
        In Section \ref{sec:apriori}, we establish $\trkla{h,\tau}$-independent regularity results, which allow us to identify weakly (or weakly$^*$) converging subsequences.
        In Section \ref{sec:limenergy}, we construct a discrete version of the energy estimate and pass to the limit to obtain the existence of and the convergence towards energy-variational solutions (cf.~Theorem \ref{thm:convergence}).
        Thereby, we have to rewrite our time-discrete approximation of the Stratonovich integral as a time-discrete approximation of an \Ito~integral.
        The error terms arising in this discrete transformation formula are discussed in Section \ref{sec:trafoerror}.
        The limit of the discrete \Ito-Stratonovich-correction terms is discussed in Section \ref{sec:correctionterms}.
        In Section \ref{sec:timederivative}, we establish a discrete version of the regularity result \eqref{eq:d-dW}. This is important, as our concept of energy-variational solutions does not include an evolution equation for $\bs{d}$ that can be used to derived additional improved regularity results after passing to the limit.
        These results are then used in Section \ref{sec:conv} to compute the limit $\rkla{h,\tau}\rightarrow\trkla{0,0}$ and establish the main result.
    \end{proof}
Further, we will show that if a sufficiently regular (analytically) strong solution exists, it coincides with the energy-variational solutions established in Theorem \ref{thm:mainResult}:   
\begin{definition}[Strong solution]\label{def:strongSolution}
Let $\trkla{\Omega,\mathcal{A},\tgkla{\mathcal{F}_t}_{t\geq0},\mathds{P}}$ be a stochastic basis with a complete right-continuous filtration, $\widetilde{\bs{d}}_0$ be a given $\mathcal{F}_0$-measurable random variable, and let $\bs{W}$ be a vector-valued cylindrical Wiener process adapted to $\mathcal{F}_t$ and $\mathcal{Q}^{1/2}$ be a Hilbert--Schmidt operator such that \ref{item:W1}, \ref{item:W2}, and \ref{item:W3} hold true.
A random variable $\widetilde{\bs{d}}$ and a stopping time $\mathfrak{t}$ are called a local analytically and stochastically strong solution to \eqref{eq:Ito} provided
\begin{itemize}
    \item the quantity $\widetilde{\bs{d}}
    \in C\trkla{\tekla{0,T};W^{2,\infty}\trkla{\D}^3}$ and  the quantity $\widetilde{\bs{d}}\times\Delta\widetilde{\bs{d}}\in C\trkla{\tekla{0,T};W^{1,\infty}\trkla{\D}^3}$ $\mathds{P}$-a.s.~are $\mathcal{F}_t$-adapted and
    \begin{align}\label{eq:regStrongSol}
        \expected{\sup_{t\in\tekla{0,T}}\norm{\widetilde{\bs{d}}}_{W^{2,\infty}\trkla{\D}}^p +\sup_{t\in\tekla{0,T}}\norm{\widetilde{\bs{d}}\times\Delta\widetilde{\bs{d}}}_{W^{1,\infty}\trkla{\D}}^p}<\infty&&\text{for all~}1\leq p<\infty\,,
    \end{align} 
    \item the identity
    \begin{align}
    \begin{split}
        \widetilde{\bs{d}}\trkla{t}=&\,\widetilde{\bs{d}}_0+\int_0^{t\wedge\mathfrak{t}}\trkla{\widetilde{\bs{d}}\times\Delta\widetilde{\bs{d}}}\times\widetilde{\bs{d}}\ds \\
        &\,+\int_0^{t\wedge\mathfrak{t}}\tfrac12\sum_{i=1}^3\sum_{l\in\mathds{Z}}\tabs{\lambda_{li}}^2\trkla{\widetilde{\bs{d}}\times\g{li}}\times\g{li}\ds + \int_0^{t\wedge\mathfrak{t}}\widetilde{\bs{d}}\times\mathrm{d}\mathcal{Q}^{1/2}\bs{W}
        \end{split}
    \end{align}
    holds $\mathds{P}$-almost surely for all $t\in\trkla{0,T}$ a.e.~in $\D$ and $\trkla{\bs{n}\cdot\nabla}\widetilde{\bs{d}}=0$ on $\partial\D$ in the sense of the trace $\mathds{P}$-almost surely for all times $t\in\trkla{0,T}$.
\end{itemize}

\end{definition}
The regularity assumptions \eqref{eq:regStrongSol} stated for (analytically) strong solutions also imply $\trkla{\widetilde{\bs{d}}\times\Delta\widetilde{\bs{d}}}\times\widetilde{\bs{d}}\in L^p\trkla{\Omega;L^\infty\trkla{0,T;W^{1,\infty}\trkla{\D}^3}}$.
If such solutions exist, they are unique and coincide with the energy-variational solutions: 
\begin{theorem}\label{thm:weakstronguniqueness}
Let $\bs{d}$ be a probabilistically strong analytically energy-variational   solution as specified in Theorem \ref{thm:mainResult} and let $\widetilde{\bs{d}}$ and $\mathfrak{t}$ be an analytically and stochastically strong solution as defined in Definition \ref{def:strongSolution} with $\bs{d}_0=\widetilde{\bs{d}}_0$ $\mathds{P}$-almost surely. Then $\bs{d}=\widetilde{\bs{d}}$ a.e.~in $\D$ for all $t\in[0,\mathfrak{t}]$ and $\mathds{P}$-almost surely. 
\end{theorem}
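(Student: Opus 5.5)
We argue by a relative energy method built on the energy-variational inequality \eqref{eq:envarineq2} and the constraint inequality \eqref{eq:formulationq}. The natural candidate is the functional $\mathcal{R}(\bs d,E|\widetilde{\bs d})$ from the introduction:
\begin{align*}
\mathcal R(t)= E(t)-\mathcal E(\bs d(t)) + \tfrac12\iD\rkla{1-\abs{\bs d(t)}^2}\dx + \tfrac12\norm{\nabla(\bs d-\widetilde{\bs d})(t)}_{L^2(\D)}^2+\tfrac12\norm{(\bs d-\widetilde{\bs d})(t)}_{L^2(\D)}^2 .
\end{align*}
Since $E\geq\mathcal E(\bs d)$ and $\abs{\bs d}\leq 1$, this quantity is nonnegative. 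At $t=0$ it vanishes because $E(0-)=\mathcal E(\bs d_0)$, $\abs{\bs d_0}=1$ and $\bs d_0=\widetilde{\bs d}_0$. Expanding it, we can write
\begin{align*}
\mathcal R = E-(\nabla\bs d,\nabla\widetilde{\bs d})+\tfrac12\norm{\nabla\widetilde{\bs d}}^2 + \tfrac12\abs{\D}-(\bs d,\widetilde{\bs d})+\tfrac12\norm{\widetilde{\bs d}}^2 .
\end{align*}
Hence $\mathcal R$ is $E$ minus terms that are linear in $\bs d$, plus a functional of $\widetilde{\bs d}$ alone.

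The key step is the choice of test process in \eqref{eq:envarineq2}. We take $\bs\varphi=\widetilde{\bs d}-\Delta_N\widetilde{\bs d}$, stopped at $\mathfrak t$. By Definition~\ref{def:strongSolution} and It\^o's formula, $\bs\varphi$ has the form $\mathrm d\bs\varphi=A\dt+B\mathrm d\bs W$, where $A$ is given by the drift of the strong equation acted on by $(I-\Delta_N)$ (interpreted in $(H^1)^*$), and $B_{li}\g{li}=(I-\Delta_N)(\widetilde{\bs d}\times\lambda_{li}\g{li})$. The bounds \eqref{eq:regStrongSol} and \ref{item:W3} ensure $(\bs\varphi_0,A,B)$ is an admissible test process in the sense of Definition~\ref{def:testprocess}.

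With this choice, $(\bs d,\bs\varphi)=(\bs d,\widetilde{\bs d})+(\nabla\bs d,\nabla\widetilde{\bs d})$, so \eqref{eq:envarineq2} yields an inequality for $\mathcal R(t)$. Separately, we apply It\^o's formula to $\tfrac12\norm{\widetilde{\bs d}}_{H^1}^2$ (using the formal energy identity \eqref{eq:formalIto}, which is rigorous for $\widetilde{\bs d}$, together with Lemma~\ref{lem}) and add the result. This gives
\begin{align*}
\mathcal R(t)+\int_0^t\norm{\div\bs q-\widetilde{\bs d}\times\Delta\widetilde{\bs d}}^2\dtau\leq \int_0^t\mathfrak{E}\dtau+M_t ,
\end{align*}
where $M_t$ is a martingale. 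We expect the stochastic integrals of the form $(\bs q,\lambda\nabla\g{})$, $\langle B\g{},\bs d\rangle$ and $(\lambda\bs d\times\g{},\bs\varphi)$ to combine into a local martingale. The It\^o correction terms, including the cross term $\langle B_{li}\g{li},\bs d\times\lambda_{li}\g{li}\rangle$, should recombine into quantities quadratic in $\bs d-\widetilde{\bs d}$.

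The main obstacle is showing $\mathfrak E\leq C(\widetilde{\bs d})\,\mathcal R$ with a multiplier $C$ that is integrable in time and has finite moments. The drift term $(\div\bs q,\bs d\times\bs\varphi)$ must be paired against $\widetilde{\bs d}\times\Delta\widetilde{\bs d}$ to complete the square $\norm{\div\bs q-\widetilde{\bs d}\times\Delta\widetilde{\bs d}}^2$. The remaining terms involve $\bs q$ rather than $\bs d\times\nabla\bs d$, and we replace one by the other using \eqref{eq:formulationq}. Choosing $\bs\Psi$ to be a suitable multiple of a $W^{1,\infty}$-function built from $\widetilde{\bs d}$ (this is where the $W^{1,\infty}$-regularity of $\widetilde{\bs d}\times\Delta\widetilde{\bs d}$ enters), \eqref{eq:formulationq} gives
\begin{align*}
(\bs d\times\nabla\bs d-\bs q):\bs\Psi\lesssim \norm{\bs\Psi}_{L^\infty}^2\rkla{\tfrac12\iD(1-\abs{\bs d}^2)\dx+E-\mathcal E(\bs d)} ,
\end{align*}
so each such term is controlled by $\mathcal R$. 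The penalty term $\tfrac14\iD\abs{\bs\varphi}^2(\abs{\bs d}^2-1)\dx$ has exactly the sign needed to absorb the constraint-violation terms, provided $\abs{\bs\varphi}^2$ dominates the coefficients that appear. If it does not, we rescale the test process by a large constant multiple of $\widetilde{\bs d}$, since the relation is linear in $\bs\varphi$. The terms $\bs d\times\nabla\bs d-\widetilde{\bs d}\times\nabla\widetilde{\bs d}$ are handled using $\abs{\bs d}\leq1$ and Young's inequality, which reduces them to $\norm{\bs d-\widetilde{\bs d}}_{H^1}^2$.

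Finally, we pass from this inequality to $\mathcal R\equiv 0$ by a stochastic Gronwall argument. Since $C(t)$ depends on $\sup_t\norm{\widetilde{\bs d}}_{W^{2,\infty}}$, we use an exponential weight $e^{-\int_0^tC\,\mathrm ds}$ and localize with stopping times $\mathfrak t_R$ at which $\norm{\widetilde{\bs d}}_{W^{2,\infty}}$ reaches $R$, for $t\le \mathfrak t_R\wedge\mathfrak t$. The c\`adl\`ag form of $E$ given by Lemma~\ref{lem:invar} justifies the pathwise inequality for all $s\leq t$. Taking expectations removes the martingale and gives $\expected{e^{-\int_0^{t}C}\,\mathcal R(t\wedge\mathfrak t_R\wedge\mathfrak t)}\leq0$. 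Letting $R\to\infty$ then yields $\bs d=\widetilde{\bs d}$ on $[0,\mathfrak t]$ almost surely.
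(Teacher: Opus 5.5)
Your strategy is the paper's. You use the same relative energy, the same test process $\widetilde{\bs d}-\Delta_N\widetilde{\bs d}$ with the $A$ and $B$ you describe, and It\^o's formula for the Dirichlet energy of $\widetilde{\bs d}$ via Lemma~\ref{lem}. You complete the same square $\norm{\div\bs q-\widetilde{\bs d}\times\Delta\widetilde{\bs d}}_{L^2(\D)}^2$ and use \eqref{eq:formulationq} with a normalized $\bs\Psi$ built from $\nabla(\widetilde{\bs d}\times\Delta\widetilde{\bs d})$ and $\g{li}\times\nabla\g{li}$. Localization and Gronwall finish as in the paper, which stops at the first time $\tau_L$ where $\norm{\widetilde{\bs d}}_{W^{2,\infty}(\D)}+\norm{\widetilde{\bs d}\times\Delta\widetilde{\bs d}}_{W^{1,\infty}(\D)}$ exceeds $L$. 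All multipliers are then bounded by $C(L)$, so the ordinary Gronwall lemma on expectations suffices and your exponential weight is harmless but unnecessary.

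Two of your justifications are wrong, although the steps they support hold for other reasons.

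\textbf{The penalty term.} Because $\abs{\bs d}\leq1$, the term $\tfrac14\iD\abs{\bs\varphi}^2(\abs{\bs d}^2-1)\dx$ is nonpositive on the left-hand side of \eqref{eq:envarineq2}, so it absorbs nothing. It must be moved to the right, where it is one more constraint-violation error, bounded by $\tfrac12\norm{\bs\varphi}_{L^\infty(\D)}^2\cdot\tfrac12\iD(1-\abs{\bs d}^2)\dx\leq C(L)\,\mathcal R$. Your fallback of rescaling the test process is not available. The inequality is quadratic in $\bs\varphi$ exactly through this term, and the part linear in $\bs\varphi$ is what produces $\mathcal R$. Modifying $\bs\varphi$ therefore changes the functional and only enlarges the penalty.

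\textbf{The first-order terms.} $\bs d\times\nabla\bs d-\widetilde{\bs d}\times\nabla\widetilde{\bs d}$ contains terms of first order in $\bs d-\widetilde{\bs d}$, and no Young inequality bounds these by $\norm{\bs d-\widetilde{\bs d}}_{H^1(\D)}^2$. In the paper they cancel exactly against the terms generated by $A$ and by the completed square, after an integration by parts that uses the Neumann condition and $\partial_j\widetilde{\bs d}\times\partial_j\widetilde{\bs d}=0$. What survives is the quadratic remainder $\sum_j\partial_j(\widetilde{\bs d}\times\Delta\widetilde{\bs d})\cdot\bigl((\widetilde{\bs d}-\bs d)\times\partial_j(\widetilde{\bs d}-\bs d)\bigr)$, and only this is estimated by $C(L)\norm{\bs d-\widetilde{\bs d}}_{H^1(\D)}^2$. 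The analogous exact cancellation you anticipate for the It\^o-correction terms does occur. It leaves $\bigl((\bs d-\widetilde{\bs d})\times\nabla(\bs d-\widetilde{\bs d})\bigr):(\g{li}\times\nabla\g{li})$ and $\abs{\nabla\g{li}}^2\bigl(1-\abs{\bs d}^2+\abs{\bs d-\widetilde{\bs d}}^2\bigr)$, both controlled by $C\,\mathcal R$.

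With these two corrections your argument goes through exactly as in the paper.
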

The proof Theorem \ref{thm:weakstronguniqueness} can be found in Section \ref{sec:weakstrong}.

\section{Uniform regularity results for fully discrete solutions\label{sec:apriori}}
In this section, we derive uniform estimates for the fully discrete solutions to \eqref{eq:discscheme}.
Our first estimate will provide bounds for the energy:
\begin{lemma}\label{lem:energy}
Let assumptions \ref{item:T}, \ref{item:S}, \ref{item:I}, \ref{item:W1}, \ref{item:W2}, and \ref{item:W3} hold true. Then for $p\in[0,\infty)$ the estimate
\begin{multline}
\expected{\max_{0\leq\tilde{N}\leq N}\norm{\bs{d}\h^{\tilde{N}}}_{H^1\trkla{\D}}^{2p}}+\expected{\rkla{\sum_{n=0}^{N-1}\norm{\bs{d}\h\nn-\bs{d}\h\no}_{H^1\trkla{\D}}^2}^p}\\
+\expected{\rkla{\sum_{n=0}^{N-1}\tau\norm{\bs{d}\h\nh\times\Delta\h\bs{d}\h\nn}\h^2}^p}\leq C\,
\end{multline}
holds true with a constant $C$ depending on $p$ but not on $h$ nor $\tau$.
\end{lemma}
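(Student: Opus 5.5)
The estimate splits into two parts. The $L^2$-part of the $H^1$-norm is trivial: by the nodal length constraint \eqref{eq:lengthconstraint} and \ref{item:I}, $\abs{\bs{d}\h\no(\bs{x}_k)}=1$ at every node. Hence $\norm{\bs{d}\h\no}_{L^\infty}\leq1$ and $\norm{\bs{d}\h\no}_{L^2}^2\leq\tabs{\D}$ deterministically. It therefore remains to control the Dirichlet energy, its increments, and the dissipation term.

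\textbf{Discrete energy identity.} I test \eqref{eq:discscheme} with $\bs{\psi}\h=-\Delta\h\bs{d}\h\nn$. By \eqref{eq:defDiscLap}, the first term becomes
\begin{align*}
\rkla{\nabla(\bs{d}\h\nn-\bs{d}\h\no),\nabla\bs{d}\h\nn}=\tfrac12\norm{\nabla\bs{d}\h\nn}^2-\tfrac12\norm{\nabla\bs{d}\h\no}^2+\tfrac12\norm{\nabla(\bs{d}\h\nn-\bs{d}\h\no)}^2 .
\end{align*}
The second term yields $\tau\norm{\bs{d}\h\nh\times\Delta\h\bs{d}\h\nn}\h^2$, using the nodewise identity $(a\times b)\cdot(a\times c)$ with $c=b$. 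The noise term is
\begin{align*}
-\rkla{\Delta\h\bs{d}\h\nn,\bs{d}\h\nh\times\sinc{n+1}}\h=\rkla{\bs{d}\h\nh\times\Delta\h\bs{d}\h\nn,\sinc{n+1}}\h .
\end{align*}
It is not a martingale increment, since $\bs{d}\h\nn$ depends on $\sinc{n+1}$. I split it as
\begin{align*}
\rkla{\bs{d}\h\no\times\Delta\h\bs{d}\h\no,\sinc{n+1}}\h+R^{n+1}.
\end{align*}
The first part is an $\mathcal{F}_{t\no}$-adapted martingale increment. Because the nodal interpolant of $\bs{d}\h\no\times\Delta\h\bs{d}\h\no$ is not obviously bounded in a dual norm, I would rather express it as $\rkla{\nabla\bs{d}\h\no,\nabla\Ih{\bs{d}\h\no\times\sinc{n+1}}}$, up to interpolation errors via Lemma \ref{lem:interpolationerror}. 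Its quadratic variation is then bounded by $C\tau\widehat{C}\norm{\bs{d}\h\no}_{H^1}^2$, thanks to \ref{item:W3} and $\norm{\bs{d}\h\no}_{L^\infty}\leq1$.

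\textbf{Remainder terms.} The remainder $R^{n+1}$ contains differences such as $(\bs{d}\h\nn-\bs{d}\h\no)$ and $\Delta\h(\bs{d}\h\nn-\bs{d}\h\no)$ paired with $\sinc{n+1}$. I handle these by again moving the discrete Laplacian onto $\Ih{\bs{d}\times\sinc{n+1}}$ through \eqref{eq:defDiscLap}. This gives bounds of the form
\begin{align*}
\abs{R^{n+1}}\leq \tfrac14\norm{\nabla(\bs{d}\h\nn-\bs{d}\h\no)}^2+C\norm{\sinc{n+1}}_{W^{1,\infty}}^2\rkla{1+\norm{\nabla\bs{d}\h\no}^2+\norm{\nabla\bs{d}\h\nn}^2}.
\end{align*}
The $\tfrac14$-part is absorbed into the numerical dissipation. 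The term with $\norm{\nabla\bs{d}\h\nn}^2$ cannot be absorbed pathwise, because $\norm{\sinc{n+1}}_{W^{1,\infty}}$ is unbounded. I would resolve this by a stopping or truncation argument on the event $\norm{\sinc{n+1}}_{W^{1,\infty}}\leq c$, which fails only with probability $\mathcal{O}(\tau^k)$. Alternatively, I would use the $L^\infty$ bound together with inverse estimates, in the spirit of \cite{BanasBrzezniakNeklyudovProhl2014}. Controlling this implicit noise contribution is the main obstacle.

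\textbf{Summation and moment bounds.} I sum over $n\leq\tilde N$, raise to the power $p$, and take the maximum and the expectation. The martingale part is treated by the discrete Burkholder--Davis--Gundy inequality, giving
\begin{align*}
C\,\mathds{E}\Bigl[\Bigl(\sum_n\tau\norm{\bs{d}\h\no}_{H^1}^2\Bigr)^{p/2}\Bigr]\leq\varepsilon\,\mathds{E}\Bigl[\max_n\norm{\bs{d}\h\no}_{H^1}^{2p}\Bigr]+C .
\end{align*}
The remainder terms use $\mathds{E}\norm{\sinc{n+1}}_{W^{1,\infty}}^{2k}\leq C\tau^k$ together with independence of $\sinc{n+1}$ from $\mathcal{F}_{t\no}$. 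Since the initial energy is bounded by \ref{item:I}, the discrete Gronwall lemma then yields all three bounds simultaneously, for every $p$.
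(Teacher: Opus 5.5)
Your preliminary steps match the paper: the trivial $L^2$ part from the nodal constraint, the test with $\bs{\psi}\h=-\Delta\h\bs{d}\h\nn$, and your martingale part. Indeed, $\rkla{\nabla\bs{d}\h\no,\nabla\Ih{\bs{d}\h\no\times\sinc{n+1}}}$ is exactly the paper's $\mathfrak{r}_2$ plus $\sum_q\rkla{\bs{d}\h\no\times\partial_q\Ih{\sinc{n+1}},\partial_q\bs{d}\h\no}$. The gap is the one you flag yourself. Your bound on $R^{n+1}$ keeps the term $C\norm{\sinc{n+1}}_{W^{1,\infty}}^2\norm{\nabla\bs{d}\h\nn}^2$. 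This term cannot be absorbed, and it is not of the form ($\mathcal{F}_{t\no}$-measurable quantity)$\times$(independent noise), so neither the independence argument nor Gronwall closes.

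The truncation idea does not repair this under the hypotheses of the lemma. On the exceptional event, the only pathwise control is the inverse estimate $\norm{\nabla\bs{d}\h\nn}_{L^2\trkla{\D}}\leq Ch^{-1}$, which follows from $\tabs{\bs{d}\h}\leq1$. But $h^{-2p}$ times a tail probability depending only on $\tau$ is not bounded uniformly in $(h,\tau)$ unless $h$ and $\tau$ are coupled, and Lemma~\ref{lem:energy} assumes no such coupling. On the good event, absorbing the term produces a random amplification factor in every step, which destroys the martingale structure you need for Burkholder--Davis--Gundy.

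The missing idea is that no $\norm{\nabla\bs{d}\h\nn}^2$-term has to appear at all. Write the noise term as $\rkla{\nabla\Ih{\bs{d}\h\nh\times\sinc{n+1}},\nabla\bs{d}\h\nn}$ and remove $\Ihop$. Then use the pointwise identity $(\bs{a}\times\bs{b})\cdot\bs{a}=0$, which gives $\rkla{\partial_q\bs{d}\h\nh\times\Ih{\sinc{n+1}},\partial_q\bs{d}\h\nn}=\tfrac12\rkla{\partial_q\bs{d}\h\no\times\Ih{\sinc{n+1}},\partial_q(\bs{d}\h\nn-\bs{d}\h\no)}$. In $\rkla{\bs{d}\h\nh\times\partial_q\Ih{\sinc{n+1}},\partial_q\bs{d}\h\nn}$, split $\partial_q\bs{d}\h\nn=\partial_q\bs{d}\h\no+\partial_q(\bs{d}\h\nn-\bs{d}\h\no)$ and use $\tabs{\bs{d}\h\nh}\leq1$. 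In this way $\nabla\bs{d}\h\nn$ enters only through the increment, which is absorbed by the numerical dissipation $\tfrac12\norm{\nabla(\bs{d}\h\nn-\bs{d}\h\no)}^2$.

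The interpolation errors are bounded via Lemma~\ref{lem:interpolationerror} together with the inverse estimate $h\norm{\nabla\bs{d}\h}_{L^\infty}\leq C\norm{\bs{d}\h}_{L^\infty}\leq C$. Hence they involve no gradient of $\bs{d}\h$ apart from the absorbable increment and the martingale term at time $t\no$. The leftover term $\tfrac12\rkla{(\bs{d}\h\nn-\bs{d}\h\no)\times\partial_q\Ih{\sinc{n+1}},\partial_q\bs{d}\h\no}$ is controlled by a preliminary test with $\bs{\psi}\h=\bs{d}\h\nn-\bs{d}\h\no$. This yields $\norm{\bs{d}\h\nn-\bs{d}\h\no}\h^2\leq C\tau^2\norm{\bs{d}\h\nh\times\Delta\h\bs{d}\h\nn}\h^2+C\norm{\sinc{n+1}}\h^2$, which is absorbed by the dissipation term.

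After these manipulations, every right-hand-side term is one of two kinds: a martingale increment with $\mathcal{F}_{t\no}$-measurable integrand, or a product of $\norm{\nabla\bs{d}\h\no}^2$ (or a constant) with squared noise increments. Only then do the Burkholder--Davis--Gundy inequality and the discrete Gronwall lemma go through as you describe.
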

\begin{proof}
To prove the claim, we first need an estimate on $\norm{\bs{d}\h\nn-\bs{d}\h\no}\h$.
Thus, we choose $\bs{\psi}\h=\bs{d}\h\nn-\bs{d}\h\no$ in \eqref{eq:discscheme} and deduce after absorption
\begin{align}\label{eq:proof:energy:0}
\begin{split}
\norm{\bs{d}\h\nn-\bs{d}\h\no}\h^2\leq&\, C\tau^2\norm{\bs{d}\h\nh}_{L^\infty\trkla{\D}}^2\norm{\bs{d}\h\nh\times\Delta\h\bs{d}\h\nn}\h^2+ C\norm{\bs{d}\h\nh\times\sinc{n+1}}\h^2\\
\leq&\,C\tau^2\norm{\bs{d}\h\nh\times\Delta\h\bs{d}\h\nn}\h^2 +C\norm{\sinc{n+1}}\h^2\,.
\end{split}
\end{align}
Here, we used the $L^\infty$-bound on $\bs{d}\h\nh$ established in \eqref{eq:lengthconstraint}.
Next, we recover the discrete energy.
Choosing $\bs{\psi}\h=-\Delta\h\bs{d}\h\nn$, we obtain
\begin{align}
\begin{split}\label{eq:proof:energy:0.5}
\tfrac12\norm{\nabla\bs{d}\h\nn}_{L^2\trkla{\D}}^2&\,+\tfrac12\norm{\nabla\bs{d}\h\nn-\nabla\bs{d}\h\no}_{L^2\trkla{\D}}^2-\tfrac12\norm{\nabla\bs{d}\h\no}_{L^2\trkla{\D}}^2+\tau\norm{\bs{d}\h\nh\times\Delta\h\bs{d}\h\nn}\h^2\\
=&\,\rkla{-\Delta\h\bs{d}\h\nn,\bs{d}\h\nh\times\sinc{n+1}}\h\,.
\end{split}
\end{align}
To derive an estimate for the right-hand side, we adapt the lines of \cite{Banas2013}:
\begin{align}
\begin{split}\label{eq:proof:energy:1}
&\rkla{-\Delta\h\bs{d}\h\nn,\bs{d}\h\nh\times\sinc{n+1}}\h=\rkla{\nabla\Ih{\bs{d}\h\nh\times\sinc{n+1}},\nabla\bs{d}\h\nn}\\
&=-\rkla{\nabla\trkla{\id-\Ihop}\gkla{\bs{d}\h\nh\times\Ih{\sinc{n+1}}},\nabla\bs{d}\h\nn}\\
&\qquad+\rkla{\nabla\trkla{\bs{d}\h\nh\times\Ih{\sinc{n+1}}},\nabla\bs{d}\h\nn}\\
&=-\rkla{\nabla\trkla{\id-\Ihop}\gkla{\bs{d}\h\nh\times\Ih{\sinc{n+1}}},\nabla\bs{d}\h\nn-\nabla\bs{d}\h\no}\\
&\qquad-\rkla{\nabla\trkla{\id-\Ihop}\gkla{\bs{d}\h\no\times\Ih{\sinc{n+1}}},\nabla\bs{d}\h\no}\\
&\qquad-\tfrac12\rkla{\nabla\trkla{\id-\Ihop}\gkla{\trkla{\bs{d}\h\nn-\bs{d}\h\no}\times\Ih{\sinc{n+1}}},\nabla\bs{d}\h\no}\\
&\qquad+\rkla{\nabla\trkla{\bs{d}\h\nh\times\Ih{\sinc{n+1}}},\nabla\bs{d}\h\nn}=:\mathfrak{r}_1+\mathfrak{r}_2+\mathfrak{r}_3+\mathfrak{r}_4\,.
\end{split}
\end{align}
For the first term, we deduce
\begin{align}
\begin{split}
\mathfrak{r}_1\leq&\,C\norm{\nabla\trkla{\id-\Ihop}\gkla{\bs{d}\h\nh\times\Ih{\sinc{n+1}}}}_{L^2\trkla{\D}}^2+\alpha\norm{\nabla\bs{d}\h\nn-\nabla\bs{d}\h\no}_{L^2\trkla{\D}}^2\,.
\end{split}
\end{align}
While the second term can be absorbed on the left-hand side, we need to control the first term. 
Using Lemma \ref{lem:interpolationerror}, a standard inverse estimate, and the constraint $\norm{\bs{d}\h\nh}_{L^\infty\trkla{\D}}\leq 1$ (cf.~\eqref{eq:lengthconstraint}), we deduce
\begin{align}
    \begin{split}
        &\norm{\nabla\rkla{\id-\Ihop}\gkla{\bs{d}\h\nh\times\Ih{\sinc{n+1}}}}_{L^2\trkla{\D}}^2\\
        &~\leq Ch^2\norm{\nabla\bs{d}\h\nh}_{L^\infty\trkla{\D}}^2\norm{\nabla\Ih{\sinc{n+1}}}_{L^2\trkla{\D}}^2\\
        &~\leq C\norm{\bs{d}\h\nh}_{L^\infty\trkla{\D}}^2\norm{\nabla\Ih{\sinc{n+1}}}_{L^2\trkla{\D}}^2\leq C\norm{\nabla\Ih{\sinc{n+1}}}_{L^2\trkla{\D}}^2\,.
    \end{split}
\end{align}

For the third term on the right-hand side of \eqref{eq:proof:energy:1} we obtain using Lemma \ref{lem:interpolationerror}
\begin{align}
\begin{split}
\mathfrak{r}_3=&\,-\tfrac12\rkla{\nabla\trkla{\id-\Ihop}\gkla{\trkla{\bs{d}\h\nn-\bs{d}\h\no}\times\Ih{\sinc{n+1}}},\nabla\bs{d}\h\no}\\
\leq&\, \tfrac12\norm{\nabla\trkla{\id-\Ihop}\gkla{\trkla{\bs{d}\h\nn-\bs{d}\h\no} \times\Ih{\sinc{n+1}}}}_{L^1\trkla{\D}}\norm{\nabla\bs{d}\h\no}_{L^\infty\trkla{\D}}\\
\leq&\, C\norm{\nabla\bs{d}\h\nn-\nabla\bs{d}\h\no}_{L^2\trkla{\D}}\norm{\bs{d}\h\no}_{L^\infty\trkla{\D}}\norm{\nabla\Ih{\sinc{n+1}}}_{L^2\trkla{\D}}\\
\leq&\,\alpha\norm{\nabla\bs{d}\h\nn-\nabla\bs{d}\h\no}_{L^2\trkla{\D}}^2+C\norm{\nabla\Ih{\sinc{n+1}}}_{L^2\trkla{\D}}^2
\end{split}
\end{align}
due to a standard inverse estimate.
As we will keep the term $\mathfrak{r}_2$ for the moment, only $\mathfrak{r}_4$ remains to be estimated.
To deal with the last term, we use the identity $\trkla{\bs{a}\times\bs{b}}\cdot\bs{a}=0$ and the norm equivalence \eqref{eq:normequivalence} to compute
\begin{align}
\begin{split}
\mathfrak{r}_4=&\,\tfrac12\sum_{q=1}^d\rkla{\rkla{\frac{\partial\bs{d}\h\no}{\partial x_q}\times\Ih{\sinc{n+1}}},\frac{\partial\bs{d}\h\nn}{\partial x_q}} +\sum_{q=1}^d\rkla{\bs{d}\h\nh\times\frac{\partial\Ih{\sinc{n+1}}}{\partial x_q},\frac{\partial\bs{d}\h\nn}{\partial x_q}}\\
=&\,\tfrac12\sum_{q=1}^d\rkla{\rkla{\frac{\partial\bs{d}\h\no}{\partial x_q}\times\Ih{\sinc{n+1}}},\frac{\partial\trkla{\bs{d}\h\nn-\bs{d}\h\no}}{\partial x_q}} +\sum_{q=1}^d\rkla{\bs{d}\h\nh\times\frac{\partial\Ih{\sinc{n+1}}}{\partial x_q},\frac{\partial\bs{d}\h\no}{\partial x_q}}\\
&\,+\sum_{q=1}^d\rkla{\bs{d}\h\nh\times\frac{\partial\Ih{\sinc{n+1}}}{\partial x_q},\frac{\partial\trkla{\bs{d}\h\nn-\bs{d}\h\no}}{\partial x_q}}\\
\leq&\,\alpha\norm{\nabla\bs{d}\h\nn-\nabla\bs{d}\h\no}_{L^2\trkla{\D}}^2 +C\sum_{q=1}^d\norm{\frac{\partial\bs{d}\h\no}{\partial x_q}\times\Ih{\sinc{n+1}}}_{L^2\trkla{\D}}^2 \\
&\,+ C\sum_{q=1}^d\norm{\bs{d}\h\nh\times\frac{\partial\Ih{\sinc{n+1}}}{\partial x_q}}_{L^2\trkla{\D}}^2 +\sum_{q=1}^d\rkla{\bs{d}\h\no\times\frac{\partial\Ih{\sinc{n+1}}}{\partial x_q},\frac{\partial\bs{d}\h\no}{\partial x_q}}\\
&\,+\alpha \norm{\bs{d}\h\nn-\bs{d}\h\no}\h^2+ C\sum_{q=1}^d\norm{\frac{\partial\Ih{\sinc{n+1}}}{\partial x_q}\times\frac{\partial\bs{d}\h\no}{\partial x_q}}_{L^2\trkla{\D}}^2\,.
\end{split}
\end{align}
Next, we can estimate $\norm{\bs{d}\h\nh\times\tfrac{\partial\Ih{\sinc{n+1}}}{\partial x_q}}_{L^2\trkla{\D}}^2\leq C\norm{\nabla\Ih{\sinc{n+1}}}_{L^2\trkla{\D}}^2$ using the $L^\infty$-bound \eqref{eq:lengthconstraint}.
Thus, \eqref{eq:proof:energy:0.5} reads after adding $\alpha\norm{\bs{d}\h\nn-\bs{d}\h\no}_{L^2\trkla{\D}}^2$ on both sides, substituting \eqref{eq:proof:energy:0}, and choosing $\alpha>0$ sufficiently small
\begin{align}\label{eq:proof:energy:intermediate}
\begin{split}
\tfrac12&\,\norm{\nabla\bs{d}\h\nn}_{L^2\trkla{\D}}^2+\tfrac14\norm{\nabla\bs{d}\h\nn-\nabla\bs{d}\h\no}_{L^2\trkla{\D}}^2+\alpha\norm{\bs{d}\h\nn-\bs{d}\h\no}_{L^2\trkla{\D}}^2+\tau\tfrac12\norm{\bs{d}\h\nh\times\Delta\h\bs{d}\h\nn}\h^2\\
&\leq\tfrac12\norm{\nabla\bs{d}\h\no}_{L^2\trkla{\D}}^2+ C\norm{\nabla\Ih{\sinc{n+1}}}_{L^2\trkla{\D}}^2 \\
&\qquad-\rkla{\nabla\trkla{\id-\Ihop}\gkla{\bs{d}\h\no\times\Ih{\sinc{n+1}}},\nabla\bs{d}\h\no} + \sum_{q=1}^d\rkla{\bs{d}\h\no\times\frac{\partial\Ih{\sinc{n+1}}}{\partial x_q},\frac{\partial\bs{d}\h\no}{\partial x_q}}\\
&\qquad+C\sum_{q=1}^d\norm{\frac{\partial\bs{d}\h\no}{\partial x_q}\times\Ih{\sinc{n+1}}}_{L^2\trkla{\D}}^2 +C\sum_{q=1}^d\norm{\frac{\partial\Ih{\sinc{n+1}}}{\partial x_q}\times\frac{\partial\bs{d}\h\no}{\partial x_q}}_{L^2\trkla{\D}}^2\\
&\qquad+C\alpha\norm{\sinc{n+1}}\h^2\,.
\end{split}
\end{align}

Summing \eqref{eq:proof:energy:intermediate} from $n=0$ to $\tilde{N}-1\leq N-1$ and taking the $p$-th power, we obtain for the expected value
\begin{align}
\begin{split}
&\expected{\norm{\nabla \bs{d}\h^{\tilde{N}}}_{L^2\trkla{\D}}^{2p}} + \expected{\rkla{\sum_{n=0}^{\tilde{N}-1}\norm{\bs{d}\h\nn-\bs{d}\h\no}_{H^1\trkla{\D}}^2}^p} \\
&\qquad+\expected{\rkla{\sum_{n=0}^{\tilde{N}-1}\tau\norm{\bs{d}\h\nh\times\Delta\h\bs{d}\h\nn}\h^2}^p}\\
&\quad\leq C\expected{\norm{\nabla\bs{d}\h^0}_{L^2\trkla{\D}}^{2p}} +C\expected{\rkla{\sum_{n=0}^{\tilde{N}-1}\norm{\Ih{\sinc{n+1}}}_{H^1\trkla{\D}}^2}^p}\\
&\qquad +C\expected{\max_{1\leq M\leq \tilde{N}}\abs{\sum_{n=0}^{M-1}\rkla{\nabla\trkla{\id-\Ihop}\gkla{\bs{d}\h\no\times\Ih{\sinc{n+1}}},\nabla\bs{d}\h\no}}^p} \\
&\qquad+C\sum_{q=1}^d\expected{\max_{1\leq M\leq \tilde{N}}\abs{\sum_{n=0}^{M-1}\rkla{\bs{d}\h\no\times\frac{\partial\Ih{\sinc{n+1}}}{\partial x_q},\frac{\partial \bs{d}\h\no}{\partial x_q}}}^p} \\
&\qquad+C\sum_{q=1}^d\expected{\rkla{\sum_{n=0}^{\tilde{N}-1}\norm{\frac{\partial\bs{d}\h\no}{\partial x_q}\times\Ih{\sinc{n+1}}}_{L^2\trkla{\D}}^2}^p}\\
&\qquad+C\sum_{q=1}^d\expected{\rkla{\sum_{n=0}^{\tilde{N}-1}\norm{\frac{\partial\Ih{\sinc{n+1}}}{\partial x_q}\times\frac{\partial\bs{d}\h\no}{\partial x_q}}_{L^2\trkla{\D}}^2}^p}\\
&\quad=:C\expected{\norm{\nabla\bs{d}\h^0}_{L^2\trkla{\D}}^{2p}} + \sum_{r=1}^5\mathfrak{R}_r\tekla{\tilde{N}-1}\,.
\end{split}
\end{align}
Standard arguments and \ref{item:W3} show that 
\begin{align}\label{eq:estimatestochastic}
\mathfrak{R}_1\tekla{\tilde{N}-1} \leq C\sum_{n=0}^{\tilde{N}-1}\tau \rkla{\sum_{i=1}^3\sum_{l\in\mathds{Z}\h}\norm{\lambda_{li}\Ih{\g{li}}}_{H^1\trkla{\D}}^2}^p\leq C\,.
\end{align}
For $\mathfrak{R}_2\tekla{\tilde{N}-1}$ and $\mathfrak{R}_3\tekla{\tilde{N}-1}$, we apply the Burkholder--Davis--Gundy inequality, Hölder's inequality, assumption \ref{item:W3}, and \eqref{eq:lengthconstraint} to deduce
\begin{align}
\begin{split}
\mathfrak{R}_2\tekla{\tilde{N}-1}\leq&\,C\sum_{n=0}^{\tilde{N}-1}\tau\expected{\rkla{\norm{\nabla\bs{d}\h\no}_{L^2\trkla{\D}}^2\sum_{i=1}^3\sum_{l\in\mathds{Z}\h}\norm{\nabla\trkla{\id-\Ihop}\gkla{\bs{d}\h\no\times\Ih{\lambda_{li}\g{li}}}}_{L^2\trkla{\D}}^2}^{p/2}}\\
\leq&\,C\sum_{n=0}^{\tilde{N}-1}\tau\expected{\norm{\nabla\bs{d}\h\no}_{L^2\trkla{\D}}^{2p}\rkla{Ch\sum_{i=1}^3\sum_{l\in\mathds{Z}\h}\norm{\nabla\Ih{\lambda_{li}\g{li}}}_{L^\infty\trkla{\D}}^2}^{p/2}}\\
\leq&\,Ch^{p/2}\sum_{n=0}^{\tilde{N}-1}\tau\expected{\norm{\nabla\bs{d}\h\no}_{L^2\trkla{\D}}^{2p}}\,,
\end{split}
\end{align}
\begin{align}
\begin{split}
\mathfrak{R}_3\tekla{\tilde{N}-1}\leq&\,C\sum_{q=1}^d\sum_{n=0}^{\tilde{N}-1}\tau\expected{\rkla{\sum_{i=1}^3\sum_{l\in\mathds{Z}\h}\abs{\rkla{\bs{d}\h\no\times\frac{\partial\Ih{\lambda_{li}\g{li}}}{\partial x_q},\frac{\partial\bs{d}\h\no}{\partial x_q}}}^2}^{p/2}}\\
\leq&\,C\sum_{n=0}^{\tilde{N}-1}\tau\expected{\rkla{\norm{\bs{d}\h\no}_{L^\infty\trkla{\D}}^2\norm{\nabla\bs{d}\h\no}_{L^2\trkla{\D}}^2\sum_{i=1}^3\sum_{l\in\mathds{Z}\h}\norm{\lambda_{li}\nabla\Ih{\g{li}}}_{L^2\trkla{\D}}^2}^{p/2}}\\
\leq&\,C\sum_{n=0}^{\tilde{N}-1}\tau\expected{\norm{\nabla\bs{d}\h\no}_{L^2\trkla{\D}}^{2p}} +C\,,
\end{split}
\end{align}
For $\mathfrak{R}_4\tekla{\tilde{N}-1}$ and $\mathfrak{R}_5\tekla{\tilde{N}-1}$, we obtain
\begin{align}
\begin{split}
\mathfrak{R}_4\tekla{\tilde{N}-1}\leq&\,C\sum_{q=1}^d\sum_{n=0}^{\tilde{N}-1}\tau \expected{\rkla{\sum_{i=1}^3\sum_{l\in\mathds{Z}\h}\norm{\frac{\partial\bs{d}\h\no}{\partial x_q}\times\Ih{\lambda_{li}\g{li}}}_{L^2\trkla{\D}}^2}^p}\\
\leq&\,C\sum_{n=0}^{\tilde{N}-1}\tau\expected{\norm{\nabla\bs{d}\h\no}_{L^2\trkla{\D}}^{2p}\rkla{\sum_{i=1}^3\sum_{l\in\mathds{Z}\h}\norm{\lambda_{li}\Ih{\g{li}}}_{L^\infty\trkla{\D}}^2}^p}\\
\leq&\,C\sum_{n=0}^{\tilde{N}-1}\tau\expected{\norm{\nabla\bs{d}\h\no}_{L^2\trkla{\D}}^{2p}}\,,
\end{split}
\end{align}
\begin{align}
\begin{split}
\mathfrak{R}_5\tekla{\tilde{N}-1}\leq&\,C\sum_{q=1}^d\sum_{n=0}^{\tilde{N}-1}\tau\expected{\rkla{\sum_{i=1}^3\sum_{l\in\mathds{Z}\h}\norm{\frac{\partial\Ih{\lambda_{li}\g{li}}}{\partial x_q}\times\frac{\partial\bs{d}\h\no}{\partial x_q}}_{L^2\trkla{\D}}^2}^p}\\
\leq&\,C\sum_{n=0}^{\tilde{N}-1}\tau\expected{\norm{\nabla\bs{d}\h\no}_{L^2\trkla{\D}}^{2p}\rkla{\sum_{i=1}^3\sum_{l\in\mathds{Z}\h}\norm{\nabla\Ih{\lambda_{li}\g{li}}}_{L^\infty\trkla{\D}}^2}^p}\\
\leq&\,C\sum_{n=0}^{\tilde{N}-1}\tau\expected{\norm{\nabla\bs{d}\h\no}_{L^2\trkla{\D}}^{2p}}\,.
\end{split}
\end{align}
Consequently, a Gronwall argument provides
\begin{align}
\max_{0\leq\tilde{N}\leq N}\expected{\norm{\nabla\bs{d}\h^{\tilde{N}}}_{L^2\trkla{\D}}^{2p}} \leq C\trkla{T,\bs{d}\h^0}\,.
\end{align}
Summing \eqref{eq:proof:energy:intermediate} from $n=0$ to $\tilde{N}-1\leq N-1$ and taking the $p$-th power and the maximum over $\tilde{N}\in\tgkla{0,\ldots,N}$, we obtain for the expected value
\begin{align}
\begin{split}
&\expected{\max_{0\leq\tilde{N}\leq N}\norm{\nabla\bs{d}\h^{\tilde{N}}}_{L^2\trkla{\D}}^{2p}}+\expected{\rkla{\sum_{n=0}^{N-1}\norm{\bs{d}\h\nn-\bs{d}\h\no}_{H^1\trkla{\D}}^2}^p}\\
&\qquad+\expected{\rkla{\sum_{n=0}^{N-1}\tau\norm{\bs{d}\h\nh\times\Delta\h\bs{d}\h\nn}\h^2}^p}\\
&\quad\leq C\expected{\norm{\nabla\bs{d}\h^0}_{L^2\trkla{\D}}^{2p}}+\sum_{r=1}^5\mathfrak{R}_r\tekla{N-1}\\
&\quad\leq C\expected{\norm{\nabla\bs{d}\h^0}_{L^2\trkla{\D}}^{2p}} + C\trkla{T}+C\sum_{n=0}^{N-1}\tau\expected{\norm{\nabla\bs{d}\h\no}_{L^2\trkla{\D}}^{2p}}\leq C\,.
\end{split}
\end{align}
Recalling $\norm{\bs{d}\h\nn}_{L^\infty\trkla{\D}}=\norm{\bs{d}\h^0}_{L^\infty\trkla{\D}}$ completes the estimate on the full $H^1\trkla{\D}$-norm.
\end{proof}
Next, we establish Nikolskii-type estimates for $\trkla{\bs{d}\h\no}_{n}$. 
On the one hand, this guarantees that the limit function $\bs{d}$ will be Hölder continuous with respect to time (cf.~\cite{Banas2013, Simon1990}).
On the other hand, we will use the special case $k=1$ in Lemma \ref{lem:stochasticerror} and Lemma \ref{lem:B} to show that our approximation of the Stratonovich integral indeed converges towards an \Ito~integral with the proper correction terms.

\begin{lemma}\label{lem:Nikolskii}
Let assumptions \ref{item:T}, \ref{item:S}, \ref{item:I}, \ref{item:W1}, \ref{item:W2}, and \ref{item:W3} hold true.
Then, the solution $\trkla{\bs{d}\h\no}_n$ satisfies
\begin{align}
\expected{\sum_{m=0}^{N-k}\tau\norm{\bs{d}\h^{m+k}-\bs{d}\h^m}\h^{2q}}\leq C\trkla{k\tau}^q
\end{align}
for all $q\geq1$ and all $k=0,\ldots,N$ with a constant $C$ depending on $q$ but not on $h$, $\tau$, or $k$.
\end{lemma}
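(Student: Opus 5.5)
We prove the Nikolskii estimate by a telescoping argument based on the discrete equation \eqref{eq:discscheme}, tested with the increment $\bs{d}\h^{m+k}-\bs{d}\h^m$ itself. Summing \eqref{eq:discscheme} over $n=m,\ldots,m+k-1$ gives, for every $\bs{\psi}\h\in\trkla{\Uh}^3$,
\begin{align*}
\rkla{\bs{d}\h^{m+k}-\bs{d}\h^m,\bs{\psi}\h}\h=\sum_{n=m}^{m+k-1}\tau\rkla{\bs{d}\h\nh\times\Delta\h\bs{d}\h\nn,\bs{d}\h\nh\times\bs{\psi}\h}\h+\sum_{n=m}^{m+k-1}\rkla{\bs{\psi}\h,\bs{d}\h\nh\times\sinc{n+1}}\h\,.
\end{align*}
We choose $\bs{\psi}\h=\bs{d}\h^{m+k}-\bs{d}\h^m$. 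The left-hand side becomes $\norm{\bs{d}\h^{m+k}-\bs{d}\h^m}\h^2$.

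For the deterministic part we use the nodal bound $\abs{\bs{d}\h\nh}\leq1$ from \eqref{eq:lengthconstraint}, the norm equivalence \eqref{eq:normequivalence} and the Cauchy--Schwarz inequality in time. This bounds it by $\norm{\bs{d}\h^{m+k}-\bs{d}\h^m}\h\,(k\tau)^{1/2}\bigl(\sum_{n}\tau\norm{\bs{d}\h\nh\times\Delta\h\bs{d}\h\nn}\h^2\bigr)^{1/2}$. After Young's inequality and raising to the power $q$, its contribution is $C(k\tau)^q\bigl(\sum_{n=0}^{N-1}\tau\norm{\bs{d}\h\nh\times\Delta\h\bs{d}\h\nn}\h^2\bigr)^q$. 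Its expectation is bounded by Lemma \ref{lem:energy}, uniformly in $m$. Summing $\tau$ over $m$ costs only a factor $T$.

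The noise term is the main obstacle. The integrand $\bs{d}\h\nh$ is not $\mathcal{F}_{t\no}$-measurable, so we first split $\bs{d}\h\nh=\bs{d}\h\no+\tfrac12(\bs{d}\h\nn-\bs{d}\h\no)$.

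\emph{Martingale part.} Testing with $\bs{\psi}\h$ and pairing with $\bs{d}\h\no\times\sinc{n+1}$, we rewrite
\begin{align*}
\rkla{\bs{\psi}\h,\bs{d}\h\no\times\sinc{n+1}}\h=\rkla{\Ih{\sinc{n+1}},\bs{\psi}\h\times\bs{d}\h\no}\h
\end{align*}
and move the $\bs{\psi}\h$-dependence outside. Concretely, we bound this contribution by $\norm{\bs{\psi}\h}\h\,\norm{\sum_{n=m}^{m+k-1}\Ih{\bs{d}\h\no\times\sinc{n+1}}}\h$. The second factor is a discrete martingale in $n$ with $\mathcal{F}_{t\no}$-measurable coefficients. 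By Young's inequality it suffices to estimate $\expected{\norm{\sum_n\Ih{\bs{d}\h\no\times\sinc{n+1}}}\h^{2q}}$. Using the discrete Burkholder--Davis--Gundy inequality, $\abs{\bs{d}\h\no}\leq1$ and \ref{item:W3}, this is at most $C\bigl(\sum_n\tau\sum_{i,l}\abs{\lambda_{li}}^2\norm{\Ih{\g{li}}}_{L^\infty}^2\bigr)^q\leq C(k\tau)^q$.

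\emph{Correction part.} The remaining piece $\tfrac12\sum_n\rkla{\bs{\psi}\h,(\bs{d}\h\nn-\bs{d}\h\no)\times\sinc{n+1}}\h$ is bounded by $\norm{\bs{\psi}\h}\h\sum_n\norm{\bs{d}\h\nn-\bs{d}\h\no}\h\norm{\Ih{\sinc{n+1}}}_{L^\infty}$. By Cauchy--Schwarz in $n$, its $2q$-th moment is controlled by
\begin{align*}
\expected{\rkla{\sum_n\norm{\bs{d}\h\nn-\bs{d}\h\no}\h^2}^{2q}}^{1/2}\,\expected{\rkla{\sum_{n=m}^{m+k-1}\norm{\Ih{\sinc{n+1}}}_{L^\infty}^2}^{2q}}^{1/2}\,.
\end{align*}
The first factor is bounded by Lemma \ref{lem:energy}. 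The second factor is $C(k\tau)^q$ by Gaussian moments of the Brownian increments and \ref{item:W3}. A cleaner alternative is to use \eqref{eq:proof:energy:0}, which gives $\norm{\bs{d}\h\nn-\bs{d}\h\no}\h^2\leq C\tau^2\norm{\bs{d}\h\nh\times\Delta\h\bs{d}\h\nn}\h^2+C\norm{\sinc{n+1}}\h^2$ and hence makes each summand of order $\tau$.

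Collecting terms and absorbing $\tfrac12\norm{\bs{\psi}\h}\h^2$ yields $\norm{\bs{d}\h^{m+k}-\bs{d}\h^m}\h^{2}\leq C(\ldots)$ pathwise. Taking the $q$-th power, expectations, multiplying by $\tau$ and summing over $m\leq N-k$ gives the claim. The care needed is to keep all constants independent of $m$ and $k$, and in particular to handle the adaptedness of the stochastic sum by performing the split above before applying BDG.
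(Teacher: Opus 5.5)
Your proposal is correct and follows essentially the same route as the paper: you telescope the scheme over $n=m,\dots,m+k-1$, test with $\bs{d}\h^{m+k}-\bs{d}\h^m$, and split into three parts. These are the dissipative term, the adapted martingale part with $\bs{d}\h\no$ (handled by Young's inequality and the Burkholder--Davis--Gundy inequality after pulling the non-adapted test function out), and the correction part containing $\bs{d}\h\nn-\bs{d}\h\no$. The differences are minor and do not affect validity: you absorb pathwise before taking $q$-th powers, and in the correction term you use Cauchy--Schwarz with $\norm{\Ih{\sinc{n+1}}}_{L^\infty}$ instead of the paper's $H^1$-norms with Young's inequality weighted by $(k\tau)^{\pm1/2}$.
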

\begin{proof}
Summing \eqref{eq:discscheme} from $n=m$ to $m+k-1$, choosing $\bs{\psi}\h:=\bs{d}\h^{m+k}-\bs{d}\h^m$, taking the $q$-th power, multiplying by $\tau$, and summing from $m=0$ to $N-k$ yields
\begin{align}
\begin{split}
&\expected{\sum_{m=0}^{N-k}\tau\norm{\bs{d}\h^{m+k}-\bs{d}\h^m}\h^{2q}}\\
&\qquad\leq C\expected{\sum_{m=0}^{N-k}\tau\abs{\tau\sum_{n=m}^{m+k-1}\rkla{\bs{d}\h\nh\times\Delta\h\bs{d}\h\nn,\bs{d}\h\nh\times\trkla{\bs{d}\h^{m+k}-\bs{d}\h^m}}\h}^q}\\
&\qquad\qquad+C\expected{\sum_{m=0}^{N-k}\tau\abs{\sum_{n=m}^{m+k-1}\rkla{\bs{d}\h^{m+k}-\bs{d}\h^m,\bs{d}\h\no\times\sinc{n+1}}\h}^q}\\
&\qquad\qquad+C\expected{\sum_{m=0}^{N-k}\tau\abs{\sum_{n=m}^{m+k-1}\rkla{\bs{d}\h^{m+k}-\bs{d}\h^m,\trkla{\bs{d}\h\nn-\bs{d}\h\no}\times\sinc{n+1}}\h}^q}\\
&\qquad=:I+II+III\,.
\end{split}
\end{align}
To estimate $I$, we use Hölder's inequality, Young's inequality with $0<\alpha<\!\!<1$, \eqref{eq:lengthconstraint}, and Lemma \ref{lem:energy} to deduce
\begin{align}
\begin{split}
I\leq&\, C\expected{\sum_{m=0}^{N-k}\tau\rkla{\sum_{n=m}^{m+k-1}\tau\norm{\bs{d}\h\nh\times\Delta\h\bs{d}\h\nn}\h\norm{\bs{d}\h\nh}_{L^\infty\trkla{\D}}\norm{\bs{d}\h^{m+k}-\bs{d}\h^m}\h}^q}\\
\leq&\,\alpha\expected{\sum_{m=0}^{N-k}\tau\norm{\bs{d}\h^{m+k}-\bs{d}\h^m}\h^{2q}}\\
&\,+C\expected{\sum_{m=0}^{N-k}\tau \rkla{\sum_{n=m}^{m+k-1} \tau\norm{\bs{d}\h\nh\times\Delta\h\bs{d}\h\nn}\h\norm{\bs{d}\h\nh}_{L^\infty\trkla{\D}}}^{2q}}\\
\leq&\,\alpha\expected{\sum_{m=0}^{N-k}\tau\norm{\bs{d}\h^{m+k}-\bs{d}\h^m}\h^{2q}}\\
&\,+C\expected{\max_{0\leq n\leq N-1}\norm{\bs{d}\h\nh}_{L^\infty\trkla{\D}}^{2q} \sum_{m=0}^{N-k}\tau\trkla{k\tau}^q \rkla{\sum_{n=m}^{m+k-1}\tau\norm{\bs{d}\h\nh\times\Delta\h\bs{d}\h\nn}\h^2}^q }\\
\leq&\,\alpha\expected{\sum_{m=0}^{N-k}\tau\norm{\bs{d}\h^{m+k}-\bs{d}\h^m}\h^{2q}} +C\trkla{k\tau}^q\,.
\end{split}
\end{align}
Applying Young's inequality, the Burkholder--Davis--Gundy inequality, and \eqref{eq:lengthconstraint} provides
\begin{align}
\begin{split}
II\leq&\,\alpha\expected{\sum_{m=0}^{N-k}\tau\norm{\bs{d}\h^{m+k}-\bs{d}\h^m}\h^{2q}} +C\expected{\sum_{m=0}^{N-k}\tau\norm{\sum_{n=m}^{m+k-1}\bs{d}\h\no\times\sinc{n+1}}\h^{2q}}\\
\leq&\,\alpha\expected{\sum_{m=0}^{N-k}\tau\norm{\bs{d}\h^{m+k}-\bs{d}\h^m}\h^{2q}} +C\sum_{m=0}^{N-k}\tau\trkla{k\tau}^{q-1}\sum_{n=m}^{m+k-1}\tau\expected{\rkla{\sum_{i=1}^d\sum_{l\in\mathds{Z}\h}\norm{\bs{d}\h\no\times\trkla{\lambda_{li}\g{li}}}\h^2}^q}\\
\leq&\,\alpha\expected{\sum_{m=0}^{N-k}\tau\norm{\bs{d}\h^{m+k}-\bs{d}\h^m}\h^{2q}} +C\trkla{k\tau}^{q}\,.
\end{split}
\end{align}

Using Hölder's inequality, standard Sobolev embeddings, Young's inequality, and Lemma \ref{lem:energy}, we compute
\begin{align}
\begin{split}
III\leq&\,C\expected{\sum_{m=0}^{N-k}\tau \rkla{\sum_{n=m}^{m+k-1}\norm{\bs{d}\h^{m+k}-\bs{d}\h^m}\h\norm{\bs{d}\h\nn-\bs{d}\h\no}_{H^1\trkla{\D}}\norm{\Ih{\sinc{n+1}}}_{H^1\trkla{\D}}}^q}\\
\leq&\,C\expected{\sum_{m=0}^{N-k}\tau\rkla{\trkla{k\tau}^{1/2}\sum_{n=m}^{m+k-1}\norm{\bs{d}\h^{m+k}-\bs{d}\h^m}\h\norm{\bs{d}\h\nn-\bs{d}\h\no}_{H^1\trkla{\D}}^2 }^q}\\
&\,+C\expected{\sum_{m=0}^{N-k}\tau\rkla{\trkla{k\tau}^{-1/2}\sum_{n=m}^{m+k-1}\norm{\bs{d}\h^{m+k}-\bs{d}\h^m}\h\norm{\Ih{\sinc{n+1}}}_{H^1\trkla{\D}}^2}^q}\\
\leq&\,\alpha\expected{\sum_{m=0}^{N-k}\tau\norm{\bs{d}\h^{m+k}-\bs{d}\h^m}\h^{2q}}+C\expected{\sum_{m=0}^{N-k}\tau\trkla{k\tau}^q\rkla{\sum_{n=m}^{m+k-1}\norm{\bs{d}\h\nn-\bs{d}\h\no}_{H^1\trkla{\D}}^2}^{2q}}\\
&\,+C\expected{\sum_{m=0}^{N-k}\trkla{k\tau}^{-q}\rkla{\sum_{n=m}^{m+k-1}\norm{\Ih{\sinc{n+1}}}_{H^1\trkla{\D}}^2}^{2q}}\\
\leq&\,\alpha\expected{\sum_{m=0}^{N-k}\tau\norm{\bs{d}\h^{m+k}-\bs{d}\h^m}\h^{2q}} +C\trkla{k\tau}^q\,.
\end{split}
\end{align}
\end{proof}

\section{Limit of the energy estimate}\label{sec:limenergy}
In this section, we shall prove the following convergence result:
\begin{theorem}\label{thm:convergence}
Let $\bs{W}$ be a cylindrical Wiener process which is defined on a complete filtered probability space $\trkla{\Omega,\mathcal{A},\trkla{\mathcal{F}_t}_{t\geq0},\mathds{P}}$ and let the assumptions \ref{item:W1}, \ref{item:W2}, \ref{item:W3}, \ref{item:T}, \ref{item:S}, \ref{item:I}, \ref{item:C}, and \ref{item:Z} hold true.
Then, for $p\in[1,\infty)$, $q\in\trkla{1,\infty}$ there exists a progressively $\mathcal{F}$-measurable process
\begin{align}
\begin{split}
\bs{d}\in& L^\infty\trkla{\Omega\times\trkla{0,T}\times\D}\cap L^{p}_{\weakstartop}\trkla{\Omega;L^\infty\trkla{0,T;H^1\trkla{\D}}}\\
&\,\cap L^{2q}\trkla{\Omega;C^{0,\trkla{q-1}/\trkla{2q}}\rkla{\tekla{0,T};L^2\trkla{\D}}}
\end{split}
\end{align}
and progressively measurable processes $ \bs{q} \in L^{p}(\Omega; L^2(\D\times (0,T)))$ and $ E \in L^{p}(\Omega; \mathfrak{D}([0,T]))$. 
These processes can be obtained as limits of suitable subsequences of discrete solutions to \eqref{eq:discscheme} in the sense that
\begin{subequations}\label{eq:convergence}
\begin{align}
\bs{d}\h\tpm\stackrel{*}{\rightharpoonup}&\,\bs{d}&&\text{in~}L^\infty\trkla{\Omega\times\trkla{0,T}\times\D}^3\,,\label{eq:envarconvergence:weakstar}\\
\bs{d}\h\tpm\stackrel{*}{\rightharpoonup}&\,\bs{d}&&\text{in~}L^{p}_{\weakstartop}\trkla{\Omega;L^\infty\trkla{0,T;H^1\trkla{\D}}}^3\,,\label{eq:envarconvergence:weakgrad}\\
\bs{d}\h\ti\times\nabla \bs{d}\h\ti\stackrel{*}{\rightharpoonup}&\,\bs{q}&&\text{in~}L^{p}_{\weakstartop}\trkla{\Omega;L^\infty\trkla{0,T;L^2\trkla{\D}}}^{3\times d}\,,\label{eq:envarconvergence:dissipation}\\
\frac{1}{2}\| \nabla \bs d\h\tp \|_{L^2(\D)}^2 \stackrel{*}{\rightharpoonup}& \, E &&\text{in~}L^{p}_{w^*}\trkla{\Omega;L^\infty(0,T)} \,,\label{eq:envarconvergence:energy}\\
\Ih{\bs{d}\h\ti\times\Delta\h\bs{d}\h\ti}\rightharpoonup&\,\div\tgkla{\bs{q}}&&\text{in~}L^{p}\trkla{\Omega;L^2\trkla{0,T;L^2\trkla{\D}}}^3\,,\label{eq:envarconvergence:dissipation2}
\end{align}
and
\begin{align}
\sum_{i=1}^3\sum_{l\in\mathds{Z}\h}\tabs{\lambda_{li}}^2\Ih{\rkla{\bs{d}\h\ti\times\g{li}}\times\g{li}}\rightharpoonup&\sum_{i=1}^3\sum_{l\in\mathds{Z}}\tabs{\lambda_{li}}^2\rkla{\bs{d}\times\g{li}}\times\g{li}&&\text{in~}L^p\trkla{\Omega\times\trkla{0,T}\times\D}\,.\label{eq:envarconvergence:stratcorrection}
\end{align}
\end{subequations}
Furthermore, the limits $(\bs{d}, \bs q, E) $ are an energy-variational solution in the sense of Definition~\ref{def:envar2}. 
\end{theorem}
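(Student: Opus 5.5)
We prove Theorem~\ref{thm:convergence} in three stages: extraction of weakly converging subsequences, a discrete energy-variational inequality, and passage to the limit using only weak(-$*$) convergence and weak lower semicontinuity, all on the original stochastic basis.

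\emph{Step 1: compactness.} By \eqref{eq:lengthconstraint} and \eqref{eq:normequivalence}, $\|\bs d\h\tpm\|_{L^\infty}\le1$. Lemma~\ref{lem:energy} bounds $\bs d\h\tpm$ in $L^{p}(\Omega;L^\infty(0,T;H^1))$ for every $p$, and also $\bs d\h\ti\times\nabla\bs d\h\ti$, $\tfrac12\|\nabla\bs d\h\tp\|^2$ and $\Ih{\bs d\h\ti\times\Delta\h\bs d\h\ti}$ in the spaces of \eqref{eq:convergence}. For the last quantity we relate $\bs d\h\ti\times\Delta\h\bs d\h\ti$ to $\bs d\h\nh\times\Delta\h\bs d\h\nn$ via $\sum\|\bs d\h\nn-\bs d\h\no\|_{H^1}^2$ and inverse estimates; this is where \ref{item:C} is used. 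Banach--Alaoglu and a diagonal argument then give the limits $\bs d,\bs q,E$ and $\div\bs q$. These limits remain progressively measurable, since weak limits of adapted processes stay in the closed subspace of adapted processes. Differences between the interpolants $\tpm$ vanish by the $\sum\|\bs d\h\nn-\bs d\h\no\|^2$ bound, so all interpolants share one limit. The Nikolskii estimate of Lemma~\ref{lem:Nikolskii}, combined with Kolmogorov/Simon-type arguments and weak lower semicontinuity, yields the H\"older regularity. Weak lower semicontinuity of $|\cdot|$ gives $|\bs d|\le1$. We identify $\div\bs q$ as the limit of $\Ih{\bs d\h\ti\times\Delta\h\bs d\h\ti}$ by testing with smooth $\bs\Psi$: a discrete integration by parts gives $(\Ih{\bs d\h\ti\times\Delta\h\bs d\h\ti},\bs\Psi)\approx-(\bs d\h\ti\times\nabla\bs d\h\ti,\nabla\bs\Psi)$ up to $\mathcal O(h)$ errors from Lemma~\ref{lem:interpolationerror}. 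The convergence \eqref{eq:envarconvergence:stratcorrection} is linear in $\bs d$ and follows from \ref{item:Z}, \ref{item:W3}, and interpolation estimates.

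\emph{Step 2: discrete inequality.} We sum \eqref{eq:proof:energy:0.5} in time to get a discrete energy identity, and test \eqref{eq:discscheme} with $\Ih{\bs\varphi(t\no)}$ for a test process $\bs\varphi$. We add the two and apply discrete It\^o/Abel summation to the product $(\bs d\h\no,\bs\varphi\no)$. This produces discrete versions of the $A$, $B\,\mathrm d\bs W$, and cross-variation terms of \eqref{eq:envarineq2}. The key algebraic step completes a square:
\[
\tau\|\bs d\h\nh\times\Delta\h\bs d\h\nn\|\h^2-\tau(\bs d\h\nh\times\Delta\h\bs d\h\nn,\bs d\h\nh\times\bs\varphi)\h\ \ge\ \tau\|\bs d\h\nh\times\Delta\h\bs d\h\nn\|\h^2\cdot 0+\ldots,
\]
that is, we replace the nonconvex term by $\|\div\bs q\|^2+(\div\bs q,\bs d\times\bs\varphi)+\tfrac14\int|\bs\varphi|^2(|\bs d|^2-1)$. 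This works because nodal $|\bs d\h|=1$ gives $\tfrac14|\bs d\h\times\bs\varphi|^2\le\tfrac14|\bs\varphi|^2$. The resulting map is convex in $(\bs d,\div\bs q)$ for fixed $\bs\varphi$, hence weakly lower semicontinuous. Likewise, the $\mathcal K$-type term $\tfrac14\int|\bs\varphi|^2(|\bs d|^2-1)$ is convex in $\bs d$.

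\emph{Step 3: Stratonovich-to-It\^o conversion (main obstacle).} The discrete noise $\bs d\h\nh\times\sinc{n+1}$ must be rewritten as $\bs d\h\no\times\sinc{n+1}$, which is a discrete It\^o integral, plus correction terms. Using \eqref{eq:discscheme} once more to expand $\bs d\h\nn-\bs d\h\no$ produces $\tfrac12\sum|\lambda_{li}|^2\Ih{(\bs d\h\ti\times\g{li})\times\g{li}}$ and $(\bs q,\g{li}\times\nabla\g{li})$-type terms, via Lemma~\ref{lem}. The remainder terms involve $|\incbeta{n+1}{l}{i}|^2-\tau$ (martingale differences), products $\tau\,\delta\beta$, and interpolation errors. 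We control them with BDG, Lemma~\ref{lem:Nikolskii} for $k=1$, the $\tau^{1-2\widehat\epsilon}/h^2$ restriction from \ref{item:C} to absorb inverse estimates applied to $\Delta\h\bs d\h$, and Lemma~\ref{lem:interpolationerror}. We expect the higher-order term $(-\Delta\h\bs d\h\nn,\bs d\h\nh\times\sinc{n+1})\h$ in the energy to be the hardest, since it needs the splitting $\mathfrak r_1$--$\mathfrak r_4$ refined to vanishing errors.

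\emph{Step 4: limit passage and remaining properties.} The stochastic integrals are linear in $(\bs d\h,\bs q\h)$ with fixed integrands, so they converge weakly, the It\^o integral being a bounded linear operator on progressive $L^2$ processes. We multiply by $\psi\ge0$ in $L^\infty(\Omega)$ and $\phi\ge0$ in time and pass to the limit using weak lower semicontinuity on the convex parts. Lemma~\ref{lem:invar} then yields \eqref{eq:envarineq2} pointwise with a c\`adl\`ag $E$. The bound $E\ge\mathcal E(\bs d)$ follows from lower semicontinuity. For \eqref{eq:formulationq} we pass to the limit in the pointwise identity
\[
0\le\tfrac12\|\nabla\bs d\h-\bs d\h\times\bs\Psi\|^2\approx\tfrac12\|\nabla\bs d\h\|^2-(\bs d\h\times\nabla\bs d\h,\bs\Psi)+\tfrac12\int|\bs\Psi|^2|\bs d\h|^2,
\]
where $|\bs d\h|\approx1$ at the nodes, using the convexity in $\bs d$ of $\int|\bs\Psi|^2(\ldots)$. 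The regularity \eqref{eq:d-dW} comes from a discrete bound on $\bs d\h-\sum\bs d\h\no\times\sinc{n+1}$ in $H^1(0,T;L^2)$, obtained from the scheme and Lemma~\ref{lem:energy}, together with weak lower semicontinuity.
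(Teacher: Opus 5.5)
Your overall architecture is the paper's: weak-$*$ compactness, the discrete energy identity combined with the scheme paired against $\bs\varphi\no$, It\^o--Stratonovich conversion with residuals controlled under \ref{item:C}, completing the square, Lemma~\ref{lem}, lower semicontinuity, and Lemma~\ref{lem:invar}. However, the step producing \eqref{eq:formulationq} fails as written.

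\textbf{The gap in \eqref{eq:formulationq}.} Your ``identity'' drops a term. The correct identity is
\[
(\bs d\h\times\nabla\bs d\h,\bs\Psi)=\tfrac12\norm{\nabla\bs d\h-\bs d\h\times\bs\Psi}_{L^2(\D)}^2+\tfrac12\norm{\bs d\h\cdot\bs\Psi}_{L^2(\D)}^2-\tfrac12\norm{\nabla\bs d\h}_{L^2(\D)}^2-\tfrac12\iD\abs{\bs d\h}^2\abs{\bs\Psi}^2\dx\,,
\]
and $\tfrac12\norm{\bs d\h\cdot\bs\Psi}^2$ is of order one even when $\abs{\bs d\h}=1$ at the nodes. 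Moreover, starting from ``$0\le$'' discards the square. The term $\iD\abs{\bs\Psi}^2\abs{\bs d\h}^2\dx$ then sits on the side where its convexity gives the wrong inequality.

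Taken literally, your argument yields only $\abs{(\bs q,\bs\Psi)}\le E+\tfrac12\norm{\bs\Psi}^2$. At best it yields \eqref{eq:formulationq} with an extra $\tfrac12\norm{\bs d\cdot\bs\Psi}_{L^2(\D)}^2$ on the right. That is not the inequality of Definition~\ref{def:envar2}, and the extra term would break the scaling argument of Theorem~\ref{thm:weaksol} and the estimates of Section~\ref{sec:weakstrong}.

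The fix is to use the displayed identity and bound only the last term, by $-\tfrac12\norm{\bs\Psi}^2$, via $\abs{\bs d\h}\le1$. Then apply weak lower semicontinuity to the two convex quadratics, together with the weak-$*$ convergence of the discrete energy to $E$.

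\textbf{The same issue in the It\^o-correction terms.} After Lemma~\ref{lem}, the term $\iD\abs{\bs d\h\ti}^2\abs{\partial_j\g{li}}^2\dx$ stands on the \emph{right-hand} side, where convexity is useless. It must first be replaced by $\norm{\partial_j\g{li}}^2$ using $\abs{\bs d\h\ti}\le1$; this is why \eqref{eq:envarineq2} contains $\norm{\nabla\g{li}}^2$. Meanwhile $-\norm{\bs d\h\ti\cdot\nabla\g{li}}^2$ is concave and the $\bs q$-term is linear, so both pass to the limit directly. Your Step~2 is fine only because you appeal to convexity of the full functional $\norm{\bs a+\tfrac12\bs d\times\bs\varphi}^2+\tfrac14\norm{\bs d\cdot\bs\varphi}^2-\tfrac14\norm{\bs\varphi}^2$. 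The bound $\tfrac14\abs{\bs d\h\times\bs\varphi}^2\le\tfrac14\abs{\bs\varphi}^2$ alone would lose $\tfrac14\norm{\bs d\cdot\bs\varphi}^2$.

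\textbf{Two smaller points.} First, testing with $-\Delta\h\bs d\h\nn$, i.e.\ using \eqref{eq:proof:energy:0.5}, keeps the numerical dissipation $\tfrac12\norm{\nabla(\bs d\h\nn-\bs d\h\no)}^2$. Its sum does not vanish, since it is of the size of the quadratic variation of the noise. Correspondingly, the piece $\tfrac12(\nabla\Ih{\bs d\h\nh\times\sinc{n+1}},\nabla(\bs d\h\nn-\bs d\h\no))$ of the noise term cannot be ``refined to a vanishing error''. It must be cancelled against the dissipation by inserting the scheme, which leaves a remainder of order $(\tau/h^2)^{1/2}$. Dropping the dissipation instead would leave a spurious additional It\^o correction in the limit. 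The paper avoids this by testing with $-\Delta\h\bs d\h\nh$.

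Second, $\Ih{\bs\varphi(t\no)}$ is undefined for test processes that are only $L^4$ in space, such as $\widetilde{\bs d}-\Delta\widetilde{\bs d}$ in Section~\ref{sec:weakstrong}. Instead, write the lumped scheme as the identity of finite element functions
\[
\bs d\h\nn-\bs d\h\no=\tau\Ih{(\bs d\h\nh\times\Delta\h\bs d\h\nn)\times\bs d\h\nh}+\Ih{\bs d\h\nh\times\sinc{n+1}}
\]
and pair it with $\bs\varphi\no$ in $L^2(\D)$, as the paper does.
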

Before we prove this theorem in Section \ref{sec:conv}, we need to establish some auxiliary results.
The main ingredient for the proof of Theorem \ref{thm:convergence} will be a suitably weighted (in time and probability) energy estimate, which has to be written in terms of time-discrete approximations of \Ito-integrals.\\

We choose $\bs{\psi}\h=-\Delta\h\bs{d}\h\nh$ in \eqref{eq:discscheme}. Further, as \eqref{eq:discscheme} holds true on pointwise on all nodes of $\Th$, we may multiply by $\bs{\varphi}\no:=\bs{\varphi}\trkla{t\no}$, where $\bs{\varphi}$ is a test process in the sense of Definition \ref{def:testprocess}.

The difference of these testing procedures reads
\begin{multline}
\tfrac12\norm{\nabla\bs{d}\h\nn}_{L^2\trkla{\D}}^2-\tfrac12\norm{\nabla\bs{d}\h\no}_{L^2\trkla{\D}}^2-\trkla{\bs{d}\h\nn-\bs{d}\h\no,\bs{\varphi}\no} +\tau\norm{\bs{d}\h\nh\times\Delta\h\bs{d}\h\nh}\h^2\\
+\tau\rkla{\bs{d}\h\nh\times\Delta\h\trkla{\bs{d}\h\nn-\bs{d}\h\nh},\bs{d}\h\nh\times\Delta\h\bs{d}\h\nh}\h \\
+ \tau\rkla{\Ih{\trkla{\bs{d}\h\nh\times\Delta\h\bs{d}\h\nh}\times\bs{d}\h\nh},\bs{\varphi}\no}\\
+\tau\rkla{\Ih{\trkla{\bs{d}\h\nh\times\Delta\h\trkla{\bs{d}\h\nn-\bs{d}\h\nh}}\times\bs{d}\h\nh},\bs{\varphi}\no}\\
=\rkla{\bs{d}\h\nh\times\Delta\h\bs{d}\h\nh,\sinc{n+1}}\h +\rkla{\Ih{\sinc{n+1}\times\bs{d}\h\nh},\bs{\varphi}\no}\,.
\end{multline}
Now, we multiply the resulting equality by $\eta\no=\eta\trkla{t\no}$ for $\eta\in C_0^\infty\trkla{[0,T);[0,\infty)}$ and sum over $n$. Applying the discrete integration by parts formula
\begin{align}
\sum_{n=0}^{N-1} (a\nn-a\no)b\no = -\sum_{n=0}^{N-1} a\nn (b\nn - b\no) - a^0 b^0  
\end{align}
for all $\tgkla{a\no,b\no}_{n\in\tgkla{0,\ldots,N}}$ with $b_N=0$ and the discrete product rule $a\nn b\nn-a\no b\no=\trkla{a\nn-a\no}b\nn+a\no\trkla{b\nn-b\no}$, we find
\begin{align}\label{eq:weakEnergy}
\begin{split}
-\sum_{n=0}^{N-1}&\tau\frac{\eta\nn-\eta\no}\tau\ekla{\tfrac12\norm{\nabla\bs{d}\h\nn}_{L^2\trkla{\D}}^2-\trkla{\bs{d}\h\nn,\bs{\varphi}\nn}}-\eta^0\ekla{\tfrac12\norm{\nabla\bs{d}\h^0}_{L^2\trkla{\D}}^2-\trkla{\bs{d}\h^0,\bs{\varphi}^0}}\\
&+\sum_{n=0}^{N-1}\eta\no\rkla{\bs{d}\h\nn,\bs{\varphi}\nn-\bs{\varphi}\no}+\sum_{n=0}^{N-1}\tau\eta\no\norm{\bs{d}\h\nh\times\Delta\h\bs{d}\h\nh}\h^2\\
&+\sum_{n=0}^{N-1}\tau\eta\no\rkla{\Ih{\trkla{\bs{d}\h\nh\times\Delta\h\bs{d}\h\nh}\times\bs{d}\h\nh},\bs{\varphi}\no}\\
&+\sum_{n=0}^{N-1}\tau\eta\no\rkla{\bs{d}\h\nh\times\Delta\h\trkla{\bs{d}\h\nn-\bs{d}\h\nh},\bs{d}\h\nh\times\Delta\h\bs{d}\h\nh}\h\\
&+\sum_{n=0}^{N-1}\tau\eta\no \rkla{\Ih{\trkla{\bs{d}\h\nh\times\Delta\h\trkla{\bs{d}\h\nn-\bs{d}\h\nh}}\times\bs{d}\h\nh},\bs{\varphi}\no}\\
=&\,\sum_{n=0}^{N-1}\eta\no\ekla{\rkla{\bs{d}\h\nh\times\Delta\h\bs{d}\h\nh,\sinc{n+1}}\h-\rkla{\bs{d}\h\nh\times\Ih{\sinc{n+1}},\bs{\varphi\no}}}\,.
\end{split}
\end{align}
The error terms in the fourth and fifth line (i.e.~the last two terms on the left-hand side) will be denoted by
\begin{align}
    \mathfrak{S}_1:=&\,\sum_{n=0}^{N-1}\tau\eta\no\rkla{\bs{d}\h\nh\times\Delta\h\trkla{\bs{d}\h\nn-\bs{d}\h\nh},\bs{d}\h\nh\times\Delta\h\bs{d}\h\nh}\h\,,\\
    \mathfrak{S}_2:=&\,\sum_{n=0}^{N-1}\tau\eta\no\rkla{\Ih{\trkla{\bs{d}\h\nh\times\Delta\h\trkla{\bs{d}\h\nn-\bs{d}\h\nh}}\times\bs{d}\h\nh},\bs{\varphi}\no}\,.
\end{align}
These terms will be discussed in Lemma \ref{lem:diff} and Corollary \ref{cor:diffS} below.
To deal with the terms involving a discrete approximation of the Stratonovich integral or the test process $\bs{\varphi}$, we need to reformulate them as stochastic integrals in the sense of \Ito.\\
We start with the third term on the left-hand side of \eqref{eq:weakEnergy}.
By using $\bs{\varphi}\nn-\bs{\varphi}\no=\int_{t\no}^{t\nn} A\mathrm{d}t +\int_{t\no}^{t\nn}B\mathrm{d}\bs{W}$, we obtain
\begin{multline}
\sum_{n=0}^{N-1}\eta\no\rkla{\bs{d}\h\nn,\bs{\varphi}\nn-\bs{\varphi}\no}=\sum_{n=0}^{N-1}\eta\no\skla{\int_{t^n}^{t^{n+1}}A\dt,\bs{d}\h\nn}+\sum_{n=0}^{N-1}\eta\no\skla{\int_{t^n}^{t^{n+1}}B\mathrm{d}\bs{W},\bs{d}\h\no}\\
+\sum_{n=0}^{N-1}\eta\no\skla{\int_{t\no}^{t\nn}B\mathrm{d}\bs{W},\bs{d}\h\nn-\bs{d}\h\no}\,.
\end{multline}
Inserting \eqref{eq:discscheme} into the last term, we deduce
\begin{align*}
\sum_{n=0}^{N-1}&\eta\no\skla{\int_{t\no}^{t\nn}B\mathrm{d}\bs{W},\trkla{\bs{d}\h\nn-\bs{d}\h\no}} \\
=&\sum_{n=0}^{N-1}\eta\no\tau\skla{\int_{t\no}^{t\nn}B\mathrm{d}\bs{W},\Ih{\trkla{\bs{d}\h\nh\times\Delta\h\bs{d}\h\nn}\times\bs{d}\h\nh}} \\&
+ \sum_{n=0}^{N-1}\eta\no\skla{\int_{t\no}^{t\nn}B\mathrm{d}\bs{W},\Ih{\bs{d}\h\nh\times\sinc{n+1}}}\\
=&\sum_{n=0}^{N-1}\eta\no\sum_{i=1}^3\sum_{l\in\mathds{Z}\h}\ekla{\int_{t\no}^{t\nn}\skla{B_{li}\trkla{t}\g{li},\Ih{\bs{d}\h\no\times\lambda_{li}\g{li}}} \dt} \\&
+\sum_{n=0}^{N-1}\eta\no\tau\skla{\int_{t\no}^{t\nn}B\mathrm{d}\bs{W},\Ih{\trkla{\bs{d}\h\nh\times\Delta\h\bs{d}\h\nn}\times\bs{d}\h\nh}}\\&
+\sum_{n=0}^{N-1}\eta\no\sum_{i=1}^3\sum_{l\in\mathds{Z}\h}\skla{\ekla{\incbeta{n+1}{l}{i}\int_{t\no}^{t\nn}\!\!\!\!\!\!B_{li}\trkla{t}\mathrm{d}\beta_l^i -\int_{t\no}^{t\nn}\!\!\!\!\!\!B_{li}\trkla{t}\dt}\g{li} ,\lambda_{li}\Ih{\bs{d}\h\no\times\g{li}}}\\
&
+\sum_{n=0}^{N-1}\eta\no\sum_{\trkla{i,l}\neq\trkla{j,k}}\skla{\int_{t\no}^{t\nn}B_{kj}\trkla{t}\g{kj}\mathrm{d}\beta_k^j,\Ih{\bs{d}\h\no\times\lambda_{li}\g{li}\incbeta{n+1}{l}{i}}}
\\& 
+\frac 12 \sum_{n=0}^{N-1}\eta\no\sum_{i,j=1}^3 \sum_{l\in\mathbb Z\h}\sum_{k\in\mathbb Z}\skla{\int_{t\no}^{t\nn}B_{kj}\trkla{t}\g{kj}\mathrm{d}\beta_k^j,\Ih{(\bs{d}\h\nn-\bs{d}\h\no)\times\lambda_{li}\g{li}\incbeta{n+1}{l}{i}}}\\
=:&\sum_{n=0}^{N-1}\eta\no\sum_{i=1}^3\sum_{l\in\mathds{Z}\h}\ekla{\int_{t\no}^{t\nn}\skla{B_{li}\trkla{t}\g{li},\Ih{\bs{d}\h\tm\times\lambda_{li}\g{li}}}\dt}  -\mathfrak{E}_1-\mathfrak{E}_2-\mathfrak{E}_3-\mathfrak{E}_4\,.
\end{align*}

Here, we want to point out that in the definition of $\mathfrak{E}_3$ the index $l$ is taken from the finite set $\mathds{Z}\h$, while $k$ is taken from the infinite set $\mathds{Z}$.
For the first part of the right-hand side of \eqref{eq:weakEnergy} we use 
\begin{align}
&\rkla{\bs{d}\h\nh\times\Delta\h\bs{d}\h\nh,\sinc{n+1}}\h= \rkla{\nabla\Ih{\bs{d}\h\nh\times\sinc{n+1}},\nabla\bs{d}\h\nh}\notag\\
&~=\rkla{\nabla\Ih{\bs{d}\h\no\times\sinc{n+1}},\nabla\bs{d}\h\no} +\tfrac12\rkla{\nabla\Ih{\bs{d}\h\no\times\sinc{n+1}},\nabla\trkla{\bs{d}\h\nn-\bs{d}\h\no}}\notag\\
&\qquad +\tfrac12\rkla{\nabla\Ih{\trkla{\bs{d}\h\nn-\bs{d}\h\no}\times\sinc{n+1}},\nabla\bs{d}\h\nh}\notag\\
&~=\rkla{\nabla\Ih{\bs{d}\no\times\sinc{n+1}},\nabla\bs{d}\h\no}+\tfrac12\tau\sum_{i=1}^3\sum_{l\in\mathds{Z}\h}\norm{\nabla\Ih{\bs{d}\h\no\times\lambda_{li}\g{li}}}_{L^2\trkla{\D}}^2\notag\\
&\qquad+\tfrac12\tau\sum_{i=1}^3\sum_{l\in\mathds{Z}\h}\rkla{\nabla\Ih{\rkla{\bs{d}\h\nh\times\lambda_{li}\g{li}}\times\lambda_{li}\g{li}},\nabla\bs{d}\h\nh}\notag\\
&\qquad+\tfrac12\tau\rkla{\nabla\Ih{\bs{d}\h\no\times\sinc{n+1}},\nabla\Ih{\trkla{\bs{d}\h\nh\times\Delta\h\bs{d}\h\nn}\times\bs{d}\h\nh}}\notag\\
&\qquad+\tfrac12\tau\rkla{\nabla\Ih{\rkla{\rkla{\bs{d}\h\nh\times\Delta\h\bs{d}\h\nn}\times\bs{d}\h\nh}\times\sinc{n+1}},\nabla\bs{d}\h\nh}\notag\\
&\qquad+\tfrac12 \sum_{i=1}^3\sum_{l\in\mathds{Z}\h}\trkla{\tabs{\incbeta{n+1}{l}{i}}^2-\tau}\norm{\nabla\Ih{\bs{d}\h\no\times\lambda_{li}\g{li}}}_{L^2\trkla{\D}}^2\notag\notag\\
&\qquad+\tfrac12\sum_{\trkla{i,l}\neq\trkla{j,k}}\rkla{\nabla\Ih{\bs{d}\h\no\times\lambda_{li}\g{li}\incbeta{n+1}{l}{i}},\nabla\Ih{\bs{d}\h\no\times\lambda_{kj}\g{kj}\incbeta{n+1}{k}{j}}}\notag\\
&\qquad+\tfrac14\rkla{\nabla\Ih{\bs{d}\h\no\times\sinc{n+1}},\nabla\Ih{\trkla{\bs{d}\h\nn-\bs{d}\h\no}\times\sinc{n+1}}}\notag\\
&\qquad+\tfrac12\sum_{i=1}^3\sum_{l\in\mathds{Z}\h}\trkla{\tabs{\incbeta{n+1}{l}{i}}^2-\tau}\rkla{\nabla\Ih{\rkla{\bs{d}\h\nh\times\lambda_{li}\g{li}}\times\lambda_{li}\g{li}},\nabla\bs{d}\h\nh}\notag\\
&\qquad+\tfrac12\sum_{\trkla{i,l}\neq\trkla{j,k}}\rkla{\nabla\Ih{\rkla{\bs{d}\h\nh\times\lambda_{li}\g{li}\incbeta{n+1}{l}{i}}\times\lambda_{kj}\g{kj}\incbeta{n+1}{k}{j}},\nabla\bs{d}\h\nh}\notag\\
&~=:\rkla{\nabla\Ih{\bs{d}\h\no\times\sinc{n+1}},\nabla\bs{d}\h\no}+\tfrac12\tau\sum_{i=1}^3\sum_{l\in\mathds{Z}\h}\norm{\nabla\Ih{\bs{d}\h\no\times\lambda_{li}\g{li}}}_{L^2\trkla{\D}}^2\notag\\
&\qquad+\tfrac12\tau\sum_{i=1}^3\sum_{l\in\mathds{Z}\h}\rkla{\nabla\Ih{\rkla{\bs{d}\h\nh\times\lambda_{li}\g{li}}\times\lambda_{li}\g{li}},\nabla\bs{d}\h\nh} +\sum_{a=1}^7\mathfrak{B}_a\nn\,.\label{eq:discdiffItoStrat:energy}
\end{align}
Thus, we can write this term as
\begin{align}
\begin{split}
\sum_{n=0}^{N-1}&\eta\no\rkla{\bs{d}\h\nh\times\Delta\h\bs{d}\h\nh,\sinc{n+1}}\h\\
=&\, \sum_{i=1}^3\sum_{l\in\mathds{Z}\h}\int_0^T\eta\tm\rkla{\Ih{\bs{d}\h\tm\times\Delta\h\bs{d}\h\tm},\lambda_{li}\g{li}}\h\mathrm{d}\beta_l^i\\
&\,+\tfrac12\sum_{i=1}^3\sum_{l\in\mathds{Z}\h}\int_0^T\eta\tm\norm{\nabla\Ih{\bs{d}\h\tm\times\lambda_{li}\g{li}}}_{L^2\trkla{\D}}^2  \dt\\
&\,+\tfrac12\sum_{i=1}^3\sum_{l\in\mathds{Z}\h}\int_0^T\eta\tm\rkla{\nabla\Ih{\trkla{\bs{d}\h\ti\times\lambda_{li}\g{li}}\times\lambda_{li}\g{li}},\nabla\bs{d}\h\ti}\dt\\
&\,+\sum_{a=1}^7\sum_{n=0}^{N-1}\eta\no\mathfrak{B}_a\nn\,.
\end{split}
\end{align}
The last term on the right-hand side of \eqref{eq:weakEnergy}, can also be written as a discrete \Ito-integral with correction term by using \eqref{eq:discscheme}:
\begin{align}\label{eq:deffrakA}
 \begin{split}
    -\sum_{n=0}^{N-1}&\,\eta\no\rkla{\Ih{\bs{d}\h\nh\times\sinc{n+1}},\bs{\varphi}\no}\\
    =&\,-\sum_{n=0}^{N-1}\eta\no\rkla{\Ih{\bs{d}\h\no\times\sinc{n+1}},\bs{\varphi}\no}-\tfrac12\sum_{n=0}^{N-1}\eta\no\rkla{\Ih{\rkla{\bs{d}\h\nn-\bs{d}\h\no}\times\sinc{n+1}},\bs{\varphi}\no}\\
    =&\,-\sum_{n=0}^{N-1}\eta\no\rkla{\Ih{\bs{d}\h\no\times\sinc{n+1}},\bs{\varphi}\no}\\
    &\,-\tfrac12\sum_{n=0}^{N-1}\tau\eta\no\sum_{i=1}^3\sum_{l\in\mathds{Z}\h}\rkla{\Ih{\trkla{\bs{d}\h\nh\times\lambda_{li}\g{li}}\times\lambda_{li}\g{li}},\bs{\varphi}\no} \\
    &\,-\tfrac12\sum_{n=1}^{N-1}\eta\no\sum_{i=1}^{3}\sum_{l\in\mathds{Z}\h}\rkla{\tabs{\incbeta{n+1}{l}{i}}^2-\tau}\rkla{\Ih{\trkla{\bs{d}\h\nh\times\lambda_{li}\g{li}}\times\lambda_{li}\g{li}},\bs{\varphi}\no}\\
    &\,-\tfrac12\sum_{n=0}^{N-1}\eta\no\sum_{\trkla{i,l}\neq\trkla{j,k}}\incbeta{n+1}{l}{i}\incbeta{n+1}{k}{j}\rkla{\Ih{\trkla{\bs{d}\h\nh\times\lambda_{li}\g{li}}\times\lambda_{kj}\g{kj}},\bs{\varphi}\no}\\
    &\,-\tfrac12\sum_{n=0}^{N-1}\eta\no\rkla{\tau\Ih{\ekla{\trkla{\bs{d}\h\nh\times\Delta\h\bs{d}\h\nn}\times\bs{d}\h\nh}\times\sinc{n+1}},\bs{\varphi}\no}\\
    =:&\,-\sum_{n=0}^{N-1}\eta\no\rkla{\Ih{\bs{d}\h\no\times\sinc{n+1}},\bs{\varphi}\no}\\
    &\,-\tfrac12\sum_{n=0}^{N-1}\tau\eta\no\sum_{i=1}^3\sum_{l\in\mathds{Z}\h}\rkla{\Ih{\trkla{\bs{d}\h\nh\times\lambda_{li}\g{li}}\times\lambda_{li}\g{li}},\bs{\varphi}\no}\\
    &\,+\tfrac12\sum_{n=0}^{N-1}\eta\no\rkla{\mathfrak{A}_1\nn,\bs{\varphi}\no} +\tfrac12\sum_{n=0}^{N-1}\eta\no\rkla{\mathfrak{A}_2\nn,\bs{\varphi}\no} +\tfrac12\sum_{n=0}^{N-1}\eta\no\rkla{\mathfrak{A}_3\nn,\bs{\varphi}\no}\,.
     \end{split}
\end{align}
Combining the above transformations, multiplying the result by $\psi\in L^\infty\trkla{\Omega}$, and taking the expectation, we find that
\begin{align}
-&\expected{\psi\int_0^T\partial_t\eta\tl\ekla{\tfrac12\norm{\nabla\bs{d}\h\tp}_{L^2\trkla{\D}}^2-\trkla{\bs{d}\h\tp,\bs{\varphi}\tp}}\dt} \notag\\
&-\expected{\psi\eta\trkla{0}\ekla{\tfrac12\norm{\nabla\bs{d}\h^0}_{L^2\trkla{\D}}^2-\trkla{\bs{d}\h^0,\bs{\varphi}\trkla{0}}}}\notag\\
&+\expected{\psi\int_0^T\eta\tm\skla{A,\bs{d}\h\tp}\dt} +\sum_{i=1}^3\sum_{l\in\mathds{Z}}\expected{\psi\int_0^T\eta\tm\skla{B_{li}\g{li},\bs{d}\h\tm}\mathrm{d}\beta_l^i}\notag\\
&+\sum_{i=1}^3\sum_{l\in\mathds{Z}\h}\expected{\psi\int_0^T\eta\tm\skla{B_{li}\trkla{t}\g{li},\Ih{\bs{d}\h\ti\times\lambda_{li}\g{li}}}\dt}\notag\\
&+\expected{\psi\int_0^T\eta\tm\norm{\bs{d}\h\ti\times\Delta\h\bs{d}\h\ti}_h^2\dt}\notag\\
&+\expected{\psi\int_0^T\eta\tm\trkla{\Ih{\trkla{\bs{d}\h\ti\times\Delta\h\bs{d}\h\ti}\times\bs{d}\h\ti},\bs{\varphi}\tm}\dt}\notag\\
=&\,\sum_{i=1}^3\sum_{l\in\mathds{Z}\h}\expected{\psi\int_0^T\eta\tm\rkla{\Ih{\bs{d}\h\tm\times\Delta\h\bs{d}\h\tm},\lambda_{li}\g{li}}\h\mathrm{d}\beta_l^i}\notag\\
&+\tfrac12\sum_{i=1}^3\sum_{l\in\mathds{Z}\h}\expected{\psi\int_0^T\eta\tm\norm{\nabla\Ih{\bs{d}\h\tm\times\lambda_{li}\g{li}}}_{L^2\trkla{\D}}^2\dt}\notag\\
&+\tfrac12\sum_{i=1}^3\sum_{l\in\mathds{Z}\h}\expected{\psi\int_0^T    \eta\tm \rkla{\nabla\Ih{\trkla{\bs{d}\h\ti\times\lambda_{li}\g{li}}\times\lambda_{li}\g{li}},\nabla\bs{d}\h\ti}\dt}\notag\\
&-\sum_{i=1}^3\sum_{l\in\mathds{Z}\h}\expected{\psi\int_0^T\eta\tm\rkla{\Ih{\bs{d}\h\tm\times\lambda_{li}\g{li}},\bs{\varphi}\tm}\mathrm{d}\beta_l^i}\notag\\
&-\tfrac12\sum_{i=1}^3\sum_{l\in\mathds{Z}\h}\expected{\psi\int_0^T\eta\tm\rkla{\Ih{\trkla{\bs{d}\h\ti\times\lambda_{li}\g{li}}\times\lambda_{li}\g{li}},\bs{\varphi}\tm}\dt}\notag\\
&+\expected{\psi \mathfrak{E}_1} +\expected{\psi \mathfrak{E}_2} +\expected{\psi \mathfrak{E}_3}+\expected{\psi \mathfrak{E}_4}+\sum_{a=1}^7\expected{\psi \sum_{n=0}^{N-1}\eta\no\mathfrak{B}_a\nn}\notag\\
&+\sum_{a=1}^3\tfrac12\expected{\psi\sum_{n=0}^{N-1}\eta\no\trkla{\mathfrak{A}_a\nn,\bs{\varphi}\no}} - \expected{\psi \mathfrak{S}_1} -\expected{\psi\mathfrak{S}_2}\,.\label{eq:weakEnergy2}
\end{align}
In the next section, we shall discuss the error terms on the right-hand side of \eqref{eq:weakEnergy2}.

\subsection{Treatment of the residual terms}\label{sec:trafoerror}
We start by discussing the the terms related to $\mathfrak{S}_1$ and $\mathfrak{S}_2$.
\begin{lemma}\label{lem:diff}
Let assumptions \ref{item:T}, \ref{item:S}, \ref{item:I}, \ref{item:W1}, \ref{item:W2}, and \ref{item:W3} hold true.
Then, the estimate
\begin{align}
    \expected{\rkla{\sum_{n=0}^{N-1}\tau\norm{\Ih{\bs{d}\h\nh\times\Delta\h\trkla{\bs{d}\h\nh-\bs{d}\h\nn}}}_{L^2\trkla{\D}}^2}^p}\leq&\, C\rkla{\frac{\tau}{h^2}}^p\,\label{eq:diff3}
\end{align}
holds true for all $p\in[1,\infty)$ with a constant $C$ depending on $p$ but not on $h$ or $\tau$.
\end{lemma}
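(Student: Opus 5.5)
Since $\bs{d}\h\nn-\bs{d}\h\nh=\tfrac12\trkla{\bs{d}\h\nn-\bs{d}\h\no}$, the quantity to be bounded is
\begin{align*}
\tfrac14\sum_{n=0}^{N-1}\tau\norm{\Ih{\bs{d}\h\nh\times\Delta\h\trkla{\bs{d}\h\nn-\bs{d}\h\no}}}_{L^2\trkla{\D}}^2 .
\end{align*}
The plan is to trade the discrete Laplacian for a factor $h^{-2}$ through inverse estimates, and then to use the increment bound $\expected{\trkla{\sum_n\norm{\bs{d}\h\nn-\bs{d}\h\no}_{H^1\trkla{\D}}^2}^p}\leq C$ from Lemma \ref{lem:energy}. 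The factor $\tau$ then survives, and the $h^{-2}$ comes from the inverse estimate, giving exactly $\tau/h^2$.

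First, I would remove the cross product and the interpolation. The norm equivalence \eqref{eq:normequivalence} with $p=2$ allows passing to the lumped norm $\norm{\cdot}\h$. Since the lumped norm evaluates pointwise at the nodes, the nodal bound $\abs{\bs{d}\h\nh\trkla{\bs{x}_k}}\leq 1$ from \eqref{eq:lengthconstraint} gives
\begin{align*}
\norm{\Ih{\bs{d}\h\nh\times\Delta\h\bs{a}\h}}_{L^2\trkla{\D}}\leq C\norm{\bs{d}\h\nh\times\Delta\h\bs{a}\h}\h\leq C\norm{\Delta\h\bs{a}\h}\h ,
\end{align*}
with $\bs{a}\h:=\bs{d}\h\nn-\bs{d}\h\no$.

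Second, I would establish the inverse estimate $\norm{\Delta\h\bs{a}\h}\h\leq Ch^{-1}\norm{\nabla\bs{a}\h}_{L^2\trkla{\D}}$. Testing the definition \eqref{eq:defDiscLap} with $b\h=-\Delta\h\bs{a}\h$ gives
\begin{align*}
\norm{\Delta\h\bs{a}\h}\h^2=\rkla{\nabla\bs{a}\h,-\nabla\Delta\h\bs{a}\h}\leq\norm{\nabla\bs{a}\h}_{L^2\trkla{\D}}\norm{\nabla\Delta\h\bs{a}\h}_{L^2\trkla{\D}} .
\end{align*}
By the standard inverse estimate for quasi-uniform meshes \ref{item:S} and norm equivalence, $\norm{\nabla\Delta\h\bs{a}\h}_{L^2\trkla{\D}}\leq Ch^{-1}\norm{\Delta\h\bs{a}\h}\h$, which yields the claimed bound after division.

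Combining the two steps,
\begin{align*}
\sum_{n=0}^{N-1}\tau\norm{\Ih{\bs{d}\h\nh\times\Delta\h\trkla{\bs{d}\h\nh-\bs{d}\h\nn}}}_{L^2\trkla{\D}}^2\leq C\frac{\tau}{h^2}\sum_{n=0}^{N-1}\norm{\bs{d}\h\nn-\bs{d}\h\no}_{H^1\trkla{\D}}^2 .
\end{align*}
Raising this to the power $p$ and taking expectations, Lemma \ref{lem:energy} bounds the right-hand side by $C\trkla{\tau/h^2}^p$, which is \eqref{eq:diff3}.

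I expect no real obstacle. The point needing care is the first step, where the nodal unit-length property must be used inside the lumped norm rather than as a global $L^\infty$ bound in the standard $L^2$ norm. Working with $\norm{\cdot}\h$ throughout, and converting via \eqref{eq:normequivalence} only at the very end, handles this cleanly.
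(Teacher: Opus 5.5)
Your proposal is correct and follows essentially the same route as the paper. Like the paper, you bound the cross product using the nodal unit-length property, convert $\Delta\h$ into a factor $h^{-1}$ (squared to give $h^{-2}$) by an inverse estimate, and conclude with the increment bound of Lemma \ref{lem:energy}; your derivation of $\norm{\Delta\h\bs{a}\h}\h\leq Ch^{-1}\norm{\nabla\bs{a}\h}_{L^2\trkla{\D}}$ from \eqref{eq:defDiscLap} just spells out the inverse estimate the paper invokes without proof.
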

\begin{proof}
Combining Hölder's inequality and an inverse estimate, we obtain
\begin{align}
\begin{split}
&\expected{\rkla{\sum_{n=0}^{N-1}\tau\norm{\Ih{\bs{d}\h\nh\times\Delta\h\trkla{\bs{d}\h\nn-\bs{d}\h\no}}}_{L^2\trkla{\D}}^2}^p}\\
&~\leq C\rkla{\frac{\tau}{h^2}}^p\expected{\rkla{\sum_{n=0}^{N-1}\norm{\nabla\trkla{\bs{d}\h\nn-\bs{d}\h\no}}_{L^2\trkla{\D}}^2}^p}\leq C\rkla{\frac{\tau}{h^2}}^p\,.
\end{split}
\end{align}
This provides the result.
\end{proof}
Straightforward computations now show that also $\mathfrak{S}_1$ and $\mathfrak{S}_2$ vanish.
\begin{corollary}\label{cor:diffS}
Let the assumptions of Lemma \ref{lem:diff} hold true. Then 
    \begin{align}
         \expected{\abs{\mathfrak{S}_1}^p}&\leq C\rkla{\frac{\tau}{h^2}}^{p/2} \label{eq:diffS:1}\,, && \text{and}&\expected{\abs{\mathfrak{S}_2}^q}&\leq C\rkla{\frac{\tau}{h^2}}^{q/2} 
    \end{align}
    for any $p\in[1,\infty)$ and $q\in[1,\p/2)$ with a constant $C$ depending on $p$, $q$ but not on $h$ or $\tau$.
\end{corollary}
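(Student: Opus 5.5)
The plan is to bound both terms by Cauchy--Schwarz and reduce everything to Lemma~\ref{lem:diff} and Lemma~\ref{lem:energy}. First note that $\bs{d}\h\nn-\bs{d}\h\nh=\tfrac12\trkla{\bs{d}\h\nn-\bs{d}\h\no}$, so the difference term inside $\mathfrak{S}_1$ and $\mathfrak{S}_2$ is, up to the factor $\tfrac12$ and a sign, exactly the quantity controlled in \eqref{eq:diff3}. By the norm equivalence \eqref{eq:normequivalence}, the lumped norm $\norm{\cdot}\h$ of a product evaluated at nodes is equivalent to $\norm{\Ih{\cdot}}_{L^2\trkla{\D}}$, and $\eta$ is bounded. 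Hence
\begin{align*}
\abs{\mathfrak{S}_1}\leq C\rkla{\sum_{n=0}^{N-1}\tau\norm{\Ih{\bs{d}\h\nh\times\Delta\h\trkla{\bs{d}\h\nn-\bs{d}\h\no}}}_{L^2\trkla{\D}}^2}^{1/2}\rkla{\sum_{n=0}^{N-1}\tau\norm{\bs{d}\h\nh\times\Delta\h\bs{d}\h\nh}\h^2}^{1/2}.
\end{align*}
The second factor is controlled by writing $\Delta\h\bs{d}\h\nh=\Delta\h\bs{d}\h\nn-\tfrac12\Delta\h\trkla{\bs{d}\h\nn-\bs{d}\h\no}$. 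This gives the dissipation term from Lemma~\ref{lem:energy} plus, once more, the quantity in \eqref{eq:diff3}, which is bounded in all moments by \ref{item:C} since $\tau/h^2\leq C$. Taking the $p$-th power, applying Cauchy--Schwarz in $\Omega$, and using Lemma~\ref{lem:diff} with exponent $p$ together with Lemma~\ref{lem:energy} with exponent $p$ yields $\expected{\abs{\mathfrak{S}_1}^p}\leq C\trkla{\tau/h^2}^{p/2}$.

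For $\mathfrak{S}_2$ I will use the nodal identity $\Ih{\trkla{\bs{a}\times\bs{b}}\times\bs{c}}\cdot\bs{\varphi}$ and $\abs{\bs{d}\h\nh}\leq1$ at the nodes from \eqref{eq:lengthconstraint}. The inner product $(\Ih{\cdot},\bs{\varphi}\no)$ is an $L^2$ pairing with a non-discrete function, so I estimate it by $\norm{\Ih{\bs{X}\times\bs{d}\h\nh}}_{L^{4/3}}\norm{\bs{\varphi}\no}_{L^4}$. The first factor is bounded by $C\norm{\Ih{\bs{X}}}_{L^2}$, where $\bs{X}=\bs{d}\h\nh\times\Delta\h\trkla{\bs{d}\h\nn-\bs{d}\h\nh}$; this uses \eqref{eq:normequivalence}, the nodal bound, and boundedness of $\D$. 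Then
\begin{align*}
\abs{\mathfrak{S}_2}\leq C\sup_{t\in\tekla{0,T}}\norm{\bs{\varphi}\trkla{t}}_{L^4\trkla{\D}}\,T^{1/2}\rkla{\sum_{n=0}^{N-1}\tau\norm{\Ih{\bs{X}\no}}_{L^2\trkla{\D}}^2}^{1/2}.
\end{align*}
Raising this to the power $q$ and applying Hölder in $\Omega$ with exponents $\p/q$ and $\p/(\p-q)$ puts $\expected{\sup_t\norm{\bs{\varphi}}_{L^4}^{\p}}$, which is finite by Definition~\ref{def:testprocess}, on one factor. The other factor is a finite moment of the sum, controlled via Lemma~\ref{lem:diff} with exponent $p'=q\p/(2(\p-q))$, giving $(\tau/h^2)^{q/2}$.

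The main obstacle is this moment bookkeeping for $\mathfrak{S}_2$. The test process only has $\p$-th moments, which forces the restriction $q<\p/2$ stated in the corollary. With $q<\p/2$ the conjugate exponent stays finite, and one can even afford squaring via Cauchy--Schwarz, since $2q<\p$. A minor point is that $L^4$-integrability of $\bs{\varphi}\no$ suffices only because the $\Ih{\cdot}$ factor is controlled in $L^2$ and hence in $L^{4/3}$ on the bounded domain.
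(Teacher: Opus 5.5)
Your proposal is correct and is essentially the paper's argument. For $\mathfrak{S}_1$ you split $\Delta_h\bs{d}_h^{n+1/2}=\Delta_h\bs{d}_h^{n+1}-\tfrac12\Delta_h(\bs{d}_h^{n+1}-\bs{d}_h^{n})$ and apply Cauchy--Schwarz in time and in $\Omega$ together with Lemma~\ref{lem:diff} and Lemma~\ref{lem:energy}, exactly as the paper does; your explicit use of \ref{item:C} to keep $\tau/h^2$ bounded is also tacitly needed in the paper's final inequality. For $\mathfrak{S}_2$ your only deviation is the split in $\Omega$ with exponents $\p/q$ and $\p/(\p-q)$ instead of the paper's Cauchy--Schwarz. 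That requires Jensen when $q\p/(2(\p-q))<1$, since Lemma~\ref{lem:diff} is stated only for exponents $\geq1$, but it works for every $q<\p$, so the restriction $q<\p/2$ comes from the paper's squaring step and is not forced by your route.
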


\begin{proof}
    To show the first estimate in \eqref{eq:diffS:1}, we combine Hölder's inequality and Lemma \ref{lem:diff} to obtain
\begin{align}
\begin{split}
&\expected{\max_{1\leq M\leq N}\abs{\sum_{n=0}^{M-1}\tau\rkla{\bs{d}\h\nh\times\Delta\h\bs{d}\h\nh,\bs{d}\h\nh\times\Delta_h\trkla{\bs{d}\h\nn-\bs{d}\h\nh}}\h}^p}\\
&\quad\leq C\mathds{E}\left[\rkla{\sum_{n=0}^{N-1}\tau\norm{\bs{d}\h\nh\times\Delta\h\bs{d}\h\nn}\h^2 +\sum_{n=0}^{N-1}\tau\norm{\bs{d}\h\nh\times\Delta\h\trkla{\bs{d}\h\nn-\bs{d}\h\nh}}\h^2}^{p/2}\right. \\
&\qquad\qquad\qquad\cdot\left.\rkla{\sum_{n=0}^{N-1}\tau\norm{\bs{d}\h\nh\times\Delta\h\rkla{\bs{d}\h\nn-\bs{d}\h\nh}}\h^2}^{p/2} \right]\\
&\quad\leq C\rkla{\frac{\tau}{h^2}}^{p/2}\,.
\end{split}
\end{align}
The second estimate in \eqref{eq:diffS:1}, can be shown with similar arguments
\begin{align}
\begin{split}
    \expected{\abs{\mathfrak{S}_2}^q}\leq &\,\expected{\rkla{\sum_{n=0}^{N-1}\tau\abs{\eta\no}^2\norm{\Ih{\bs{d}\h\nh\times\Delta\h\trkla{\bs{d}\h\nn-\bs{d}\h\nh}}}_{L^2\trkla{\D}}^2}^q}^{1/2}\\
    &\,\cdot\expected{\rkla{\sum_{n=0}^{N-1}\tau\norm{\bs{\varphi}\no}_{L^2\trkla{\D}}^2}^q}^{1/2}\leq C\rkla{\frac{\tau}{h^2}}^{q/2}\,.
    \end{split}
\end{align}
\end{proof}

As the following lemma shows, the residual terms $\mathfrak{A}\nn_1$, $\mathfrak{A}\nn_2$, and $\mathfrak{A}\nn_3$ will also vanish for $\tau\searrow0$:
\begin{lemma}\label{lem:stochasticerror}
Let assumptions \ref{item:T}, \ref{item:S}, \ref{item:I}, \ref{item:W1}, \ref{item:W2}, and \ref{item:W3} hold true.
Then, the residual terms $\trkla{\mathfrak{A}_1\nn,\mathfrak{A}_2\nn,\mathfrak{A}_3\nn}_n$ satisfy the estimates
\begin{subequations}\label{eq:stochasticerror}
\begin{align}
\expected{\sum_{n=0}^{N-1}\norm{\mathfrak{A}_1\nn}_{L^2\trkla{\D}}^2} +\max_{1\leq M\leq N}\expected{\norm{\sum_{n=0}^{M-1}\mathfrak{A}_1\nn}_{L^2\trkla{\D}}^2}\leq &\,C\tau\,,\label{eq:stochasticerror:A1}\\
\expected{\sum_{n=0}^{N-1}\norm{\mathfrak{A}_2\nn}_{L^2\trkla{\D}}^2} +\max_{1\leq M\leq N}\expected{\norm{\sum_{n=0}^{M-1}\mathfrak{A}_2^{n+1}}_{L^2\trkla{\D}}^2}\leq &\,C\tau\,,\label{eq:stochasticerror:A2}\\
\expected{\sum_{n=0}^{N-1}\tau\norm{\mathfrak{A}\nn_3}_{L^2\trkla{\D}}^{p}}\leq&\,C \tau^{1/2+p}\label{eq:stochasticerror:A3}
\end{align}
\end{subequations}
with a constant $C$ which is independent of $h$ and $\tau$.
\end{lemma}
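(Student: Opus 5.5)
The three residuals, read off from \eqref{eq:deffrakA}, are
\begin{align*}
\mathfrak{A}_1\nn&=-\sum_{i,l}\rkla{\tabs{\incbeta{n+1}{l}{i}}^2-\tau}\Ih{\trkla{\bs{d}\h\nh\times\lambda_{li}\g{li}}\times\lambda_{li}\g{li}},\\
\mathfrak{A}_2\nn&=-\sum_{(i,l)\neq(j,k)}\incbeta{n+1}{l}{i}\incbeta{n+1}{k}{j}\Ih{\trkla{\bs{d}\h\nh\times\lambda_{li}\g{li}}\times\lambda_{kj}\g{kj}},\\
\mathfrak{A}_3\nn&=-\tau\Ih{\ekla{\trkla{\bs{d}\h\nh\times\Delta\h\bs{d}\h\nn}\times\bs{d}\h\nh}\times\sinc{n+1}}.
\end{align*}
Throughout I use the nodal bound $\tabs{\bs{d}\h\nh}\le 1$ from \eqref{eq:lengthconstraint}, the stability $\norm{\Ih{f}}_{L^2}\le C\norm{f}_{L^\infty}$, and \ref{item:W3}. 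The key difficulty is that $\bs{d}\h\nh$ is not $\mathcal{F}_{t\no}$-measurable. So for $\mathfrak{A}_1$ and $\mathfrak{A}_2$ I split $\bs{d}\h\nh=\bs{d}\h\no+\tfrac12(\bs{d}\h\nn-\bs{d}\h\no)$. This gives a ``martingale'' part $\mathfrak{A}_{a,M}\nn$, with $\bs{d}\h\no$ in place of $\bs{d}\h\nh$, and a ``remainder'' part $\mathfrak{A}_{a,R}\nn$ containing the increment.

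\textbf{First sums $\sum_n\norm{\mathfrak{A}_a\nn}^2$.} I bound these crudely. Using $\norm{\bs{d}\h\nh}_{L^\infty}\le1$, Cauchy--Schwarz in $(i,l)$, and independence of the increments from $\mathcal{F}_{t\no}$,
\[
\expected{\norm{\mathfrak{A}_1\nn}_{L^2}^2}\le C\rkla{\sum_{i,l}\tabs{\lambda_{li}}^2\norm{\g{li}}_{L^\infty}^2}\sum_{i,l}\tabs{\lambda_{li}}^2\norm{\g{li}}_{L^\infty}^2\,\expected{\rkla{\tabs{\incbeta{n+1}{l}{i}}^2-\tau}^2}\le C\tau^2 .
\]
The same argument for $\mathfrak{A}_2$ uses $\expected{\tabs{\incbeta{}{}{}\incbeta{}{}{}}^2}=\tau^2$ and the trace bound. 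Summing over $n$ gives $C\tau$.

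\textbf{Second terms $\expected{\norm{\sum_{n<M}\mathfrak{A}_a\nn}^2}$.} These are the main point.
\begin{itemize}
\item For the martingale part, the coefficient $\Ih{\trkla{\bs{d}\h\no\times\g{li}}\times\g{kj}}$ is $\mathcal{F}_{t\no}$-measurable, and the scalar factors $\tabs{\incbeta{n+1}{l}{i}}^2-\tau$ and $\incbeta{n+1}{l}{i}\incbeta{n+1}{k}{j}$ ($(i,l)\neq(j,k)$) have conditional mean zero. The partial sums are therefore discrete martingales, and the cross terms $n\neq m$ vanish in expectation. This leaves $\sum_n\expected{\norm{\mathfrak{A}_{a,M}\nn}^2}\le C\tau$ by the previous step.
\item For the remainder part, I use $\norm{\sum_n\cdot}^2\le\rkla{\sum_n\norm{\cdot}}^2$, then Cauchy--Schwarz in $n$ with weights:
\[
\rkla{\sum_n\norm{\bs{d}\h\nn-\bs{d}\h\no}_{L^2}\,X_n}^2\le\tau^{-1}\sum_n\norm{\bs{d}\h\nn-\bs{d}\h\no}_{L^2}^2\;\cdot\;\tau\sum_nX_n^2,
\]
where $X_n=\sum_{i,l,k,j}\tabs{\lambda_{li}\lambda_{kj}}\,\tabs{\incbeta{n+1}{l}{i}\incbeta{n+1}{k}{j}-\delta\tau}\,\norm{\g{li}}_{L^\infty}\norm{\g{kj}}_{L^\infty}$ has all moments of order $\tau$. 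The Nikolskii bound of Lemma~\ref{lem:Nikolskii} with $k=1$ (and $q=2$ via Hölder in $\Omega$) gives $\expected{\rkla{\tau^{-1}\sum_n\norm{\bs{d}\h\nn-\bs{d}\h\no}^2}^2}\le C$. Meanwhile $\expected{\rkla{\tau\sum_nX_n^2}^2}\le C\tau^2$. Hence this part is $O(\tau)$.
\end{itemize}
I expect this remainder estimate to be the main obstacle. The fix, if the exponent bookkeeping does not close, is to use \eqref{eq:proof:energy:0}, i.e.\ $\norm{\bs{d}\h\nn-\bs{d}\h\no}\h\le C\tau\norm{\bs{d}\h\nh\times\Delta\h\bs{d}\h\nn}\h+C\norm{\sinc{n+1}}\h$. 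This produces either a $\tau$ factor paired with the dissipation bound of Lemma~\ref{lem:energy}, or an extra Brownian increment, which supplies a further $\tau^{1/2}$.

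\textbf{Bound for $\mathfrak{A}_3$.} I estimate
\[
\norm{\mathfrak{A}_3\nn}_{L^2}\le C\tau\norm{\bs{d}\h\nh\times\Delta\h\bs{d}\h\nn}\h\,\norm{\Ih{\sinc{n+1}}}_{L^\infty},
\]
using nodal evaluation and the norm equivalence \eqref{eq:normequivalence}. Then
\[
\expected{\sum_n\tau\norm{\mathfrak{A}_3\nn}^p}\le C\tau^{p+1}\,\expected{\sum_n\norm{\bs{d}\h\nh\times\Delta\h\bs{d}\h\nn}\h^p\,\norm{\Ih{\sinc{n+1}}}_{L^\infty}^p}.
\]
For $p\le 2$, Hölder together with $\sum_n\tau\norm{\cdot}\h^2$ bounded in all moments (Lemma~\ref{lem:energy}) and $\expected{\norm{\Ih{\sinc{n+1}}}_{L^\infty}^r}\le C\tau^{r/2}$ gives at least the claimed $\tau^{1/2+p}$. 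For larger $p$, I would bound $\norm{\cdot}\h^p\le\rkla{\tau^{-1}\sum_m\tau\norm{\cdot}\h^2}^{p/2}$, losing powers of $\tau$. In that case I would expect the intended statement to be used only for $p\le2$; that is where the proof will be checked carefully.
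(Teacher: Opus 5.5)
\textbf{Genuine gap: the off-diagonal sums in $\mathfrak{A}_2$.} Your architecture is the paper's: split $\bs{d}\h\nh$, treat the martingale part by conditional independence, and bound the remainder by Cauchy--Schwarz in $n$. For $\mathfrak{A}_1$ it works. The problem is $\mathfrak{A}_2$. Both in your crude bound (the termwise ``same argument'' with $\expected{\tabs{\incbeta{n+1}{l}{i}\incbeta{n+1}{k}{j}}^2}=\tau^2$) and in your $X_n$, you take absolute values term by term in the sum over $(i,l)\neq(j,k)$. This produces $\rkla{\sum_{i,l}\tabs{\lambda_{li}}\norm{\g{li}}_{L^\infty(\D)}}^2$; for example, $\expected{X_n}\geq c\,\tau\sum_{(i,l)\neq(j,k)}\tabs{\lambda_{li}\lambda_{kj}}\norm{\g{li}}_{L^\infty(\D)}\norm{\g{kj}}_{L^\infty(\D)}$. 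Assumption \ref{item:W3} only controls $\sum_{i,l}\tabs{\lambda_{li}}^2\norm{\g{li}}_{W^{1,\infty}(\D)}^2$. Since $\mathds{Z}\h\to\mathds{Z}$, your constants are therefore not uniform in $h$. Using Cauchy--Schwarz with $\ell^2$-weights instead does not help: it leaves $\sum_{(i,l)\neq(j,k)}\tabs{\incbeta{n+1}{l}{i}\incbeta{n+1}{k}{j}}^2$, whose mean grows like $\tabs{\mathds{Z}\h}^2\tau^2$.

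The missing idea is to keep the double sum inside the norm.
\begin{itemize}
\item For the $\bs{d}\h\no$-part, products $\incbeta{n+1}{l}{i}\incbeta{n+1}{k}{j}$ belonging to different unordered pairs are orthogonal conditionally on $\mathcal{F}_{t\no}$. The second moment therefore reduces to a sum of squares, bounded by $C\tau^2\rkla{\sum_{i,l}\tabs{\lambda_{li}}^2\norm{\g{li}}_{L^\infty(\D)}^2}^2$ (cf.~\eqref{eq:tmp:A2:1}).
\item For the increment part, write the off-diagonal sum as $\Ih{\rkla{\trkla{\bs{d}\h\nn-\bs{d}\h\no}\times\sinc{n+1}}\times\sinc{n+1}}$ minus the diagonal. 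The former is bounded by $C\norm{\bs{d}\h\nn-\bs{d}\h\no}_{L^2(\D)}\norm{\sinc{n+1}}_{H^2(\D)}^2$ with $\expected{\norm{\sinc{n+1}}_{H^2(\D)}^r}\leq C\tau^{r/2}$; this is where the $H^2$-part of \ref{item:W3} is needed (cf.~\eqref{eq:tmp:A2:2}). The diagonal has summable coefficients $\tabs{\lambda_{li}}^2\norm{\g{li}}_{L^\infty(\D)}^2$ and is treated like $\mathfrak{A}_1$.
\end{itemize}
The same rewriting, done nodally using $\tabs{\bs{d}\h\nh}\leq 1$, also repairs your bound $\expected{\norm{\mathfrak{A}_2\nn}_{L^2(\D)}^2}\leq C\tau^2$, on which your martingale step for $\mathfrak{A}_2$ relies.

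\textbf{Two smaller points.} First, in your remainder estimate the claim $\expected{\rkla{\tau^{-1}\sum_n\norm{\bs{d}\h\nn-\bs{d}\h\no}_{L^2(\D)}^2}^2}\leq C$ is false. The noise makes $\norm{\bs{d}\h\nn-\bs{d}\h\no}_{L^2(\D)}^2$ of order $\tau$, so this quantity is of order $\tau^{-2}$; Lemma~\ref{lem:Nikolskii} with $k=1$, $q=2$ only yields $\expected{\rkla{\sum_n\norm{\bs{d}\h\nn-\bs{d}\h\no}_{L^2(\D)}^2}^2}\leq C$. Conversely, $\expected{\rkla{\tau\sum_nX_n^2}^2}$ is of order $\tau^4$, not $\tau^2$. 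Since the weights $\tau^{-1}$ and $\tau$ cancel, your inequality is just the unweighted Cauchy--Schwarz inequality. With the correct bounds the conclusion $O(\tau)$ does hold for $\mathfrak{A}_1$; this is the paper's $\norm{\sum_{n<M}\cdot}^2\leq M\sum_n\norm{\cdot}^2$ argument.

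Second, for $\mathfrak{A}_3$ your argument is correct for $1\leq p\leq 2$ but leaves $p>2$ open, although the lemma has no such restriction. There is no loss if you apply Young's inequality before summing:
\[
\tau^{1/2}\norm{\mathfrak{A}_3\nn}_{L^2(\D)}^p\leq C\tau^{2p}\norm{\bs{d}\h\nh\times\Delta\h\bs{d}\h\nn}\h^{2p}+C\tau\norm{\sinc{n+1}}_{H^2(\D)}^{2p}.
\]
Then set $a_n:=\norm{\bs{d}\h\nh\times\Delta\h\bs{d}\h\nn}\h^2$ and use $\sum_n\tau^{2p}a_n^p=\tau^p\sum_n\trkla{\tau a_n}^p\leq\tau^p\rkla{\sum_n\tau a_n}^p$ together with Lemma~\ref{lem:energy}. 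This gives $\expected{\sum_n\tau^{1/2}\norm{\mathfrak{A}_3\nn}_{L^2(\D)}^p}\leq C\tau^p$ for every $p\geq1$, which is \eqref{eq:stochasticerror:A3}.
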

\begin{proof}
To control $\mathfrak{A}_1$, we write
\begin{align}
\begin{split}
&\expected{\sum_{n=0}^{N-1}\norm{\mathfrak{A}_1\nn}_{L^2\trkla{\D}}^2}\\
&\leq C\sum_{n=0}^{N-1}\expected{\norm{\sum_{i=1}^3\sum_{l\in\mathds{Z}\h}\abs{\lambda_{li}}^2\rkla{\tabs{\incbeta{n+1}{l}{i}}^2-\tau}\Ih{\rkla{\bs{d}\h\no\times\g{li}}\times\g{li}}}_{L^2\trkla{\D}}^2}\\
&\quad+C\sum_{n=0}^{N-1}\expected{\norm{\sum_{i=1}^3\sum_{l\in\mathds{Z}\h}\abs{\lambda_{li}}^2\rkla{\tabs{\incbeta{n+1}{l}{i}}^2-\tau}\Ih{\rkla{\trkla{\bs{d}\h\nn-\bs{d}\h\no}\times\g{li}}\times\g{li}}}_{L^2\trkla{\D}}^2}\\
&=:C\sum_{n=0}^{N-1}\expected{\norm{\widetilde{\mathfrak{A}}_{1,1}\nn}_{L^2\trkla{\D}}^2}+C\sum_{n=0}^{N-1}\expected{\norm{\widetilde{\mathfrak{A}}_{1,2}\nn}_{L^2\trkla{\D}}^2}\,.
\end{split}
\end{align}
Due to the independence of the increments $\incbeta{n+1}{l}{i}$, \eqref{eq:lengthconstraint}, and \ref{item:W3}, we obtain
\begin{align}
\sum_{n=0}^{N-1}\expected{\norm{\widetilde{\mathfrak{A}}_{1,1}\nn}_{L^2\trkla{\D}}^2}\leq CN\tau^2\leq C\tau\,.
\end{align}
For the second term, we compute using Hölder's inequality, \ref{item:W3}, and Lemma \ref{lem:Nikolskii}
\begin{align}\label{eq:tmp:A1:2}
\begin{split}
&\sum_{n=0}^{N-1}\expected{\norm{\widetilde{\mathfrak{A}}_{1,2}\nn}_{L^2\trkla{\D}}^2}\\
&\leq C\sum_{n=0}^{N-1}\expected{\rkla{\sum_{i=1}^3\sum_{l\in\mathds{Z}\h} \norm{\tabs{\lambda_{li}}^2\trkla{\tabs{\incbeta{n+1}{l}{i}}^2-\tau}\Ih{\rkla{\trkla{\bs{d}\h\nn-\bs{d}\h\no}\times\g{li}}\times\g{li}}}_{L^2\trkla{\D}}}^2}\\
&\leq C\sum_{n=0}^{N-1}\sum_{i,j=1}^3\sum_{k,l\in\mathds{Z}\h}\mathds{E}\left[\norm{\bs{d}\h\nn-\bs{d}\h\no}_{L^2\trkla{\D}}^2\right.\\
&\qquad\qquad\qquad\cdot\left.\abs{\tabs{\incbeta{n+1}{l}{i}}^2-\tau}\abs{\tabs{\incbeta{n+1}{k}{j}}^2-\tau}\norm{\lambda_{li}\g{li}}_{L^\infty\trkla{\D}}^2\norm{\lambda_{kj}\g{kj}}_{L^\infty\trkla{\D}}^2\right]\\
&\leq C\sum_{n=0}^{N-1}\sum_{i,j=1}^3\sum_{k,l\in\mathds{Z}\h}\norm{\lambda_{li}\g{li}}_{L^\infty\trkla{\D}}^2\norm{\lambda_{kj}\g{kj}}_{L^\infty\trkla{\D}}^2\expected{\norm{\bs{d}\h\nn-\bs{d}\h\no}_{L^2\trkla{\D}}^4}^{1/2}\\
&\qquad\qquad\qquad\qquad\qquad\qquad\qquad\cdot\expected{\abs{\tabs{\incbeta{n+1}{l}{i}}^2-\tau}^4}^{1/4}\expected{\abs{\tabs{\incbeta{n+1}{k}{j}}^2-\tau}^4}^{1/4}\\
&\leq C\sum_{n=0}^{N-1}\tau^2\expected{\norm{\bs{d}\h\nn-\bs{d}\h\no}_{L^2\trkla{\D}}^4}^{{1/2}}\leq C\tau\rkla{\sum_{n=0}^{{N-1}}\tau\expected{\norm{\bs{d}\h\nn-\bs{d}\h\no}_{L^2\trkla{\D}}^4}}^{{1/2}}\leq C\tau^{{2}}\,,
\end{split}
\end{align}
which provides the first part of \eqref{eq:stochasticerror:A1}. To verify the second claim in \eqref{eq:stochasticerror:A1}, we use
\begin{multline}
\max_{1\leq M\leq N}\expected{\norm{\sum_{n=0}^{M-1}\mathfrak{A}_1^{n+1}}_{L^2\trkla{\D}}^2}\\
\leq\max_{1\leq M\leq N}\expected{\norm{\sum_{n=0}^{M-1}\widetilde{\mathfrak{A}}_{1,1}^{n+1}}_{L^2\trkla{\D}}^2}+\max_{1\leq M\leq N}\expected{\norm{\sum_{n=0}^{M-1}\widetilde{\mathfrak{A}}_{1,2}^{n+1}}_{L^2\trkla{\D}}^2}\,.
\end{multline}
Following the lines of \cite[Lemma 4.3]{Banas2013} (see also \cite[Lemma 2.23]{BanasBrzezniakNeklyudovProhl2014}), we infer that $\tgkla{\widetilde{\mathfrak{A}}\nn_{1,1}}_{n=0}^{N-1}$ is an $\trkla{L^2\trkla{\D}}^3$-valued martingale difference with respect to the filtration $\mathcal{F}_t$.
Thus, the estimate
\begin{align}\label{eq:martingale:difference}
\begin{split}
\expected{\norm{\sum_{n=0}^{M-1}\widetilde{\mathfrak{A}}_{1,1}^{n+1}}_{L^2\trkla{\D}}^2}=\sum_{n=0}^{M-1}\expected{\norm{\widetilde{\mathfrak{A}}\nn_{1,1}}_{L^2\trkla{\D}}^2}\leq CM\tau^2\leq C\tau
\end{split}
\end{align}
holds true for all $M\leq N$.
The second term satisfies the estimate
\begin{align}
\begin{split}
\expected{\norm{\sum_{n=0}^{M-1}\widetilde{\mathfrak{A}}_{1,2}^{n+1}}_{L^2\trkla{\D}}^2}\leq CM\sum_{n=0}^{M-1}\expected{\norm{\widetilde{\mathfrak{A}}_{1,2}\nn}_{L^2\trkla{\D}}^2}\leq C\tau
\end{split}
\end{align}
due to \eqref{eq:tmp:A1:2}.

To establish the estimate \eqref{eq:stochasticerror:A2}, we use similar arguments and decompose $\bs{d}\h\nh$ into $\bs{d}\h\no$ and $\tfrac12\trkla{\bs{d}\h\nn-\bs{d}\h\no}$. 
Using the independence of the stochastic increments, we obtain
\begin{align}\label{eq:tmp:A2:1}
\begin{split}
&\expected{\norm{\sum_{n=0}^{M-1}\sum_{\trkla{i,l}\neq\trkla{j,k}}\lambda_{li}\lambda_{kj}\incbeta{n+1}{l}{i}\incbeta{n+1}{k}{j}\Ih{\rkla{\bs{d}\h\no\times\g{kj}}\times\g{li}}}^2_{L^2\trkla{\D}}}\\
&=\sum_{n=0}^{M-1}\sum_{\trkla{i,l}\neq\trkla{j,k}}\expected{\abs{\lambda_{li}\lambda_{kj}}^2\rkla{\incbeta{n+1}{l}{i}\incbeta{n+1}{k}{j}}^2\norm{\Ih{\rkla{\bs{d}\h\no\times\g{kj}}\times\g{li}}}_{L^2\trkla{\D}}^2}\\
&\quad+\sum_{n=0}^{M-1}\sum_{\trkla{i,l}\neq\trkla{j,k}}\expected{\abs{\lambda_{li}\lambda_{kj}}^2\rkla{\incbeta{n+1}{l}{i}\incbeta{n+1}{k}{j}}^2\rkla{\Ih{\rkla{\bs{d}\h\no\times\g{kj}}\times\g{li}}, \Ih{\rkla{\bs{d}\h\no\times\g{li}}\times\g{kj}}}}\\
&\leq C\tau\,.
\end{split}
\end{align}
To deal with the second component of $\mathfrak{A}_2\nn$, we need the estimate
\begin{align}\label{eq:tmp:A2:2}
\begin{split}
&\expected{\norm{\Ih{\rkla{\trkla{\bs{d}\h\nn-\bs{d}\h\no}\times\sinc{n+1}}\times\sinc{n+1}}}_{L^2\trkla{\D}}^2}\\
&\quad\leq C\expected{\norm{\bs{d}\h\nn-\bs{d}\h\no}_{L^2\trkla{\D}}^2\norm{\sinc{n+1}}_{L^\infty\trkla{\D}}^4} \\
&\quad\leq C\expected{\norm{\bs{d}\h\nn-\bs{d}\h\no}_{L^2\trkla{\D}}^4}^{1/2}\expected{\norm{\sinc{n+1}}_{H^2\trkla{\D}}^8}^{1/2}\leq C\tau^2\expected{\norm{\bs{d}\h\nn-\bs{d}\h\no}_{L^2\trkla{\D}}^4}^{1/2}\,,
\end{split}
\end{align}
which is a consequence of Hölder's inequality, standard Sobolev embeddings, the Burkholder--Davis--Gundy inequality, and assumption \ref{item:W3}.
Thus, rearranging the terms in $\mathfrak{A}\nn_2$, applying the estimates \eqref{eq:tmp:A2:1} and \eqref{eq:tmp:A2:2}, and arguing as in \eqref{eq:tmp:A1:2} provides

\begin{align}
\begin{split}
&\expected{\norm{\sum_{n=0}^{M-1}\mathfrak{A}\nn_2}_{L^2\trkla{\D}}^2}\\
&\quad\leq C\expected{\norm{\sum_{n=0}^{M-1}\sum_{\trkla{i,l}\neq\trkla{j,k}}\lambda_{li}\lambda_{kj}\incbeta{n+1}{l}{i}\incbeta{n+1}{k}{j}\Ih{\rkla{\bs{d}\h\no\times\g{kj}}\times\g{li}}}_{L^2\trkla{\D}}^2}\\
&\qquad\quad +C\expected{\norm{\sum_{n=0}^{M-1}\sum_{i=1}^3\sum_{l\in\mathds{Z}\h}\tabs{\lambda_{li}}^2\tabs{\incbeta{n+1}{l}{i}}^2\Ih{\rkla{\trkla{\bs{d}\h\nn-\bs{d}\h\no}\times\g{li}}\times\g{li}}}_{L^2\trkla{\D}}^2}\\
&\qquad\quad+C M \sum_{n=0}^{M-1}\expected{\norm{\Ih{\rkla{\trkla{\bs{d}\h\nn-\bs{d}\h\no}\times\sinc{n+1}}\times\sinc{n+1}}}_{L^2\trkla{\D}}^2}\\
&\quad\leq C\tau +C\trkla{M\tau}\sum_{n=0}^{M-1}\tau\expected{\norm{\bs{d}\h\nn-\bs{d}\h\no}_{L^2\trkla{\D}}^4}^{1/2}\leq {C\tau}\,,
\end{split}
\end{align}
due to Lemma \ref{lem:Nikolskii}.
A straightforward modification of this argument shows
\begin{align}
\sum_{n=0}^{N-1}\expected{\norm{\mathfrak{A}_2\nn}_{L^2\trkla{\D}}^2}\leq C\tau\,.
\end{align}
Finally, for $\mathfrak{A}\nn_3$ we obtain using Young's inequality, the Burkholder--Davis--Gundy inequality, and \eqref{eq:lengthconstraint}
\begin{align*}
\expected{\sum_{n=0}^{N-1}\tau^{1/2}\norm{\mathfrak{A}\nn_3}_{L^2\trkla{\D}}^{p}}\leq&\, C\sum_{n=0}^{N-1}\tau^{2p}\expected{\norm{\Ih{\rkla{\bs{d}\h\nh\times\Delta\h\bs{d}\h\nn}\times\bs{d}\h\nh}}_{L^2\trkla{\D}}^{2p}}\\
&\, +C\sum_{n=0}^{N-1}\tau^{1/2}\tau^{1/2}\expected{\norm{\sinc{n+1}}_{H^2\trkla{\D}}^{2p}}\\
\leq&\, C\tau^p\expected{\rkla{\sum_{n=0}^{N-1}\tau\norm{\Ih{\bs{d}\h\nh\times\Delta\h\bs{d}\h\nn}}_{L^2\trkla{\D}}^2}^p}+C\sum_{n=0}^{N-1}\tau^{p+1}\\
\leq&\,C\tau^p\,.
\end{align*}
\end{proof}

\begin{lemma}\label{lem:B}
Let assumptions \ref{item:T}, \ref{item:S}, \ref{item:I}, \ref{item:W1}, \ref{item:W2}, and \ref{item:W3} hold true. Then the residual terms in \eqref{eq:discdiffItoStrat:energy} satisfy
\begin{subequations}\label{eq:B}
\begin{align}
\expected{\rkla{\sum_{n=0}^{N-1}\abs{\mathfrak{B}_1\nn}}^p}+\expected{\rkla{\sum_{n=0}^{N-1}\abs{\mathfrak{B}_2\nn}}^p}\leq& \,C\rkla{\frac{\tau^{1-\widehat{\epsilon}}}{h^2}}^p+C\tau^{\widehat{\epsilon}p}\,,\label{eq:B1B2}\\
\max_{1\leq M\leq N}\expected{\abs{\sum_{n=0}^{M-1}\mathfrak{B}_3\nn}^2}+\max_{1\leq M\leq N}\expected{\abs{\sum_{n=0}^{M-1}\mathfrak{B}_4\nn}^2}\leq&\,C\tau\,,\label{eq:B3B4}\\
\expected{\rkla{\sum_{n=0}^{N-1}\abs{\mathfrak{B}_5\nn}}^p}\leq&\,C\rkla{\frac{\tau^{1/2}}{h}}^p\,,\label{eq:B5}\\
\max_{1\leq M\leq N}\expected{\abs{\sum_{n=0}^{M-1}\mathfrak{B}_6\nn}^2}\leq&\,C\tau^{1/2}\,,\label{eq:B6}\\
\max_{1\leq M\leq N}\expected{\abs{\sum_{n=0}^{M-1}\mathfrak{B}_7\nn}^2}\leq&\,C\tau^{1/3}\,,\label{eq:B7}
\end{align}
\end{subequations}
with $0<\widehat{\epsilon}<1$.
\end{lemma}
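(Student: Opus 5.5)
We estimate the seven residual terms $\mathfrak{B}_1\nn,\dots,\mathfrak{B}_7\nn$ from \eqref{eq:discdiffItoStrat:energy} one by one. Throughout we use four facts: the nodal bound $\norm{\bs{d}\h\no}_{L^\infty}\leq1$ from \eqref{eq:lengthconstraint}; the energy bounds of Lemma \ref{lem:energy}; the Nikolskii estimate of Lemma \ref{lem:Nikolskii} with $k=1$; and moment bounds of the increments, $\expected{\norm{\Ih{\sinc{n+1}}}_{W^{1,\infty}}^{r}}\leq C\tau^{r/2}$, which follow from \ref{item:W3}.

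\textbf{The terms $\mathfrak{B}_1,\mathfrak{B}_2$.} Both contain the factor $\tau\,\Ih{(\bs{d}\h\nh\times\Delta\h\bs{d}\h\nn)\times\bs{d}\h\nh}$ paired with a gradient of an $\Ih{\cdots\times\sinc{n+1}}$-term. We apply an inverse estimate $\norm{\nabla\Ih{\cdot}}_{L^2}\leq Ch^{-1}\norm{\cdot}_{L^2}$ to the dissipation factor. The other factor carries $\nabla\bs{d}\h$ (in $L^2$) times $\sinc{n+1}$ (in $W^{1,\infty}$). This gives, summed in $n$,
\[
\sum_n \tau h^{-1}\norm{\bs{d}\h\nh\times\Delta\h\bs{d}\h\nn}\h\,\norm{\bs{d}\h}_{H^1}\norm{\Ih{\sinc{n+1}}}_{W^{1,\infty}} .
\]
We now split the maximal increment on the event $\{\max_n\norm{\Ih{\sinc{n+1}}}_{W^{1,\infty}}\leq \tau^{1/2-\widehat{\epsilon}}\}$ and its complement. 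On the good event, Cauchy--Schwarz in $n$ yields $C\tau^{1/2}\tau^{1/2-\widehat\epsilon}h^{-1}\cdot h^{-1}$-type factors. Where a second inverse estimate is needed, the bound becomes $\tau^{1-\widehat\epsilon}/h^2$. On the bad event, Gaussian tails together with Chebyshev in high moments give a contribution of order $\tau^{\widehat\epsilon p}$. This is the source of the two terms in \eqref{eq:B1B2}.

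\textbf{The martingale terms $\mathfrak{B}_3,\mathfrak{B}_4,\mathfrak{B}_6$.} The terms $\mathfrak{B}_3$ and $\mathfrak{B}_4$ involve $(\tabs{\incbeta{n+1}{l}{i}}^2-\tau)$, respectively the mixed products $\incbeta{n+1}{l}{i}\incbeta{n+1}{k}{j}$, multiplied by $\mathcal{F}_{t\no}$-measurable quantities (those containing $\bs{d}\h\no$ only). They are therefore martingale differences. As in \eqref{eq:martingale:difference}, the square of the sum equals the sum of the squares, which is bounded by $\sum_n\tau^2\expected{\norm{\nabla\bs{d}\h\no}^4}\leq C\tau$, giving \eqref{eq:B3B4}. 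For $\mathfrak{B}_6$ the test quantity contains $\bs{d}\h\nh$. We split $\bs{d}\h\nh=\bs{d}\h\no+\frac12(\bs{d}\h\nn-\bs{d}\h\no)$. The $\bs{d}\h\no$-part is again a martingale difference and contributes $C\tau$. The difference part is handled by Cauchy--Schwarz in $n$ (yielding a factor $M$), Hölder in $\omega$, and Lemma \ref{lem:energy} applied to $\sum\norm{\bs{d}\h\nn-\bs{d}\h\no}_{H^1}^2$. The resulting rate is only $\tau^{1/2}$, because the $H^1$ increments are only square-summable and not small individually.

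\textbf{The term $\mathfrak{B}_5$.} Here $\mathfrak{B}_5$ is $\frac14(\nabla\Ih{\bs{d}\h\no\times\sinc{n+1}},\nabla\Ih{(\bs{d}\h\nn-\bs{d}\h\no)\times\sinc{n+1}})$. We bound it by $\norm{\sinc{n+1}}_{W^{1,\infty}}^2\norm{\bs{d}\h\no}_{H^1}\norm{\bs{d}\h\nn-\bs{d}\h\no}_{H^1}$ and use an inverse estimate only where $\nabla$ falls on the $L^\infty$ director. After Cauchy--Schwarz over $n$ this gives $\tau^{1/2}h^{-1}$, matching \eqref{eq:B5}.

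\textbf{The main obstacle: $\mathfrak{B}_7$.} This term is the mixed double-increment term paired with $\nabla\bs{d}\h\nh$. We again split $\bs{d}\h\nh$ into its $\bs{d}\h\no$-part, which is a martingale difference and contributes $C\tau$, and its increment part. The increment part carries three stochastic factors and one $H^1$-increment without a $\tau$ prefactor, so the crude argument gives nothing. To handle it, we interpolate: the $L^2$-increment is small by Lemma \ref{lem:Nikolskii}, of order $\tau^{1/2}$ in moments, while the $H^1$-increment is only bounded. Using Hölder with exponents $(3,3/2)$ between $L^2$-smallness and $H^1$-boundedness (after moving one derivative onto the smooth $\g{li}$ where possible), we expect a rate of $\tau^{1/3}$. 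Getting this exponent correct is the delicate point of the proof.
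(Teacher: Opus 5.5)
There are two genuine gaps, in $\mathfrak{B}_1,\mathfrak{B}_2$ and in $\mathfrak{B}_7$. Your treatment of $\mathfrak{B}_3$, $\mathfrak{B}_4$ and $\mathfrak{B}_6$ coincides with the paper's: martingale differences for the $\bs{d}\h\no$-parts, and Cauchy--Schwarz in $n$, Hölder in $\omega$ and square-summability of the $H^1$-increments for the rest. Your $\mathfrak{B}_5$ bound is fine.

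\textbf{$\mathfrak{B}_1,\mathfrak{B}_2$.} The truncation on $\{\max_n\norm{\Ih{\sinc{n+1}}}_{W^{1,\infty}}\leq\tau^{1/2-\widehat{\epsilon}}\}$ does not give the claimed rate.
\begin{itemize}
\item On the good event your displayed sum is at most $h^{-1}\tau^{1/2-\widehat{\epsilon}}\max_n\norm{\bs{d}\h\no}_{H^1}\sum_n\tau\norm{\bs{d}\h\nh\times\Delta\h\bs{d}\h\nn}\h\leq C\tau^{1/2-\widehat{\epsilon}}/h$. Nothing produces an extra factor $\tau^{1/2}h^{-1}$, and a second inverse estimate would only add another $h^{-1}$.
\item This bound does not imply \eqref{eq:B1B2}. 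For $h=\tau^{1/2-\widehat{\epsilon}}$, which is compatible with \ref{item:C}, it is of order one, while the right-hand side of \eqref{eq:B1B2} is of order $\tau^{\widehat{\epsilon}p}$.
\item Truncating discards exactly what you need: $\sum_n\norm{\nabla\Ih{\bs{d}\h\no\times\sinc{n+1}}}_{L^2}^2$ is $O(1)$ in every moment by BDG.
\end{itemize}
The paper uses the weighted Young inequality $\tau ab\leq\tau^{1-\widehat{\epsilon}}h^{-2}a^2+\tau^{1+\widehat{\epsilon}}h^2b^2$, with $b$ the dissipation factor. One inverse estimate absorbs the $h^2$, and $\tau^{\widehat{\epsilon}}\sum_n\tau\norm{\cdot}\h^2$ gives the $\tau^{\widehat{\epsilon}p}$ term. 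For $\mathfrak{B}_2$ this is done after rewriting via \eqref{eq:defDiscLap}, so that the inverse estimate falls on $\Delta\h\bs{d}\h\nh$. So the second term in \eqref{eq:B1B2} comes from the Young weight, not from Gaussian tails. Equivalently, plain Cauchy--Schwarz without truncation gives $C\tau^{1/2}/h\leq C(\tau^{1-\widehat{\epsilon}}/h^2+\tau^{\widehat{\epsilon}})$.

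Relatedly, the bound $\expected{\norm{\Ih{\sinc{n+1}}}_{W^{1,\infty}}^r}\leq C\tau^{r/2}$ that you use throughout is not a direct consequence of \ref{item:W3} for $d\geq2$. \ref{item:W3} only controls square sums of modal norms, and $H^2\not\hookrightarrow W^{1,\infty}$. The paper instead works with $\sum_{i,l}\tabs{\lambda_{li}}^2\norm{\g{li}}^2$ after BDG or independence, or with the Hilbert norm $\norm{\sinc{n+1}}_{H^2}$.

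\textbf{$\mathfrak{B}_7$.} The crude argument does not give nothing; it is exactly what the paper uses. The increment parts of $\mathfrak{B}_7$ have the same structure as the increment parts of $\mathfrak{B}_6$ that you already handle: one $H^1$-increment, one factor $\nabla\bs{d}\h$, and two Brownian increments whose product is of size $\tau$. Cauchy--Schwarz in $n$ and Hölder with exponents $(3,3,3)$ give
\[
CM\tau^2\sum_n\expected{\norm{\bs{d}\h\nn-\bs{d}\h\no}_{H^1}^6}^{1/3}\leq C\tau^{1/3}\expected{\Big(\sum_n\norm{\bs{d}\h\nn-\bs{d}\h\no}_{H^1}^2\Big)^3}^{1/3}\leq C\tau^{1/3}
\]
by Lemma \ref{lem:energy}. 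That is where the exponent $1/3$ comes from.

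The mechanism you substitute cannot work for the part where the gradient falls on $\bs{d}\h\nn-\bs{d}\h\no$. Paired with $\nabla\bs{d}\h\nh$, that term needs the full $H^1$-norm of the increment. Trading it for the small $L^2$-norm from Lemma \ref{lem:Nikolskii} costs $h^{-1}$, which is incompatible with the $h$-independent bound \eqref{eq:B7}.

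What is actually missing is the step that makes the crude estimate legitimate. Since $\bs{d}\h\nn$ depends on the increments, independence is unavailable. Estimating $\sum_{(i,l)\neq(j,k)}\lambda_{li}\lambda_{kj}\cdots$ termwise would then require $\sum_{i,l}\tabs{\lambda_{li}}\norm{\g{li}}_{W^{1,\infty}}<\infty$, which \ref{item:W3} does not provide. The paper therefore writes the off-diagonal sum as the full double sum $\Ih{\rkla{\trkla{\bs{d}\h\nn-\bs{d}\h\no}\times\sinc{n+1}}\times\sinc{n+1}}$ minus the diagonal, which carries $\tabs{\lambda_{li}}^2$. It then proves the nested interpolation estimate \eqref{eq:B73}, bounding the gradient of this full sum by $C\norm{\bs{d}\h\nn-\bs{d}\h\no}_{H^1}\norm{\sinc{n+1}}_{H^2}^2$.
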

\begin{proof}
Applying Young's inequality, an inverse estimate, the $L^\infty$-bound on $\bs{d}\h\nh$, the Burkholder--Davis--Gundy inequality, assumption \ref{item:W3}, and the regularity results obtained in Lemma \ref{lem:energy} yields
\begin{align}
\begin{split}
&\expected{\rkla{\sum_{n=0}^{N-1}\abs{\mathfrak{B}_1\nn}}^p}\\
&~\leq C\expected{\rkla{\sum_{n=0}^{N-1}\tau^{1-\widehat{\epsilon}}h^{-2}\norm{\nabla\Ih{\bs{d}\h\no\times\sinc{n+1}}}_{L^2\trkla{\D}}^2}^p}\\
&\qquad+C\expected{\rkla{\sum_{n=0}^{N-1}\tau^{1+\widehat{\epsilon}}h^2\norm{\nabla\Ih{\rkla{\bs{d}\h\nh\times\Delta\h\bs{d}\h\nn}\times\bs{d}\h\nh}}_{L^2\trkla{\D}}^2}^p}\\
&~\leq C\expected{\rkla{\sum_{n=0}^{N-1}\tau^{1-\widehat{\epsilon}}h^{-2}\norm{\nabla\Ih{\bs{d}\h\no\times\sinc{n+1}}}_{L^2\trkla{\D}}^2}^p}\\
&\qquad+C\expected{\rkla{\sum_{n=0}^{N-1}\tau^{1+\widehat{\epsilon}}\norm{
\bs{d}\h\nh\times\Delta\h\bs{d}\h\nn}
\h^2}^p}\\
&~\leq C\rkla{\frac{\tau^{1-\widehat{\epsilon}}}{h^2}}^p\sum_{n=0}^{N-1}\tau\expected{\rkla{\sum_{i=1}^3\sum_{l\in\mathds{Z}\h}\norm{\nabla\Ih{\bs{d}\h\no\times\lambda_{li}\g{li}}}_{L^2\trkla{\D}}^2}^p} +C\tau^{\widehat{\epsilon}p}\\
&~\leq C\rkla{\frac{\tau^{1-\widehat{\epsilon}}}{h^2}}^p+C\tau^{\widehat{\epsilon}p}\,.
\end{split}
\end{align}
Here, we used the estimate
\begin{align}\label{eq:tmp:nablaIh}
\begin{split}
\norm{\nabla\Ih{a\h b\h}}_{L^2\trkla{\D}}\leq&\, \norm{\nabla\trkla{a\h b\h}}_{L^2\trkla{\D}} +\norm{\nabla\trkla{\id-\Ihop}\gkla{a\h b\h}}_{L^2\trkla{\D}} \\
\leq&\, C\norm{a\h}_{H^1\trkla{\D}}\norm{b\h}_{W^{1,\infty}\trkla{\D}}\,
\end{split}
\end{align}
in the last step.
To verify the second part of \eqref{eq:B1B2}, we use a similar strategy and obtain
\begin{align}
\begin{split}
&\expected{\rkla{\sum_{n=0}^{N-1}\abs{\mathfrak{B}_2\nn}}^p}\\
&~\leq C\expected{\rkla{\sum_{n=0}^{N-1}\tau^{1+\widehat{\epsilon}}\norm{\Ih{\rkla{\bs{d}\h\nh\times\Delta\h\bs{d}\h\nn}\times\bs{d}\h\nh}}\h^2}^p}\\
&\qquad+C\expected{\rkla{\sum_{n=0}^{N-1}\tau^{1-\widehat{\epsilon}}\norm{\Delta\h\bs{d}\h\no\times\sinc{n+1}}\h^2}^p}\\
&\qquad+C\expected{\rkla{\sum_{n=0}^{N-1}\tau^{1-\widehat{\epsilon}}\norm{\Delta\h\trkla{\bs{d}\h\nn-\bs{d}\h\no}\times\sinc{n+1}}\h^2}^p}\\
&~\leq C\tau^{\widehat{\epsilon}p}+C\tau^{\trkla{1-\widehat{\epsilon}}p}\sum_{n=0}^{N-1}\tau\expected{\rkla{\sum_{i=1}^3\sum_{l\in\mathds{Z}\h}\norm{\Delta\h\bs{d}\h\no\times\lambda_{li}\g{li}}\h^2}^p}\\
&\qquad+C\rkla{\frac{\tau^{1-\widehat{\epsilon}}}{h^2}}^p\expected{\rkla{\sum_{n=0}^{N-1}\norm{\nabla\trkla{\bs{d}\h\nn-\bs{d}\h\no}}_{L^2\trkla{\D}}^2\norm{\sinc{n+1}}_{H^2\trkla{\D}}^2}^p}\\
&~\leq C\tau^{\widehat{\epsilon}p}+C\rkla{\frac{\tau^{1-\widehat{\epsilon}}}{h^2}}^p\,,
\end{split}
\end{align}
due to
\begin{align}
\begin{split}
&\expected{\rkla{\sum_{n=0}^{N-1}\norm{\nabla\bs{d}\h\nn-\nabla\bs{d}\h\no}_{L^2\trkla{\D}}^2\norm{\sinc{n+1}}_{H^2\trkla{\D}}^2}^p}^2\\
&\,\leq \expected{\max_{0\leq n\leq N-1}\norm{\bs{d}\h\nn-\bs{d}\h\no}_{H^1\trkla{\D}}^{2p}\rkla{\sum_{n=0}^{N-1}\norm{\sinc{n+1}}_{H^2\trkla{\D}}^2}^p}^2\\
&\,\leq \expected{\rkla{\sum_{n=0}^{N-1}\norm{\bs{d}\h\nn-\bs{d}\h\no}_{H^1\trkla{\D}}^2}^{2p}}\expected{\rkla{\sum_{n=0}^{N-1}\norm{\sinc{n+1}}_{H^2\trkla{\D}}}^{2p}}\leq C\,.
\end{split}
\end{align}
To deal with $\mathfrak{B}_3\nn$, we use that $\tgkla{\rkla{\tabs{\incbeta{n+1}{l}{i}}^2-\tau}}_{n,i,l}$ are mutually independent and satisfy $\expected{\rkla{\tabs{\incbeta{n+1}{l}{i}}^2-\tau}}=0$ to obtain
\begin{align}
\begin{split}
\expected{\abs{\sum_{n=0}^{M-1}\mathfrak{B}_3\nn}^2}\leq&\,C\sum_{n=0}^{N-1}\sum_{i=1}^3\sum_{l\in\mathds{Z}\h}\expected{\rkla{\tabs{\incbeta{n+1}{l}{i}}^2-\tau}^2\norm{\nabla\Ih{\bs{d}\h\no\times\lambda_{li}\g{li}}}_{L^2\trkla{\D}}^4}\\
\leq&\,C\tau\expected{\max_{0\leq n\leq N}\norm{\bs{d}\h\no}_{H^1\trkla{\D}}^2\sum_{n=0}^{N-1}\tau\norm{\bs{d}\h\no}_{H^1\trkla{\D}}^2}\leq C\tau\,.
\end{split}
\end{align}
For $\mathfrak{B}_4\nn$ the independence of the increments provides
\begin{align}
\begin{split}
\expected{\abs{\sum_{n=0}^{M-1}\mathfrak{B}_4\nn}^2}\leq&\,C\sum_{n=0}^{M-1}\sum_{\trkla{i,l}\neq\trkla{j,k}}\tau^2\expected{\norm{\nabla\Ih{\bs{d}\h\no\times\lambda_{li}\g{li}}}_{L^2\trkla{\D}}^2\norm{\nabla\Ih{\bs{d}\h\no\times\lambda_{ki}\g{kj}}}_{L^2\trkla{\D}}^2}\\
\leq&\,C\tau\expected{\max_{0\leq n\leq N}\norm{\bs{d}\h\no}_{H^1\trkla{\D}}^2\sum_{n=0}^{N-1}\tau\norm{\bs{d}\h\no}_{H^1\trkla{\D}}^2}\leq C\tau\,.
\end{split}
\end{align}
To establish \eqref{eq:B5}, we use Hölder's inequality, an inverse estimate, standard Sobolev embeddings, and Young's inequality to deduce
\begin{multline*}
\abs{\mathfrak{B}_5\nn}\leq C\frac{\tau^{1/2}}{h}\tau^{-1/4}\norm{\nabla\Ih{\bs{d}\h\no\times\sinc{n+1}}}_{L^2\trkla{\D}}\norm{\bs{d}\h\nn-\bs{d}\h\no}_{H^1\trkla{\D}}\tau^{-1/4}\norm{\sinc{n+1}}_{H^1\trkla{\D}}\\
\leq C\frac{\tau^{1/2}}{h}\rkla{\tau^{-1}\norm{\nabla\Ih{\bs{d}\h\no\times\sinc{n+1}}}_{L^2\trkla{\D}}^4+\norm{\bs{d}\h\nn-\bs{d}\h\no}_{H^1\trkla{\D}}^2+\tau^{-1}\norm{\Ih{\sinc{n+1}}}_{H^1\trkla{\D}}^4}\,.
\end{multline*}
Thus, an application of the Burkholder--Davis--Gundy inequality provides
\begin{align}
\begin{split}
\expected{\rkla{\sum_{n=0}^{N-1}\abs{\mathfrak{B}_5\nn}}^p}\leq&\,C\rkla{\frac{\tau^{1/2}}{h}}^pN^{p-1}\tau^{-p}\sum_{n=0}^{N-1}\expected{\norm{\nabla\Ih{\bs{d}\h\no\times\sinc{n+1}}}_{L^2\trkla{\D}}^{4p}}\\
&\,+C\rkla{\frac{\tau^{1/2}}{h}}^pN^{p-1}\tau^{-p}\sum_{n=0}^{N-1}\expected{\norm{\Ih{\sinc{n+1}}}_{H^1\trkla{\D}}^{4p}}\\
&\,+C\rkla{\frac{\tau^{1/2}}{h}}^p\expected{\rkla{\sum_{n=0}^{N-1}\norm{\bs{d}\h\nn-\bs{d}\h\no}_{H^1\trkla{\D}}^2}^p}\\
\leq&\,C\rkla{\frac{\tau^{1/2}}{h}}^p\,.
\end{split}
\end{align}
To establish \eqref{eq:B6}, we first write
\begin{align}
\begin{split}
\mathfrak{B}_6\nn=&\,\tfrac12\sum_{i=1}^3\sum_{l\in\mathds{Z}\h}\trkla{\tabs{\incbeta{n+1}{l}{i}}^2-\tau}\rkla{\nabla\Ih{\rkla{\bs{d}\h\nh\times\lambda_{li}\g{li}}\times\lambda_{li}\g{li}},\nabla\bs{d}\h\nh}\\
=&\,\tfrac12\sum_{i=1}^3\sum_{l\in\mathds{Z}\h}\trkla{\tabs{\incbeta{n+1}{l}{i}}^2-\tau}\rkla{\nabla\Ih{\rkla{\bs{d}\h\no\times\lambda_{li}\g{li}}\times\lambda_{li}\g{li}},\nabla\bs{d}\h\no}\\
&\,+\tfrac14\sum_{i=1}^3\sum_{l\in\mathds{Z}\h}\trkla{\tabs{\incbeta{n+1}{l}{i}}^2-\tau}\rkla{\nabla\Ih{\rkla{\bs{d}\h\no\times\lambda_{li}\g{li}}\times\lambda_{li}\g{li}},\nabla\trkla{\bs{d}\h\nn-\bs{d}\h\no}}\\
&\,+\tfrac14\sum_{i=1}^3\sum_{l\in\mathds{Z}\h}\trkla{\tabs{\incbeta{n+1}{l}{i}}^2-\tau}\rkla{\nabla\Ih{\rkla{\trkla{\bs{d}\h\nn-\bs{d}\h\no}\times\lambda_{li}\g{li}}\times\lambda_{li}\g{li}},\nabla\bs{d}\h\nh}\\
=:&\,\mathfrak{B}\nn_{6,1} + \mathfrak{B}\nn_{6,2} + \mathfrak{B}\nn_{6,3}\,.
\end{split}
\end{align}
To control the last term, we use Hölder's inequality, \eqref{eq:tmp:nablaIh}, Assumption \ref{item:W3}, Young's inequality, and the regularity results established in Lemma \ref{lem:energy}:
\begin{align}\label{sumestimate}
\begin{split}
\expected{\abs{\sum_{n=0}^{M-1}\mathfrak{B}_{6,3}\nn}^2}\leq&\,CM\mathds{E}\left[
\sum_{n=0}^{M-1}\!\sum_{i,j=1}^3\!\sum_{k,l\in\mathds{Z}\h}\trkla{\tabs{\incbeta{n+1}{l}{i}}^2-\tau}\trkla{\tabs{\incbeta{m+1}{k}{j}}^2-\tau}\right.\\
&\,\left.\cdot\norm{\lambda_{li}\g{li}}_{W^{1,\infty}\trkla{\D}}^2\norm{\lambda_{kj}\g{kj}}_{W^{1,\infty}\trkla{\D}}^2 \norm{\nabla\bs{d}\h\nh}_{L^2\trkla{\D}}^2\norm{\bs{d}\h\nn-\bs{d}\h\no}_{H^1\trkla{\D}}^2\right]\\
\leq&\,CM\sum_{n=0}^{M-1}\!\sum_{i,j=1}^3\!\sum_{k,l\in\mathds{Z}\h}\!\!\norm{\lambda_{li}\g{li}}_{W^{1,\infty}\trkla{\D}}^2\!\norm{\lambda_{kj}\g{kj}}_{W^{1,\infty}\trkla{\D}}^2 \!\expected{\norm{\nabla\bs{d}\h\nh}_{L^2\trkla{\D}}^{12}}^{1/6}\\
&\,\cdot\!\expected{\norm{\bs{d}\h\nn-\bs{d}\h\no}_{H^1\trkla{\D}}^4}^{1/2}\!\expected{\rkla{\tabs{\incbeta{n+1}{l}{i}}^2-\tau}^6}^{1/6}\!\expected{\rkla{\tabs{\incbeta{n+1}{k}{j}}^2-\tau}^6}^{1/6}\\
\leq&\,C\sum_{n=0}^{M-1}\tau\expected{\norm{\nabla\bs{d}\h\nh}_{L^2\trkla{\D}}^{12}}^{1/6}\expected{\norm{\bs{d}\h\nn-\bs{d}\h\no}_{H^1\trkla{\D}}^4}^{1/2}\\
\leq&\, C\tau^{1/2}\rkla{\sum_{n=0}^{M-1}\!\tau\expected{\norm{\nabla\bs{d}\h\nh}_{L^2\trkla{\D}}^{12}}^{1/3}\!\!\! +\expected{\rkla{\sum_{n=0}^{M-1}\!\norm{\bs{d}\h\nn-\bs{d}\h\no}_{H^1\trkla{\D}}^2}^{\!2}}}\\
\leq&\, C\tau^{1/2}\,.
\end{split}
\end{align}
A similar argumentation provides
\begin{align}
\expected{\abs{\sum_{n=0}^{M-1}\mathfrak{B}_{6,2}\nn}^2}\leq C\tau^{1/2}\,.
\end{align}
For the remaining term $\mathfrak{B}_{6,1}\nn$ we argue as in \eqref{eq:martingale:difference} and deduce
\begin{multline}
\expected{\abs{\sum_{n=0}^{M-1}\mathfrak{B}_{6,1}\nn}^2}\\
=\tfrac14\sum_{n=0}^{M-1}\sum_{i=1}^3\sum_{l\in\mathds{Z}\h}\expected{\trkla{\tabs{\incbeta{n+1}{l}{i}}^2-\tau}^2}\expected{\rkla{\nabla\Ih{\rkla{\bs{d}\h\no\times\lambda_{li}\g{li}}\times\lambda_{li}\g{li}},\nabla\bs{d}\h\no}^2}\\
\leq C\tau\sum_{n=0}^{M-1}\tau\sum_{i=1}^3\sum_{l\in\mathds{Z}\h}\norm{\lambda_{li}\g{li}}_{W^{1,\infty}\trkla{\D}}^4\expected{\norm{\bs{d}\h\no}_{H^1\trkla{\D}}^4}\leq C\tau\,.
\end{multline}
For the final term, we start with a similar decomposition:
\begin{align}
\begin{split}
\mathfrak{B}_7\nn=&\,\tfrac12\sum_{\trkla{i,l}\neq\trkla{j,k}}\rkla{\nabla\Ih{\rkla{\bs{d}\h\nh\times\lambda_{li}\g{li}\incbeta{n+1}{l}{i}}\times\lambda_{kj}\g{kj}\incbeta{n+1}{k}{j}},\nabla\bs{d}\h\nh}\\
=&\,\tfrac12\sum_{\trkla{i,l}\neq\trkla{j,k}}\rkla{\nabla\Ih{\rkla{\bs{d}\h\no\times\lambda_{li}\g{li}\incbeta{n+1}{l}{i}}\times\lambda_{kj}\g{kj}\incbeta{n+1}{k}{j}},\nabla\bs{d}\h\no}\\
&\,+\tfrac14\sum_{\trkla{i,l}\neq\trkla{j,k}}\rkla{\nabla\Ih{\rkla{\bs{d}\h\no\times\lambda_{li}\g{li}\incbeta{n+1}{l}{i}}\times\lambda_{kj}\g{kj}\incbeta{n+1}{k}{j}},\nabla\bs{d}\h\nn-\nabla\bs{d}\h\no}\\
&\,+\tfrac14\sum_{\trkla{i,l}\neq\trkla{j,k}}\rkla{\nabla\Ih{\rkla{\trkla{\bs{d}\h\nn-\bs{d}\h\no}\times\lambda_{li}\g{li}\incbeta{n+1}{l}{i}}\times\lambda_{kj}\g{kj}\incbeta{n+1}{k}{j}},\nabla\bs{d}\h\nh}\\
=:&\,\mathfrak{B}_{7,1}\nn+\mathfrak{B}_{7,2}\nn+\mathfrak{B}_{7,3}\nn\,.
\end{split}
\end{align}
For $\mathfrak{B}_{7,1}\nn$ we deduce by using the independence of the increments $\tgkla{\incbeta{n+1}{l}{i}}_{n,l,i}$
\begin{align}
\begin{split}
&\expected{\abs{\sum_{n=0}^{M-1}\mathfrak{B}_{7,1}\nn}^2}=\tfrac14\sum_{n=0}^{M-1}\sum_{\trkla{i,l}\neq\trkla{j,k}}\expected{\rkla{\nabla\Ih{\trkla{\bs{d}\h\no\times\lambda_{li}\g{li}\incbeta{n+1}{l}{i}}\times\lambda_{kj}\g{kj}\incbeta{n+1}{k}{j}},\nabla\bs{d}\h\no}^2}\\
&\qquad\quad+\tfrac14\sum_{n=0}^{M-1}\sum_{\trkla{i,l}\neq\trkla{j,k}}\mathds{E}\left[\rkla{\nabla\Ih{\trkla{\bs{d}\h\no\times\lambda_{li}\g{li}\incbeta{n+1}{l}{i}}\times\lambda_{kj}\g{kj}\incbeta{n+1}{k}{j}},\nabla\bs{d}\h\no}\right.\\ 
&\qquad\qquad\qquad\qquad\qquad\quad\cdot \left.\rkla{\nabla\Ih{\trkla{\bs{d}\h\no\times\lambda_{kj}\g{kj}\incbeta{n+1}{k}{j}}\times\lambda_{li}\g{li}\incbeta{n+1}{l}{i}},\nabla\bs{d}\h\no}\right]\\
&\quad\leq C\sum_{n=0}^{M-1}\sum_{\trkla{i,l}\neq\trkla{j,k}}\tau^2\norm{\lambda_{li}\g{li}}_{W^{1,\infty}\trkla{\D}}^2\norm{\lambda_{kj}\g{kj}}_{W^{1,\infty}\trkla{\D}}^2\expected{\norm{\bs{d}\h\no}_{H^1\trkla{\D}}^4}\leq C\tau\,.
\end{split}
\end{align}
As before, $\mathfrak{B}_{7,2}\nn$ and $\mathfrak{B}_{7,3}\nn$ can be treated in a similar manner.
Exemplarily, we show the computations for $\mathfrak{B}_{7,3}\nn$:
We first reduce the impact of the interpolation operator by writing
\begin{multline}\label{eq:tmp:b73:splitting}
    \norm{\nabla\Ih{\rkla{\trkla{\bs{d}\h\nn-\bs{d}\h\no}\times\sinc{n+1}}\times\sinc{n+1}}}_{L^2\trkla{\D}}\\ \leq \norm{\nabla\rkla{\Ih{\trkla{\bs{d}\h\nn-\bs{d}\h\no}\times\sinc{n+1}}\times\Ih{\sinc{n+1}}}}_{L^2\trkla{\D}}\\
    + \norm{\nabla\trkla{I-\Ihop}\gkla{\Ih{\trkla{\bs{d}\h\nn-\bs{d}\h\no}\times\sinc{n+1}}\times\Ih{\sinc{n+1}}}}_{L^2\trkla{\D}}\,.
\end{multline}
To derive an estimate for the first term, we use Hölder's inequality and the standard error estimates for $\Ihop$ for smooth functions (cf.~\cite{BrennerScott}). Thus, we obtain
\begin{align}
    \begin{split}
        &\norm{\nabla\rkla{\Ih{\trkla{\bs{d}\h\nn-\bs{d}\h\no}\times\sinc{n+1}}\times\Ih{\sinc{n+1}}}}_{L^2\trkla{\D}}\\
        &~\leq C\sum_{q=1}^d\norm{\rkla{\partial_q\Ih{\trkla{\bs{d}\h\nn-\bs{d}\h\no}\times\sinc{n+1}}}\times\Ih{\sinc{n+1}}}_{L^2\trkla{\D}}\\
        &\quad+C\sum_{q=1}^d\norm{\Ih{\trkla{\bs{d}\h\nn-\bs{d}\h\no}\times\sinc{n+1}}\times\rkla{\partial_q\Ih{\sinc{n+1}}}}_{L^2\trkla{\D}}\\
        &~\leq C\sum_{q=1}^d\norm{\partial_q\Ih{\trkla{\bs{d}\h\nn-\bs{d}\h\no}\times\sinc{n+1}}}_{L^2\trkla{\D}}\norm{\sinc{n+1}}_{L^\infty\rkla{\D}}\\
        &\quad+C\sum_{q=1}^d\norm{\Ih{\trkla{\bs{d}\h\nn-\bs{d}\h\no}\times\sinc{n+1}}}_{L^3\trkla{\D}}\norm{\partial_q\sinc{n+1}}_{L^6\trkla{\D}}\,.
    \end{split}
\end{align}
Next, we show that the remaining term on the right-hand side of \eqref{eq:tmp:b73:splitting} can be written in a similar form.
Combining Lemma \ref{lem:interpolationerror}, standard inverse estimates, and the error estimates for $\Ihop$ applied to smooth functions, we deduce
\begin{align}
    \begin{split}
        \norm{\nabla\trkla{\id-\Ihop}\gkla{\Ih{\trkla{\bs{d}\h\nn-\bs{d}\h\no}\times\sinc{n+1}}\times\Ih{\sinc{n+1}}}}_{L^2\trkla{\D}}\\
        \leq Ch\norm{\nabla\Ih{\trkla{\bs{d}\h\nn-\bs{d}\h\no}}}_{L^3\trkla{\D}}\norm{\nabla\Ih{\sinc{n+1}}}_{L^6\trkla{\D}}\\
        \leq C\norm{\Ih{\trkla{\bs{d}\h\nn-\bs{d}\h\no}\times\sinc{n+1}}}_{L^3\trkla{\D}}\norm{\sinc{n+1}}_{W^{1,6}\trkla{\D}}\,.
    \end{split}
\end{align}
Combining the above results, we get by recalling the standard Sobolev embeddings
\begin{align}\label{eq:B73}
\begin{split}
&\norm{\nabla\Ih{\rkla{\trkla{\bs{d}\h\nn-\bs{d}\h\no}\times\sinc{n+1}}\times\sinc{n+1}}}_{L^2\trkla{\D}}\\
&\quad \leq C\norm{\nabla\Ih{\trkla{\bs{d}\h\nn-\bs{d}\h\no}\times\sinc{n+1}}}_{L^2\trkla{\D}}\norm{\sinc{n+1}}_{L^\infty\trkla{\D}} \\
&\qquad+C\norm{\Ih{\trkla{\bs{d}\h\nn-\bs{d}\h\no}\times\sinc{n+1}}}_{L^3\trkla{\D}} \norm{\sinc{n+1}}_{W^{1,6}\trkla{\D}}\\
&\quad\leq C\norm{\Ih{\trkla{\bs{d}\h\nn-\bs{d}\h\no}\times\sinc{n+1}}}_{H^1\trkla{\D}}\norm{\sinc{n+1}}_{H^2\trkla{\D}}\\
&\quad \leq C\norm{\trkla{\bs{d}\h\nn-\bs{d}\h\no}}_{H^1\trkla{\D}}\norm{\sinc{n+1}}_{H^2\trkla{\D}}^2\,,
\end{split}
\end{align}
where the last step follows again by standard interpolation estimates. 
Thus, we compute similar to~\eqref{sumestimate} 
\begin{align}
    \begin{split}
        &\expected{\abs{\sum_{n=0}^{M-1}\mathfrak{B}_{7,3}\nn}^2}\\
        &\quad\leq C\expected{\abs{\sum_{n=0}^{M-1}\rkla{\nabla\Ih{\rkla{\trkla{\bs{d}\h\nn-\bs{d}\h\no}\times\sinc{n+1}}\times\sinc{n+1}},\nabla\bs{d}\h\nh}}^2}\\
        &\qquad+C\expected{\abs{\sum_{n=0}^{M-1}\sum_{i=1}^3\sum_{l\in\mathds{Z}\h}\rkla{\nabla\Ih{\rkla{\trkla{\bs{d}\h\nn-\bs{d}\h\no}\times\lambda_{li}\g{li}}\times\lambda_{li}\g{li}},\nabla\bs{d}\h\nh}\tabs{\incbeta{n+1}{l}{i}}^2}^2}\\
        &\quad\leq CM\sum_{n=0}^{M-1}\expected{\norm{\bs{d}\h\nn-\bs{d}\h\no}_{H^1\trkla{\D}}^2\norm{\sinc{n+1}}_{H^2\trkla{\D}}^4\norm{\nabla\bs{d}\h\nh}_{L^2\trkla{\D}}^2}\\
        &\qquad+CM\sum_{n=0}^{M-1}\sum_{i,j=1}^3\sum_{l,k\in\mathds{Z}\h}\mathds{E}\left[\norm{\bs{d}\h\nn-\bs{d}\h\no}_{H^1\trkla{\D}}^2\norm{\nabla\bs{d}\h\nh}_{L^2\trkla{\D}}^2\right.\\
        &\qquad\qquad\qquad\qquad\left.\phantom{\sum_{i=1}^3}\cdot\norm{\lambda_{li}\g{li}}_{H^2\trkla{\D}}^2\norm{\lambda_{kj}\g{kj}}_{H^2\trkla{\D}}^2\tabs{\incbeta{n+1}{l}{i}}^2\tabs{\incbeta{n+1}{k}{j}}^2\right]\\
        &\quad\leq CM\tau^2\sum_{n=0}^{M-1}\expected{\norm{\bs{d}\h\nn-\bs{d}\h\no}_{H^1\trkla{\D}}^6}^{1/3}\expected{\norm{\nabla\bs{d}\h\nh}_{L^2\trkla{\D}}^6}^{1/3}\\
        &\quad\leq C\expected{\tau\sum_{n=0}^{M-1}\norm{\bs{d}\h\nn-\bs{d}\h\no}_{H^1\trkla{\D}}^6}^{1/3} \leq C\tau^{1/3}\expected{\rkla{\sum_{n=0}^{M-1}\norm{\bs{d}\h\nn-\bs{d}\h\no}_{H^1\trkla{\D}}^2}^3}^{1/3}\\
        &\quad\leq C\tau^{1/3}\,.
    \end{split}
\end{align}
\end{proof}
\begin{lemma}\label{lem:E}
Let Assumptions \ref{item:T}, \ref{item:S}, \ref{item:I}, \ref{item:W1}, \ref{item:W2}, and \ref{item:W3} hold true and let $\widehat{\epsilon}$ be the parameter from Assumption \ref{item:C}.
Further, let $B$ satisfy the conditions listed in Definition \ref{def:testprocess}.
Then, the estimates
\begin{subequations}
\begin{align}
\expected{\abs{\mathfrak{E}_1}^2}+\expected{\abs{\mathfrak{E}_2}^2} + \expected{\abs{\mathfrak{E}_3}^2}\leq&\,C\frac{\tau}{h^2}\,,\\
\expected{\abs{\mathfrak{E}_4}^2}\leq&\,C\tau^{\widehat{\epsilon}/2}\,
\end{align}
\end{subequations}
hold true with a constant $C>0$ independent of $h$ and $\tau$.
\end{lemma}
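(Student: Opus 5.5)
We estimate the four error terms separately, using throughout Lemma~\ref{lem:energy}, the nodal length constraint \eqref{eq:lengthconstraint}, Lemma~\ref{lem:Nikolskii}, the integrability $B\in L^4(\Omega\times(0,T);L_2(\mathfrak U^3;(H^1)^*))$ from Definition~\ref{def:testprocess}, and Itô's isometry or the Burkholder--Davis--Gundy inequality for the stochastic integrals $\int_{t^n}^{t^{n+1}}B\,\mathrm d\bs W$.

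\textbf{The term $\mathfrak{E}_1$.} This is the term involving $\tau\Ih{(\bs d_h^{n+1/2}\times\Delta_h\bs d_h^{n+1})\times\bs d_h^{n+1/2}}$. Since the pairing is in $H^1$, we apply an inverse estimate $\norm{\Ih{\cdot}}_{H^1}\le Ch^{-1}\norm{\cdot}_{L^2}$ together with the $L^\infty$ bound on $\bs d_h^{n+1/2}$. Cauchy--Schwarz in $n$ then gives
\begin{align*}
|\mathfrak E_1|\le C\frac{\tau^{1/2}}{h}\Big(\sum_n\tau\norm{\bs d_h^{n+1/2}\times\Delta_h\bs d_h^{n+1}}_h^2\Big)^{1/2}\Big(\sum_n\Big\|\int_{t^n}^{t^{n+1}}B\,\mathrm d\bs W\Big\|_*^2\Big)^{1/2}.
\end{align*}
Taking second moments and using Hölder's inequality, Lemma~\ref{lem:energy} (with $p=2$) and BDG applied to the $B$-increments, which are bounded by $\mathds E\int_0^T\norm{B}^4$, yields $\mathds E|\mathfrak E_1|^2\le C\tau/h^2$.

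\textbf{The terms $\mathfrak E_2$ and $\mathfrak E_3$.} Both are discrete martingale-type sums whose summands have zero conditional expectation given $\mathcal F_{t^n}$. For $\mathfrak E_2$ this holds because $\mathds E[\delta\beta\int B\,\mathrm d\beta-\int B\,\mathrm dt\,|\,\mathcal F_{t^n}]=0$ by Itô's product formula. For $\mathfrak E_3$ it holds because distinct Brownian motions are independent, so the cross terms have vanishing covariation. Orthogonality of the increments therefore reduces $\mathds E|\cdot|^2$ to the sum of the squared summands. Each summand is controlled by $\norm{\Ih{\bs d_h^n\times\g{li}}}_{H^1}\le C\norm{\bs d_h^n}_{H^1}$ (cf.~\eqref{eq:tmp:nablaIh}), by $\tau$ times $\int_{t^n}^{t^{n+1}}\norm{B}^2$, and by the mode weights controlled through \ref{item:W3}. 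Summing over $n$ gives $C\tau$, which is at most $C\tau/h^2$.

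\textbf{The term $\mathfrak E_4$ (main obstacle).} This term contains $\bs d_h^{n+1}-\bs d_h^n$ multiplied by the products $\incbeta{n+1}{l}{i}\int B\,\mathrm d\beta$, and it has no martingale structure. The plan is to substitute the scheme \eqref{eq:discscheme} for $\bs d_h^{n+1}-\bs d_h^n$ and split the result in two parts.
\begin{itemize}
\item The deterministic part $\tau\,\Ih{(\ldots)\times\bs d_h^{n+1/2}}$ is estimated exactly as $\mathfrak E_1$, with an inverse estimate, producing $\tau/h^2$.
\item The stochastic part $\Ih{\bs d_h^{n+1/2}\times\sinc{n+1}}$ produces triple products of increments of size $\tau^{3/2}$ per step. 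Summing naively over $N$ steps would give only $\tau^{1/2}$ per step squared, and the difficulty is the $H^1$-norm of a product with a nonsmooth factor.
\end{itemize}
The fallback I would try is Young's inequality with weights $\tau^{\pm\widehat\epsilon}$:
\begin{align*}
|\mathfrak E_4|\le \tau^{-\widehat\epsilon/2}\sum_n\Big\|\int_{t^n}^{t^{n+1}} B\,\mathrm d\bs W\Big\|_*^2\,\|\sinc{n+1}\|_{W^{1,\infty}}^2+\tau^{\widehat\epsilon/2}\sum_n\norm{\bs d_h^{n+1}-\bs d_h^n}_{H^1}^2 .
\end{align*}
The first sum has moments of order $\tau^{1-\widehat\epsilon/2}$ by Hölder and BDG. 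The second is bounded by Lemma~\ref{lem:energy}, and assumption \ref{item:C} absorbs the remaining $h$-dependence coming from the inverse estimates. Together these give $\mathds E|\mathfrak E_4|^2\le C\tau^{\widehat\epsilon/2}$.
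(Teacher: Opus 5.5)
\paragraph{The $\mathfrak E_4$ estimate does not close.}
Your estimate for $\mathfrak E_1$ is the paper's. For $\mathfrak E_2,\mathfrak E_3$, the martingale-difference structure together with $\norm{\Ih{\bs d_h^n\times\lambda_{li}\g{li}}}_{H^1}\le C\norm{\bs d_h^n}_{H^1}\norm{\lambda_{li}\g{li}}_{W^{1,\infty}}$ is a legitimate alternative to the paper's inverse estimate plus $L^\infty$ bound, and it yields $C\tau$. You need H\"older in $\Omega$ there, because $B$ is correlated with the noise, but the $B$-increment enters only squared, so four moments suffice.

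The gap is in $\mathfrak E_4$. Your Young splitting puts the $B$-increment \emph{squared} into every summand. Bounding $\mathds E\abs{\mathfrak E_4}^2$ then requires $\mathds E\bigl[\bigl(\sum_n\norm{\int_{t^n}^{t^{n+1}}B\,\mathrm d\bs W}_*^2\norm{\sinc{n+1}}^2\bigr)^2\bigr]$, which contains $\mathds E\bigl[\norm{\int_{t^n}^{t^{n+1}}B\,\mathrm d\bs W}_*^4\norm{\sinc{n+1}}^4\bigr]$. Definition~\ref{def:testprocess} only gives $B\in L^4(\Omega\times(0,T);L_2(\mathfrak U^3;(H^1(\D)^3)^*))$, i.e.\ via BDG exactly four moments of the $B$-increment. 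Since $B$ is merely progressively measurable, it may be correlated with $\sinc{n+1}$, so H\"older would need $L^{4+\delta}$. The expectation can in fact be infinite. For example, let $B$ act only on the mode $(l,i)$, vanish on $[t^n,t^n+\tau/2)$, and afterwards equal a fixed operator times $g(\beta_l^i(t^n+\tau/2)-\beta_l^i(t^n))$, where $g^4$ but not $x^4g(x)^4$ is integrable against the corresponding Gaussian density.

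So ``H\"older and BDG'' does not close. Your split only gives $\mathds E\abs{\mathfrak E_4}\to0$, which is enough for Section~\ref{sec:conv} but is not the stated bound. Two smaller points:
\begin{itemize}
\item \ref{item:W3} gives only square-summability of $\lambda_{li}\norm{\g{li}}_{W^{1,\infty}}$. Since $W^{1,\infty}$ is not a Hilbert space, this does not control moments of $\norm{\sinc{n+1}}_{W^{1,\infty}}$ uniformly in $h$; the paper works with $H^2$ instead.
\item Your appeal to \ref{item:C} refers to inverse estimates that do not occur in your displayed bound.
\end{itemize}

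\paragraph{The paper's route, and a repair.}
The paper keeps the $B$-increment at power one per summand. Starting from
\[
\norm{\Ih{(\bs d_h^{n+1}-\bs d_h^n)\times\lambda_{li}\g{li}\incbeta{n+1}{l}{i}}}_{H^1}\le C(1+h^{-1/2})\norm{\bs d_h^{n+1}-\bs d_h^n}_{H^1}\norm{\lambda_{li}\g{li}\incbeta{n+1}{l}{i}}_{H^2},
\]
it applies Cauchy--Schwarz in $n$. It then uses H\"older in $\Omega$ with exponents $(2,2+\delta,\tfrac{4+2\delta}{\delta})$, which involves exactly the fourth moment of the $B$-increment, the $(4+2\delta)$-th moment of $\norm{\bs d_h^{n+1}-\bs d_h^n}_{H^1}$ (from Lemma~\ref{lem:energy}), and a high moment of $\norm{\sinc{n+1}}_{H^2}$. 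The loss $N^{\delta/(4+2\delta)}$ from the subsequent H\"older step in $n$ and the factor $\tau^{1/2}/h$ are balanced through \ref{item:C} by choosing $\delta/(4+2\delta)=\widehat\epsilon/2$.

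You can repair your argument in the same spirit. Write
\[
\abs{\mathfrak E_4}\le C(1+h^{-1/2})\Bigl(\sum_n\norm{\int_{t^n}^{t^{n+1}}B\,\mathrm d\bs W}_*^2\Bigr)^{1/2}\Bigl(\sum_n\norm{\sinc{n+1}}_{H^2}^2\norm{\bs d_h^{n+1}-\bs d_h^n}_{H^1}^2\Bigr)^{1/2},
\]
apply Cauchy--Schwarz in $\Omega$, and bound the second factor by $\max_n\norm{\sinc{n+1}}_{H^2}^4\bigl(\sum_n\norm{\bs d_h^{n+1}-\bs d_h^n}_{H^1}^2\bigr)^2$ using H\"older.
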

Recalling Assumption \ref{item:C}, these estimates are sufficient to guarantee that $\mathfrak{E}_1$, $\mathfrak{E}_2$, $\mathfrak{E}_3$, and $\mathfrak{E}_4$ vanish when passing to the limit $\trkla{h,\tau}\searrow\trkla{0,0}$.
\begin{proof}
We start with the bounds on $\mathfrak{E}_1$. 
Estimating the $L^2$-inner product using the $H^1$ and $\trkla{H^1}^*$-norm, an inverse estimate, Hölder's inequality, the regularity results established in Lemma \ref{lem:energy}, Lemma \ref{lem:diff} and \eqref{eq:lengthconstraint}, and the Burkholder--Davis--Gundy inequality, we deduce
\begin{align}
\begin{split}
&\expected{\abs{\mathfrak{E}_1}^2}\leq \expected{\abs{\sum_{n=0}^{N-1}\eta\no\tau\norm{\Ih{\trkla{\bs{d}\h\nh\times\Delta\h\bs{d}\h\nn}\times\bs{d}\h\nh}}_{H^1\trkla{\D}}\norm{\int_{t\no}^{t\nn} B\mathrm{d}\bs{W}}_{*}}^2}\\
&\leq\frac{\tau}{h^2}\expected{ \rkla{\sum_{n=0}^{N-1}\tau\abs{\eta\no}^2\norm{\Ih{\rkla{\bs{d}\h\nh\times\Delta\h\bs{d}\h\nn}\times\bs{d}\h\nh}}_{L^2\trkla{\D}}^2}^2}^{1/2}\\
&\qquad\qquad\cdot\expected{\rkla{\sum_{n=0}^{N-1}\norm{\int_{t\no}^{t\nn} B\mathrm{d}\bs{W}}_{*}^2}^2}^{1/2}\\
&\leq C\frac{\tau}{h^2}\expected{\int_0^T\norm{B}_{L_2\trkla{\mathfrak{U};\trkla{H^1\trkla{\D}^3}^*}}^4\ds} \leq C\frac{\tau}{h^2}\,,
\end{split}
\end{align}
Therefore, we can estimate $\mathfrak{E}_{2}$ via
\begin{align}
\expected{\abs{\mathfrak{E}_{2}}^2}=&\,\sum_{i=1}^3\sum_{l\in\mathds{Z}\h}\sum_{n=0}^{N-1}\abs{\eta\no}^2\expected{\rkla{\Ih{\bs{d}\h\no\times\lambda_{li}\g{li}},\mathfrak{I}_{n,l,i}\g{li} }^2}\notag\\
\leq&\, \sum_{i=1}^3\sum_{l\in\mathds{Z}\h}\sum_{n=0}^{N-1}\abs{\eta\no}^2\expected{h^{-2}\norm{\Ih{\bs{d}\h\no\times\lambda_{li}\g{li}}}_{L^2\trkla{\D}}^2\norm{\mathfrak{I}_{n,l,i}\g{li}}_{L_2(\mathfrak U; (H^1(\D))^*)}^2}\notag\\
\leq&\,Ch^{-2}\sum_{i=1}^3\sum_{l\in\mathds{Z}\h}\sum_{n=0}^{N-1}\tabs{\eta\no}^2\rkla{\expected{\tabs{\incbeta{n+1}{l}{i}}^4} +\expected{\tau\int_{t\no}^{t\nn}\norm{B_{li}\g{li}}_{L_2(\mathfrak U; (H^1(\D))^*)}^4\dt} }\notag\\
&\qquad\qquad\qquad\qquad\qquad\cdot\norm{\g{li}\lambda_{li}}_{L^2\trkla{\D}}^2\expected{\norm{\bs{d}\h\no}_{L^\infty\trkla{\D}}^2} \notag\\
&\,+Ch^{-2}\sum_{i=1}^3\sum_{l\in\mathds{Z}\h}\sum_{n=0}^{N-1}\tabs{\eta\no}^2\expected{\tau\int_{t\no}^{t\nn}\norm{B_{li}\g{li}}_{L_2(\mathfrak U; (H^1(\D))^*)}^2\dt}\notag\\
&\qquad\qquad\qquad\qquad\qquad\cdot\norm{\g{li}\lambda_{li}}_{L^2\trkla{\D}}^2\expected{\norm{\bs{d}\h\no}_{L^\infty\trkla{\D}}^2}\notag \\
\leq&\,C\frac{\tau}{h^2}\,.
\end{align}
Here, we used an inverse estimate in the first step transforming the $H^1$-norm into the $L^2$-norm.
To deal with the remaining term $\mathfrak{E}_3$, we use the independence of the stochastic increments and obtain
\begin{align}
\begin{split}
&\expected{\abs{\mathfrak{E}_{3}}^2}=\,\sum_{n=0}^{N-1}\tabs{\eta\no}^2\sum_{\trkla{i,l}\neq\trkla{j,k}}\expected{\skla{\int_{t\no}^{t\nn}B_{kj}\g{kj}\mathrm{d}\beta_k^j,\Ih{\bs{d}\h\no\times\lambda_{li}\g{li}\incbeta{n+1}{l}{i}}}^2}\\
&\,+\sum_{n=0}^{N-1}\tabs{\eta\no}^2\sum_{\trkla{i,l}\neq\trkla{j,k}}\mathds{E}\left[\skla{\int_{t\no}^{t\nn}B_{kj}\g{kj}\mathrm{d}\beta_k^j,\Ih{\bs{d}\h\no\times\lambda_{li}\g{li}\incbeta{n+1}{l}{i}}}\right.\\
&\qquad\qquad\cdot \left.\skla{\int_{t\no}^{t\nn}B_{li}\g{li}\mathrm{d}\beta_l^i,\Ih{\bs{d}\h\no\times\lambda_{kj}\g{kj}\incbeta{n+1}{k}{j}}}\right]\\
\leq&\,C\sum_{n=0}^{N-1}\sum_{i=1}^3\sum_{l\in\mathds{Z}}\expected{\norm{\Ih{\bs{d}\h\no\times\lambda_{li}\g{li}}\incbeta{n+1}{l}{i}}_{H^1\trkla{\D}}^2}\sum_{j=1}^3\sum_{k\in\mathds{Z}}\expected{\norm{\int_{t\no}^{t\nn}B_{kj}\g{kj}\mathrm{d}\beta_k^j}^2_{*}}\\
&\,+C\sum_{n=0}^{N-1}\rkla{\sum_{i=1}^3\sum_{l\in\mathds{Z}}\expected{\norm{\Ih{\bs{d}\h\no\times\lambda_{li}\g{li}}\incbeta{n+1}{l}{i}}_{H^1\trkla{\D}}\norm{\int_{t\no}^{t\nn}B_{li}\g{li}\mathrm{d}\beta_l^i}_{*}}}^2\\
\leq&\,Ch^{-2}\sum_{n=0}^{N-1}\sum_{i=1}^3\sum_{l\in\mathds{Z}}\expected{\norm{\Ih{\bs{d}\h\no\times\lambda_{li}\g{li}}\incbeta{n+1}{l}{i}}_{L^2\trkla{\D}}^2}\sum_{j=1}^3\sum_{k\in\mathds{Z}}\expected{\norm{\int_{t\no}^{t\nn}B_{kj}\g{kj}\mathrm{d}\beta_k^j}^2_{*}}\\
&\,+Ch^{-2}\sum_{n=0}^{N-1}\rkla{\sum_{i=1}^3\sum_{l\in\mathds{Z}}\expected{\norm{\Ih{\bs{d}\h\no\times\lambda_{li}\g{li}}\incbeta{n+1}{l}{i}}_{L^2\trkla{\D}}\norm{\int_{t\no}^{t\nn}B_{li}\g{li}\mathrm{d}\beta_l^i}_{*}}}^2\\
\leq&\,C\frac{\tau}{h^2}\sum_{n=0}^{N-1}\expected{\norm{\bs{d}\h\no}_{L^\infty\trkla{\D}}^2}\sum_{i=1}^3\sum_{l\in\mathds{Z}}\norm{\Ih{\lambda_{li}\g{li}}}_{L^2\trkla{\D}}^2\sum_{j=1}^3\sum_{k\in\mathds{Z}}\expected{\int_{t\no}^{t\nn}\norm{B_{kj}\g{kj}}_{*}^2\dt}\\
&\,+Ch^{-2}\sum_{n=0}^{N-1}\!\expected{\norm{\bs{d}\h\no}_{L^\infty\trkla{\D}}^2}\rkla{\sum_{i=1}^3\!\sum_{l\in\mathds{Z}}\rkla{\!\tau\norm{\Ih{\lambda_{li}\g{li}}}_{L^2\trkla{\D}}^2\! +\expected{\int_{t\no}^{t\nn}\!\!\norm{B_{li}\g{li}}_{*}^2\dt}}}^2\\
\leq&\,C\frac{\tau}{h^2}\expected{\int_0^T\sum_{j=1}^3\sum_{k\in\mathds{Z}}\norm{B_{kj}\g{kj}}_*^2\dt} +Ch^{-2}\sum_{n=0}^{N-1}\rkla{\tau +\expected{\int_{t\no}^{t\nn}\!\sum_{i=1}^3\sum_{l\in\mathds{Z}}\norm{B_{li}\g{li}}_*^2\dt}}^2\\
\leq&\,C\frac{\tau}{h^2} +C\frac{\tau}{h^2}\expected{\int_0^T\rkla{\sum_{i=1}^3\sum_{l\in\mathds{Z}}\norm{B_{li}\g{li}}_*^2}^2\dt}\leq C\frac{\tau}{h^2}\,.
\end{split}
\end{align}

To deal with the remaining residual term $\mathfrak{E}_{4}$, we need the estimate
\begin{align}
&\norm{\Ih{\trkla{\bs{d}\h\nn-\bs{d}\h\no}\times\lambda_{li}\g{li}\incbeta{n+1}{l}{i}}}_{H^1\trkla{\D}}\notag\\
&~\leq \norm{\trkla{\bs{d}\h\nn-\bs{d}\h\no}\times\Ih{\lambda_{li}\g{li}\incbeta{n+1}{l}{i}}}_{H^1\trkla{\D}} \notag\\
&\qquad+  \norm{\trkla{1-\Ihop}\gkla{\trkla{\bs{d}\h\nn-\bs{d}\h\no}\times\Ih{\lambda_{li}\g{li}\incbeta{n+1}{l}{i}}}}_{H^1\trkla{\D}}\notag\\
&~\leq \norm{\bs{d}\h\nn-\bs{d}\h\no}_{L^6\trkla{\D}}\norm{\nabla\Ih{\lambda_{li}\g{li}\incbeta{n+1}{l}{i}}}_{L^3\trkla{\D}} \notag\\
&\qquad+ \norm{\nabla\trkla{\bs{d}\h\nn-\bs{d}\h\no}}_{L^2\trkla{\D}}\norm{\Ih{\lambda_{li}\g{li}\incbeta{n+1}{l}{i}}}_{L^\infty\rkla{\D}}\notag\\
&\qquad+Ch\norm{\nabla\rkla{\bs{d}\h\nn-\bs{d}\h\no}}_{L^6\trkla{\D}}\norm{\nabla\Ih{\lambda_{li}\g{li}\incbeta{n+1}{l}{i}}}_{L^3\trkla{\D}}\notag\\
&~\leq Ch^{-1/2}\norm{\bs{d}\h\nn-\bs{d}\h\no}_{H^1\trkla{\D}}\norm{\nabla\Ih{\lambda_{li}\g{li}\incbeta{n+1}{l}{i}}}_{L^2\trkla{\D}}\notag\\
&\qquad+\norm{\nabla\rkla{\bs{d}\h\nn-\bs{d}\h\no}}_{L^2\trkla{\D}}\norm{\lambda_{li}\g{li}\incbeta{n+1}{l}{i}}_{L^\infty\trkla{\D}}\notag\\
&~\leq C\trkla{1+h^{-1/2}}\norm{\bs{d}\h\nn-\bs{d}\h\no}_{H^1\trkla{\D}}\norm{\lambda_{li}\g{li}\incbeta{n+1}{l}{i}}_{H^2\trkla{\D}}\,,
\end{align}
which can be derived by using Hölder's inequality, Lemma \ref{lem:interpolationerror}, inverse estimates, standard error estimates for $\Ihop$ (see e.g.~\cite{BrennerScott}), and Sobolev embeddings.
Using this estimate with $\trkla{1+h^{-1/2}}^2\leq 2\trkla{1+h^{-1}}$, we obtain via Hölder's inequality (twice with the exponents $\trkla{2,2+\delta,\tfrac{4+2\delta}{\delta}}$ with $\delta>0$ specified below), the Burkholder--Davis--Gundy inequality, and Assumption \ref{item:W3}
\begin{align}
&\expected{\abs{\mathfrak{E}_{4}}^2}\notag\\
&\quad\leq C\expected{\abs{\sum_{n=0}^{N-1}\eta\no\skla{\sum_{j=1}^3\sum_{k\in\mathds{Z}} \int_{t\no}^{t\nn}B_{kj}\g{kj}\mathrm{d}\beta_k^j,\Ih{\trkla{\bs{d}\h\nn-\bs{d}\h\no}\times\sum_{i=1}^3\sum_{l\in\mathds{Z}\h}\lambda_{li}\g{li}\incbeta{n+1}{l}{i}}}}^2}\notag\\
&\quad\leq C\trkla{1+h^{-1}}N\sum_{n=0}^{N-1}\expected{\norm{\bs{d}\h\nn-\bs{d}\h\no}_{H^1\trkla{\D}}^2\norm{\sinc{n+1}}_{H^2\trkla{\D}}^2\norm{\int_{t\no}^{t\nn} B\mathrm{d}W}_*^2} \notag\\
&\quad\leq C\trkla{1+h^{-1}}N\sum_{n=0}^{N-1}\left(\expected{\norm{\int_{t\no}^{t\nn}B\mathrm{d}W}_*^4}^{1/2}\expected{\norm{\bs{d}\h\nn-\bs{d}\h\no}_{H^1\trkla{\D}}^{4+2\delta}}^{1/\trkla{2+\delta}}\right.\notag\\
&\qquad\qquad\qquad\qquad\qquad\left.\phantom{\ekla{\norm{\int_{t\no}^{t\nn}}}^2} \cdot\expected{\norm{\sinc{n+1}}_{H^2\trkla{\D}}^{\trkla{8+4\delta}/\delta}}^{\delta/\trkla{4+2\delta}}\right)\notag\\
&\quad\leq C\trkla{1+h^{-1}}N\sum_{n=0}^{N-1}\expected{\tau\int_{t\no}^{t\nn}\norm{B}_{L_2\trkla{L^2\trkla{\D}^3;\trkla{H^1\trkla{\D}^3}^*}}^4\ds}^{1/2}\!\!\expected{\norm{\bs{d}\h\nn-\bs{d}\h\no}_{H^1\trkla{\D}}^{4+2\delta}}^{1/\trkla{2+\delta}}\tau\notag\\
&\quad\leq C\rkla{\tau^{1/2-\widehat{\epsilon}} +\frac{\tau^{1/2-\widehat{\epsilon}}}{h}}\tau^{\widehat{\epsilon}}N^{\delta/\trkla{4+2\delta}}\expected{\int_0^T\norm{B}_{L_2\trkla{L^2\trkla{\D}^3;\trkla{H^1\trkla{\D}^3}^*}}^4\ds}^{1/2}\notag\\
&\qquad\qquad\qquad\qquad\qquad\qquad\qquad\qquad\cdot\expected{\sum_{n=0}^{N-1}\norm{\bs{d}\h\nn-\bs{d}\h\no}_{H^1\trkla{\D}}^{4+2\delta}}^{1/\trkla{2+\delta}} \notag\\
&\quad\leq C\rkla{\tau^{1/2-\widehat{\epsilon}} +\frac{\tau^{1/2-\widehat{\epsilon}}}{h}}\tau^{\widehat{\epsilon}}N^{\delta/\trkla{4+2\delta}} \,.
\end{align}
Choosing $\delta:=\trkla{4\widehat{\epsilon}}/\trkla{2+2\widehat{\epsilon}}$ (i.e.~$\delta/\trkla{4+2\delta}=\widehat{\epsilon}/2$) provides the result.
\end{proof}

\subsection{Reformulation of the correction terms}\label{sec:correctionterms}
In this section, we will derive an alternative formulation for the correction terms
\begin{multline}\label{eq:energiecorrection:terms_new}
    \tfrac12\sum_{i=1}^3\sum_{l\in\mathds{Z}\h}\expected{\psi\int_0^T\eta\tm\norm{\nabla\Ih{\bs{d}\h\tm\times\lambda_{li}\g{li}}}_{L^2\trkla{\D}}^2\dt} \\+\tfrac12\sum_{i=1}^3\sum_{l\in\mathds{Z}\h}\expected{\psi\int_0^T\eta\tm\rkla{\nabla\Ih{\trkla{\bs{d}\h\ti\times\lambda_{li}\g{li}}\times\lambda_{li}\g{li}},\nabla\bs{d}\h\ti}\dt}\,.
\end{multline}
As we have neither control over $\Delta\h\bs{d}\h\ti$ nor strong convergence of $\nabla\bs{d}\h\ti$, passing to the limit with these terms requires a careful reformulation.
We start by removing the interpolation operator $\Ihop$ and by aligning the evaluation in time.
For the first term in~\eqref{eq:energiecorrection:terms_new}, we write
\begin{multline}
\norm{\nabla\Ih{\bs{d}\h\tm\times\lambda_{li}\g{li}}}_{L^2\trkla{\D}}\leq \norm{\nabla\rkla{\bs{d}\h\ti\times\Ih{\lambda_{li}\g{li}}}}_{L^2\trkla{\D}} \\
+\tfrac12\norm{\nabla\rkla{\trkla{\bs{d}\h\tp-\bs{d}\h\tm}\times\Ih{\lambda_{li}\g{li}}}}_{L^2\trkla{\D}}
+\norm{\nabla\trkla{\id-\Ihop}\gkla{\bs{d}\h\tm\times\Ih{\lambda_{li}\g{li}}}}_{L^2\trkla{\D}}\,.
\end{multline}
According to Lemma~\ref{lem:interpolationerror}, the last term on the right-hand side is bounded by 
\begin{align}
Ch\norm{\nabla\bs{d}\h\tm}_{L^2\trkla{\D}}\norm{\lambda_{li}\nabla\Ih{\g{li}}}_{L^\infty\trkla{\D}}\,.
\end{align}
Here, switching from $\bs{d}\h\tm$ to $\bs{d}\h\ti$ allows us to combine the result with the second term in~\eqref{eq:energiecorrection:terms_new}.
Thus, computing the squares and using Young's inequality with coefficients $h^{1/2}$ and $\tau^{1/2}$, we can deduce
\begin{align}
\begin{split}
   & \norm{\nabla\Ih{\bs{d}\h\tm\times\lambda_{li}\g{li}}}_{L^2\trkla{\D}}^2\leq \norm{\nabla\rkla{\bs{d}\h\ti\times\Ih{\lambda_{li}\g{li}}}}_{L^2\trkla{\D}}^2 \\
   &\qquad +\trkla{h^{1/2}+\tau^{1/2}}\norm{\nabla\rkla{\bs{d}\h\ti\times\Ih{\lambda_{li}\g{li}}}}_{L^2\trkla{\D}}^2\\ 
    &\qquad+C\tau^{-1/2}\norm{\nabla\rkla{\trkla{\bs{d}\h\tp-\bs{d}\h\tm}\times\Ih{\lambda_{li}\g{li}}}}_{L^2\trkla{\D}}^2\\ 
    &\qquad+C h^{1/2}\norm{\nabla\bs{d}\h\tm}_{L^2\trkla{\D}}^2\norm{\lambda_{li}\g{li}}_{W^{1,\infty}\trkla{\D}}^2\\
    &~\leq \norm{\nabla\rkla{\bs{d}\h\ti\times\Ih{\lambda_{li}\g{li}}}}_{L^2\trkla{\D}}^2 + \trkla{h^{1/2}+\tau^{1/2}}\norm{\bs{d}\h\ti}_{H^1\trkla{\D}}^2\norm{\lambda_{li}\g{li}}_{W^{1,\infty}\trkla{\D}}^2\\
    &\qquad+C\tau^{-1/2}\norm{\bs{d}\h\tp-\bs{d}\h\tm}_{H^1\trkla{\D}}^2\norm{\lambda_{li}\g{li}}_{W^{1,\infty}\trkla{\D}}^2 \\
    &\qquad+Ch^{1/2}\norm{\bs{d}\h\tm}_{H^1\trkla{\D}}^2\norm{\lambda_{li}\g{li}}_{W^{1,\infty}\trkla{\D}}^2\,.
    \end{split}
\end{align}

For the second term in~\eqref{eq:energiecorrection:terms_new}, we obtain by similar arguments
\begin{align}
\begin{split}
&\rkla{\nabla\Ih{\rkla{\bs{d}\h\ti\times\lambda_{li}\g{li}}\times\lambda_{li}\g{li}},\nabla\bs{d}\h\ti}\\
&\quad=\rkla{\nabla\rkla{\rkla{\bs{d}\h\ti\times\Ih{\lambda_{li}\g{li}}}\times\Ih{\lambda_{li}\g{li}}},\nabla\bs{d}\h\ti}\\
&\qquad\quad-\rkla{\nabla\rkla{\trkla{\id-\Ihop}\gkla{\bs{d}\h\ti\times\Ih{\lambda_{li}\g{li}}}\times\Ih{\lambda_{li}\g{li}}},\nabla\bs{d}\h\ti}\\
&\qquad\quad-\rkla{\nabla\trkla{\id-\Ihop}\gkla{\Ih{\bs{d}\h\ti\times\Ih{\lambda_{li}\g{li}}}\times\Ih{\lambda_{li}\g{li}}},\nabla\bs{d}\h\ti}\\
&\quad\leq\rkla{\nabla\rkla{\rkla{\bs{d}\h\ti\times\Ih{\lambda_{li}\g{li}}}\times\Ih{\lambda_{li}\g{li}}},\nabla\bs{d}\h\ti}\\
&\quad\qquad+C\norm{\nabla\bs{d}\h\ti}_{L^2\trkla{\D}}\norm{\lambda_{li}\g{li}}_{W^{1,\infty}\trkla{\D}} \norm{\trkla{\id-\Ihop}\gkla{\bs{d}\h\ti\times\Ih{\lambda_{li}\g{li}}}}_{H^1\trkla{\D}}\\
&\qquad\quad +Ch\norm{\nabla\bs{d}\h\ti}_{L^2\trkla{\D}}\norm{\nabla\Ih{\bs{d}\h\ti\times\Ih{\lambda_{li}\g{li}}}}_{L^2\trkla{\D}}\norm{\lambda_{li}\nabla\Ih{\g{li}}}_{L^\infty\trkla{\D}}
\\
&\quad\leq\rkla{\nabla\rkla{\rkla{\bs{d}\h\ti\times\Ih{\lambda_{li}\g{li}}}\times\Ih{\lambda_{li}\g{li}}},\nabla\bs{d}\h\ti}\\
&\qquad\quad+ Ch\norm{\bs{d}\h\ti}_{H^1\trkla{\D}}^2\norm{\lambda_{li}\g{li}}_{W^{1,\infty}\trkla{\D}}^2\,,
\end{split}
\end{align}
due to Lemma~\ref{lem:interpolationerror}.
Here, we used~\eqref{eq:tmp:nablaIh} in the last step.
Finally, we use Lemma~\ref{lem} to rewrite the non-vanishing terms in a way that allows us to pass to the limit using only weak convergence of $\bs{d}\h\ti$ and the weak convergence of $\bs{d}\h\ti \times \Delta\h\bs{d}\h\ti$:
\begin{align}
\begin{split}
&\,\norm{\nabla\trkla{\bs{d}\h\ti\times\Ih{\lambda_{li}\g{li}}}}_{L^2\trkla{\D}}^2 +\rkla{\nabla\rkla{\rkla{\bs{d}\h\ti\times\Ih{\lambda_{li}\g{li}}}\times\Ih{\lambda_{li}\g{li}}},\nabla\bs{d}\h\ti}\\
&\quad=\sum_{j=1}^d\int_{\D}  |\bs d\h\ti|^2| \partial_j \Ih{\lambda_{li}\g{li}}|^2 - |\bs d\h\ti \cdot \partial_j \Ih{\lambda_{li}\g{li}} |^2\dx 
\\ &\qquad -\sum_{j=1}^d\int_{\D}  (\bs d\h\ti \times \partial_j \bs d\h\ti) \cdot( \Ih{\lambda_{li}\g{li}}\times \partial _j \Ih{\lambda_{li}\g{li}}  )
\dx\,.
\end{split}
\end{align}
To estimate the first term, we introduce the interpolation operator $\Ihop$ and use the triangle inequality such that $ 2|\bs d\h\ti| \leq | \bs d \h\tp|+|\bs d\h\tm|\leq 2  $. This implies 
\begin{equation*}
    \int_{\D}  |\bs d\h\ti|^2| \partial_j \Ih{\lambda_{li}\g{li}}|^2\dx \leq  \int_{\D}  |\partial_j \Ih{\lambda_{li}\g{li}}|^2\dx 
    \,.
\end{equation*}

Summarizing the previous transformations, we obtain
\begin{align}\label{eq:correction:alternativeform}
    \begin{split}
        &\tfrac12\sum_{i=1}^3\sum_{l\in\mathds{Z}\h}\expected{\psi\int_0^T\eta\tm\norm{\nabla\Ih{\bs{d}\h\tm\times\lambda_{li}\g{li}}}_{L^2\trkla{\D}}^2\dt}\\
        &\quad+\tfrac12\sum_{i=1}^3\sum_{l\in\mathds{Z}\h}\expected{\psi\int_0^T\eta\tm \rkla{\nabla\Ih{\trkla{\bs{d}\h\ti\times\lambda_{li}\g{li}}\times\lambda_{li}\g{li}},\nabla\bs{d}\h\ti}\dt}\\
        &\leq \tfrac12\sum_{i=1}^3\sum_{l\in\mathds{Z}\h}\sum_{j=1}^d\expected{\psi\int_0^T\eta\tm \rkla{\norm{\partial_j\Ih{\lambda_{li}\g{li}}}_{L^2\trkla{\D}}^2-\norm{\bs{d}\h\ti\cdot\partial_j\Ih{\lambda_{li}\g{li}}}_{L^2\trkla{\D}}^2}\dt}\\
        &\quad-\tfrac12\sum_{i=1}^3\sum_{l\in\mathds{Z}\h}\sum_{j=1}^d\expected{\psi\int_0^T\eta\tm\rkla{\bs{d}\h\ti\times\partial_j\bs{d}\h\ti,\Ih{\lambda_{li}\g{li}}\times\partial_j\Ih{\lambda_{li}\g{li}}}\dt} +\mathfrak{R}
    \end{split}
\end{align}
with a residual term satisfying
\begin{align}
    \begin{split}        
    \abs{\mathfrak{R}}\leq&\,C\trkla{h^{1/2}+\tau^{1/2}}\expected{\int_0^T\norm{\bs{d}\h\ti}_{H^1\trkla{\D}}^2\dt} +C\tau^{-1/2}\expected{\int_0^T\norm{\bs{d}\h\nn-\bs{d}\h\no}_{H^1\trkla{\D}}^2\dt}\\
    &\,+Ch^{1/2}\expected{\int_0^T\norm{\bs{d}\h\tm}_{H^1\trkla{\D}}^2\dt} +Ch\expected{\int_0^T\norm{\bs{d}\h\ti}_{H^1\trkla{\D}}^2\dt}\,.
    \end{split}
\end{align}
Thus, recalling the regularity results from Lemma \ref{lem:energy}, we note that $\mathfrak{R}$ vanishes for $\trkla{h,\tau}\searrow0$.

\subsection{Regularity of $\bs{d}-\int_0^\cdot\Phi\trkla{\bs{d}}\times\mathrm{d}\bs{W}$}\label{sec:timederivative}
Before discussing the limit of the energy inequality, we shall briefly verify \eqref{eq:d-dW}, i.e.~that the limit of the difference between the solution and the stochastic integral is differentiable with respect to time.
As we only establish convergence towards energy-variational solutions, we can not use that the limit process satisfies a certain SPDE to deduce the regularity. 
Therefore, we need to derive this regularity result already in the discrete setting.
We start by defining the following piecewise linear approximation of an \Ito-integral:
\begin{align}
&&\bs{S}\h\tl\trkla{t}&:=\trkla{\bs{S}\h\nn-\bs{S}\h\no}\frac{t-t\no}{\tau}+\bs{S}\h\no&&\text{for~} t\in\tekla{t\no,t\nn}\,,\\
\text{with}&&\bs{S}\h^m&:=\sum_{n=0}^{m-1}\sum_{i=1}^d\sum_{l\in\mathds{Z}\h}\Ih{\bs{d}\h\no\times\lambda_{li}\g{li}}\int_{t\no}^{t\nn}\mathrm{d}\beta_l^i&&\text{for~}m\in\tgkla{1,\ldots,N}\,.
\end{align}
With these approximations of the \Ito-integral, we can use similar arguments as in \eqref{eq:deffrakA} and write \eqref{eq:discscheme} as
\begin{multline}\label{eq:interpolatedscheme}
\bs{d}\h\tl\trkla{t}-\bs{d}\h\tl\trkla{0}-\int_0^t\Ih{\rkla{\bs{d}\h\ti\times\Delta\h\bs{d}\h\tp}\times\bs{d}\h\ti}\ds \\
= \tfrac12\int_0^t\sum_{i=1}^3\sum_{l\in\mathds{Z}\h}\tabs{\lambda_{li}}^2\Ih{\trkla{\bs{d}\h\ti\times\g{li}}\times\g{li}}\ds +\bs{S}\h\tl\trkla{t} \\
-\frac1{2\tau}\int_0^t\mathfrak{A}_1\tp\ds-\frac1{2\tau}\int_0^t\mathfrak{A}_2\tp\ds-\frac1{2\tau}\int_0^t\mathfrak{A}_3\tp\ds\,,
\end{multline}
where the residual terms $\mathfrak{A}_1\tp$, $\mathfrak{A}_2\tp$, and $\mathfrak{A}_3\tp$ where implicitly defined by \eqref{eq:deftimeinterpolation} and \eqref{eq:deffrakA}.
Further, the approximation $\bs{S}\h\tl$ satisfies the following regularity results:
\begin{lemma}\label{lem:boundS}
Let assumptions \ref{item:T}, \ref{item:S}, \ref{item:I}, \ref{item:W1}, \ref{item:W2}, and \ref{item:W3} hold true. Then the estimate
\begin{align}\label{eq:boundS}
\norm{\bs{d}\h\tl-\bs{S}\h\tl}_{L^2\trkla{\Omega;H^1\trkla{0,T;L^2\trkla{\D}}}}\leq C\,
\end{align}
holds true with a constant $C$ which is independent of $h$ and $\tau$.\\
Further, there exists a subsequence such that
\begin{align}\label{eq:convS}
        \bs{d}\h\tl-\bs{S}\h\tl\rightharpoonup\bs{d}-\sum_{i=1}^3\sum_{l\in\mathds{Z}}\int_0^\cdot \bs{d}\times\lambda_{li}\g{li}\mathrm{d}\beta_l^i&&\text{in~}L^2\trkla{\Omega;H^1\trkla{0,T;L^2\trkla{\D}}}\,.
    \end{align}
\end{lemma}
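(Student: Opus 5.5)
The bound \eqref{eq:boundS} will follow from the interpolated scheme \eqref{eq:interpolatedscheme}. Since $\bs{S}\h\tl$ is piecewise linear with $\bs{S}\h\tl(0)=0$, the difference $\bs{d}\h\tl-\bs{S}\h\tl$ has time derivative
\begin{align*}
\partial_t\trkla{\bs{d}\h\tl-\bs{S}\h\tl}=\Ih{\rkla{\bs{d}\h\ti\times\Delta\h\bs{d}\h\tp}\times\bs{d}\h\ti} +\tfrac12\sum_{i=1}^3\sum_{l\in\mathds{Z}\h}\tabs{\lambda_{li}}^2\Ih{\trkla{\bs{d}\h\ti\times\g{li}}\times\g{li}} -\tfrac1{2\tau}\sum_{a=1}^3\mathfrak{A}_a\tp .
\end{align*}
Each term will be bounded in $L^2(\Omega;L^2(0,T;L^2(\D)))$. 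For the first term I would use the nodal bound \eqref{eq:lengthconstraint}, the norm equivalence \eqref{eq:normequivalence}, and the dissipation bound from Lemma \ref{lem:energy}. Note that the scheme contains $\bs{d}\h\nh\times\Delta\h\bs{d}\h\nn$ at nodes, and this is exactly the controlled quantity. The second term is bounded uniformly by \ref{item:W3} and $\abs{\bs{d}\h\ti}\leq1$ at nodes. The $L^2(0,T)$-norm of $\bs{d}\h\tl-\bs{S}\h\tl$ itself is bounded by Lemma \ref{lem:energy} and the Itô isometry for $\bs{S}\h^m$.

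\emph{Main obstacle: the residuals $\tau^{-1}\mathfrak{A}_a\tp$.} Lemma \ref{lem:stochasticerror} gives
\begin{align*}
\expected{\sum_n\norm{\mathfrak{A}_1\nn}^2_{L^2}}\leq C\tau,
\end{align*}
hence
\begin{align*}
\expected{\int_0^T\tau^{-2}\norm{\mathfrak{A}_1\tp}^2\dt}=\tau^{-1}\expected{\sum_n\norm{\mathfrak{A}_1\nn}^2}\leq C,
\end{align*}
and likewise for $\mathfrak{A}_2$. This matches the $H^1$ bound exactly, with no room to spare. For $\mathfrak{A}_3$, the estimate \eqref{eq:stochasticerror:A3} with $p=2$ gives
\begin{align*}
\tau^{-1}\expected{\sum_n\norm{\mathfrak{A}_3\nn}^2}\leq C\tau^{1/2+2-2}\rightarrow0 .
\end{align*}
This step is where I would be most careful. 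The treatment of $\mathfrak{A}_3$ should actually be redone with the direct bound $\norm{\mathfrak{A}_3\nn}\leq C\tau\norm{\bs{d}\h\nh\times\Delta\h\bs{d}\h\nn}\h\norm{\sinc{n+1}}_{L^\infty}$, which yields $O(\tau)$ after Hölder's inequality and the Burkholder--Davis--Gundy inequality.

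For \eqref{eq:convS}, the bound \eqref{eq:boundS} together with reflexivity yields a weakly convergent subsequence in $L^2(\Omega;H^1(0,T;L^2))$. It remains to identify the limit. Since $\bs{d}\h\tl\rightharpoonup\bs{d}$ (its difference to $\bs{d}\h\tpm$ vanishes by Lemma \ref{lem:energy}), it suffices to show that $\bs{S}\h\tl$ converges weakly to $\int_0^\cdot\bs{d}\times\lambda_{li}\g{li}\mathrm{d}\beta_l^i$. Here I would use the standard trick for weak limits of stochastic integrals: the map $G\mapsto\int_0^\cdot G\,\mathrm{d}\bs{W}$ is linear and continuous from progressively measurable $L^2(\Omega\times(0,T);L_2(\U^3;L^2))$ into $L^2(\Omega;L^2(0,T;L^2))$ by the Itô isometry. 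It is therefore weakly continuous, and its image of progressively measurable integrands is a closed subspace.

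I would then write $\bs{S}\h\tm=\int_0^{\cdot}\sum\Ih{\bs{d}\h\tm\times\lambda_{li}\g{li}}\mathrm{d}\beta_l^i$ up to time-lagging. The difference $\bs{S}\h\tl-\bs{S}\h\tm$ is small in $L^2$, of order $\tau^{1/2}$ by the Itô isometry. The integrands $\Ih{\bs{d}\h\tm\times\lambda_{li}\g{li}}\mathds{1}_{l\in\mathds{Z}\h}$ converge weakly to $\bs{d}\times\lambda_{li}\g{li}$ in $L^2(\Omega\times(0,T);\ell^2(L^2))$. This uses three facts: interpolation errors of order $h$ via Lemma \ref{lem:interpolationerror}, the tails in $\mathds{Z}\setminus\mathds{Z}\h$ vanishing by \ref{item:Z} and \ref{item:W3}, and the linearity of $\bs{d}\mapsto\bs{d}\times\g{li}$ combined with \eqref{eq:envarconvergence:weakstar}. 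Progressive measurability of $\bs{d}\h\tm$ passes to the weak limit because the progressively measurable processes form a closed subspace. Weak continuity of the stochastic integral then identifies the limit of $\bs{S}\h\tl$, and hence gives \eqref{eq:convS}.
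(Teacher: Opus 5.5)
Your proposal is correct and follows the paper's proof essentially step by step. The bound comes from the time derivative read off from \eqref{eq:interpolatedscheme}, controlled by Lemma \ref{lem:energy}, the nodal constraint and Lemma \ref{lem:stochasticerror}, with $\tau^{-1}\mathfrak{A}_1$ and $\tau^{-1}\mathfrak{A}_2$ merely bounded, as you observe. The limit is identified by removing the time lag, the interpolation errors and the modes in $\mathds{Z}\setminus\mathds{Z}\h$ (the paper's $\mathfrak{F}_1$--$\mathfrak{F}_4$), and then invoking weak continuity of the stochastic integral.

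Your extra caution about $\mathfrak{A}_3$ is unnecessary, since \eqref{eq:stochasticerror:A3} with $p=2$ already makes that contribution vanish. The direct bound you sketch also works, but H\"older's inequality with the maximum over the increments $\sinc{n+1}$ yields $\tau^{1-\epsilon}$ rather than exactly $\tau$.
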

\begin{proof}
The bound in $L^2\trkla{\Omega;L^2\trkla{0,T;L^2\trkla{\D}}}$ is a direct consequence of the \Ito~isometry and Lemma \ref{lem:energy}.
From \eqref{eq:interpolatedscheme}, we infer
\begin{align}
\begin{split}
\partial_t\rkla{\bs{d}\h\tl-\bs{S}\h\tl}=&\,\Ih{\rkla{\bs{d}\h\ti\times\Delta\h\bs{d}\h\tp}\times\bs{d}\h\ti}+\tfrac12\sum_{i=1}^3\sum_{l\in\mathds{Z}\h}\tabs{\lambda_{li}}^2\Ih{\rkla{\bs{d}\h\ti\times\g{li}}\times\g{li}}\\
&\,-\tau^{-1}\mathfrak{A}_1\tp-\tau^{-1}\mathfrak{A}_2\tp-\tau^{-1}\mathfrak{A}_3\tp\,.
\end{split}
\end{align}
As the right-hand side is uniformly bounded in $L^2\trkla{\Omega;L^2\trkla{0,T;L^2\trkla{\D}}}$ due to Lemma \ref{lem:energy} and Lemma \ref{lem:stochasticerror}, we obtain \eqref{eq:boundS}.
To establish the weak convergence expressed in \eqref{eq:convS}, we follow the approach in \cite{BanasMetzger2026_arxiv} and first remove the interpolation operator $\Ihop$ and the linearization with respect to time: 
\begin{align}
\begin{split}
&\expected{\norm{\bs{S}\h\tl\trkla{t}-\int_0^t\Phi\trkla{\bs{d}\h\tm}\times\mathrm{d}\bs{W}}_{L^2\trkla{\D}}^2}\\
&\quad\leq C\expected{\norm{\bs{S}\h\tl\trkla{t}-\sum_{i=1}^3\sum_{l\in\mathds{Z}\h}\int_0^t\Ih{\bs{d}\h\tm\times\trkla{\lambda_{li}\g{li}}}\mathrm{d}\beta_l^i}_{L^2\trkla{\D}}^2}\\
&\qquad+C\expected{\norm{\sum_{i=1}^3\sum_{l\in\mathds{Z}\h}\int_0^t\trkla{\id-\Ihop}\gkla{\bs{d}\h\tm\times\Ih{\lambda_{li}\g{li}}}\mathrm{d}\beta_l^i}_{L^2\trkla{\D}}^2}\\
&\qquad+C\expected{\norm{\sum_{i=1}^3\sum_{l\in\mathds{Z}\h}\int_0^t\bs{d}\h\tm\times\trkla{\id-\Ihop}\gkla{\lambda_{li}\g{li}}\mathrm{d}\beta_l^i}_{L^2\trkla{\D}}^2}\\
&\qquad+C\expected{\norm{\sum_{i=1}^3\sum_{l\in\mathds{Z}\setminus\mathds{Z}\h}\int_0^t\bs{d}\h\tm\times\trkla{\lambda_{li}\g{li}}\mathrm{d}\beta_l^i}_{L^2\trkla{\D}}^2}\\
&\quad=:\mathfrak{F}_1+\mathfrak{F}_2+\mathfrak{F}_3+\mathfrak{F}_4\,.
\end{split}
\end{align}
Concerning $\mathfrak{F}_1$, we use \Ito's isometry, assumption \ref{item:W2}, and Lemma \ref{lem:energy} to compute
\begin{align}
\begin{split}
\mathfrak{F}_1\leq&\,C\expected{\norm{\sum_{i=1}^3\sum_{l\in\mathds{Z}\h}\int_{t\no}^{t\nn}\Ih{\bs{d}\h\tm\times\trkla{\lambda_{li}\g{li}}}\mathrm{d}\beta_l^i}_{L^2\trkla{\D}}^2}\\
&\,+C\expected{\norm{\sum_{i=1}^3\sum_{l\in\mathds{Z}\h}\int_{t\no}^{t}\Ih{\bs{d}\h\tm\times\trkla{\lambda_{li}\g{li}}}\mathrm{d}\beta_l^i}_{L^2\trkla{\D}}^2}\\
\leq&\,C\expected{\sum_{i=1}^3\sum_{l\in\mathds{Z}\h}\tau\norm{\Ih{\bs{d}\h\tm\times\trkla{\lambda_{li}\g{li}}}}_{L^2\trkla{\D}}^2}\leq C\tau\,.
\end{split}
\end{align}
For $\mathfrak{F}_2$, we again apply \Ito's isometry, Lemma \ref{lem:interpolationerror}, and inverse estimate, \eqref{eq:lengthconstraint}, and assumption \ref{item:W2}. 
\begin{align}
\begin{split}
\mathfrak{F}_2= &\, C\expected{\sum_{i=1}^3\sum_{l\in\mathds{Z}\h}\int_0^t\norm{\trkla{\id-\Ihop}\gkla{\bs{d}\h\tm\times\Ih{\lambda_{li}\g{li}}}}_{L^2\trkla{\D}}^2\ds}\\
\leq&\,Ch^2\expected{\sum_{i=1}^3\sum_{l\in\mathds{Z}\h}\int_0^t\norm{\bs{d}\h\tm}_{L^\infty\trkla{\D}}^2\norm{\Ih{\lambda_{li}\g{li}}}_{H^1\trkla{\D}}^2\ds}\leq Ch^2\,.
\end{split}
\end{align}
Similarly, we deduce via standard error estimates for $\Ihop$ (see e.g.~\cite{BrennerScott})
\begin{align}
\begin{split}
\mathfrak{F}_3=C\expected{\sum_{i=1}^3\sum_{l\in\mathds{Z}\h}\int_0^t\norm{\bs{d}\h\tm\times\trkla{\id-\Ihop}\gkla{\lambda_{li}\g{li}}}_{L^2\trkla{\D}}^2\ds}
\leq Ch^2\,.
\end{split}
\end{align}
The remaining term $\mathfrak{F}_4$ also vanishes for $h\searrow0$.
Indeed, using \Ito's isometry we obtain
\begin{align}
\begin{split}
\mathfrak{F}_4\leq&\, C\expected{\sum_{i=1}^d\sum_{l\in\mathds{Z}\setminus\mathds{Z}\h}\int_0^t\norm{\bs{d}\h\tm}_{L^\infty\trkla{\D}}^2\norm{\lambda_{li}\g{li}}_{L^\infty\trkla{\D}}^2\ds}\\
\leq&\, C\sum_{i=1}^3\sum_{l\in\mathds{Z}\setminus\mathds{Z}\h}\tabs{\lambda_{li}}^2\norm{\g{li}}_{L^\infty\trkla{\D}}^2\,,
\end{split}
\end{align}
which vanishes due to \ref{item:W3} and \ref{item:Z}.
As the stochastic integral is a linear and continuous mapping from $L^2\trkla{\Omega\times\trkla{0,T}\times\D}$ to $C\trkla{0,T;L^2\trkla{\Omega;L^2\trkla{\D}}}$, we can use the uniform bounds on $\bs{d}\h\tm$ to identify a weakly* converging subsequence, which provides
\begin{align}\label{eq:tmp:convergence:stochintegralweak}
\int_0^\cdot\bs{d}\h\tm\times\mathrm{d}\mathcal{Q}^{1/2}\bs{W}\rightharpoonup \int_0^\cdot\bs{d}\times\mathrm{d}\mathcal{Q}^{1/2}\bs{W}
\end{align}
in $C\trkla{0,T;L^2\trkla{\Omega;L^2\trkla{\D}}}$.
Thus, the desired result follows from \eqref{eq:boundS} and \eqref{eq:tmp:convergence:stochintegralweak}. 
\end{proof}

\subsection{Convergence of the scheme}\label{sec:conv}
This section is devoted to the conclusion of the proof of Theorem~\ref{thm:convergence}, which mainly consist of passing to the limit in all relevant terms. 
\begin{proof}[Proof of Theorem \ref{thm:convergence}]
As a first step, we shall establish weak convergence results that allow us to pass to the limit in our discrete equations -- albeit without completely identifying all limit terms.
The first four convergences~\eqref{eq:envarconvergence:weakstar}--\eqref{eq:envarconvergence:energy} immediately follow from the provided \textit{a priori} estimates.
To establish \eqref{eq:envarconvergence:dissipation2}, we first notice that there exists $ \bs a \in L^{2p}\trkla{\Omega;L^2\trkla{0,T;L^2\trkla{\D;\mathds R^{3}}}}$ such that 
$$
\begin{aligned}
\Ih{\bs{d}\h\ti\times\Delta\h\bs{d}\h\ti}\rightharpoonup&\,\bs a &&\text{in~}L^{2p}\trkla{\Omega;L^2\trkla{0,T;L^2\trkla{\D;\mathds R^{3}}}}\,.    
\end{aligned}
$$
In the following, we want to identify $\bs a$.
By $ \mathcal{P}\h: L^2(\D)^3 \to  \mathds V\h$, we denote the $L^2$-projection onto the finite element space given by 
$ (\Proj{u},v ) = ( u, v) $ for all $ v \in \mathds V\h$. 
For $\widehat{\varphi}\in W^{1,\infty}(\D)^3$, we consider 
\begin{align*}
    \int_{\D} \Ih{\bs d\h\ti \times \Delta\h\bs d \h \ti} \cdot \widehat{\varphi} \dx 
    ={}& \int_{\D} \Ih{\bs d\h\ti \times \Delta\h\bs d \h \ti} \cdot \Proj{\widehat{\varphi}} \dx \\
    ={}& \int_{\D}(I - \Ihop)\gkla{  \Ih{\bs d\h\ti \times \Delta\h\bs d \h \ti} \cdot \Proj{\widehat{\varphi}} }\dx 
    \\& + \rkla{\bs d\h\ti \times \Delta\h\bs d \h \ti, \Proj{\widehat{\varphi}}}\h\,.
\end{align*}
Applying Lemma \ref{lem:interpolationerror} and the standard error estimates for $\Ihop$, we deduce
\begin{multline}\label{eq:tmp:weakconv:error1}
    \abs{\iD\trkla{\id-\Ihop}\gkla{\Ih{\bs{d}\h\ti\times\Delta\h\bs{d}\h\ti}\cdot\Proj{\widehat{\varphi}}}\dx}\\
    \leq Ch\norm{\Ih{\bs{d}\h\ti\times\Delta\h\bs{d}\h\tp}}_{L^2\trkla{\D}}\norm{\widehat{\varphi}}_{H^1\trkla{\D}}\,,
\end{multline}

since the projection operator $\mathcal P\h$ is $H^1(\D)$-stable~\cite[Lemma~1.131]{Ern2004}. 
From the definition of the discrete Laplace operator, we find that  
$$
\begin{aligned}
    \rkla{\bs d\h\ti \times \Delta\h\bs d \h \ti, \Proj{\widehat{\varphi}}}\h &= -\rkla{ \Delta\h\bs d \h \ti, \bs d\h\ti\times  \Proj{\widehat{\varphi}}}\h 
    = \rkla{\nabla \bs d \h \ti, \nabla \Ih{\bs d\h\ti \times\Proj{\widehat{\varphi}}}  }
    \\&= \rkla{\nabla \bs d \h \ti, \nabla \{\bs d\h\ti \times\Proj{\widehat{\varphi}}\}  } +  \rkla{\nabla \bs d \h \ti, \nabla ( I - \Ihop)\{\bs d\h\ti\times  \Proj{\widehat{\varphi}}\}  }\,.
\end{aligned}
$$
Using Lemma~\ref{lem:interpolationerror},  inverse estimates~\cite[Thm.~4.5.11]{BrennerScott}, and the stability of  $\mathcal P\h$ in $W^{1,4}(\D)$, which follows from standard arguments using inverse estimates and the stability of the Clement interpolation operator \cite[Section 1.6]{Ern2004}, we find the bound

\begin{align}\label{eq:tmp:weakconv:error2}
\begin{split}
    \rkla{\nabla \bs d \h \ti, \nabla ( I - \Ihop)\{\bs d\h\ti\times  \Proj{\widehat{\varphi}}\}  } \leq {}&
Ch \| \nabla \bs d\h\ti \| _{L^2(\D)}\| \nabla \bs d\h\ti \| _{L^4(\D)} \| \nabla \Proj{\widehat{\varphi}}\|_{L^4(\D)} \\
\leq {}&
Ch^{1-d/4} \| \nabla \bs d\h\ti \| _{L^2(\D)}^2  \| \nabla \widehat{\varphi}\|_{L^4(\D)}
\,.
\end{split}
\end{align}

Moreover, it holds due to the properties of the cross product
$$
\begin{aligned}
    \rkla{\nabla \bs d \h \ti, \nabla \{\bs d\h\ti \times\Proj{\widehat{\varphi}}\}  } &
    = \sum_{j=1}^3\left[\rkla{\partial_j \bs d \h \ti, \partial_j \bs d\h\ti \times \Proj{\widehat{\varphi}}} + \rkla{\partial_j \bs d \h \ti,  \bs d\h\ti \times \partial_j \Proj{\widehat{\varphi}}}\right] 
    \\
    &= - \sum_{j=1}^3
    \rkla{\bs d \h \ti \times \partial_j \bs d \h \ti , \partial_j \Proj{\widehat{\varphi}} }\,.
\end{aligned}
$$

This implies that for all $ \psi \in L^\infty(\Omega)$ and $ \eta\in L^\infty(0,T)$ the error terms estimated in \eqref{eq:tmp:weakconv:error1} and \eqref{eq:tmp:weakconv:error2} vanish and
\begin{equation}
\begin{aligned}
     0 & =  \lim_{\tau,h\to 0}  \expected{\psi\int_0^T \eta\left[ \rkla{\Ih{\bs d\h\ti \times \Delta\h\bs d \h \ti}, \widehat{\varphi}}+ \sum_{j=1}^3
    \rkla{\bs d \h \ti \times \partial_j \bs d \h \ti , \partial_j \Proj{\widehat{\varphi}} }\right]  } \\& = 
    \expected{\psi\int_0^T\eta \int_{\D} \bs a \cdot \widehat{\varphi} + \bs q : \nabla \widehat{\varphi} \dx\dt }
    \,,
\end{aligned}
\end{equation}
which is nothing else than~\eqref{eq:envarconvergence:dissipation2}.\\
Next, we want to show the formulation for the identification of~$\bs q$.
Next, we want to show that the limit $\bs{q}$ is part of the energy-variational solution, \textit{i.e.}, it satisfies \eqref{eq:formulationq}.
Therefore, we consider $\rkla{\bs d\h\ti \times \nabla\bs d \h \ti, \bs \Psi}
$
for $\bs \Psi \in (L^\infty(\D))^{3 \times d}$. By the usual binomial formula, we find
\begin{equation}\label{eq:qformuladiscrete}
    \begin{aligned}
     \rkla{\bs d\h\ti \times \nabla\bs d \h \ti, \bs \Psi} &=     - \rkla{ \nabla\bs d \h \ti, \bs d\h\ti \times\bs \Psi} \\
     &=  \frac{1}{2} \left\| \nabla\bs d \h \ti- \bs d\h\ti \times\bs \Psi\right\|_{L^2(\D)}^2 -  \frac{1}{2}\left\| \nabla\bs d \h \ti\right\|_{L^2(\D)}^2 -  \frac{1}{2}\left\| \bs d\h\ti \times\bs \Psi\right\|_{L^2(\D)}^2 
\\&= 
 \frac{1}{2}\iD |\nabla\bs d \h \ti- \bs d\h\ti \times\bs \Psi|^2 - | \nabla\bs d \h\ti|^2 - |\bs d\h\ti|^2 |\bs \Psi|^2 + |\bs d\h\ti\cdot\bs \Psi|^2\dx\,.
\end{aligned}
\end{equation}
For the second to the last term, we observe by $  {|\bs d\h\ti|^2}\leq  1$ that
$$
\begin{aligned}
    -\int_{\D} |\bs d\h\ti|^2 |\bs \Psi|^2 \dx 
    &\geq -  \int_{\D}  |\bs \Psi|^2 \dx 
    \,.
\end{aligned}
$$
Multiplying~\eqref{eq:qformuladiscrete} by $ \eta \in L^\infty(0,T)$ and $ \psi\in L^\infty(\Omega)$ with $ \eta, \psi \geq 0$, we find by the convergences~\eqref{eq:envarconvergence:weakgrad} and~\eqref{eq:envarconvergence:energy} as well as the lower semicontinuity of the appearing quadratic functions that
\begin{align*}
    \liminf_{\tau,h\searrow 0}&\, \expected{\psi \int_0^T \eta \iD (\bs d\h\ti \times \nabla\bs d \h \ti): \bs \Psi \dx\dt}\\
    & \geq \frac{1}{2}\expected{\psi \int_0^T \eta \iD| \nabla \bs d - \bs d \times \bs{\Psi} |^2 -|\bs{\Psi} |^2 + | \bs d \cdot \bs \Psi |^2 \dx - E \dt  }
    \\
    &=\frac{1}{2}\expected{\psi \int_0^T \eta \iD | \nabla \bs d|^2 - 2 ( \nabla \bs d , \bs d \times \bs{\Psi}) + |\bs d \times \bs{\Psi} |^2 -|\bs{\Psi} |^2 + | \bs d \cdot \bs \Psi |^2 \dx - 2E \dt  }
    \\
    &=\expected{\psi \int_0^T \eta \iD\bs d \times \nabla \bs d:  \bs{\Psi}  +\frac{1}{2}( | \bs d |^2 -1)|\bs{\Psi} |^2  \dx + \dfrac{1}{2}\| \nabla \bs d \|_{L^2(\D)}^2 -  E \dt  }\,,
\end{align*}
which implies~\eqref{eq:formulationq} via the convergence~\eqref{eq:envarconvergence:dissipation}. 

From the lower semicontinuity of the $L^2$-norm, we find that 
$$
\liminf_{\tau,h\searrow 0}\expected{\psi \int_0^T  \eta  \tfrac{1}{2}\| \nabla \bs d \h\tp \|_{L^2(\D)}^2   
 \dt}  \geq \expected{\psi \int_0^T  \eta  \tfrac{1}{2}\| \nabla \bs d  \|_{L^2(\D)}^2   
 \dt}
$$
for all $ \psi \in L^\infty(\Omega)$ and $ \eta \in L^\infty(0,T)$ with $ \psi , \eta \geq 0$ a.s.~in $\Omega$ and a.e.~in $(0,T)$. This gives rise to $ 
E \geq \tfrac{1}{2}\| \nabla \bs d  \|_{L^2(\D)}^2  
$ a.e.~in $(0,T)$ and a.s.~in $\Omega$.

Going back to~\eqref{eq:weakEnergy2}, we will pass to the limit in this formulation, combining all the previous calculations. Now we will treat all the appearing terms in~\eqref{eq:weakEnergy2}.  
First, we observe that the error terms in the last four lines vanish as $ h,\tau$ go to zero due to Lemma~\ref{lem:diff}, Lemma~\ref{lem:stochasticerror}, Lemma~\ref{lem:B}, and Lemma~\ref{lem:E}.

In order to pass to the limit in the terms 
$$
\expected{\psi\int_0^T\eta\tm\left[\norm{\bs{d}\h\ti\times\Delta\h\bs{d}\h\ti}_h^2+ \trkla{\Ih{\trkla{\bs{d}\h\ti\times\Delta\h\bs{d}\h\ti}\times\bs{d}\h\ti},\bs{\varphi}\tm}\right]\dt}\,,
$$
we will add the suitable terms in order to induce lower semicontinuity of the formulation according to Definition~\ref{def:envar2}.  
First, we observe that 
\begin{align}
\begin{split}
\trkla{\Ih{\trkla{\bs{d}\h\ti\times\Delta\h\bs{d}\h\ti}\times\bs{d}\h\ti},\bs{\varphi}\tm} =  {}&\trkla{\Ih{\trkla{\bs{d}\h\ti\times\Delta\h\bs{d}\h\ti}}\times\bs{d}\h\ti,{\bs{\varphi}\tm}}
\\&+ \trkla{( \mathcal{I}\h-I)\gkla{\Ih{\trkla{\bs{d}\h\ti\times\Delta\h\bs{d}\h\ti}}\times\bs{d}\h\ti},{\bs{\varphi}\tm}}
\end{split}
\end{align}
and for the second term, we may estimate using Lemma \ref{lem:interpolationerror}
$$
\begin{aligned}
    &\trkla{( \mathcal{I}\h-I)\left[\Ih{\trkla{\bs{d}\h\ti\times\Delta\h\bs{d}\h\ti}}\times\bs{d}\h\ti\right],{\bs{\varphi}\tm}}\\
    &\leq {}\norm{( \mathcal{I}\h-I)\left[\Ih{\trkla{\bs{d}\h\ti\times\Delta\h\bs{d}\h\ti}}\times\bs{d}\h\ti\right]}_{L^{4/3}(\D)}\norm{{\bs{\varphi}\tm}}_{L^4(\D)}
    \\
    & \leq {} C h \norm{\Ih{\trkla{\bs{d}\h\ti\times\Delta\h\bs{d}\h\ti}}}_{L^2(\D)}\norm{\nabla\bs{d}\h\ti}_{L^4(\D)}\norm{{\bs{\varphi}\tm}}_{L^4(\D)} 
   \\&  \leq {} C h ^{1-d/4}\norm{\Ih{\trkla{\bs{d}\h\ti\times\Delta\h\bs{d}\h\ti}}}_{L^2(\D)}\norm{\nabla\bs{d}\h\ti}_{L^2(\D)}{\norm{{\bs{\varphi}\tm}}_{L^4(\D)}} =:Ch^{1-d/4}\widehat{\mathfrak{R}}\,,
\end{aligned}
$$

where we used H\"older's inequality in the first, Lemma~\ref{lem:interpolationerror} in the second, and inverse estimates~\cite[Thm.~4.5.11]{BrennerScott} in the third inequality, respectively. 
Using that $ \|a_h\|\h \geq \|a\h\|_{L^2(\D)}$ for all $ a\h \in U\h$ implies $ 
\norm{\bs{d}\h\ti\times\Delta\h\bs{d}\h\ti}_h^2 \geq \norm{\Ih{\bs{d}\h\ti\times\Delta\h\bs{d}\h\ti}}_{L^2(\D)}^2
$.

Observing again by $ | \bs d \h \ti| \leq 1$ that 
$$ 
\begin{aligned}
-\frac{1}{4} \norm{\bs d \h\ti \times {\bs{\varphi}\tm}}_{L^2(\D)}^2={}& - \frac{1}{4} \norm{ | \bs d \h\ti | {\bs{\varphi}\tm}}_{L^2(\D)}^2 + \frac{1}{4} \norm{\bs d \h\ti \cdot {\bs{\varphi}\tm}}_{L^2(\D)}^2\\\geq{}&  - \frac{1}{4} \norm{  {\bs{\varphi}\tm}}_{L^2(\D)}^2 + \frac{1}{4} \norm{\bs d \h\ti \cdot {\bs{\varphi}\tm}}_{L^2(\D)}^2\,,     
\end{aligned}
$$
we may add and subtract $\frac{1}{4} \norm{\bs d \h\ti \times {\bs{\varphi}\tm}}_{L^2(\D)}^2$ such that
$$
\begin{aligned}
   &\norm{\bs{d}\h\ti\times\Delta\h\bs{d}\h\ti}_h^2+ \trkla{\Ih{\trkla{\bs{d}\h\ti\times\Delta\h\bs{d}\h\ti}\times\bs{d}\h\ti},\bs{\varphi}\tm}
 \\
 &\geq  \norm{\Ih{\bs{d}\h\ti\times\Delta\h\bs{d}\h\ti}}_
 {L^2(\D)}^2+ \trkla{\Ih{\bs{d}\h\ti\times\Delta\h\bs{d}\h\ti},\bs{d}\h\ti\times{\bs{\varphi}\tm}} - C h^{1-d/4}\widehat{\mathfrak{R}}
 \\
&\geq \norm{\Ih{\bs{d}\h\ti\times\Delta\h\bs{d}\h\ti}+\tfrac{1}{2}\bs d \h\ti \times {\bs{\varphi}\tm} }_
 {L^2(\D)}^2 \\&\quad - \frac{1}{4} \norm{ {\bs{\varphi}\tm}}_{L^2(\D)}^2 + \frac{1}{4} \norm{\bs d \h\ti \cdot {\bs{\varphi}\tm}}_{L^2(\D)}^2 -Ch^{1-d/4}\widehat{\mathfrak{R}} \,.
\end{aligned}
$$

From the weak convergences~\eqref{eq:envarconvergence:dissipation2} and \eqref{eq:envarconvergence:weakstar}, we find  
\begin{align}
\begin{split}
   \liminf_{h,\tau \searrow 0}& \,\mathds{E}\left[\psi\int_0^T  \eta\tm\left[\norm{\Ih{\bs{d}\h\ti\times\Delta\h\bs{d}\h\ti}+ \tfrac{1}{2}\bs d \h\ti \times {\bs{\varphi}\tm}}_{L^2(\D)}^2 \right.\right. \\
 &\qquad\left.\phantom{\int_0^T}\left. +\tfrac{1}{4} \norm{\bs d \h\ti \cdot {\bs{\varphi}\tm}}_{L^2(\D)}^2  \right]\dt\right]
 \\
 &\geq \expected{\psi\int_0^T  \eta\left[\norm{\div \bs q+ \tfrac{1}{2}\bs d  \times \bs{\varphi}}_{L^2(\D)}^2 
 + \frac{1}{4} \norm{\bs d  \cdot \bs{\varphi}}_{L^2(\D)}^2  \right]\dt}\,.
 \end{split}
\end{align}
From classical binomial formulas and the rules from vector calculus~\eqref{crossproduct}, we find 
$$
\begin{aligned}
    &\norm{\div  \bs q + \tfrac{1}{2}\bs d  \times \bs{\varphi}}_{L^2(\D)}^2 
- \frac{1}{4}\| \bs{\varphi} \|_{L^2(\D)}^2  + \frac{1}{4} \norm{\bs d  \cdot \bs{\varphi}}_{L^2(\D)}^2\\
 &= \norm{\div  \bs q }_{L^2(\D)}^2   + \rkla{\div  \bs q,  \bs d  \times \bs{\varphi} }
 + \frac{1}{4} \norm{\bs d  \times \bs{\varphi}}_{L^2(\D)}^2
 - \frac{1}{4}\| \bs{\varphi} \|_{L^2(\D)}^2 
 + \frac{1}{4} \norm{\bs d  \cdot \bs{\varphi}}_{L^2(\D)}^2\\
 &= \norm{\div  \bs q }_{L^2(\D)}^2   + \rkla{\div  \bs q,  \bs d  \times \bs{\varphi} }
 - \frac{1}{4} \iD (1-|\bs d|^2) |\bs{\varphi}|^2 \dx 
\,.
\end{aligned}
$$
The weak* convergence~\eqref{eq:envarconvergence:energy} and \eqref{eq:envarconvergence:weakstar} together with the strong convergence of the initial values $ \bs d\h^0 \to \bs d_0$ in $L^p(\Omega; H^1(\D;\mathds R^3))$ implies the convergence 
\begin{multline}
    -\expected{\psi\int_0^T\partial_t\eta\tl\ekla{\tfrac12\norm{\nabla\bs{d}\h\tp}_{L^2\trkla{\D}}^2-\trkla{\bs{d}\h\tp,\bs{\varphi}\tp} +\eta\trkla{0}\ekla{\tfrac12\norm{\nabla\bs{d}\h^0}_{L^2\trkla{\D}}^2-\trkla{\bs{d}\h^0,\bs{\varphi}\trkla{0}}}}\dt}\\
\\
\longrightarrow 
-\expected{\psi\int_0^T\partial_t\eta\ekla{E-\trkla{\bs{d},\bs{\varphi}}}\dt+ \eta\trkla{0}\ekla{\tfrac12\norm{\nabla\bs{d}_0}_{L^2\trkla{\D}}^2-\trkla{\bs{d}_0,\bs{\varphi}\trkla{0}}}\dt}\,
\end{multline}
as $\tau, h \searrow 0$ for all nonnegative $\psi \in L^\infty(\Omega) $ and all $ \eta \in C^1_c([0,T))$. 
The weak* convergence~\eqref{eq:envarconvergence:weakgrad} also allows to pass to the limit in the coupling terms with the test process
\begin{multline*}
    \expected{\psi\int_0^T\eta\tm\langle \bs{d}\h\tp,A\rangle \dt}
+\sum_{i=1}^3\sum_{l\in\mathds{Z}\h}\expected{\psi\int_0^T\eta\tm\skla{\Ih{\bs{d}\h\ti\times\lambda_{li}\g{li}},B_{li}\trkla{t}\g{li}}\dt}\\
\longrightarrow
\expected{\psi\int_0^T\eta\langle\bs{d},A\rangle \dt}   
+\sum_{i=1}^3\sum_{l\in\mathds{Z}\h}\expected{\psi\int_0^T\eta\skla{\bs{d}\times\lambda_{li}\g{li},B_{li}\trkla{t}\g{li}}\dt}\,
\end{multline*}
as $\tau,h\searrow 0$ for all for all nonnegative $\psi \in L^\infty(\Omega) $ and all $ \eta \in C^1_c([0,T))$. 
Here, we again used Lemma~\ref{lem:interpolationerror} to deduce 
\begin{equation}\label{interpolest}
    \norm{( I -\mathcal{I}\h)\{ \bs{d}\h\ti\times\g{li}\}}_{H^1(\D)} \leq 
 C h \| \nabla \bs{d}\h\ti \|_{L^2(\D)} \| \nabla \g{li} \|_{L^\infty(\D)}
\end{equation} such that 

\begin{align} 
\begin{split}\sum_{i=1}^3&\sum_{l\in\mathds{Z}\h}\expected{\psi\int_0^T\eta\tm\skla{(\mathcal{I}\h-I)\{\bs{d}\h\ti\times\lambda_{li}\g{li}\},B_{li}\trkla{t}\g{li}}\dt}\\ 
&\leq C h \sum_{i=1}^3\sum_{l\in\mathds{Z}\h}\expected{\psi\int_0^T\eta\tm
\| \nabla \bs{d}\h\ti \|_{L^2(\D)} \| \lambda_{li} \nabla \g{li} \|_{L^\infty(\D)} \norm{
B_{li}\trkla{t}\g{li}}_{*}\dt}\\
&\leq Ch\expected{\psi\int_0^T\norm{\nabla\bs{d}\h\ti}_{L^2\trkla{\D}}^2\dt\sum_{i=1}^3\sum_{l\in\mathds{Z}\h}\norm{\lambda_{li}\g{li}}_{W^{1,\infty}\trkla{\D}}^2}\\
&\qquad+ Ch\expected{\psi\int_0^T\sum_{i=1}^3\sum_{l\in\mathds{Z}\h}\norm{B_{li}\trkla{t}\g{li}}_*^2\dt}\,.
\end{split}
\end{align}
Using the linearity of the stochastic integral and the weak* convergences~\eqref{eq:envarconvergence:dissipation2} and \eqref{eq:envarconvergence:weakgrad}, we find that 
\begin{align*}
 \expected{\psi\int_0^T\eta\tm\rkla{\Ih{\bs{d}\h\tm\times\Delta\h\bs{d}\h\tm},\lambda_{li}\g{li}}\h\mathrm{d}\beta_l^i}
&\longrightarrow
\expected{\psi\int_0^T\eta\rkla{\div\tgkla{\bs{q}},\lambda_{li}\g{li}}  \mathrm{d}\beta_l^i}\,,
\intertext{where the difference between the mass-lumped and the $L^2$ inner product vanishes due to standard interpolation estimates,  as well as}
\sum_{i=1}^3\sum_{l\in\mathds{Z}}\expected{\psi\int_0^T\eta \tm \skla{B_{li}\g{li},\bs{d}\h\tm}\mathrm{d}\beta_l^i}
&\longrightarrow
\sum_{i=1}^3\sum_{l\in\mathds{Z}}\expected{\psi\int_0^T\eta \skla{B_{li}\g{li},\bs{d}}\mathrm{d}\beta_l^i}\,
\end{align*}
as $\tau,h\searrow 0$ for all for all nonnegative $\psi \in L^\infty(\Omega) $ and all $ \eta \in C^1_c([0,T))$. 
The weak convergence~\eqref{eq:envarconvergence:weakstar} together with the interpolation estimate~\eqref{interpolest} for $ \bs{d}\ti\h$ and similar for $\bs {d}\tm\h$ also implies the convergences 
\begin{align}\label{eq:tmp:sec5:conv}
    -&\sum_{i=1}^3\sum_{l\in\mathds{Z}\h}\expected{\psi\int_0^T\eta\tm\rkla{\Ih{\bs{d}\h\tm\times\lambda_{li}\g{li}},\bs{\varphi}\tm}\mathrm{d}\beta_l^i}\nonumber\\
    &\longrightarrow 
    -\sum_{i=1}^3\sum_{l\in\mathds{Z}}\expected{\psi\int_0^T\eta\rkla{\bs{d}\times\lambda_{li}\g{li},\bs{\varphi}}\mathrm{d}\beta_l^i}
    \intertext{as well as}
    -&\tfrac12\sum_{i=1}^3\sum_{l\in\mathds{Z}\h}\expected{\psi\int_0^T\eta\tm\rkla{\Ih{\trkla{\bs{d}\h\ti\times\lambda_{li}\g{li}}\times\lambda_{li}\g{li}},\bs{\varphi}\tm}\dt}\nonumber\\
    &\longrightarrow
-\tfrac12\sum_{i=1}^3\sum_{l\in\mathds{Z}}\expected{\psi\int_0^T\eta\rkla{{\trkla{\bs{d}\times\lambda_{li}\g{li}}\times\lambda_{li}\g{li}},\bs{\varphi}}\dt}
\end{align}
as $\tau,h\searrow 0$ for all for all nonnegative $\psi \in L^\infty(\Omega) $ and all $ \eta \in C^1_c([0,T))$. 
Exemplarily, we will show the convergence \eqref{eq:tmp:sec5:conv}. We start by writing
\begin{align}
\begin{split}
    -\sum_{i=1}^3\sum_{l\in\mathds{Z}\h}&\expected{\psi\int_0^T\eta\tm\rkla{\Ih{\bs{d}\h\tm\times\lambda_{li}\g{li}},\bs{\varphi}\tm}\mathrm{d}\beta_l^i}\\
    =&\, -\sum_{i=1}^3\sum_{l\in\mathds{Z}}\expected{\psi\int_0^T\eta\tm\rkla{\bs{d}\h\tm\times\lambda_{li}\g{li},\bs{\varphi}\tm}\mathrm{d}\beta_l^i} \\
    &\,+\sum_{i=1}^3\sum_{l\in\mathds{Z}\setminus\mathds{Z}\h}\expected{\psi\int_0^T\eta\tm\rkla{\bs{d}\h\tm\times\lambda_{li}\g{li},\bs{\varphi}\tm}\mathrm{d}\beta_l^i}\\
    &\,-\sum_{i=1}^3\sum_{l\in\mathds{Z}\h}\expected{\psi\int_0^T\eta\tm\rkla{\bs{d}\h\tm\times\trkla{\id-\Ihop}\tgkla{\lambda_{li}\g{li}},\bs{\varphi}\tm}\mathrm{d}\beta_l^i}\\
    &\,-\sum_{i=1}^3\sum_{l\in\mathds{Z}\h}\expected{\psi\int_0^T\eta\tm\rkla{\trkla{\id-\Ihop}\gkla{\bs{d}\h\tm\times\Ih{\lambda_{li}\g{li}}},\bs{\varphi}\tm}\mathrm{d}\beta_l^i}\\
    =:&\,\mathfrak{T}_1+\mathfrak{T}_2+\mathfrak{T}_3+\mathfrak{T}_4\,.
    \end{split}
\end{align}
Using Hölder's inequality, the \Ito~isometry and Lemma \ref{lem:interpolationerror}, we obtain
\begin{align}
    \begin{split}
        \abs{\mathfrak{T}_4}\leq&\,\expected{\tabs{\psi}^2}^{1/2}\expected{\rkla{\sum_{i=1}^3\sum_{l\in\mathds{Z}\h}\int_0^T\eta\tm\rkla{\trkla{\id-\Ihop}\gkla{\bs{d}\h\tm\times\Ih{\lambda_{li}\g{li}}},\bs{\varphi}\tm}\mathrm{d}\beta_l^i}^2}^{1/2}\\
        \leq&\,Ch^2\expected{\int_0^T\tabs{\eta\tm}^2\norm{\nabla\bs{d}\h\tm}_{L^2\trkla{\D}}^2\sum_{i=1}^3\sum_{l\in\mathds{Z}\h}\norm{\lambda_{li}\g{li}}_{W^{1,\infty}\trkla{\D}}^2\norm{\bs{\varphi}\tm}_{L^2\trkla{\D}}^2\dt}^{1/2}\\
        \leq&\,Ch^2\expected{\int_0^T\tabs{\eta\tm}^2\norm{\nabla\bs{d}\h\tm}_{L^2\trkla{\D}}^2\norm{\bs{\varphi}\tm}_{L^2\trkla{\D}}^2\dt}^{1/2}\leq Ch^2\,.
    \end{split}
\end{align}
Similarly, we obtain
\begin{align}
    \begin{split}
        \abs{\mathfrak{T}_3}\leq&\,\expected{\sum_{i=1}^3\sum_{l\in\mathds{Z}\h}\int_0^T\tabs{\eta\tm}^2\norm{\bs{d}\h\tm}_{L^\infty\trkla{\D}}^2\norm{\trkla{\id-\Ihop}\gkla{\lambda_{li}\g{li}}}_{L^2\trkla{\D}}^2\norm{\bs{\varphi}\tm}_{L^2\trkla{\D}}^2\dt}^{1/2}\\
        \leq&\,Ch^4\,
    \end{split}
    \intertext{and}
    \begin{split}
        \abs{\mathfrak{T}_2}\leq&\,\expected{\tabs{\psi}^2}^{1/2}\expected{\sum_{i=1}^3\sum_{l\in\mathds{Z}\setminus\mathds{Z}\h} \int_0^T\tabs{\eta\tm}^2\abs{\rkla{\bs{d}\h\tm\times\lambda_{li}\g{li},\bs{\varphi}\tm}}^2\dt}^{1/2}\\
        \leq&\,C\expected{\int_0^T\tabs{\eta\tm}^2\norm{\bs{d}\h\tm}_{L^\infty\trkla{\D}}^2\norm{\bs{\varphi}\tm}_{L^2\trkla{\D}}^2\sum_{i=1}^3\sum_{l\in\mathds{Z}\setminus\mathds{Z}\h}\norm{\lambda_{li}\g{li}}_{L^2\trkla{\D}}^2}^{1/2}\,,
    \end{split}
\end{align}
which vanishes due to Assumption \ref{item:W3}.
Thus, we can use that the stochastic integral is linear and continuous and pass to the limit in $\mathfrak{T}_1$ using the weak convergence of $\bs{d}\h\tm$ and the strong convergence of $\eta\tm$ and $\bs{\varphi}\tm$.

Finally, we can pass to the limit in the \Ito-Stratonovich correction terms by using a time-interpolated version of the alternative formulation derived in \eqref{eq:correction:alternativeform} and the weak convergence~\eqref{eq:envarconvergence:dissipation2} and~\eqref{eq:envarconvergence:weakstar}:
\begin{multline}\label{eq:limsupestimate}
\limsup_{h,\tau \searrow 0 } \expected{\psi\int_0^T \eta^{\tau,-}\frac{1}2\sum_{i=1}^3\sum_{l\in\mathds{Z}\h}\tabs{\lambda_{li}}^2\sum_{q=1}^d\int_{\D} | \partial_q \Ih{\g{li}}|^2 - |\bs d\h\ti \cdot \partial_q \Ih{\g{li}} |^2\dx\dt}
\\
\,+\limsup_{h,\tau \searrow 0 }\expected{-\psi\int_0^T\eta^{\tau,-}\frac{1}2\sum_{i=1}^3\sum_{l\in\mathds{Z}\h}\tabs{\lambda_{li}}^2\sum_{q=1}^d\rkla{\bs{d}\h\ti\times \partial_q \bs d\h\ti ,\Ih{\g{li}}\times\partial_q\Ih{\g{li}}}\dt}
\\ \leq{}\expected{\psi
\int_0^T \eta\frac{1}2\sum_{i=1}^3\sum_{l\in\mathds{Z}}\tabs{\lambda_{li}}^2\int_{\D} | \nabla  \g{li}|^2 - |\bs d \cdot \nabla  {\g{li}} |^2 - \bs q : \trkla{\g{li}\times\nabla {\g{li}}}  \dx\dt }
\,.
\end{multline}
Here, we used the estimates
\begin{align}
    \begin{split}
        &\abs{\iD\abs{\partial_q\Ih{\g{li}}}^2-\abs{\partial_q\g{li}}^2\dx}=\abs{\iD\rkla{\partial_q\Ih{\g{li}}+\partial_q\g{li}}\cdot\rkla{\partial_q\Ih{\g{li}}-\partial_q\g{li}}\dx}\\
        &~\leq \rkla{\partial_q\Ih{\g{li}}_{L^2\trkla{\D}}+\norm{\partial_q\g{li}}_{L^2\trkla{\D}}}\norm{\partial_q\Ih{\g{li}}-\partial_q\g{li}}_{L^2\trkla{\D}}\leq Ch\norm{\g{li}}_{H^2\trkla{\D}}^2\,,
    \end{split}
\end{align} 
\begin{align}
    \begin{split}
        &\abs{\iD\rkla{\abs{\bs{d}\h\ti\cdot\partial_q\Ih{\g{li}}}^2}-\abs{\bs{d}\h\ti\cdot\partial_q\g{li}}^2\dx}\\
        &~=\abs{\iD\rkla{\bs{d}\h\ti\cdot\rkla{\partial_q\Ih{\g{li}}+\partial_q\g{li}}}\rkla{\bs{d}\h\ti\cdot\rkla{\partial_q\Ih{\g{li}}-\partial_q\g{li}}}\dx}\\
        &~\leq Ch\norm{\bs{d}\h\ti}_{L^\infty\trkla{\D}}^2\norm{\g{li}}_{H^2\trkla{\D}}^2\,,
    \end{split}
\end{align}
and
\begin{align}
    \begin{split}
        &\abs{\rkla{\bs{d}\h\ti\times\partial_q\bs{d}\h\ti,\ekla{\Ih{\g{li}\times\partial_q\Ih{\g{li} -\g{li}\times\partial_q\g{li}}}}}} \\
        &~\leq \norm{\bs{d}\h\ti\times\partial_q\bs{d}\h\ti}_{L^2\trkla{\D}}\norm{\Ih{\g{li}}-\g{li}}_{L^\infty\rkla{\D}}\norm{\partial_q\Ih{\g{li}}}_{L^2\trkla{\D}}\\
        &\quad + \norm{\bs{d}\h\ti\times\partial_q\bs{d}\h\ti}_{L^2\trkla{\D}}\norm{\g{li}}_{L^\infty\trkla{\D}}\norm{\partial_q\Ih{\g{li}}-\partial_q\g{li}}_{L^2\trkla{\D}}\\
        &~\leq Ch\norm{\bs{d}\h\ti\times\partial_q\bs{d}\h\ti}_{L^2\trkla{\D}}\norm{\g{li}}_{W^{1,\infty}\trkla{\D}}\norm{\g{li}}_{H^2\trkla{D}}\,.
    \end{split}
\end{align}
Due to assumption \ref{item:W3} and the uniform bounds on $\norm{\bs{d}\h\ti\times\partial_q\bs{d}\h\ti}_{L^2\trkla{\D}}$, these estimates allow us to remove the interpolation operator on the left-hand side of \eqref{eq:limsupestimate} and use the convexity of the term $ \bs d \mapsto | \bs d \cdot \partial _q \g{li}| ^2$ for fixed $ \g{li}$.  

With all these convergences at hand, we can pass to the limit in the formulation~\eqref{eq:weakEnergy2} and find that for all $ \psi \in L^\infty(\Omega)$ and $ \eta \in C_c^1([0,T))$ with $ \psi, \eta \geq 0$ a.s.~in $\Omega$ and a.e.~in $(0,T)$ it holds that
\begin{align}
\begin{split}
-&\expected{\psi\left[\int_0^T\partial_t\eta\ekla{E-\trkla{\bs{d},\bs{\varphi}}}\dt-\eta\trkla{0}\ekla{\tfrac12\norm{\nabla\bs{d}_0}_{L^2\trkla{\D}}^2-\trkla{\bs{d}_0,\bs{\varphi}_0}}\right]}\\
&+\expected{\psi\int_0^T\eta\skla{A,\bs{d}}\dt} +\sum_{i=1}^3\sum_{l\in\mathds{Z}}\expected{\psi\int_0^T\eta\skla{B_{li}\g{li},\bs{d}}\mathrm{d}\beta_l^i}\\
&+\sum_{i=1}^3\sum_{l\in\mathds{Z}}\expected{\psi\int_0^T\eta\skla{B_{li}\g{li},{\bs{d}\times\lambda_{li}\g{li}}}\dt}\\
&+\expected{\psi\int_0^T\eta\rkla{\norm{\div  \bs q }_{L^2(\D)}^2   + \rkla{\div \bs q,  \bs d  \times \bs{\varphi} }
 - \frac{1}{4} \iD (1-|\bs d|^2) |\bs{\varphi}|^2 \dx}\dt}\\
\leq{}&\sum_{i=1}^3\sum_{l\in\mathds{Z}}\expected{\psi\int_0^T\eta\rkla{\div \bs q,\lambda_{li}\g{li}}  \mathrm{d}\beta_l^i}
\\
&+\expected{\psi\int_0^T \frac{1}2\sum_{i=1}^3\sum_{l\in\mathds{Z}}\tabs{\lambda_{li}}^2\sum_{q=1}^d\int_{\D} | \partial_q \g{li}|^2 - |\bs d \cdot \partial_q {\g{li}} |^2\dx\dt}
\\
&-\expected{\psi\int_0^T\frac{1}2\sum_{i=1}^3\sum_{l\in\mathds{Z}}\tabs{\lambda_{li}}^2\sum_{q=1}^d\rkla{\bs q  , {\g{li}}\times\partial_q {\g{li}}}   \dt}
 \\&+\sum_{i=1}^3\sum_{l\in\mathds{Z}}\expected{\psi\int_0^T\eta\rkla{{\lambda_{li}\g{li}\times\bs{d}},\bs{\varphi}}\mathrm{d}\beta_l^i
 }
 \\&-\frac{1}{2}\sum_{i=1}^3\sum_{l\in \mathds{Z}} \expected{
 \psi \int_0^T \eta \iD |\lambda_{li}|^2 \rkla{(\bs d \times\g{li} )\times \g{li} } \cdot \bs \varphi \dx \dt}\,.
\end{split}
\end{align}
Now, we may apply Lemma~\ref{lem:invar} and infer the formulation~\eqref{eq:envarineq2} from Definition~\ref{def:envar2}.
Indeed, choosing $ g = E - (\bs d , \bs \varphi)$, $h\,:\,\mathfrak{U}^3\rightarrow\mathds{R}$ defined by
\begin{align}
    h(\mathfrak{u}):=\sum_{i=1}^3\sum_{l\in\mathds{Z}}\trkla{\g{li},\mathfrak{u}}\ekla{\skla{\bs{d},B_{li}\g{li}}-\trkla{\div\bs{q},\lambda_{li}\g{li}}-\trkla{\lambda_{li}\g{li}\times\bs{d},\bs{\varphi}}}&&\text{for all~}\mathfrak{u}\in\mathfrak{U}\,,
\end{align} 
and including all the remaining terms into $f$, 
we find that Lemma~\ref{lem:invar} is applicable. 
Especially for the choice of $h$, we observe that 
$$
\begin{aligned}
    \| h \| _{L^1(\Omega;L^2(0,T;L_2(\mathfrak U; \mathds R)))} \leq {}&
C \| \bs d \|_{L^p(\Omega;L^\infty(0,T; H^1(\D)^3)} \norm{B}_{L^4\trkla{\Omega\times\trkla{0,T};L_2\trkla{\mathfrak{U};\trkla{H^1\trkla{\D}^3}^*}}} \\
\\
&+ C\| \div \bs q \|_{L^p(\Omega; L^2(0,T;L^2(\D)))} \sum_{i=1}^3\sum_{l\in\mathds Z}  \lambda _{li}^2 \| \g{li}\|_{L^2(\D)}^2 \\&+ C
    \| \bs d \|_{L^\infty(\Omega \times \D \times (0,T))} \| \bs{\varphi} \| _{L^2(\Omega; L^\infty (0,T;L^2(\D)))} \sum_{i=1}^3\sum_{l\in\mathds Z}  \lambda _{li}^2 \| \g{li}\|_{L^2(\D)}^2 \,.
\end{aligned}
$$

Choosing in the resulting formulation~\eqref{eq:envarineq2} the test process $\bs{\varphi} \equiv 0$, we find that for
the function $$
\begin{aligned}
F(t):={}& E(t) - \sum_{i=1}^3\sum_{l\in\mathds{Z}}  \int_0^t\rkla{\div  \bs q,\lambda_{li}\g{li}}  \mathrm{d}\beta_l^i 
\\&
-\sum_{i=1}^3\sum_{l\in\mathds{Z}} \left[  \int_0^t\tabs{\lambda_{li}}^2\sum_{q=1}^d\int_{\D} | \partial_q \g{li}|^2 - |\bs d \cdot \partial_q {\g{li}} |^2\dx
- \rkla{\bs q  , {\g{li}}\times\partial_q {\g{li}}}   \dt
\right]     
\end{aligned}
$$
it holds $ F(t) - F(s-) \leq 0 $, which implies that $F$
is a monotone function and as such of bounded variation. As the two time integrals are continuous this implies that, choosing the correct representation, $E$ is a cádlág function.
\end{proof}

\section{Weak-strong uniqueness}\label{sec:weakstrong}
In this section, we conclude our analysis by proving Theorem \ref{thm:weakstronguniqueness}.
Thus, we have to show that, if a sufficiently regular process $\widetilde{\bs{d}}$ exists that is a strong solution in the sense of Definition \ref{def:strongSolution}, then this process is unique and coincides with the energy-variational solutions established in Theorem \ref{thm:mainResult}.
\begin{proof}[Proof of Theorem \ref{thm:weakstronguniqueness}]
    We start by introducing an additional stopping time 
    \begin{align}
        \tau_L:=\inf\gkla{t\in\trkla{0,\mathfrak{t}}\,:\norm{\widetilde{\bs{d}}}_{W^{2,\infty}\trkla{\D}} + \norm{\widetilde{\bs{d}}\times\Delta\widetilde{\bs{d}}}_{W^{1,\infty}\trkla{\D}} >L}&&\text{for}~L>0
    \end{align}
    with the convention $\tau_L=\mathfrak{t}$ if the set is empty.
    Recalling \eqref{eq:regStrongSol}, we deduce via Tschebyscheff's inequality
    \begin{multline}
        \mathds{P}\ekla{\tau_L<\mathfrak{t}}\leq \mathds{P}\ekla{\sup_{t\in\tekla{0,\mathfrak{t}}}\norm{\widetilde{\bs{d}}}_{W^{2,\infty}\trkla{\D}} +\sup_{t\in\tekla{0,\mathfrak{t}}}\norm{\widetilde{\bs{d}}\times\Delta\widetilde{\bs{d}}}_{W^{1,\infty}\trkla{\D}}\geq L}\\
        \leq \frac1L\expected{\sup_{t\in\tekla{0,\mathfrak{t}}}\norm{\widetilde{\bs{d}}}_{W^{2,\infty}\trkla{\D}}+\sup_{t\in\tekla{0,\mathfrak{t}}}\norm{\widetilde{\bs{d}}\times\Delta\widetilde{\bs{d}}}_{W^{1,\infty}\trkla{\D}}}\rightarrow 0
    \end{multline}
    for $L\rightarrow\infty$ which guarantees $\tau_L\rightarrow\mathfrak{t}$ in probability. Therefore, it suffices to show the claim in $\trkla{0,\tau_L}$ for fixed $L$.\\
    
\newcommand{\dd}{\widetilde{\bs{d}}}
\newcommand{\dr}{\,\mathrm{d}r}
    Let $\dd$ be the strong solution according to Definition~\ref{def:strongSolution}. 
    Applying \Ito's formula to the energy $ \tfrac{1}{2}\| \nabla \dd \|_{L^2(\D)}^2 $, we obtain 
    \begin{multline}
        \label{strongenergy}
            \tfrac{1}{2}\|\nabla \dd\|_{L^2(\D)}^2 \Big|_{s\wedge \tau_L}^{t\wedge\tau_L} + \int_{s\wedge \tau_L}^{t\wedge\tau_L} \| \dd \times \Delta \dd \|_{L^2(\D)}^2  \dr\\
            -\int_{s\wedge \tau_L}^{t\wedge\tau_L}    \tfrac{1}{2}\sum_{i=1}^3\sum_{l\in\mathds Z} \left[  \|\lambda_{li}\nabla \g{li}\|_{L^2(\D)}^2 - \| \dd \cdot\lambda_{li} \nabla \g{li}\|^2_{L^2(\D)} - \tabs{\lambda_{li}}^2( \dd \times \nabla \dd , \g{li}\times \nabla \g{li})  \right]\dr
            \\
            = \sum_{i=1}^3\sum_{l\in\mathds{Z}}\int_{s\wedge \tau_L}^{t\wedge\tau_L} (\dd \times \Delta \dd , \lambda_{li}\g{li})\mathrm{d}\beta_l^i\,,
    \end{multline}
    due to Lemma \ref{lem}.
    Moreover, we observe that $ | \dd|=1 $ a.e. in $\D \times (0,T)$ and~$\mathds P$-a.s.. 

    Now we choose as the test process in Definition~\ref{def:envar2} the stochastic process given by $\bs \varphi:= -\Delta_N \dd +\dd $ such that $ A = (-\Delta_N+ I) \left[ (  ( \dd \times \Delta \dd )\times\dd + \tfrac{1}{2}\sum_{i=1}^3\sum_{l\in\mathds Z} (( \dd \times \lambda_{li}\g{li})\times \lambda_{li}\g{li})\right]  $ and $B\,:\mathfrak{U}^3\rightarrow\trkla{H^1\trkla{\D}^3}^*$ is defined via
    \begin{align}
        \mathfrak{U}^3\ni\mathfrak{u}\mapsto \trkla{-\Delta_N+\id}\sum_{i=1}^3\sum_{l\in\mathds{Z}}\dd\times\lambda_{li}\g{li}\trkla{\g{li},\mathfrak{u}}\,.
    \end{align}
    We recall that $-\Delta_N$ denotes the Neumann--Laplace operator defined in Section~\ref{sec:notation}. 
    
From the additional regularity assumed for the strong solutions in Definition~\ref{def:strongSolution}, we observe that 
\begin{align}
    A \in L^p_{w^*}(\Omega; L^\infty(0,T;(H^1(\D)^3)^*))&&\text{and}&& B\in L_{\weakstartop}^p\trkla{\Omega;L^\infty\trkla{0,T;L_2\trkla{L^2\trkla{\D}^3;\trkla{H^1\trkla{\D}^3}^*}}}\,.
\end{align}
Moreover, for $\bs \varphi:= -\Delta_N \dd +\dd$, we find by the definition of the Neumann Laplacian and the fact that $ \dd$ fulfills homogeneous Neumann boundary conditions, we deduce via 
\begin{align}
\langle -\Delta_N \dd, \bs a \rangle = \int_{\D} \nabla \dd: \nabla \bs a \dx = - \int_{\D} \Delta \dd \cdot \bs a \dx + \int_{\partial \D} (\bs n \cdot \nabla) \dd \cdot \bs a \mathrm d S = - \int_{\D} \Delta \dd \cdot \bs a \dx 
\end{align}
for all $\bs a \in H^1(\D)$ 
that $ -\Delta_N \dd$ can be pointwise identified with $-\Delta \dd$ as an element in $L^2(\D)$.
From the Definition~\ref{def:strongSolution}, we observe that $ -\Delta \dd \in L^p(\Omega ; C([0,T]; L^\infty(\mathcal{D})))$.
Therefore, $\bs \varphi $ is an admissible test process in the sense of Definition~\ref{def:testprocess}. 
    
    This leads to the inequality
 \begin{align}
        \label{eq:envarineq2inserted}
    \begin{split}
    E\Big|_{(s\wedge \tau_L)-}^{t\wedge\tau_L}& + \int_{s\wedge \tau_L}^{t\wedge\tau_L}\int_{\D} |\div \bs q |^2 \dx\dtau 
   -\sum_{i=1}^3\sum_{l\in\mathds{Z}}\int_{s\wedge \tau_L}^{t\wedge\tau_L}\trkla{\bs q,\lambda_{li}\nabla \g{li}}\mathrm{d}\beta_l^i\\
   & +\rkla{\bs{d},\Delta\dd-\dd}\Big|_{s\wedge \tau_L}^{t\wedge \tau_L} + \int_{s\wedge \tau_L}^{t\wedge\tau_L} \iD  \trkla{\div \bs q}  \cdot ( \bs d \times (\dd - \Delta \dd )) \dx \dtau\\
   &+ \int_{s\wedge \tau_L}^{t\wedge\tau_L} \iD
   \frac{1}{4} |\dd - \Delta \dd|^2 \left[  |\bs d|^2 - 1\right] \dx \dtau\\
 \leq {} & \tfrac12\sum_{i=1}^3\sum_{l\in\mathds{Z}}\tabs{\lambda_{li}}^2\int_{s\wedge \tau_L}^{t\wedge\tau_L} \| \nabla \g{li}\|_{L^2\trkla{\D}}^2 - \|\bs d \cdot \nabla \g{li} \|_{L^2\trkla{\D}}^2 -  \trkla{\bs q , \g{li}\times \nabla  \g{li}  }\dtau
\\
&- \int_{s\wedge \tau_L}^{t\wedge\tau_L} \left\langle (-\Delta_N+ \id) \left[ (  ( \dd \times \Delta \dd )\times\dd + \tfrac{1}{2}\sum_{i=1}^3\sum_{l\in\mathds Z} \tabs{\lambda_{li}}^2(( \dd \times \g{li})\times \g{li})\right] , \bs d \right \rangle  \dtau  \end{split}\notag\\
   \begin{split}& - 
 \sum_{i=1}^3
\sum_{l \in\mathds{Z}}\tabs{\lambda_{li}}^2\int_{s\wedge \tau_L}^{t\wedge\tau_L} \left \langle (-\Delta_N+\id)(\dd \times \g{li}) , ( \bs d \times \g{li})\right \rangle  \dtau 
\\&-\sum_{i=1}^3\sum_{l\in\mathds Z}\int_{s\wedge \tau_L}^{t\wedge\tau_L} \left \langle  (-\Delta_N+\id)\lambda_{li}(\dd \times  \g{li}) , \bs d \right \rangle  \mathrm{d}\beta_l^i
 \\
& -\frac{1}{2}\sum_{i=1}^3\sum_{l\in \mathds{Z}} |\lambda_{li}|^2 \int_{s\wedge \tau_L}^{t\wedge\tau_L}\iD  (\id-\Delta_N)\dd\cdot   (\bs d \times\g{li} )\times \g{li}  \dx  \dtau \\
&-\sum_{i=1}^3\sum_{l\in \mathds{Z}} \int_{s\wedge \tau_L}^{t\wedge\tau_L}  \lambda_{li}\iD (\id-\Delta_N)\dd\cdot   (\bs d \times \g{li})\dx  \mathrm{d}\beta_l^i\,.
\end{split}
    \end{align}

   Now, we add both relations~\eqref{strongenergy} and~\eqref{eq:envarineq2inserted}, add and subtract $|\D|$, 
   and take expectations in order to infer 
 \begin{align}
        \label{eq:envarineq2insertedtwo}
    \begin{split}
    &\expected{\left[ E - ( \nabla \bs d , \nabla \dd) + \tfrac{1}{2}\| \nabla \dd \|_{L^2(\D)}^2 + |\D| - (\bs d , \dd )  \right]\Big|_{(s\wedge \tau_L)-}^{t\wedge\tau_L}} \\
    &+ \expected{\int_{s\wedge \tau_L}^{t\wedge\tau_L}\int_{\D} |\div \bs q |^2 +| \dd \times \Delta \dd | ^2- \trkla{\div \bs q}  \cdot ( \bs d \times  \Delta \dd )  \dx\dr 
    }
   \\
   & +\expected{ \int_{s\wedge \tau_L}^{t\wedge\tau_L} \iD    
   \frac{1}{4} |\dd - \Delta \dd|^2 \left[  |\bs d|^2 - 1\right] - \nabla 
    \left[ ( \dd \times ( \dd \times \Delta \dd )
    \right]: \nabla \bs d\dx \dr}\\
 \leq {} & \expected{\tfrac12\sum_{i=1}^3\sum_{l\in\mathds{Z}}\tabs{\lambda_{li}}^2\int_{s\wedge \tau_L}^{t\wedge\tau_L}\rkla{ \norm{\nabla \g{li}}_{L^2\trkla{\D}}^2 - \norm{\bs d \cdot \nabla \g{li} }_{L^2\trkla{\D}}^2 -  \rkla{\bs q , ( \g{li}\times \nabla   \g{li} )} 
}\dr}
\\
&+\expected{\int_{s\wedge \tau_L}^{t\wedge\tau_L}    \tfrac{1}{2}\sum_{i=1}^3\sum_{l\in\mathds Z}
\tabs{\lambda_{li}}^2 \left[  \|\nabla \g{li}\|_{L^2(\D)}^2 - \| \dd \cdot \nabla \g{li}\|^2_{L^2(\D)} - ( \dd \times \nabla \dd , \g{li}\times \nabla \g{li})  \right]\dr}\\
&-\expected{ \int_{s\wedge \tau_L}^{t\wedge\tau_L} \int_{\D} \nabla \left[   \tfrac{1}{2}\sum_{i=1}^3\sum_{l\in\mathds Z} \tabs{\lambda_{li}}^2(( \dd \times \g{li})\times \g{li})\right] :\nabla \bs d \dx \dr } \\
& {\bs{-}}\expected{
  \sum_{i=1}^3
\sum_{l\in\mathds{Z}}\tabs{\lambda_{li}}^2\int_{s\wedge \tau_L}^{t\wedge\tau_L} \rkla{\nabla(\dd \times \g{li}) ,\nabla ( \bs d \times \g{li})} \dr }\\
& -\expected{\tfrac{1}{2}\sum_{i=1}^3\sum_{l\in \mathds{Z}} \int_{s\wedge \tau_L}^{t\wedge\tau_L}\iD |\lambda_{li}|^2 \nabla [ (\bs d \times\g{li} )\times \g{li} ] :\nabla\dd \dx \dr }\\
 &-\expected{ \int_{s\wedge \tau_L}^{t\wedge\tau_L} \int_{\D} \div \bs q  \cdot ( \bs d \times   \dd ) \dx\dr}
\end{split}\notag \\
\begin{split}&+\expected{ \int_{s\wedge \tau_L}^{t\wedge\tau_L} \int_{\D}  \left[ ( \dd \times ( \dd \times \Delta \dd ) - \tfrac{1}{2}\sum_{i=1}^3\sum_{l\in\mathds Z} \tabs{\lambda_{li}}^2(( \dd \times \g{li})\times \g{li})\right] \cdot \bs d \dx \dr }  \\
& {\bs{-}}\expected{ 
 \sum_{i=1}^3
\sum_{l\in\mathds{Z}}\tabs{\lambda_{li}}^2\int_{s\wedge \tau_L}^{t\wedge\tau_L} \rkla{\dd \times \g{li} ,  \bs d \times \g{li}} \dr }\\
& -\expected{\tfrac{1}{2}\sum_{i=1}^3\sum_{l\in \mathds{Z}} \int_{s\wedge \tau_L}^{t\wedge\tau_L}\iD |\lambda_{li}|^2 \trkla{(\bs d \times\g{li} )\times \g{li}} \cdot \dd \dx \dr }\,.
\end{split}
    \end{align} 
First, we observe that 
$$
\frac{1}{2}\rkla{( \dd \times \g{li})\times \g{li} , \bs d} +\rkla{\dd \times \g{li} ,  \bs d \times \g{li}}+\frac{1}{2}\rkla{\trkla{(\bs d \times\g{li} )\times \g{li}} , \dd} = 0
$$
vanishes due to the properties of the cross product. 
Moreover, we rewrite  
\begin{multline}
E - ( \nabla \bs d , \nabla \dd) + \tfrac{1}{2}\| \nabla \dd \|_{L^2(\D)}^2 + |\D|- (\bs d , \dd ) \\
= \left[E - \tfrac{1}{2}\| \nabla \bs d \|_{L^2(\D)}^2\right] + \frac{1}{2}\left[\iD1 - |\bs d |^2\dx\right] + \left[\frac{1}{2}\| \bs d - \dd \|_{H^1(\D)}^2\right] \,,
\end{multline}
where each of the three summands on the right-hand side is nonnegative. 
    For the remaining terms on the left-hand side of~\eqref{eq:envarineq2insertedtwo}, we find 
\begin{equation}
    \begin{aligned}
        |\div \bs q |^2&+| \dd \times \Delta \dd | ^2 - \trkla{\div \bs q}  \cdot ( \bs d \times  \Delta \dd ) 
        \\
        ={}&
        |\div \bs q - \dd \times  \Delta \dd |^2  + ( \div \bs q) \cdot (\dd \times \Delta \dd )   
        \\
        & +  (\div \bs q - \dd \times \Delta \dd ) \cdot ((\dd -  \bs d) \times  \Delta \dd )+ ( \dd \times \Delta \dd) \cdot (( \dd - \bs d ) \times \Delta \dd ) \,.
    \end{aligned}
\end{equation}
After an integration-by-parts, we may use the formulation for $\bs q$, \textit{i.e.,}~\eqref{eq:formulationq}, 
\begin{align*}
        - ( \bs q , \nabla (\dd\times\Delta \dd) ) 
    ={}& \| \nabla (\dd\times\Delta \dd)\|^2_{L^\infty(\D)}  \left  ( \bs q , -\frac{\nabla (\dd\times\Delta \dd)}{\| \nabla (\dd\times\Delta \dd)\|_{L^\infty(\D)}} \right  )
    \\
    \geq {}&
    - (\bs d \times \nabla \bs d , \nabla (\dd\times\Delta \dd) )  \\& -  \| \nabla (\dd\times\Delta \dd)\|_{L^\infty(\D)} \left[ E - \tfrac{1}{2}\| \nabla \bs d \|_{L^2(\D)}^2+ \int_{\D}\left[ 1 - |\bs d | ^2\right] \dx  \right] \,.
\end{align*}
If $\norm{\nabla\trkla{\dd\times\Delta\dd}}_{L^\infty\trkla{\D}}=0$, the above estimate is trivial.
Thus, we end up with 
\begin{align}
\begin{split}
     & \int_{\D} |\div \bs q |^2 +| \dd \times \Delta \dd | ^2- \div \bs q  \cdot ( \bs d \times  \Delta \dd )  \dx
   \\
   & -\iD    
    \nabla 
    \left[  \dd \times ( \dd \times \Delta \dd )
    \right]: \nabla \bs d\dx \\
    \geq{}&  \int_{\D}|\div \bs q - \dd \times  \Delta \dd |^2  - (\bs d \times \nabla \bs d ): (\nabla (\dd\times\Delta \dd) ) \dx\\
    & +\int_{\D}  (\div \bs q - \dd \times \Delta \dd ) \cdot ((\dd -  \bs d) \times  \Delta \dd ) \dx 
        \\
        &+ \int_{\D}( \dd \times \Delta \dd) \cdot (( \dd - \bs d ) \times \Delta \dd )\dx -\iD    
    \nabla 
    \left[  \dd \times ( \dd \times \Delta \dd )
    \right]: \nabla \bs d\dx
    \\
    &-\| \nabla (\dd\times\Delta \dd)\|_{L^\infty(\D)} \left[ E - \tfrac{1}{2}\| \nabla \bs d \|_{L^2(\D)}^2+ \int_{\D} 1 - |\bs d | ^2 \dx  \right]\,.
    \end{split}
\end{align}
With an integration-by-parts and product rule, we find 
\begin{multline*}
    \int_{\D}( \dd \times \Delta \dd) \cdot (( \dd - \bs d ) \times \Delta \dd ) - (\bs d \times \nabla \bs d ): (\nabla (\dd\times\Delta \dd) )-    
    \nabla 
    \left[ ( \dd \times ( \dd \times \Delta \dd )
    \right]: \nabla \bs d\dx
    \\
    ={}- \iD \sum_{j=1}^3\left[\partial_j ( \dd \times \Delta \dd) : (( \dd -\bs d ) \times \partial_j \dd) + (\dd\times\Delta\dd) \cdot( \partial_j (\dd -\bs d) \times \partial_j\dd)\right]\dx 
    \\
    {}+ \iD \nabla (\dd\times\Delta \dd) :( (\dd-\bs d)\times\nabla \bs d ) + \sum_{j=1}^3 (\dd\times\Delta\dd) \cdot (\partial_j \dd\times\partial_j \bs d) \dx 
    \\
    ={}- \iD \sum_{j=1}^3\left[\partial_j ( \dd \times \Delta \dd) \cdot (( \dd -\bs d ) \times \partial_j \dd- (\dd-\bs d) \times \partial_j \bs d ) \right]\dx \,,
\end{multline*}
where we used the cancellation effects of the cross product, \textit{i.e.}, $ \partial_j\dd \times \partial_j\dd= 0 $ as well as $ \partial_j \bs d  \times \partial_j \dd+ \partial_j \dd \times \partial_j \bs d = 0 $. 
Combining this with the previous calculations and neglecting a nonnegative term, we end up with 
\begin{align}
\begin{split}
     \int_{\D} & |\div \bs q |^2 +| \dd \times \Delta \dd | ^2- \div \bs q  \cdot ( \bs d \times  \Delta \dd )      
    -\nabla 
    \left[ ( \dd \times ( \dd \times \Delta \dd )
    \right]: \nabla \bs d\dx \\
    \geq{}&  \int_{\D} \tfrac{1}{2}|\div \bs q - \dd \times  \Delta \dd |^2  - \frac{1}{2} |(\dd -  \bs d) \times  \Delta \dd |^2 \dx 
        \\
        &- \iD \sum_{j=1}^3\left[\partial_j ( \dd \times \Delta \dd) : (( \dd -\bs d ) \times (\partial_j \dd- \partial_j \bs d ) \right]\dx 
    \\
    &-\| \nabla (\dd\times\Delta \dd)\|_{L^\infty(\D)} \left[ E - \tfrac{1}{2}\| \nabla \bs d \|_{L^2(\D)}^2+ \int_{\D} 1 - |\bs d | ^2 \dx  \right]\,.
    \end{split}
\end{align}
For the terms, coming from the test with the lower-order terms, we find
\begin{align*}
    -& \int_{\D} \div \bs q  \cdot ( \bs d \times   \dd )- \left[ ( \dd \times ( \dd \times \Delta \dd ) - \tfrac{1}{2}\sum_{i=1}^3\sum_{l\in\mathds Z} (( \dd \times \lambda_{li}\g{li})\times \lambda_{li}\g{li})\right] \cdot \bs d \dx   \\
& - 
\int_{\D}\sum_{i=1}^3\sum_{l\in\mathds{Z}}(\dd \times \lambda_{li}\g{li}) \cdot ( \bs d \times \lambda_{li}\g{li})\dx 
 \\
& -\frac{1}{2}\sum_{i=1}^3\sum_{l\in \mathds{Z}} \iD |\lambda_{li}|^2 \trkla{(\bs d \times\g{li} )\times \g{li}} \cdot \dd \dx 
\\
={}&  - \iD ( \div \bs q - \dd \times \Delta \dd) \cdot ( \bs d \times \dd ) \dx\\
&+ \sum_{i=1}^3\sum_{l\in\mathds{Z}}\tabs{\lambda_{li}}^2\ekla{\rkla{ \tfrac{1}{2} ( \dd \times \g{li}) , \bs d \times \g{li}} - \rkla{\dd \times \g{li} ,  \bs d \times \g{li}}+ \rkla{\tfrac{1}{2} (\bs d \times \g{li}) ,  \dd\times \g{li} } }
\\
\leq {}& \iD\frac{1}{4}| \div \bs q - \dd \times \Delta \dd|^2 + | ( \bs d -\dd)\times \dd|^2 \dx \,.
\end{align*}
We used that the second to the last line vanishes and again the properties of the cross product for instance to infer that $  \bs d \times \dd = ( \bs d - \dd ) \times \dd$. 

In order to consider the remaining terms stemming from the \Ito-correction terms, we first use a calculation rule similar to the one derived in Lemma~\ref{lem} for the mixed terms. Therefore, we use the cross-product identities $ (a\times b) \times b = (a\cdot b ) b - |b|^2 a $ as well as~\eqref{crossproduct} such that
\begin{align}
    & \left[  \nabla \left[   \tfrac{1}{2} (( \dd \times \g{li})\times \g{li})\right] :\nabla \bs d    +
 \nabla(\dd \times \g{li}) :\nabla ( \bs d \times \g{li})+\tfrac{1}{2}  \nabla [ (\bs d \times\g{li} )\times \g{li} ] :\nabla\dd\right] \notag
 \\
 &={} \tfrac{1}{2}\nabla \left[ (\dd \cdot \g{li}) \g{li} - |\g{li}|^2 \dd \right ]: \nabla \bs d + \tfrac{1}{2}\nabla \left[ (\bs d \cdot \g{li}) \g{li} - |\g{li}|^2 \bs d  \right ]: \nabla \dd\notag\\
 &\quad + \sum_{j=1}^3\left[\partial_j \dd \times \g{li} + \dd \times \partial_j\g{li}  \right]\cdot \left[\partial_j \bs d \times \g{li} + \bs d \times \partial_j\g{li}  \right]\notag
 \\
 &={} \tfrac{1}{2}\sum_{j=1}^3 \left[  (\partial_j \dd \cdot \g{li} )(\partial_j\bs d\cdot \g{li}) + ( \dd \cdot \partial_j \g{li}) (\g{li}\cdot \partial_j \bs d) + (\dd \cdot \g{li})(\partial_j \g{li}\cdot \partial_j\bs d) 
\right]\notag \\
 &\quad -\tfrac{1}{2}\sum_{j=1}^3 \left[   2 (\partial_j\g{li}\cdot \g{li})(\dd\cdot \partial_j\bs d ) + |\g{li}|^2 \partial_j\dd \cdot \partial_j \bs d)\right] \notag\\&
 \quad+\tfrac{1}{2}\sum_{j=1}^3 \left[  (\partial_j \bs d \cdot \g{li} )(\partial_j\dd \cdot\g{li}) + ( \bs d \cdot \partial_j \g{li}) (\g{li}\cdot \partial_j \dd) + (\bs d \cdot \g{li})(\partial_j \g{li}\cdot \partial_j\dd) \right]\notag
 \\&
 \quad-\tfrac{1}{2}\sum_{j=1}^3 \left[   2 (\partial_j\g{li}\cdot \g{li})(\bs d\cdot \partial_j\dd ) +|\g{li}|^2 \partial_j\bs d \cdot \partial_j \dd)\right]\notag\\
 &\quad +\sum_{j=1}^3 \left[ (\partial_j \dd \cdot \partial_j\bs d)|\g{li}|^2 - ( \g{li} \cdot\partial_j\dd)(\g{li} \cdot \partial_j \bs d) +(\partial_j \dd \cdot \bs d )(\g{li}\cdot\partial_j \g{li}) - (\partial_j \dd\cdot\partial_j \g{li}) (\bs d \cdot \g{li}) \right] \notag
 \\
 &\quad + \sum_{j=1}^3
 \left[ (\dd \cdot \partial_j \bs d)(\g{li}\cdot \partial_j\g{li}) - (\dd\cdot\g{li})(\partial_j\bs d \cdot \partial_j \g{li}) + (\dd \cdot \bs d ) |\partial_j \g{li}|^2 -(\dd\cdot\partial_j\g{li})(\bs d \cdot \partial_j \g{li})   \right] \notag
 \\
 ={}& \tfrac{1}{2}\sum_{j=1}^3
\left[ (\dd \cdot \partial_j \g{li}) (\g{li}\cdot\partial_j \bs d) + (\bs d \cdot \partial_j \g{li})(\g{li}\cdot\partial_j \dd) - (\dd \cdot \g{li})(\partial_j \g{li}\cdot\partial_j\bs d) \right]  \notag
\\
&\quad +\sum_{j=1}^3
\left[- \tfrac12(\bs d \cdot \g{li}) (\partial_j \g{li}\cdot\partial_j\dd)+ (\dd \cdot \bs d) |\partial_j \g{li}|^2 - (\dd\cdot \partial_j \g{li})(\bs d \cdot \partial_j \g{li})\right] \notag
\\
= {}& \sum_{j=1}^3
\left[ (\dd \cdot \bs d) |\partial_j \g{li}|^2 - (\dd\cdot \partial_j \g{li})(\bs d \cdot \partial_j \g{li})+ \tfrac{1}{2}(\dd \times \partial_j\bs d+ \bs d \times \partial_j \dd)(\partial_j \g{li}\times \g{li}) \right] \,.
 \end{align}
Inserting this calculation back into the relevant terms of~\eqref{eq:envarineq2insertedtwo} and using the formulation~\eqref{eq:formulationq} in order to estimate
\begin{multline}
    -\int_{\D}  \bs q : ( \g{li}\times \nabla   \g{li}  )\dx \leq{} - \iD ( \bs d \times \nabla \bs d ) : ( \g{li}\times \nabla   \g{li}  )\dx 
\\+  \|  \g{li}\times\nabla  \g{li}\|_{L^\infty(\D)} \left[ E - \tfrac{1}{2}\| \nabla \bs d \|_{L^2(\D)}^2+ \int_{\D} 1 - |\bs d | ^2 \dx  \right]\,
\end{multline}
again, we find
 \begin{align}
 \begin{split}
     & \tfrac12
     \ekla{\norm{\nabla\g{li}}_{L^2\trkla{\D}}^2-\norm{\bs{d}\cdot\nabla\g{li}}_{L^2\trkla{\D}}^2-\rkla{\bs{q},\g{li}\times\nabla\g{li}}}
\\
&+    \tfrac{1}{2}
\left[  \|\nabla \g{li}\|_{L^2(\D)}^2 - \| \dd \cdot \nabla \g{li}\|^2_{L^2(\D)} - ( \dd \times \nabla \dd , \g{li}\times \nabla \g{li})  \right]\\&- \int_{\D} \nabla \left[   \tfrac{1}{2} (( \dd \times \g{li})\times \g{li})\right] :\nabla \bs d \dx   -
\rkla{\nabla(\dd \times \g{li}) ,\nabla ( \bs d \times \g{li})}\\&
  -\tfrac{1}{2}
 \iD  \nabla [ (\bs d \times\g{li} )\times \g{li} ] :\nabla\dd \dx 
 \\
 \leq{}&\tfrac12
\ekla{ \norm{\nabla \g{li}}_{L^2\trkla{\D}}^2 - \norm{\bs d \cdot \nabla \g{li} }_{L^2\trkla{\D}}^2 -  \rkla{\bs d \times \nabla \bs d , \g{li}\times \nabla  \g{li}  }}
\\
&+    \tfrac{1}{2}
\left[  \|\nabla \g{li}\|_{L^2(\D)}^2 - \| \dd \cdot \nabla \g{li}\|^2_{L^2(\D)} - ( \dd \times \nabla \dd , \g{li}\times \nabla \g{li})  \right]\\
&- \int_{\D} \sum_{j=1}^3
\left[ (\dd \cdot \bs d) |\partial_j \g{li}|^2 - (\dd\cdot \partial_j \g{li})(\bs d \cdot \partial_j \g{li})- \tfrac{1}{2}(\dd \times \partial_j\bs d+ \bs d \times \partial_j \dd)( \g{li}\times \partial_j\g{li}) \right]  \dx 
 \\
 &+ 
 \| \g{li} \times \nabla \g{li}\|_{L^\infty(\D)} \left[ E - \tfrac{1}{2}\|\nabla \bs d\|_{L^2(\D)}^2 + \frac{1}{2}\iD 1-|\bs d|^2\dx\right]\\
 ={}&
 \sum_{j=1}^3\iD | \partial_j \g{li}|^2 ( 1 - \dd\cdot \bs d) - \frac{1}{2}| \dd\cdot \partial_j \g{li}-\bs d \cdot \partial_j \g{li}|^2 \dx 
 \\
&- \tfrac{1}{2}
\sum_{j=1}^3\iD   ((\bs d -\dd)\times \partial_j(\bs d-\dd))\cdot ( \g{li}\times \partial_j\g{li})\dx
 \\
 &+\| \g{li} \times \nabla \g{li}\|_{L^\infty(\D)} \left[ E - \tfrac{1}{2}\|\nabla \bs d\|_{L^2(\D)}^2 + \frac{1}{2}\iD 1-|\bs d|^2\dx\right]\,.
 \end{split}
 \end{align}
Now, we are able to conclude by inserting everything back into~\eqref{eq:envarineq2insertedtwo}, which leads to 
 \begin{equation}
        \label{eq:envarineq2insertedfinal}
    \begin{aligned}
    &\expected{\left[ E - \tfrac{1}{2}\| \nabla \bs d \|_{L^2(\D)}^2 + \frac12\iD 1 - |\bs d |^2\dx + \frac{1}{2}\| \bs d - \dd \|_{H^1(\D)}^2  \right]\Big|_{(s\wedge \tau_L)-}^{t\wedge\tau_L}} \\
    &+ \expected{\int_{s\wedge \tau_L}^{t\wedge\tau_L}\int_{\D} \tfrac{1}{4}|\div \bs q - \dd \times  \Delta \dd |^2   + \frac{1}{2}\sum_{j=1}^3\sum_{i=1}^3\sum_{l\in\mathds Z}|\lambda_{li}|^2| \dd\cdot \partial_j \g{li}-\bs d \cdot \partial_j \g{li}|^2 \dx\dr 
    }
   \\
 \leq {} & 
\expected{\tfrac14\sum_{i=1}^3\sum_{l\in\mathds{Z}}\tabs{\lambda_{li}}^2\int_{s\wedge \tau_L}^{t\wedge\tau_L}\int_{\D} | \nabla  \g{li}|^2 [ 1 - |\bs d|^2 + |\bs d - \dd|^2] 
\dx\dr}
\\
&+\expected{\tfrac{1}{2}\sum_{i=1}^3\sum_{l\in\mathds{Z}}\tabs{\lambda_{li}}^2\int_{s\wedge \tau_L}^{t\wedge\tau_L}     ((\bs d -\dd)\times \nabla(\bs d-\dd)): ( \g{li}\times \nabla\g{li})\dr}
\\
   & +\expected{ \int_{s\wedge \tau_L}^{t\wedge\tau_L} \sum_{j=1}^3\left[\partial_j ( \dd \times \Delta \dd) : (( \dd -\bs d ) \times (\partial_j \dd- \partial_j \bs d ) \right]\dx \dr}\\
&+\expected{ \int_{s\wedge \tau_L}^{t\wedge\tau_L} \sum_{i=1}^3\sum_{l\in\mathds{Z}}\tabs{\lambda_{li}}^2\| \g{li} \times \nabla \g{li}\|_{L^\infty(\D)} \left[ E - \tfrac{1}{2}\|\nabla \bs d\|_{L^2(\D)}^2 + \frac{1}{2}\iD 1-|\bs d|^2\dx\right] \dr } 
\\ &+\expected{ \int_{s\wedge \tau_L}^{t\wedge\tau_L} \| \nabla (\dd\times\Delta \dd)\|_{L^\infty(\D)} \left[ E - \tfrac{1}{2}\| \nabla \bs d \|_{L^2(\D)}^2+ \int_{\D} 1 - |\bs d | ^2 \dx  \right]
    \dr}
\\&+\expected{ \int_{s\wedge \tau_L}^{t\wedge\tau_L} \int_{\D}  | ( \bs d -\dd)\times \dd|^2 + \frac{1}{2} |(\dd -  \bs d) \times  \Delta \dd |^2\dx \dr }  \,.
\end{aligned}
    \end{equation} 
Under the given regularity requirements, we may estimate the right-hand side, by the relatives errors in the first line of~\eqref{eq:envarineq2insertedfinal} such that we
find
    \begin{align}
    &\expected{\left[ E - \tfrac{1}{2}\| \nabla \bs d \|_{L^2(\D)}^2 + \frac12\iD 1 - |\bs d |^2\dx + \frac{1}{2}\| \bs d - \dd \|_{H^1(\D)}^2  \right]\Big|_{(s\wedge \tau_L)-}^{t\wedge\tau_L}} \notag \\
    &+ \expected{\int_{s\wedge \tau_L}^{t\wedge\tau_L}\int_{\D} \tfrac{1}{4}|\div \bs q - \dd \times  \Delta \dd |^2   + \frac{1}{2}\sum_{j=1}^3\sum_{i=1}^3\sum_{l\in\mathds Z}|\lambda_{li}|^2| \dd\cdot \partial_j \g{li}-\bs d \cdot \partial_j \g{li}|^2 \dx\dr 
    }
  \notag  \\
 \leq {} & C 
\mathds E \Bigg[ \int_{s\wedge \tau_L}^{t\wedge\tau_L}
 \Big(\sum_{i=1}^3\sum_{l\in\mathds{Z}}\tabs{\lambda_{li}}^2
\left[ \|  \nabla  \g{li}\|^2_{L^\infty(\D)}+\| \g{li} \times \nabla \g{li}\|_{L^\infty(\D)} \right]  
\notag \\& \quad \quad + \| \nabla (\dd\times\Delta \dd)\|_{L^\infty(\D)} + 1 + \| \Delta \dd \|_{L^3(\D)}^2 \Big) 
\notag \\ & \quad \qquad \cdot \left( E - \tfrac{1}{2}\| \nabla \bs d \|_{L^2(\D)}^2+\frac12\int_{\D}  [ 1 - |\bs d|^2 ] 
\dx+ \frac{1}{2}\| \bs d - \dd \|_{H^1(\D)}^2 \right) \dr \Bigg] 
\,.
\end{align}
and
may conclude by using Gronwall's lemma that 
$$\expected{\left[ E - \tfrac{1}{2}\| \nabla \bs d \|_{L^2(\D)}^2 + 
\frac12\iD 1 - |\bs d |^2\dx + \frac{1}{2}\| \bs d - \dd \|_{H^1(\D)}^2  \right]\Big|_{0}^{t\wedge\tau_L}}\leq 0\,. $$
As $\tfrac12\norm{\nabla\bs{d}}_{L^2\trkla{\D}}^2\leq E$ and $\tabs{\bs{d}}^2\leq 1$, we especially obtain~$\bs d \equiv \dd$. 
\end{proof}

\section{Numerical simulation of the harmonic heat flow into the sphere}\label{sec:numerics}

In order to illustrate the effect of stochastic fluctuations on the molecular scale, we investigate a numerical implementation of  the algorithm proposed in~\eqref{eq:discscheme} for the harmonic map heat flow perturbed by Stratonovich noise. Stochastic perturbations are a natural way to model thermal or microscopic fluctuations that influence the evolution of molecular alignment. In the physics community it is well established~\cite[Sec.~3.4]{gennes} that the locally averaged molecular orientation, which determines anisotropy, is subject to statistical fluctuations. This is especially important when coupled to the fluid flow of an anisotropic medium, where small changes in the anisotropy may cause vast changes in the fluid flow, \textit{e.g.,} possibly leading to tipping points. \\
We implemented the finite element scheme proposed in~\eqref{eq:discscheme} using the python API of FEniCSx~\cite{fenicsx}. The source code is publicly available~\cite{code}.
In order to be able to simulate a large number of paths in the probability space, the implementation must be computationally efficient.
Therefore, the nonlinear equations are linearized to a semi-implicit scheme in time using a so-called projection method, where the computed numerical solution of the linearized scheme is nodally projected onto the sphere after each time-step. We refer to the recent article~\cite{Max} for details on this method.\\

To highlight the influence of the stochastic fluctuations, we perform numerical experiments starting from an unstable stationary configuration.

Fixing a two dimensional unit square $(-0.5,0.5)^2\subset\mathds{R}^2$ as domain, we apply homogeneous Neumann boundary conditions on the left and right sides of the domain and constant-in-time Dirichlet boundary conditions on the top and bottom of the square pointing in opposite directions given by the initial data $\boldsymbol{d}_0 (x,y) = (\mathrm{sgn} (y), 0)^T$, (cf.~Figure~\ref{fig:states}, top).
The domain is decomposed into an equispaced triangulation such that all its vertices lie on a cartesian grid with mesh width $h=1/11$.
The time-step size is set to $\tau = 0.005$ in all simulations.\\
On this domain, the cylindrical Wiener process $\bs{W}$ is realized using the Fourier base $\{f_l\}_{l\in\mathds{Z}}$ of $L^2\trkla{\D}$, i.e.
$$
f_l
=
\begin{cases}
1 & \text{ if } l = 0 \,,\\ 
\sqrt{2}\sin (2 \pi l x) &\text{ if } l > 0 \,,\\ 
\sqrt{2}\cos (2 \pi l x) &\text{ if } l < 0 \,.
\end{cases}
$$
This induces the functions $\trkla{\g{li}}_{l\in\mathds{Z}}^{i=1,2,3}$ in Equation \eqref{eq:representationW}.
The cutoff for finite-dimensional modes in the numerical realization is chosen as $\mathds{Z}_h = \mathds{Z} \cap [-Z_h, Z_h]$ with the heuristic $Z_h = 2/h-1$. For simplicity, the coefficients $\lambda_{li}$ inducing the trace class operator $\mathcal{Q}$ are set to $\lambda_{li}=1$ for all indices considered in our simulations. As our simulations do not adress the limit $Z_h\rightarrow\infty$, our choice for a finite $Z_h$ does not violate Assumption \ref{item:W3}. If one considers the limit $Z_h\rightarrow\infty$, other choices respecting the decay property of $\trkla{\lambda_i}_{i\in\mathds{Z}}$ are necessary. 

\begin{figure}[ht]
    \centering        
    \includegraphics[width=.4\textwidth,trim= 6cm 6cm 6cm 6cm , clip]{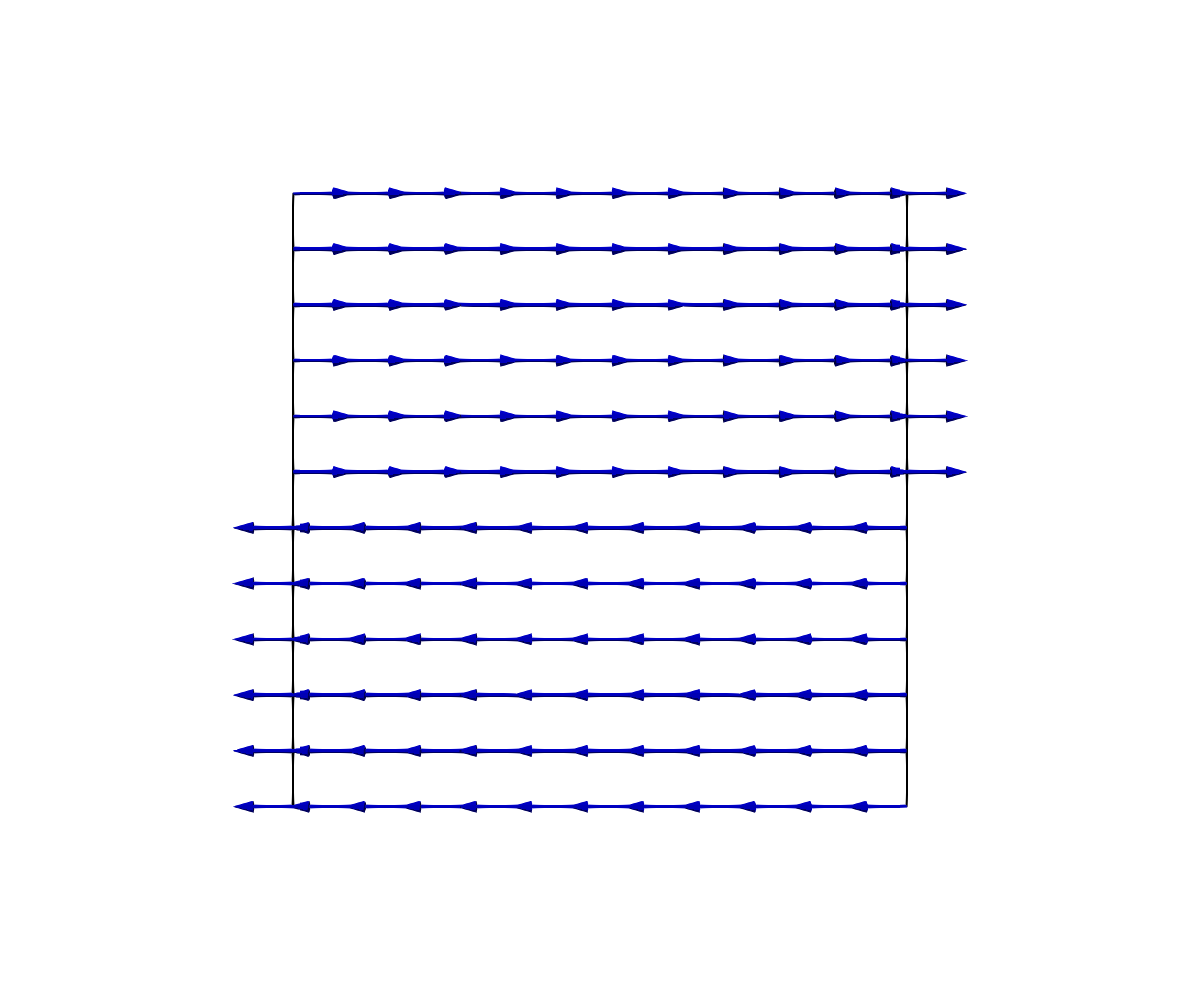}
    \\    
    \includegraphics[width=.4\textwidth,trim= 6cm 6cm 6cm 6cm , clip]{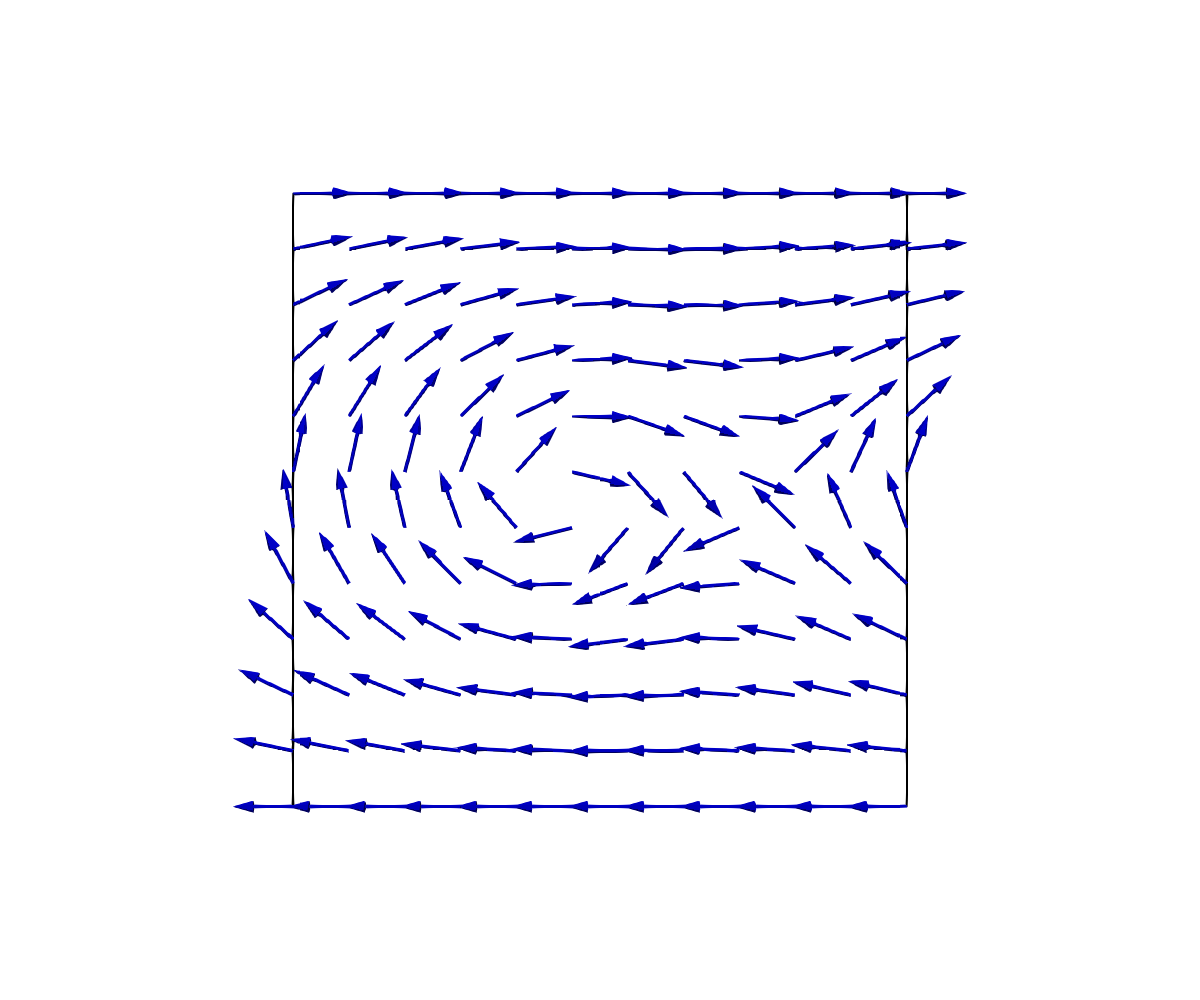}
    \hspace{1em}    
    \includegraphics[width=.4\textwidth,trim= 6cm 6cm 6cm 6cm , clip]{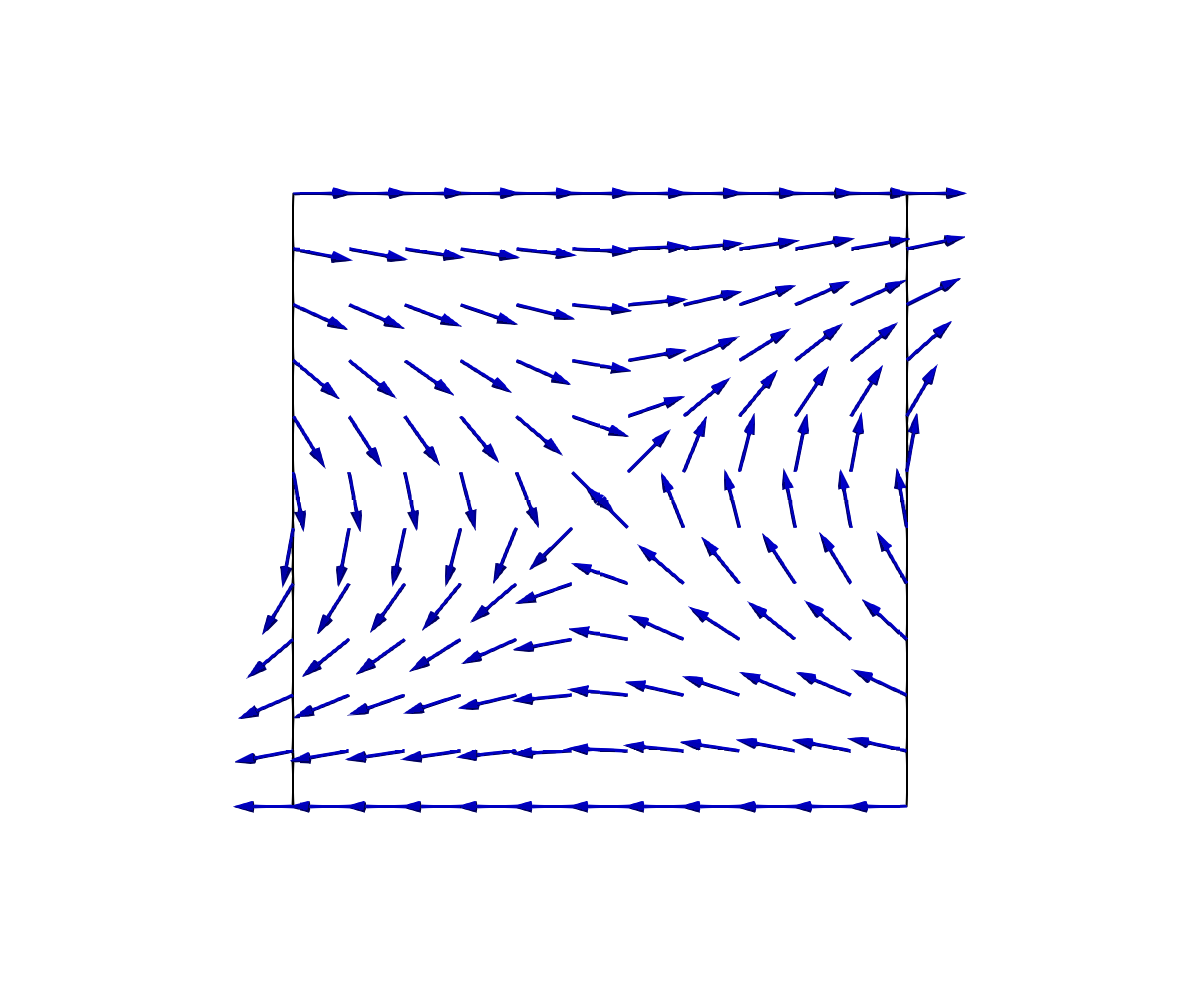}
    \\
    \includegraphics[width=.4\textwidth,trim= 6cm 6cm 6cm 6cm , clip]{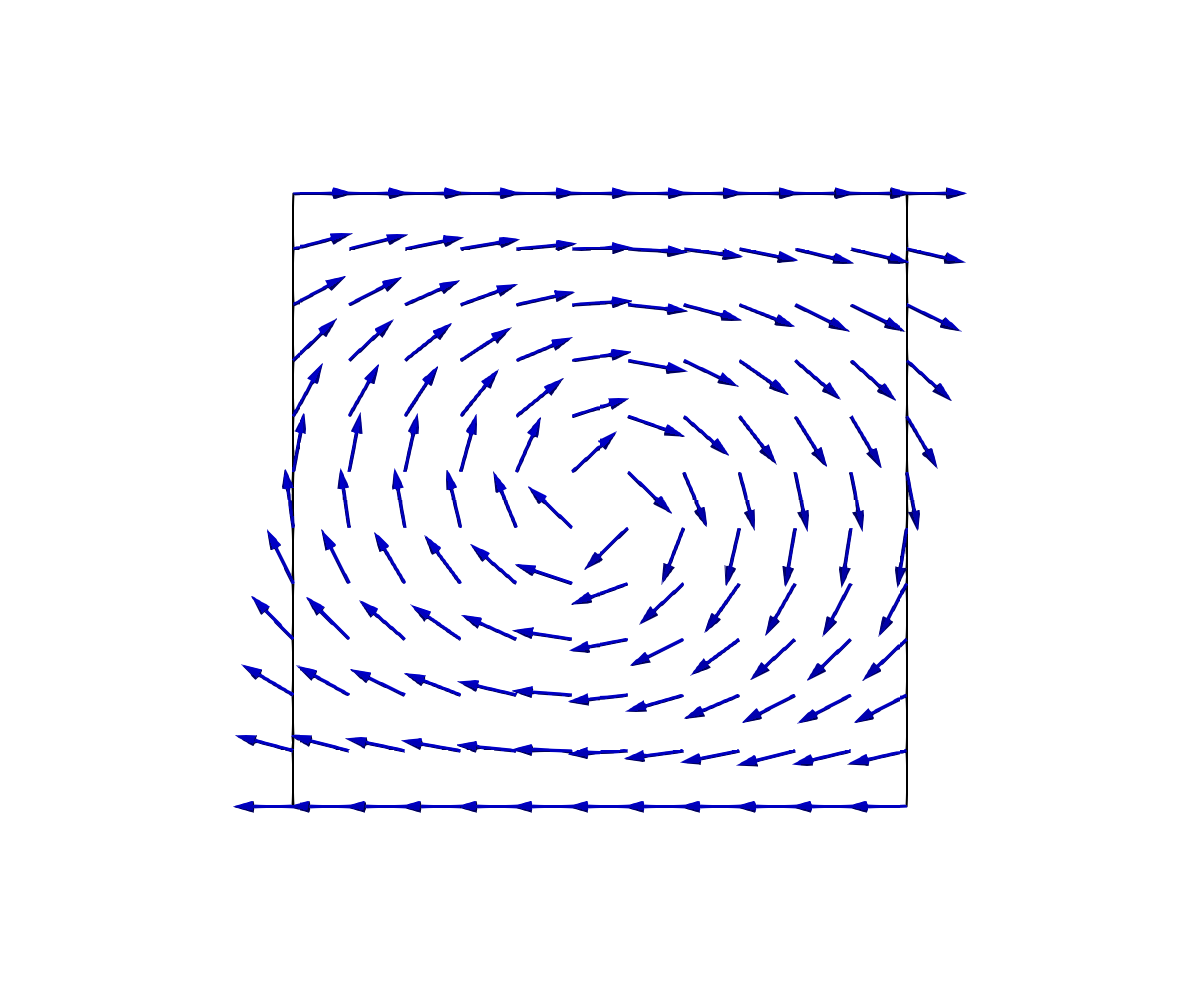}
    \hspace{1em}
    \includegraphics[width=.4\textwidth,trim= 6cm 6cm 6cm 6cm , clip]{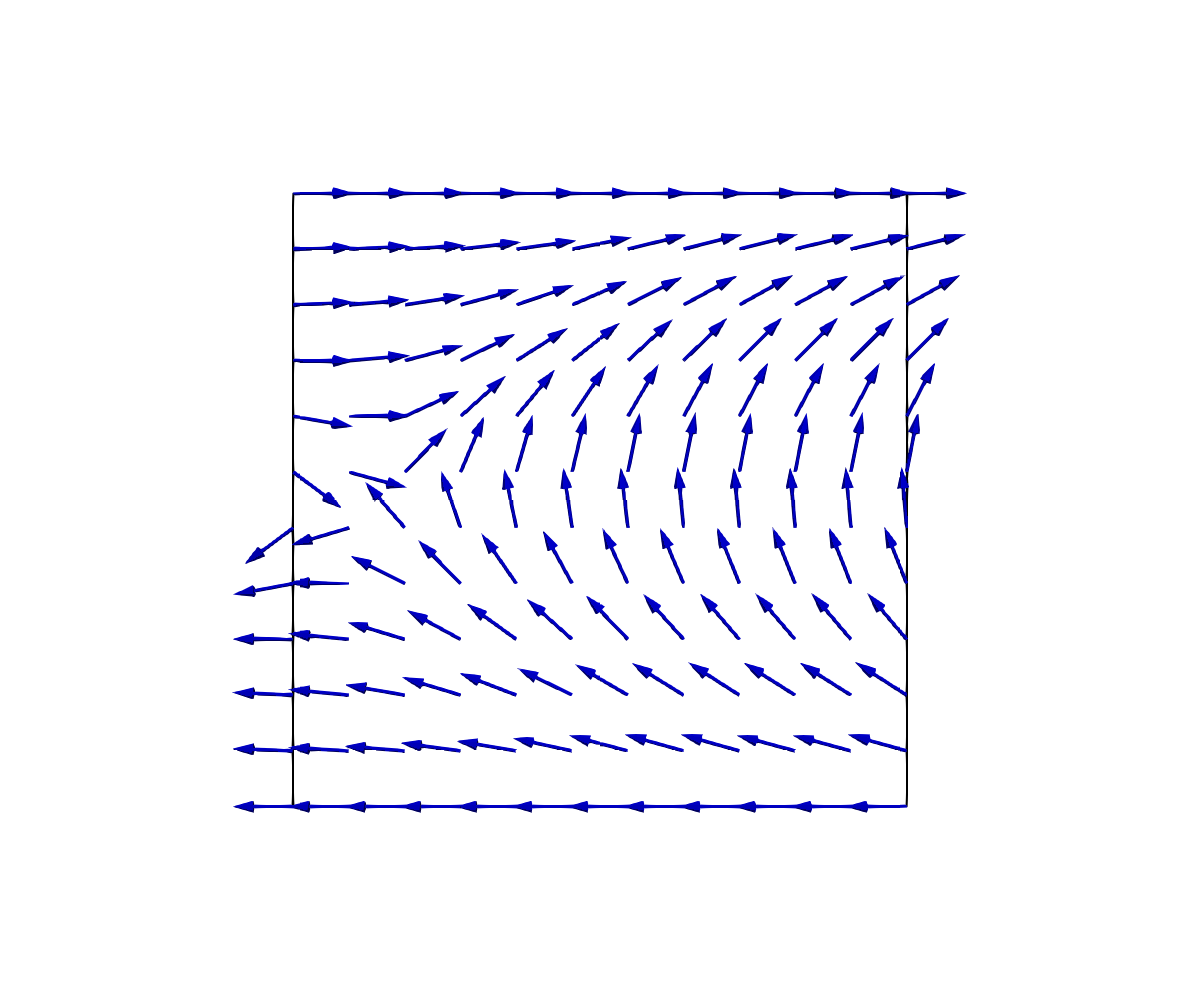}
    \\
    \includegraphics[width=.4\textwidth,trim= 6cm 6cm 6cm 6cm , clip]{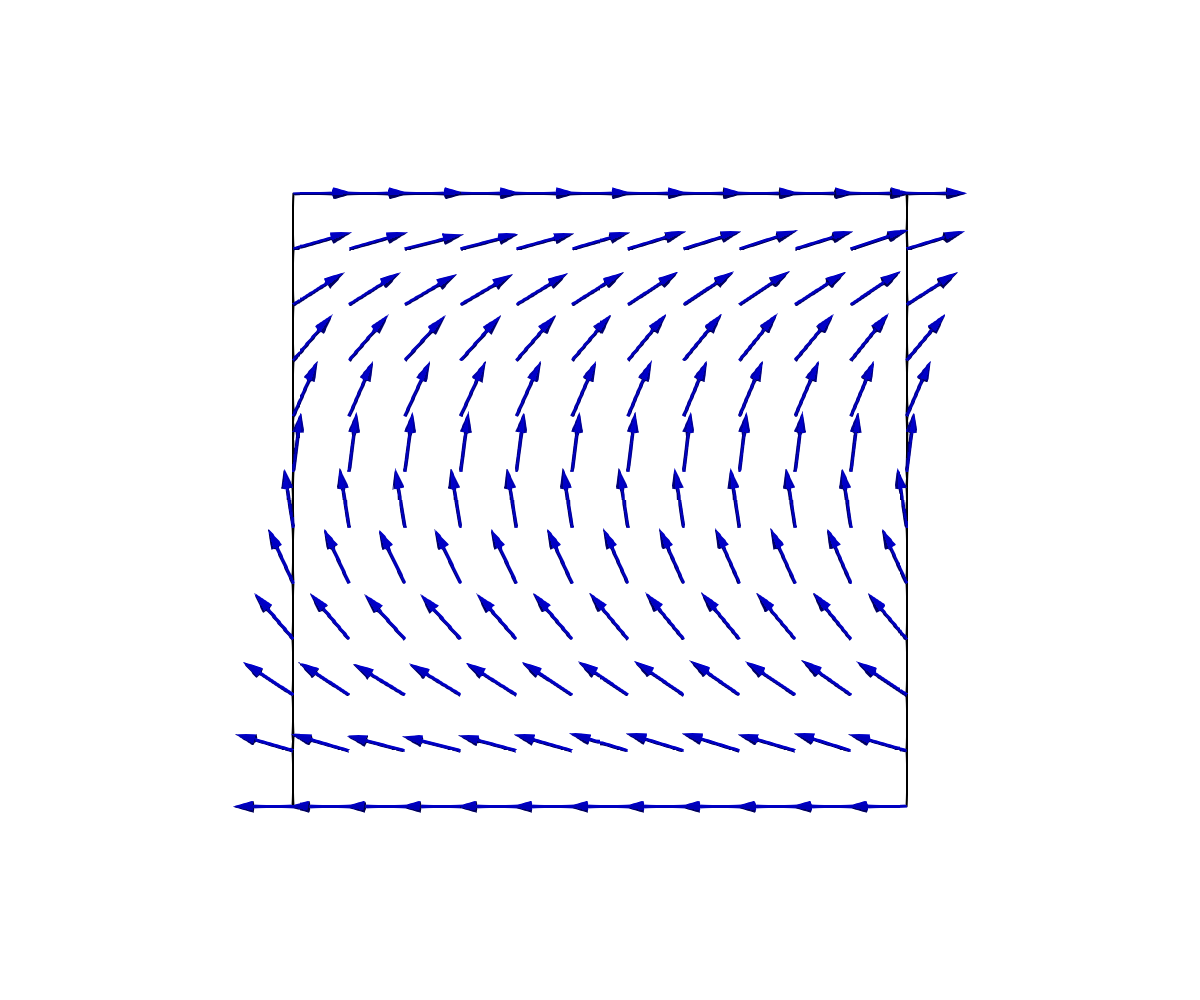}%
    \hspace{1em}
     \includegraphics[width=.4\textwidth,trim= 6cm 6cm 6cm 6cm , clip]{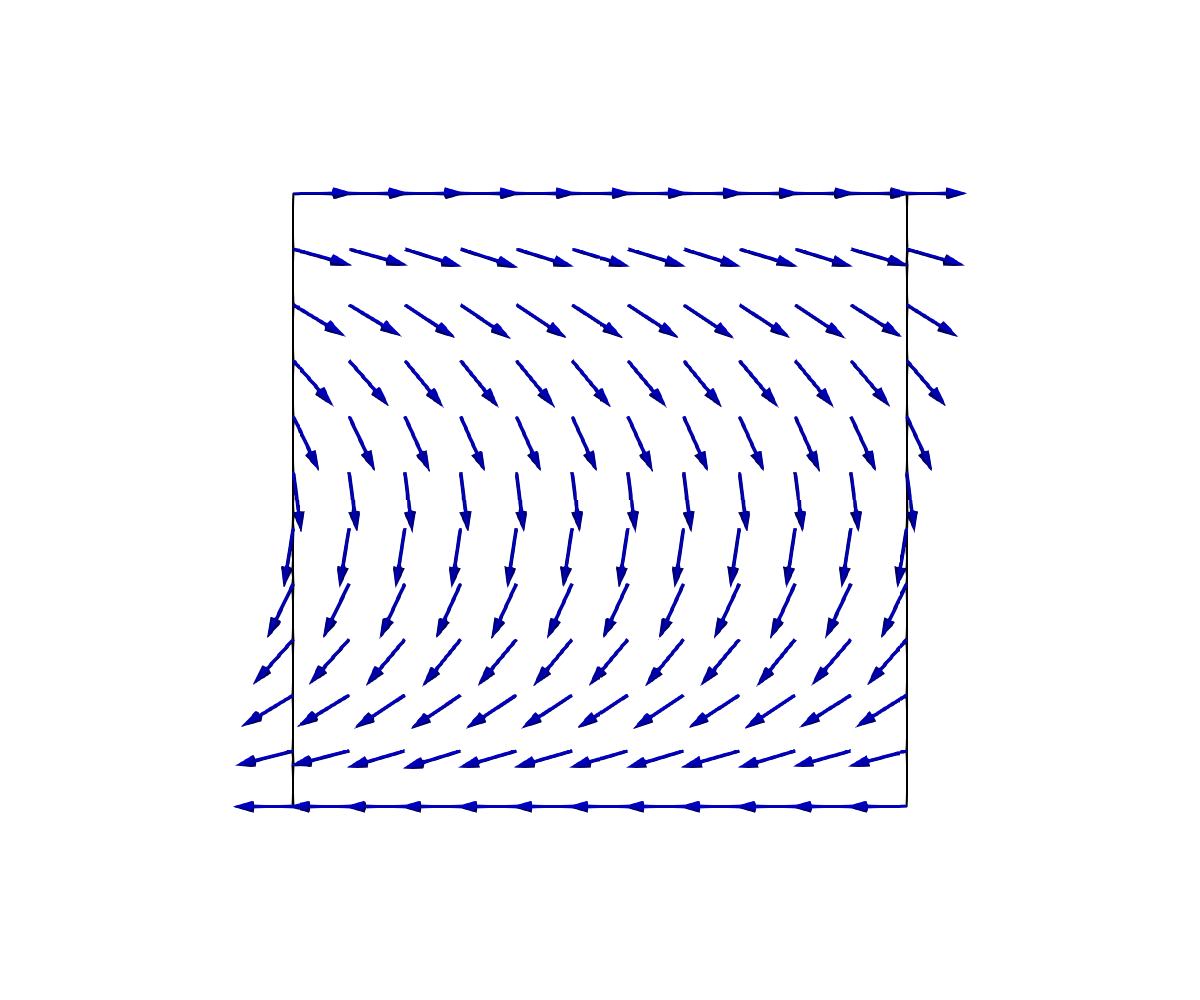}
   
    \caption{Different states of the evolution of $\bs{d}$: Initial configuration (top), local minimizer (second and third row), global minimizers (bottom row). Associated energy levels are $22.0, \, 15.5, \, 10.8 , \, 10.7 , \, 9.8, \,  4.9,\, 4.9$ (from left to right, from top to bottom).}
    \label{fig:states}
\end{figure}

Depending on the realization of the noise, the flow evolves towards one of the two global minimizers of the energy or becomes (temporarily) trapped in a nearby local minimum. Several representative outcomes of these evolutions are shown in the accompanying Figure~\ref{fig:states}. 
In particular Figure~\ref{fig:states} shows the initial data (top row), possible local minima ($2^{nd}$ and $3^{rd}$ row), and the two global minimizers (last row). 
The evolution of one path is visualized in Figure \ref{fig:evolution-local-to-global}. Here, we can observe that the director field first evolves towards a local energy minimum (energy$\approx16$) before finally attaining a configuration that is a global minimizer of the energy (energy$\approx5$). 
\begin{figure}[ht]
    \centering        
    \includegraphics[width=.45\textwidth,trim= 6cm 6cm 6cm 6cm , clip]{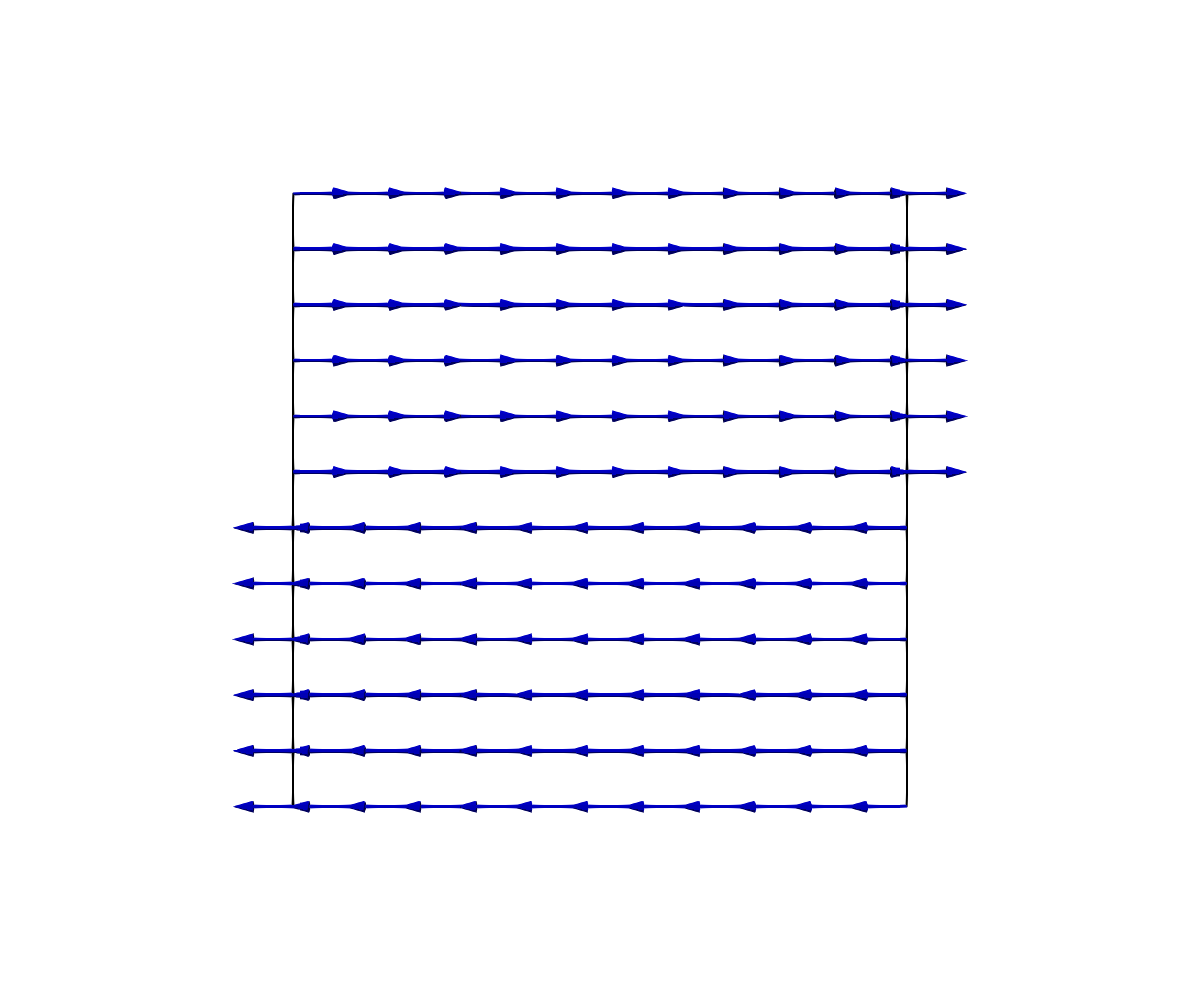}
    \hspace{1em}
    \includegraphics[width=.45\textwidth,trim= 6cm 6cm 6cm 6cm , clip]{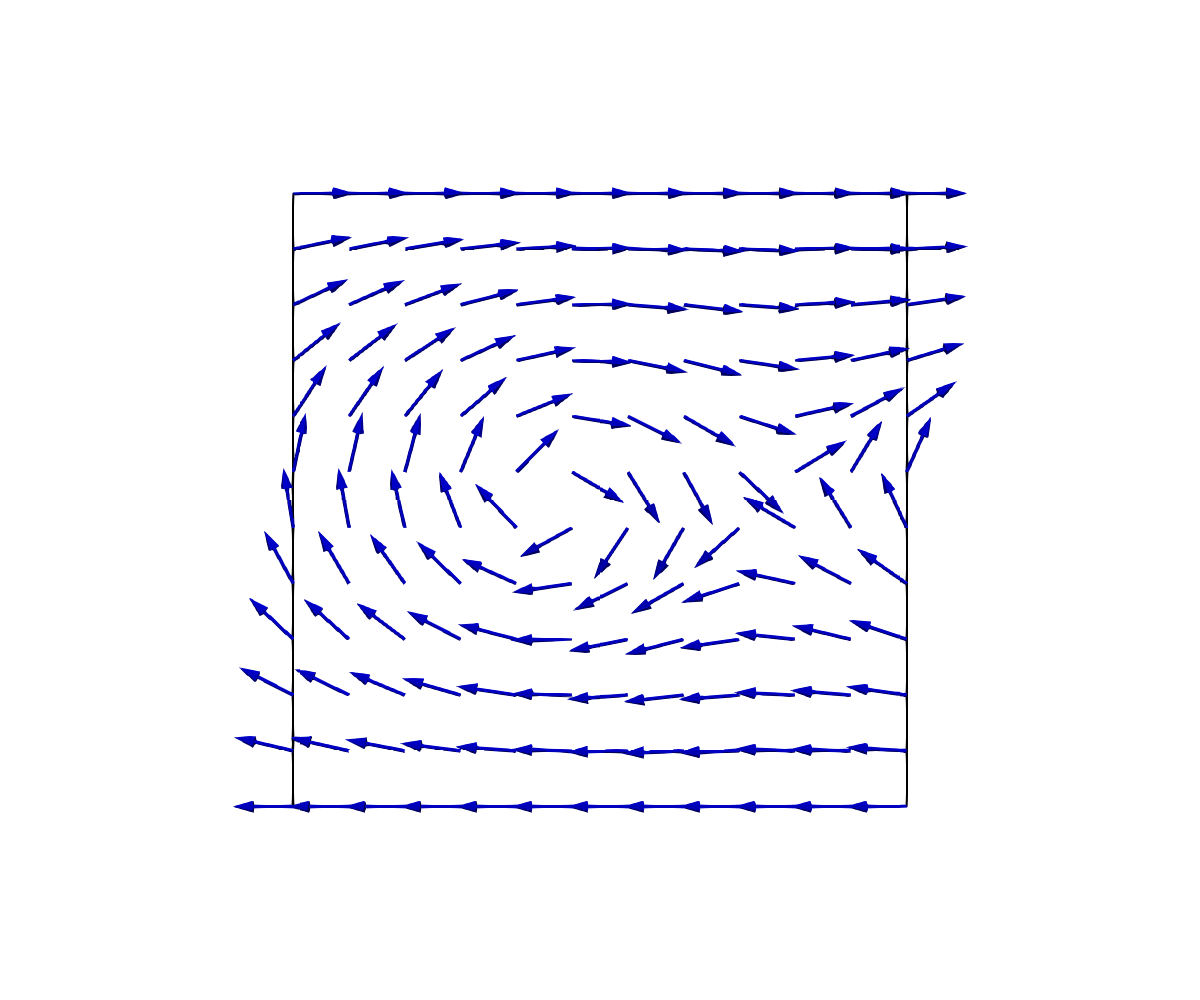}
    \\          
    \includegraphics[width=.45\textwidth,trim= 6cm 6cm 6cm 6cm , clip]{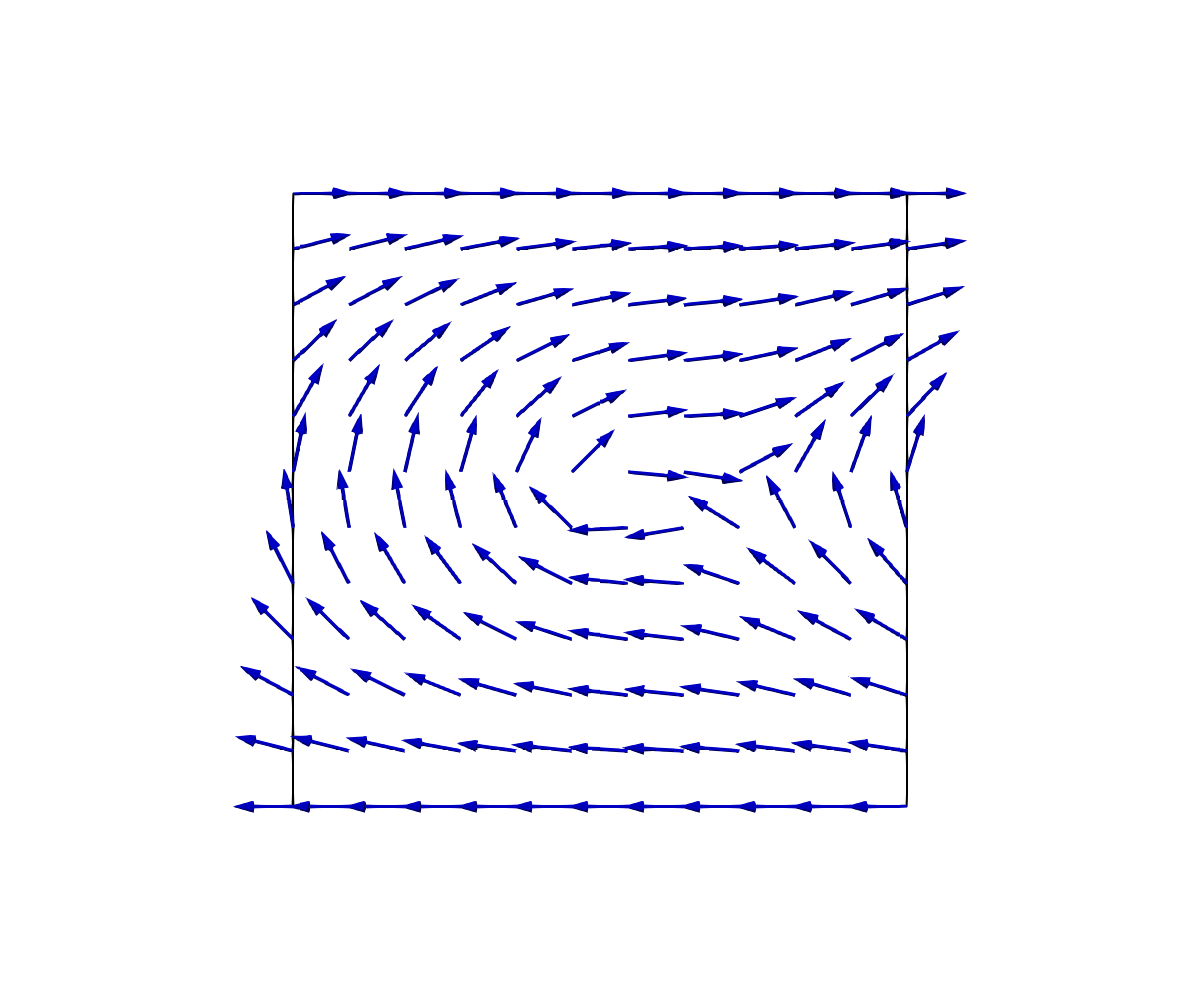}
    \hspace{1em}
    \includegraphics[width=.45\textwidth,trim= 6cm 6cm 6cm 6cm , clip]{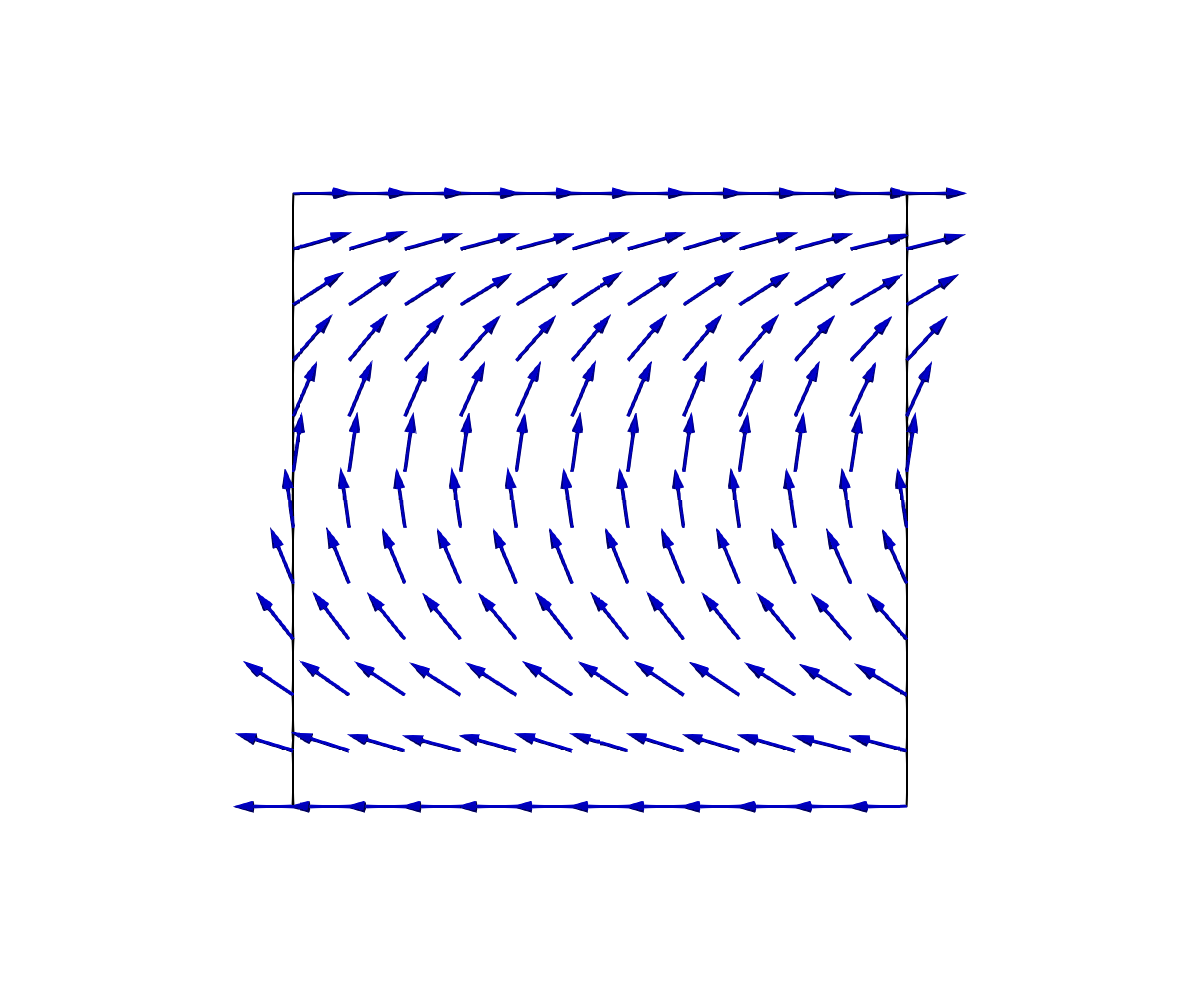}
    \\
    \includegraphics[width=.45\textwidth,trim= 6cm 6cm 6cm 6cm , clip]{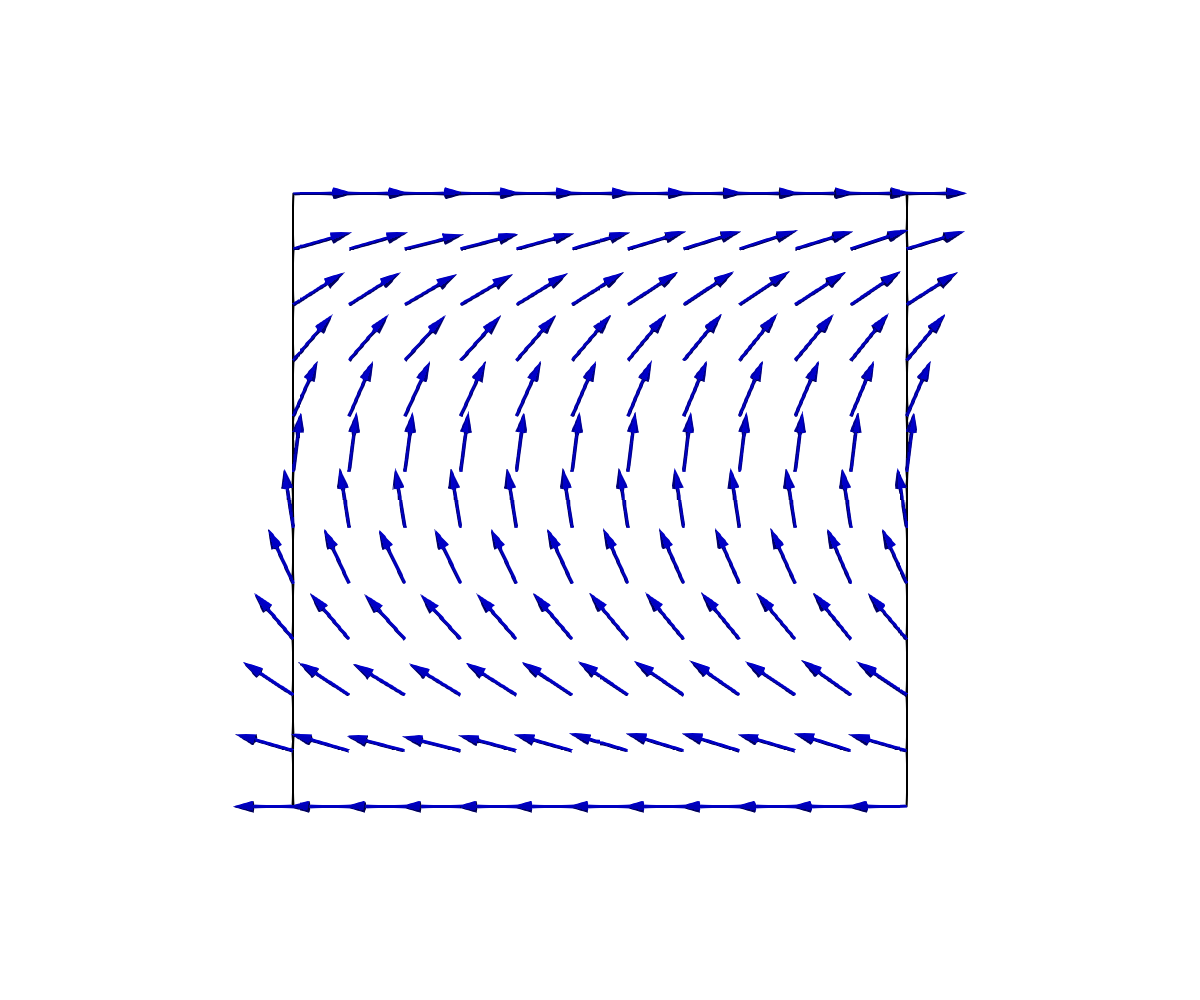}%
    \hspace{1em}
    \includegraphics[width=.45\textwidth]{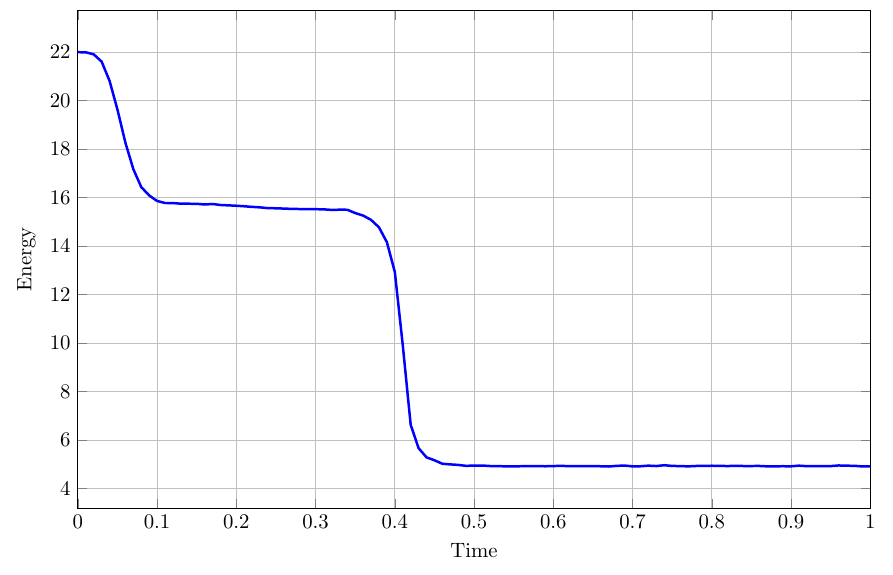}
        \caption{Evolution of the director field along one path at time $t=0.0, \, 0.2, \, 0.4, \, 0.6, \, 1.0$ (from top to bottom, from left to right) and the energy evolution over time (bottom right). The vertical lines indicate the shown time stamps.}
    \label{fig:evolution-local-to-global}
\end{figure}

We next examine the corresponding energy evolution over 500 realizations. The ensemble of energy curves depicted in Figure~\ref{fig:paths-by-noise} illustrates how randomness affects both the rate of relaxation and the likelihood of escaping unstable or metastable states. In Figure~\ref{fig:paths-by-noise}, the energy evolution of the 500 realizations as well as the inferred expectation and quantiles over time are visualized.
Thereby, the different rows depict the results for different noise intensities, which are realized by multiplying the coefficients $\lambda_{li}$ by a parameter $c_S$.
For all considered noise intensities, the configuration of the director field evolves from the initial condition (with energy$=22$) towards energetically more favorable configurations. In particular, we observe that due to local minima in the energy landscape, the different evolutions accumulate at three distinct energy levels at approximately $16$, $11$, and $5$. It is obvious that the levels $16$ and $11$ can only correspond to local minima. Thus, if the stochastic forcing is sufficiently strong, the configuration of the director field can break out of this locally stable configuration and evolve towards a configuration with a lower energy level. Possible configurations of the director field at energy level $16$ include, e.g., the configuration shown on the left-hand side of the second row of Figure \ref{fig:states}. 
An example for possible configuration at energy level $11$ can be seen on the left-hand side of the third row of Figure \ref{fig:states}, while the configurations on energy level $5$ correspond to the global optima depicted in the bottom row of Figure on the left-hand side of the second row of Figure \ref{fig:states}.\\
To understand the impact of the intensity of the noise, we compare the different rows of Figure \ref{fig:paths-by-noise} which correspond to $c_S=0.1,1,2,5$.
As one would expect, a stronger noise term leads to more oscillation of the energy, whereas a smaller noise term increases the realization of long-term stable local minima.
Increasing the noise level enhances the probability of overcoming energy barriers, leading more frequently to transitions between different (meta-)stable states, whereas lower noise intensities tend to trap the dynamics in local minima for a larger fraction of runs.

\begin{figure}
    \centering
    \includegraphics[width=0.45\linewidth]{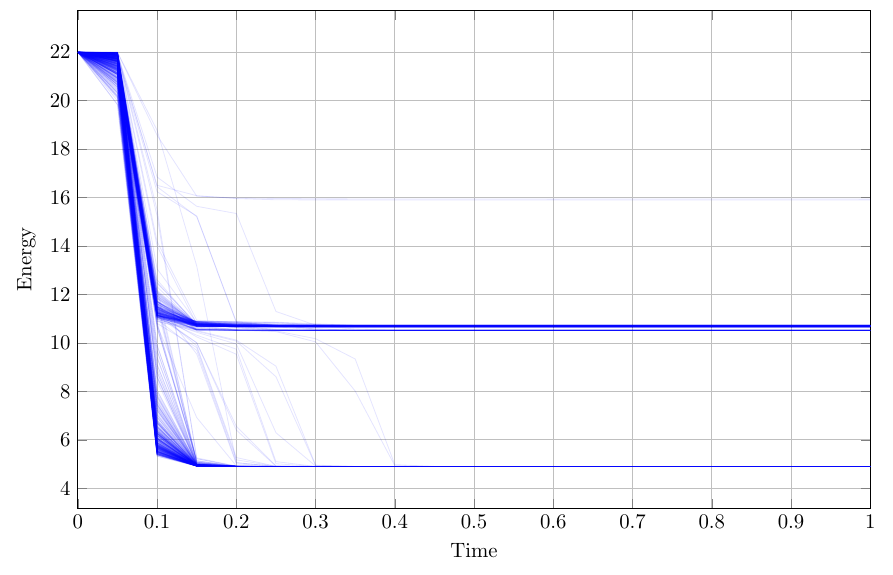}
    \includegraphics[width=0.45\textwidth]{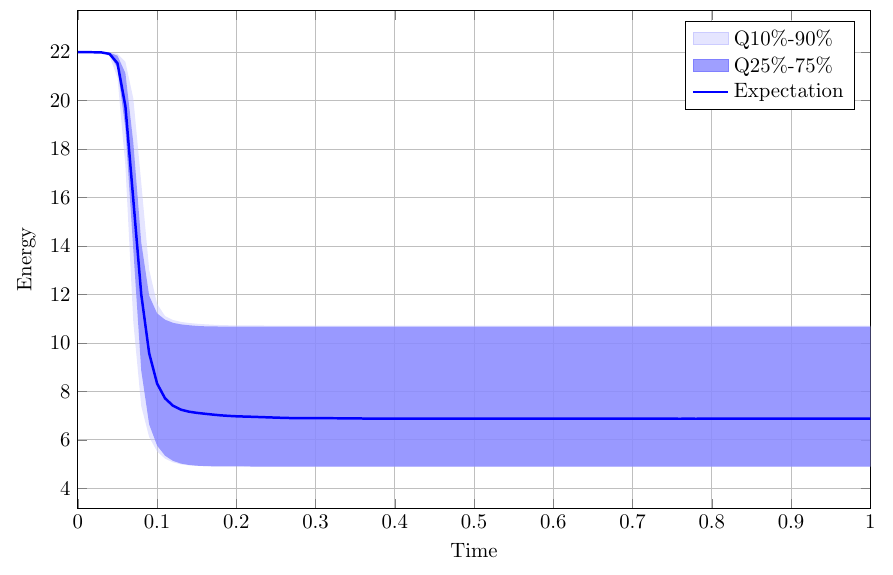}
    \\
    \includegraphics[width=0.45\linewidth]{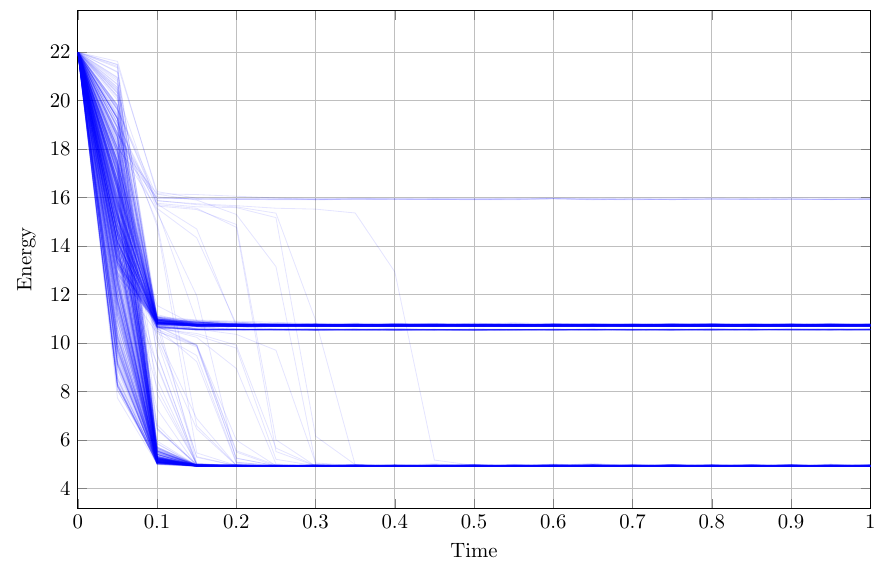}
    \includegraphics[width=0.45\textwidth]{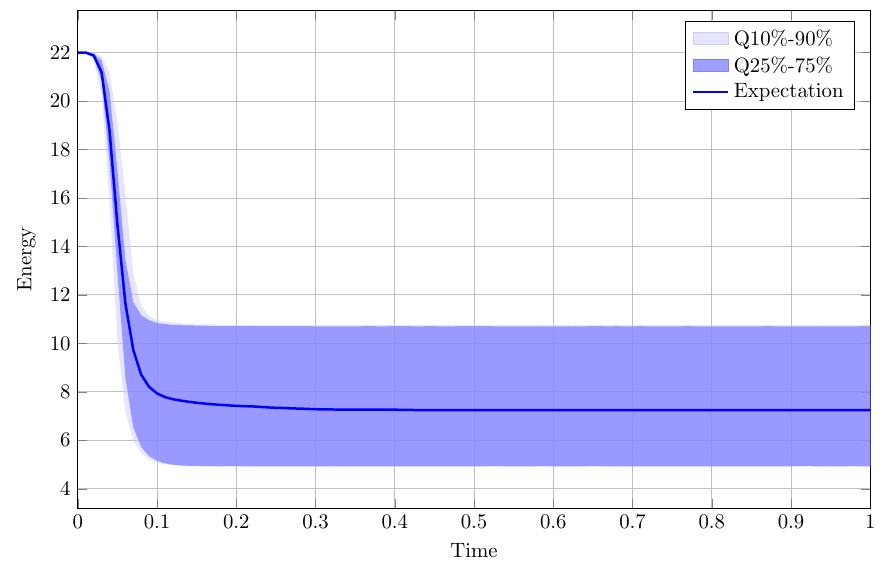}
    \\
    \includegraphics[width=0.45\linewidth]{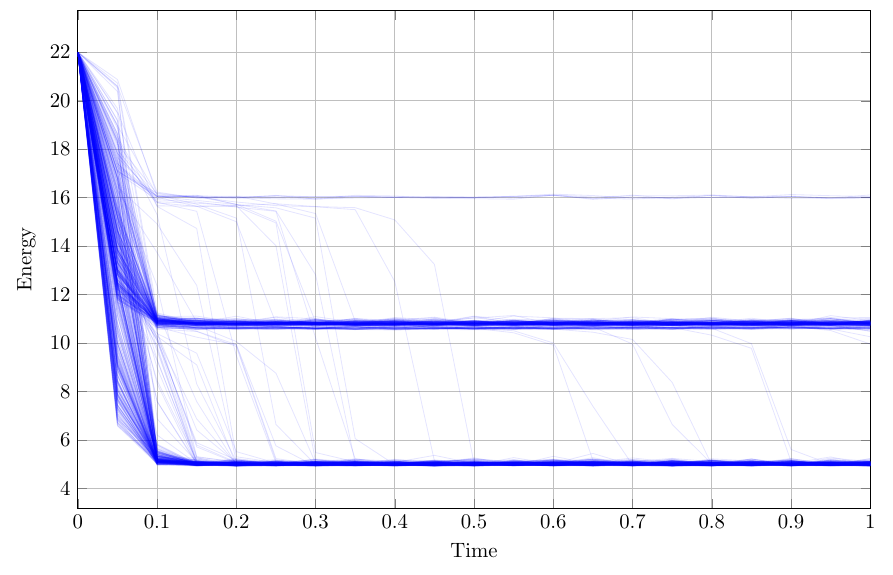}
    \includegraphics[width=0.45\textwidth]{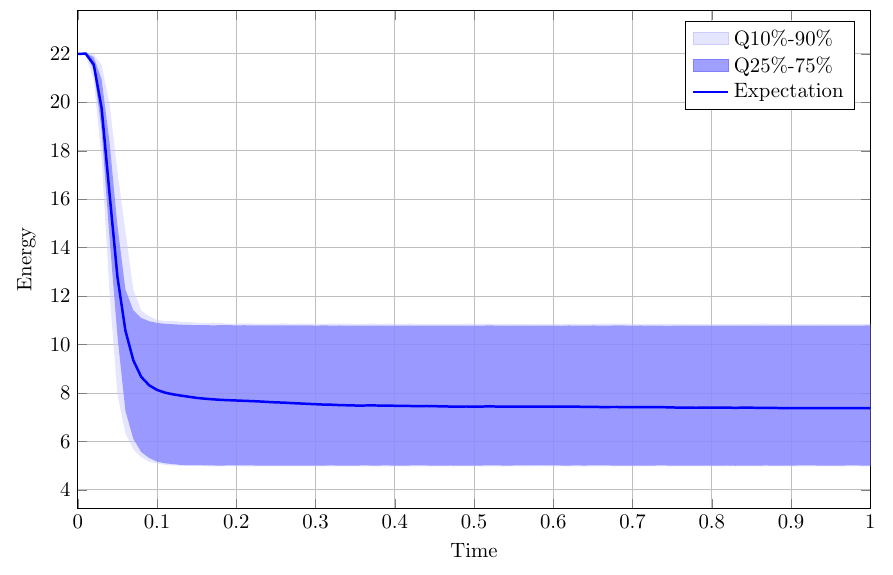}
    \\
    \includegraphics[width=0.45\linewidth]{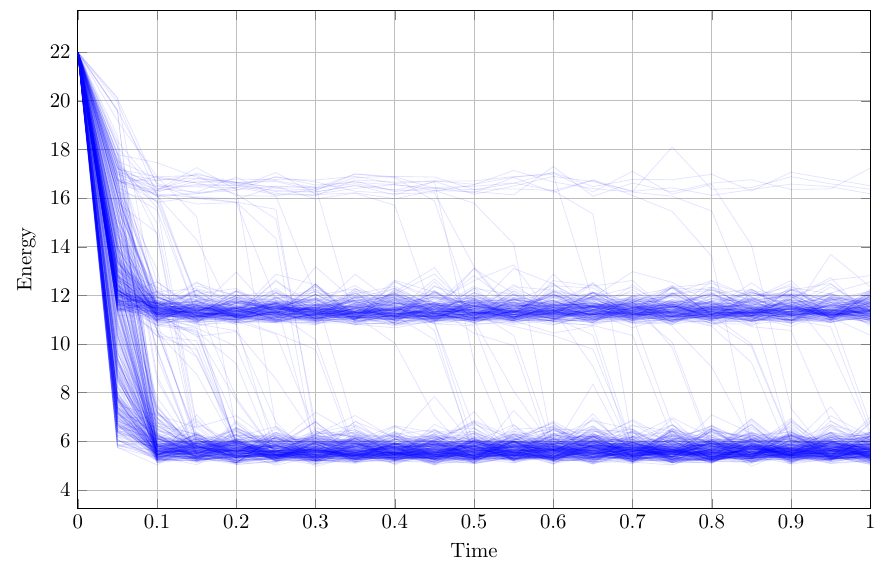}
    \includegraphics[width=0.45\textwidth]{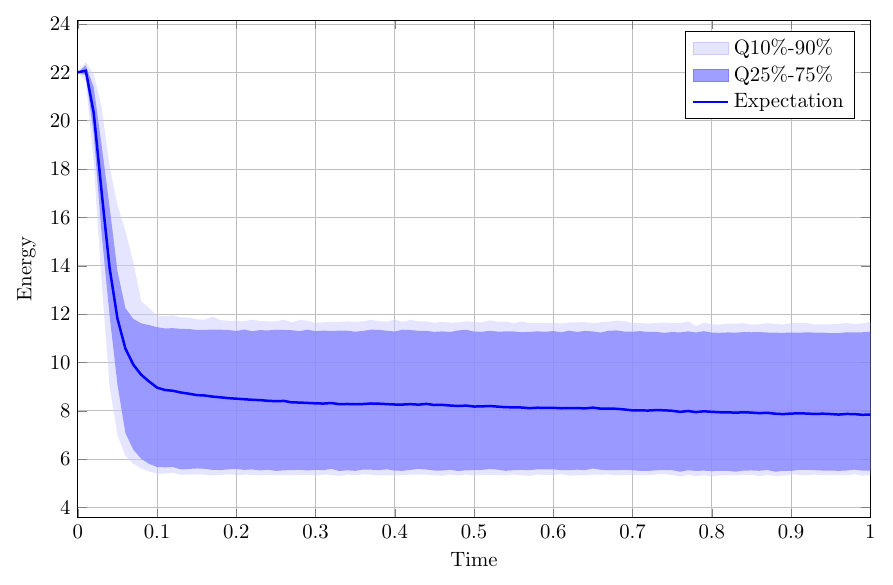}
    \caption{The energy evolution for different stochastic realizations and the associated distribution for different noise intensities $c_S = 0.1, \, 1, \, 2 , \, 5$ (from top to bottom).}
    \label{fig:paths-by-noise}
\end{figure}

These experiments show the numerical applicability of our proposed scheme. 
Furthermore, the presented simulations demonstrate the effect of the stochastic noise term which leads to physically more relevant solutions than the deterministic models, as it allows the director field to escape local optima of the energy landscape.

\section*{Acknowledgment}
Robert Lasarzik acknowledges funding by the Deutsche Forschungsgemeinschaft (DFG, German Research
Foundation) under Germany´s Excellence Strategy – The Berlin Mathematics
Research Center MATH+ (EXC-2046/1, EXC-2046/2, project ID: 390685689) within the project AA Tech 1.

%% file: stochasticEL.bbl
\providecommand{\bysame}{\leavevmode\hbox to3em{\hrulefill}\thinspace}
\providecommand{\MR}{\relax\ifhmode\unskip\space\fi MR }
\providecommand{\MRhref}[2]{%
  \href{http://www.ams.org/mathscinet-getitem?mr=#1}{#2}
}
\providecommand{\href}[2]{#2}